\titleformat*{\subsection}{\large\bfseries}
\numberwithin{equation}{section}
\pgfplotsset{compat=newest}
\pgfplotsset{plot coordinates/math parser=false}
\newlength\figureheight
\newlength\figurewidth
\DeclareMathOperator{\G}{G}
\numberwithin{equation}{section}
\newcommand{\Sig}{\Sigma}
\newcommand{\beq}{\begin{equation}}
\newcommand{\bEq}{\end{equation}}
\newcommand{\al}{\alpha}
\newcommand{\be}{\begin{equation}}
\newcommand{\ee}{\end{equation}}
\newcommand{\e}{{\varepsilon}}
\newcommand{\si}{\sigma}
\newcommand{\fa}{{\mathfrak a}}
\renewcommand{\cal}{\mathcal}
\newcommand{\wh}{\widehat}
\newcommand{\wt}{\widetilde}
\newcommand{\ii}{\mathrm{i}} 
\newcommand{\dd}{\mathrm{d}}
\renewcommand{\epsilon}{\varepsilon}
\renewcommand{\leq}{\leqslant}
\renewcommand{\geq}{\geqslant}
\renewcommand{\le}{\leq}
\renewcommand{\ge}{\geq}
\newcommand{\E}{\mathbb{E}}
\newcommand{\N}{\mathbb{N}}
\DeclareMathOperator{\im}{Im}
\DeclareMathOperator{\OO}{O}
\DeclareMathOperator{\oo}{o}
\DeclareMathOperator{\bA}{\mathbf{A}}
\DeclareMathOperator{\bC}{\mathbf{C}}
\DeclareMathOperator{\bS}{\mathbf{S}}
\DeclareMathOperator{\bv}{\mathbf{v}}
\DeclareMathOperator{\bu}{\mathbf{u}}
\DeclareMathOperator{\bw}{\mathbf{w}}
\DeclareMathOperator{\bbE}{\mathbb{E}}
\DeclareMathOperator{\bbN}{\mathbb{N}}
\DeclareMathOperator{\bbP}{\mathbb{P}}
\DeclareMathOperator{\sI}{\mathcal{I}}
\DeclareMathOperator{\sW}{\mathcal{W}}
\DeclareMathOperator{\one}{\mathbf{1}}
\theoremstyle{plain} 
\newtheorem{theorem}{Theorem}[section]
\newtheorem*{theorem*}{Theorem}
\newtheorem{lemma}[theorem]{Lemma}
\newtheorem{assumption}[theorem]{Assumption}
\newtheorem*{lemma*}{Lemma}
\newtheorem*{corollary*}{Corollary}
\newtheorem{proposition}[theorem]{Proposition}
\newtheorem*{proposition*}{Proposition}
\newtheorem{definition}[theorem]{Definition}
\newtheorem*{definition*}{Definition}
\theoremstyle{remark}
\newtheorem*{example*}{Example}
\newtheorem{remark}[theorem]{Remark}
\newtheorem*{remark*}{Remark}
\newtheorem*{remarks*}{Remarks}
\renewcommand{\Im}{{\rm{Im}}}
\renewcommand{\Re}{{\rm{Re}}}
\newcommand{\nc}{\normalcolor}
\title{Edge universality of separable covariance matrices}
\author[1]{Fan Yang  \thanks{E-mail: fyang75@wharton.upenn.edu}}
\affil[1]{Department of Statistics, University of Pennsylvania}
\begin{document}
\maketitle

\begin{abstract}

In this paper, we prove the edge universality of largest eigenvalues for separable covariance matrices of the form $\mathcal Q :=A^{1/2}XBX^*A^{1/2}$. Here $X=(x_{ij})$ is an $n\times N$ random matrix with $x_{ij}=N^{-1/2}q_{ij}$, where $q_{ij}$ are $i.i.d.$ random variables with zero mean and unit variance, and $A$ and $B$ are respectively  $n \times n$ and $N\times N$ deterministic non-negative definite symmetric (or Hermitian) matrices. We consider the high-dimensional case, i.e. ${n}/{N}\to d \in (0, \infty)$ as $N\to \infty$. Assuming $\mathbb E q_{ij}^3=0$ and some mild conditions on $A$ and $B$, we prove that the limiting distribution of the largest eigenvalue of $\mathcal Q$ coincide with that of the corresponding Gaussian ensemble (i.e.\;$\mathcal Q$ with $X$ being an $i.i.d.$ Gaussian matrix) as long as we have $\lim_{s \rightarrow \infty}s^4 \mathbb{P}(\vert  q_{ij} \vert \geq s)=0$, which is a sharp moment condition for edge universality. If we take $B=I$, then $\mathcal Q$ becomes the normal sample covariance matrix and the edge universality holds true without the vanishing third moment condition. So far, this is the strongest edge universality result for sample covariance matrices with correlated data (i.e. non-diagonal $A$) and heavy tails, which improves the previous results in \cite{BPZ1,LS} (assuming high moments and diagonal $A$), \cite{Anisotropic} (assuming high moments) and \cite{DY} (assuming diagonal $A$). 
\end{abstract}

\tableofcontents

\begin{section}{Introduction}\label{sec_intro}

Sample covariance matrices are fundamental objects in multivariate statistics. Given a centered random vector $\mathbf y\in \mathbb R^n$ and its $i.i.d.$ copies $\mathbf y_i$, $ i = 1, \cdots, N$, the sample covariance matrix $\mathcal Q := N^{-1}\sum_i \mathbf y_i \mathbf y_i^*$ is the simplest estimator for the covariance matrix $A:=\mathbb E \mathbf y \mathbf y^*$. In fact, if the dimension $n$ of the data is fixed, then $\mathcal Q$ converges almost surely to $\Sigma$ as $N\to \infty$. However, in many modern applications, high dimensional data, i.e. data with $n$ being comparable to or even larger than $N$, is commonly collected in various fields, such as statistics \cite{DT2011,IJ,IJ2,IJ2008}, economics \cite{Onatski2} and population genetics \cite{Genetics}, to name a few. In this setting, $A$ cannot be estimated through $Q$ directly due to the so-called {\it curse of dimensionality}. Yet, some properties of $A$ can be inferred from the eigenvalue statistics of $\mathcal Q$. 

In this paper, we focus on the limiting distribution of the largest eigenvalues of high-dimensional sample covariance matrices, which is of great interest to the principal component analysis. The largest eigenvalue has been widely used in hypothesis testing problems on the structure of covariance matrices, see e.g. \cite{BDMN,Karoui,IJ2,OMH}. Of course the list is very far from being complete, and we refer the reader to \cite{IJ,PA,YZB} for a comprehensive review. Precisely, we will consider sample covariance matrices of the form 
$$\mathcal Q = A^{1/2}XX^{*}A^{1/2},$$ 
where the data matrix $X=(x_{ij})$ is an $n \times N$ random matrix with $i.i.d.$ entries such that $\mathbb E x_{11}=0$ and $\mathbb E |x_{11}|^2 = N^{-1}$, and $A$ is an $n\times n$ deterministic non-negative definite symmetric (or Hermitian) matrix. On dimensionality, we assume that $n/N \to d \in (0,\infty)$ as $N\to \infty$. It is well-known that the empirical spectral distribution (ESD) of $\mathcal Q$ converges to the (deformed) Marchenko-Pastur (MP) law \cite{MP}, whose rightmost edge $\lambda_+$ gives the asymptotic location of the largest eigenvalue. Moreover, it was proved in a series of papers that under an $N^{2/3}$ scaling, the distribution of the largest eigenvalue $\lambda_1(\mathcal Q)$ around $\lambda_+$ converges to the famous Tracy-Widom distribution \cite{TW1,TW}. 
This result is commonly referred to as the {\it{edge universality}}, in the sense that it is independent of the detailed distribution of the entries of $X$. The limiting distribution of $\lambda_1$ was first obtained for $\mathcal Q$ with $X$ consisting of $i.i.d.$ centered Gaussian entries (i.e.\;$XX^*$ is a Wishart matrix) and with trivial covariance (i.e. $A=I$) \cite{IJ2}. The edge universality in the $A=I$ case was later proved for all random matrices $X$ whose entries satisfy a sub-exponential decay \cite{PY}. When $A$ is a non-scalar diagonal matrix, the Tracy-Widom distribution was first proved for the case with $i.i.d.$ Gaussian $X$ in \cite{Karoui} (non-singular $A$ case) and \cite{Onatski} (singular $A$ case). Later the edge universality with general diagonal $A$ was proved in \cite{BPZ1,LS} for $X$ with entries having arbitrarily high moments, and in \cite{DY} for $X$ with entries satisfying the tail condition \eqref{tail_cond} below. The most general case with non-diagonal $A$ is considered in \cite{Anisotropic}, where the edge universality was proved under the arbitrarily high moments assumption. 
 

Without loss of generality, we may assume that the row indices of the data matrix correspond to the spatial locations and the column indices correspond to the observation times. Then the data model $A^{1/2}X$ corresponds to observing independent samples at $N$ different times, and hence is incompetent to model sampling data with time correlations. In fact, the spatio-temporal sampling data is commonly collected in environmental study \cite{MF2006,KJ1999,LMS2008,MG2003} and wireless communications \cite{RMT_Wireless}. Motivated by this fact, we shall consider a separable data model $Y=A^{1/2}XB^{1/2}$, where $A$ and $B$ are respectively $n\times n$ and $N\times N$ deterministic non-negative definite symmetric (or Hermitian) matrices. Here $A$ and $B$ are not necessarily diagonal, which means that the entries are correlated both in space and in time. The name ``separable" is because the joint covariance of $Y$, viewed as an $(Nn)$-dimensional vector, is given by a separable form $A\otimes B$. 
In particular, if the entries of $X$ are Gaussian, then the joint distribution of $Y$ is $\mathcal N_{Nn}(0, A\otimes B)$. Note that the separable model describes a process where the time correlation does not depend on the spatial location and the spatial correlation does not depend on time, i.e. there is no space-time interaction.

The separable covariance matrix is defined as $\mathcal Q:=YY^*=A^{1/2}XB X^* A^{1/2}$. It has been proved to be very useful for various applications. For example, in wireless communications, it was shown in \cite{Verdu} that an estimate of the capacity is directly given by various informations of the largest eigenvalue. The spectral properties of separable covariance matrices have been investigated in some recent works, see e.g. \cite{PRE,  Karoui2009, Separable, WANG2014, Zhang_thesis}. However, the edge universality is much less known compared with sample covariance matrices. It is known that the edge universality generally follows from an optimal local law for the resolvent $G=(\mathcal Q-z)^{-1}$ near the spectral edge, where $z\in \mathbb C_+:=\{z\in \mathbb C: \im z>0\}$ with $\im z\gg N^{-1}$ \cite{BPZ1,DY,Anisotropic, LS}. Consider an $n\times N$ matrix $X$ consisting of independent centered entries with general variance profile $\mathbb E|x_{ij}|^2=\sigma_{ij}/N$, then an optimal local law was prove in \cite{Alt_Gram,AEK_Gram} for the resolvent $(XX^*-z)^{-1}$ under the arbitrarily high moments assumption. Note that this gives the local law for $G$ in the case where both $A$ and $B$ are diagonal. However, if $A$ and $B$ are not diagonal, no such local law is proved so far, let alone the edge universality.  


The goal of this paper is to fill this gap. More precisely, we shall prove that for general (non-diagonal) $A$ and $B$ satisfying some mild assumptions, the limiting distribution of the rescaled largest eigenvalue $N^{\frac{2}{3}}\left(\lambda_1(\mathcal Q)-\lambda_+\right)$ coincides with that of the corresponding Gaussian ensemble (i.e. ${\mathcal Q}^G=A^{1/2}X^G B (X^G)^* A^{1/2}$ with $X^G$ being an $i.i.d.$ Gaussian matrix) as long as the following conditions hold:
\begin{equation}
\lim_{s \rightarrow \infty } s^4 \mathbb{P}\left( \vert \sqrt{N}x_{11} \vert \geq s\right)=0 ,\label{tail_cond}
\end{equation}
and
\begin{equation}
\mathbb Ex_{11}^3 = 0 . \label{assm_3rdmoment}
\end{equation}
For a precise statement, the reader can refer to Theorem \ref{main_thm}. Note that the tail condition  (\ref{tail_cond}) is slightly weaker than the finite fourth moment condition for $\sqrt{N}x_{11}$, and in fact is sharp for the edge universality of the largest eigenvalue, see Remark \ref{sharp} below. Historically, for sample covariance matrices, it was proved in \cite{YBK} that $\lambda_1 \rightarrow \lambda_+$ almost surely in the null case with $A=I$ if the fourth moment exists. Later the finite fourth moment condition is proved to be also necessary for the almost sure convergence of $\lambda_1$ \cite{BSY}. On the other hand, it was proved in \cite{SW} that $\lambda_1 \rightarrow \lambda_+$ in probability under the condition (\ref{tail_cond}). If $A$ is diagonal, it was proved in \cite{DY} that the condition (\ref{tail_cond}) is actually necessary and sufficient for the edge universality of sample covariance matrices to hold. 

On the other hand, the condition \eqref{assm_3rdmoment} is more technical and should be considered to be removed in future works. We now discuss about it briefly. The main difficulty in studying $\mathcal Q=A^{1/2}XB X^* A^{1/2}$ and its resolvent is due to the fact that the entries of $A^{1/2} XB^{1/2}$ are not independent. We assume that $A$ and $B$ have eigendecompositions
$A= U\Sig U^*$ and $ B= V\wt \Sigma V^*.$
Then in the special case where $X\equiv X^{G}$ is $i.i.d.$ Gaussian, it is easy to see that 
$$A^{1/2} X^{G} B (X^{G})^* A^{1/2} \stackrel{d}{=}  U\left(\Sig^{1/2} X^{G} \wt \Sig^{1/2}\right)U^* \sim \Sig^{1/2} X^{G} \wt \Sig^{1/2},$$
which is reduced to a separable covariance matrix with diagonal $\Sig$ and $\wt \Sig$. This case can be handled using the 
current method in \cite{DY}. To extend the result in the Gaussian case to the general $X$ case, we use a self-consistent comparison argument developed in \cite{Anisotropic}. For this argument to work, we need to assume that the third moments of the $X$ entries coincide with that of the Gaussian random variable, i.e. the condition \eqref{assm_3rdmoment}. (Actually it is common that for a comparison argument to work for random matrices, some kind of four moment matching is needed; see e.g. \cite{TaoVu_edge,TaoVu_4moment,TaoVu_local}.) If one of the $A$ and $B$ is diagonal, then a notable argument in \cite[Section 8]{Anisotropic} can remove this requirement by exploring more detailed structures of the resolvents of $\mathcal Q$. However, their argument is quite specific and cannot be adapted to the general case with both $A$ and $B$ being non-diagonal. Nevertheless, this is still a welcome result, which shows that for sample covariance matrices, the condition \eqref{assm_3rdmoment} is not necessary and the edge universality holds as long as \eqref{tail_cond} holds. \nc For a more detailed explanation on why and where the condition \eqref{assm_3rdmoment} is needed, we refer the reader to the discussion following Theorem \ref{LEM_SMALL}.\nc

Finally, we believe that the largest eigenvalue of the Gaussian separable covariance matrix ${\mathcal Q}^G$ should converge to the Tracy-Widom distribution. However, to the best of our knowledge, so far there is no explicit proof for this fact. We will give a proof in another paper \cite{DY2}.

This paper is organized as follows. In Section \ref{main_result}, we first define the limiting spectral distribution of the separable covariance matrix and its rightmost edge $\lambda_+$, which will depend only on the empirical spectral densities (ESD) of $A$ and $B$. Then we will state the main theorem---Theorem \ref{main_thm}--- of this paper. In Section \ref{sec_maintools}, we introduce the notations and collect some tools including the {\it anisotropic local law (Theorem \ref{LEM_SMALL})}, {\it rigidity of eigenvalues} (Theorem \ref{thm_largerigidity}) and a comparison theorem (Theorem \ref{lem_comparison}). In Section \ref{sec_cutoff}, we prove Theorem \ref{main_thm} with these tools. Then Section \ref{sec_Gauss} and Section \ref{sec_comparison} are devoted to proving Theorem \ref{LEM_SMALL}, and Section \ref{sec_Lindeberg} is devoted to proving Theorem \ref{thm_largerigidity} and Theorem \ref{lem_comparison}. 

\vspace{5pt}

\noindent{\bf Conventions.} 
The fundamental large parameter is $N$ and we always assume that $n$ is comparable to $N$. All quantities that are not explicitly constant may depend on $N$, and we usually omit $N$ from our notations. We use $C$ to denote a generic large positive constant, whose value may change from one line to the next. Similarly, we use $\epsilon$, $\tau$, $\delta$ and $c$ to denote generic small positive constants. If a constant depends on a quantity $a$, we use $C(a)$ or $C_a$ to indicate this dependence. We use $\tau>0$ in various assumptions to denote a small positive constant. All constants appear in the statements or proof may depend on $\tau$; we neither indicate nor track this dependence.

For two quantities $a_N$ and $b_N$ depending on $N$, the notation $a_N = \OO(b_N)$ means that $|a_N| \le C|b_N|$ for some constant $C>0$, and $a_N=\oo(b_N)$ means that $|a_N| \le c_N |b_N|$ for some positive sequence $c_N\downarrow 0$ as $N\to \infty$. We also use the notations $a_N \lesssim b_N$ if $a_N = \OO(b_N)$, and $a_N \sim b_N$ if $a_N = \OO(b_N)$ and $b_N = \OO(a_N)$. For a matrix $A$, we use $\|A\|:=\|A\|_{l^2 \to l^2}$ to denote the operator norm; 
for a vector $\mathbf v=(v_i)_{i=1}^n$, $\|\mathbf v\|\equiv \|\mathbf v\|_2$ stands for the Euclidean norm, while $|\mathbf v| \equiv \|\mathbf v\|_1$ stands for the $l^1$-norm. In this paper, we often write an identity matrix as $I$ or $1$ without causing any confusions. If two random variables $X$ and $Y$ have the same distribution, we write $X\stackrel{d}{=} Y$.


%
%
%

\vspace{5pt}

\noindent{\bf Acknowledgements.} 
I would like to thank Marc Potters and Xiucai Ding for bringing this problem to my attention and for helpful discussions. I also want to thank my advisor Jun Yin for the guidance and valuable suggestions. 
\end{section}

\section{Definitions and Main Result}\label{main_result}

\subsection{Separable covariance matrices}

We consider a class of separable covariance matrices of the form $\mathcal Q_1:=A^{1/2}XBX^*A^{1/2}$, where $A$ and $B$ are deterministic non-negative definite symmetric (or Hermitian) matrices. Note that $A$ and $B$ are not necessarily diagonal. We assume that $X=(x_{ij})$ is an $n\times N$ random matrix with entries $x_{ij}= N^{-1/2}q_{ij}$, $1 \leq i \leq n$, $1 \leq j \leq N$, where $q_{ij}$ are {\it{i.i.d.}} random variables satisfying
\begin{equation}\label{assm1}
\mathbb{E} q_{11} =0, \ \quad \ \mathbb{E} \vert q_{11} \vert^2  =1.
\end{equation}
For definiteness, in this paper we focus on the real case, i.e.\;the random variable $q_{11}$ is real. However, we remark that our proof can be applied to the complex case after minor modifications if we assume in addition that $\Re\, q_{11}$ and $\Im\, q_{11}$ are independent centered random variables with variance $1/2$.  We will also use the $N \times N$ matrix $\mathcal Q_2:=B^{1/2}X^* A X B^{1/2}$. 
We assume that the aspect ratio $d_N:= n/N$ satisfies $\tau \le d_N \le \tau^{-1}$ for some constant $0<\tau <1$. Without loss of generality, by switching the roles of $\mathcal Q_1$ and $\mathcal Q_2$ if necessary, 
we can assume that 
\begin{equation}
 \tau \le d_N \le 1 \ \ \text{ for all } N. \label{assm2}
 \end{equation}
For simplicity of notations, we will often abbreviate $d_N$ as $d$ in this paper. We denote the eigenvalues of $\mathcal Q_1$ and $\mathcal Q_2$ in descending order by $\lambda_1(\mathcal Q_1)\geq \ldots \geq \lambda_{n}(\mathcal Q_1)$ and $\lambda_1(\mathcal Q_2) \geq \ldots \geq \lambda_N(\mathcal Q_2)$. Since $\mathcal Q_1$ and $\mathcal Q_2$ share the same nonzero eigenvalues, we will for simplicity write $\lambda_j$, $1\le j \le N\wedge n$, to denote the $j$-th eigenvalue of both $\mathcal Q_1$ and $\mathcal Q_2$ without causing any confusion. 

We assume that $A$ and $B$ have eigendecompositions
\be\label{eigen}A= U\Sig U^*, \quad B= V\wt \Sigma V^* ,\quad \Sig=\text{diag}(\si_1, \ldots, \si_n), \quad \wt\Sig=\text{diag}(\wt \si_1, \ldots, \wt \si_N),
\ee
where
$$\si_1 \ge \si_2 \ge \ldots \ge \si_n \ge 0, \quad \wt\si_1 \ge \wt\si_2 \ge \ldots \ge \wt \si_N \ge 0.$$
We denote the empirical spectral densities (ESD) of $A$ and $B$ by
\begin{equation}\label{sigma_ESD}
\pi_A\equiv \pi_A^{(n)} := \frac{1}{n} \sum_{i=1}^n \delta_{\si_i},\quad \pi_B\equiv \pi_B^{(N)} := \frac{1}{N} \sum_{i=1}^N \delta_{\wt\si_i}.
\end{equation}
We assume that there exists a small constant $0<\tau<1$ such that for all $N$ large enough,
\begin{equation}\label{assm3}
\max\{\si_1, \wt \sigma_1\} \le \tau^{-1}, \quad \max\left\{\pi_A^{(n)}([0,\tau]), \pi_B^{(N)}([0,\tau])\right\} \le 1 - \tau .
\end{equation}
The first condition means that the operator norms of $A$ and $B$ are bounded by $\tau^{-1}$, and the second condition means that the spectrums of $A$ and $B$ do not concentrate at zero.

We summarize our basic assumptions here for future reference.
\begin{assumption}\label{assm_big1}
We assume that $X$ is an $n\times N$ random matrix with real $i.i.d.$ entries satisfying (\ref{assm1}), $A$ and $B$ are deterministic non-negative definite symmetric matrices satisfying \eqref{eigen} and (\ref{assm3}), and $d_N$ satisfies \eqref{assm2}.
\end{assumption}

\subsection{Resolvents and limiting law}

In this paper, we will study the eigenvalue statistics of $\mathcal Q_{1}$ and $\mathcal Q_2$ through their {\it{resolvents}} (or  {\it{Green's functions}}). It is equivalent to study the matrices 
\be\label{Qtilde}
\wt{\mathcal Q}_1(X):=\Sig^{1/2} U^{*}XBX^*U\Sig^{1/2}, \quad \wt{\mathcal Q}_2(X):=\wt\Sig^{1/2}V^*X^* A X V\wt \Sig^{1/2}.
\ee
In this paper, we shall denote the upper half complex plane and the right half real line by 
$$\mathbb C_+:=\{z\in \mathbb C: \im z>0\}, \quad \mathbb R_+:=[0,\infty).$$ 

\begin{definition}[Resolvents]\label{resol_not}
For $z = E+ \ii \eta \in \mathbb C_+,$ we define the resolvents for $\wt {\mathcal Q}_{1,2}$ as
\begin{equation}\label{def_green}
\mathcal G_1(X,z):=\left(\wt{\mathcal Q}_1(X) -z\right)^{-1} , \ \ \ \mathcal G_2 (X,z):=\left(\wt{\mathcal Q}_2(X)-z\right)^{-1} .
\end{equation}
 We denote the ESD $\rho^{(n)}$ of $\wt {\mathcal Q}_{1}$ and its Stieltjes transform as
\be\label{defn_m}
\rho\equiv \rho^{(n)} := \frac{1}{n} \sum_{i=1}^n \delta_{\lambda_i(\wt{\mathcal Q}_1)},\quad m(z)\equiv m^{(n)}(z):=\int \frac{1}{x-z}\rho_{1}^{(n)}(\dd x)=\frac{1}{n} \mathrm{Tr} \, \mathcal G_1(z).
\ee
We also introduce the following quantities:
$$m_1(z)\equiv m_1^{(n)}(z):= \frac{1}{N}\sum_{i=1}^n\sigma_i (\mathcal G_1(z) )_{ii},\quad m_2(z)\equiv m_2^{(N)}(x):=\frac{1}{N}\sum_{\mu=1}^N \wt\sigma_\mu (\mathcal G_2(z) )_{\mu\mu}. $$

\end{definition}

It was shown in \cite{Separable} that if $d_N \to d \in (0,\infty)$ and $\pi_A^{(n)}$, $\pi_B^{(N)}$ converge to certain probability distributions, then almost surely $\rho^{(n)}$ converges to a deterministic distributions $ \rho_{\infty}$. We now describe it through the Stieltjes transform
$$m_{\infty}(z):=\int_{\mathbb R} \frac{\rho_{\infty}(\dd x)}{x-z}, \quad z \in \mathbb C_+.$$
For any finite $N$ and $z\in \mathbb C_+$, we define $(m^{(N)}_{1c}(z),m^{(N)}_{2c}(z))\in \mathbb C_+^2$ as the unique solution to the system of self-consistent equations
\begin{equation}\label{separa_m12}
{m^{(n)}_{1c}(z)} = d_N \int\frac{x}{-z\left[1+xm^{(N)}_{2c}(z) \right]} \pi_A^{(n)}(\dd x), \quad  {m^{(N)}_{2c}(z)} =  \int\frac{x}{-z\left[1+xm^{(N)}_{1c}(z) \right]} \pi_B^{(N)}(\dd x).
\end{equation}
Then we define
\begin{equation}\label{def_mc}
m_c(z)\equiv m_c^{(n)}(z):= \int\frac{1}{-z\left[1+xm^{(N)}_{2c}(z) \right]} \pi_A^{(n)}(\dd x).
\end{equation}
It is easy to verify that $m_c^{(n)}(z)\in \mathbb C_+$ for $z\in \mathbb C_+$. Letting $\eta \downarrow 0$, we can obtain a probability measure $\rho_{c}^{(n)}$ with the inverse formula
\begin{equation}\label{ST_inverse}
\rho_{c}^{(n)}(E) = \lim_{\eta\downarrow 0} \frac{1}{\pi}\Im\, m^{(n)}_{c}(E+\ii \eta).
\end{equation}
If $d_N \to d \in (0,\infty)$ and $\pi_A^{(n)}$, $\pi_B^{(N)}$ converge to certain probability distributions, then $m_c^{(n)}$ also converges and we define
$$m_{\infty}(z):=\lim_{N\to \infty} m_c^{(n)}(z), \ \ z \in \mathbb C_+.$$
Letting $\eta \downarrow 0$, we can recover the asymptotic eigenvalue density $ \rho_{\infty}$ with
\begin{equation}\label{ST_inverse}
\rho_{\infty}(E) = \lim_{\eta\downarrow 0} \frac{1}{\pi}\Im\, m_{\infty}(E+\ii \eta).
\end{equation}
It is also easy to see that $\rho_\infty$ is the weak limit of $\rho_{c}^{(n)}$. 

The above definitions of $m_c^{(n)}$, $\rho_c^{(n)}$, $m_\infty$ and $\rho_\infty$ make sense due to the following theorem. Throughout the rest of this paper, we often omit the super-indices $(n)$ and $(N)$ from our notations. 


\begin{theorem} [Existence, uniqueness, and continuous density]
For any $z\in \mathbb C_+$, there exists a unique solution $(m_{1c},m_{2c})\in \mathbb C_+^2$ to the systems of equations in (\ref{separa_m12}). The function $m_c$ in (\ref{def_mc}) is the Stieltjes transform of a probability measure $\mu_c$ supported on $\mathbb R_+$. Moreover, $\mu_c$ has a continuous derivative $\rho_c(x)$ on $(0,\infty)$, which is defined by \eqref{ST_inverse}.
\end{theorem}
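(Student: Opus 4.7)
The plan is to establish the three assertions in order---existence and uniqueness of $(m_{1c}, m_{2c}) \in \mathbb{C}_+^2$ satisfying \eqref{separa_m12}, identification of $m_c$ from \eqref{def_mc} as the Stieltjes transform of a probability measure on $\mathbb{R}_+$, and continuity of the density $\rho_c$ on $(0,\infty)$---by combining a fixed-point argument at large $\Im z$ with analytic continuation and the Nevanlinna representation.

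For existence and uniqueness, I would first consider $z = \ii y$ with $y > 0$ large. Searching for solutions in the ball $\{(m_1, m_2) : |m_1|, |m_2| \le C/y\}$, the right-hand side of \eqref{separa_m12} maps this ball to itself and is a contraction (with Lipschitz constant $O(1/y^2)$), using the uniform bounds $\|A\|, \|B\| \le \tau^{-1}$ from \eqref{assm3}; so Banach's fixed-point theorem yields a unique pair $(m_{1c}(\ii y), m_{2c}(\ii y))$ in this region. To extend the solution to an arbitrary $z \in \mathbb{C}_+$, I would use analytic continuation along paths in $\mathbb{C}_+$ via the implicit function theorem, showing that the Jacobian of the system $F(m_1, m_2; z) = 0$ obtained from \eqref{separa_m12} remains non-degenerate throughout. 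The main technical obstacle is proving this non-degeneracy: the key is that the positivity $\Im m_{1c}, \Im m_{2c} > 0$---which is preserved along the continuation---forces the diagonal entries of the Jacobian to dominate the off-diagonal ones, as can be verified by a direct computation of imaginary parts. Uniqueness at each fixed $z \in \mathbb{C}_+$ then follows by the standard subtraction argument: given two solutions, their difference satisfies a linear system whose coefficient matrix is precisely this non-degenerate Jacobian, hence vanishes.

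Next, to show $m_c$ is the Stieltjes transform of a probability measure, I would apply the Nevanlinna representation theorem by verifying three properties: (a) $m_c$ is holomorphic on $\mathbb{C}_+$, which is immediate from \eqref{def_mc} and the holomorphy of $m_{2c}$; (b) $\Im m_c(z) > 0$ for $z \in \mathbb{C}_+$, which follows from a direct computation showing $\Im(-z(1 + x m_{2c}))^{-1} > 0$ for $x \ge 0$, using $\Im m_{2c} > 0$ and $\Im z > 0$; and (c) $\ii y\, m_c(\ii y) \to -1$ as $y \to \infty$, which is easy from $m_{2c}(\ii y) = O(1/y)$ together with $\pi_A$ being a probability measure. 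To locate the support in $\mathbb{R}_+$, I would rerun the contraction argument for real $z < 0$ to obtain a real-analytic solution of \eqref{separa_m12} on $(-\infty, 0)$; this makes $m_c$ extend holomorphically across the negative real axis, and the Stieltjes inversion formula then forces $\mu_c$ to assign no mass to $(-\infty, 0]$.

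Finally, for continuity of the density on $(0, \infty)$, by \eqref{ST_inverse} it suffices to show that $m_c$ admits a continuous extension from $\mathbb{C}_+$ to $\mathbb{C}_+ \cup (0, \infty)$. This reduces to establishing the analogous boundary regularity for $(m_{1c}, m_{2c})$, which I would obtain by applying the implicit function theorem on this extended domain: provided the Jacobian remains non-singular at each $E > 0$---again a consequence of the stability estimates developed in the first part, now extended to the boundary via the same imaginary-part control---the boundary values $(m_{1c}(E + \ii 0^+), m_{2c}(E + \ii 0^+))$ exist and depend continuously on $E$, so $\rho_c(E) = \pi^{-1}\, \Im m_c(E + \ii 0^+)$ is continuous on $(0, \infty)$.
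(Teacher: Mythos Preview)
The paper does not give a proof here; it simply cites \cite[Theorem 1.2.1]{Zhang_thesis}, \cite[Theorem 2.4]{Hachem2007}, and \cite[Theorem 3.1]{Separable_solution}. Your outline is therefore an independent attempt, and the existence/uniqueness portion via contraction at large $\Im z$ followed by analytic continuation is broadly workable, though the uniqueness step (``difference satisfies a linear system with the Jacobian as coefficient matrix'') is not literally correct---the relation between two solutions is only quasi-linear, with coefficients involving both solutions, and one rather needs inequalities $|m_1-m_1'|\le C_1|m_2-m_2'|$, $|m_2-m_2'|\le C_2|m_1-m_1'|$ with $C_1C_2<1$.

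The genuine gap is in the continuity step. Your argument requires the Jacobian to be non-singular at every boundary point $E>0$, but this fails precisely at the spectral edges $a_k$: equation \eqref{equationEm2} in Lemma \ref{lambdar} says $\partial_\alpha f(a_k,m_{2c}(a_k))=0$, and one checks directly that $\partial_\alpha f$ equals minus the determinant of the $2\times 2$ Jacobian you are relying on. This degeneracy is exactly what produces the square-root singularity in Lemma \ref{lambdar_sqrt}, and the implicit function theorem therefore cannot deliver continuous boundary values at the edges; a Puiseux-type local analysis (as in \cite{Separable_solution}) is needed there. A secondary issue: your verification of $\Im m_c>0$ via ``$\Im(-z(1+xm_{2c}))^{-1}>0$ for $x\ge 0$'' does not follow from $\Im z>0$ and $\Im m_{2c}>0$ alone---with $\Re z<0$, $\Re m_{2c}<0$ and $x$ large the sign can flip. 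It \emph{does} follow once $m_{2c}$ is known to be the Stieltjes transform of a measure on $\mathbb{R}_+$, but that is part of what you are proving; the cleaner route, hinted at just after this theorem in the paper, is via the vector equation \eqref{self_vector}, where each component $v_i$ is individually a Stieltjes transform and $m_c=n^{-1}\sum_i v_i$.
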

\begin{proof}
See {\cite[Theorem 1.2.1]{Zhang_thesis}}, {\cite[Theorem 2.4]{Hachem2007}} and {\cite[Theorem 3.1]{Separable_solution}}.
\end{proof}

We now make a small detour and discuss about another very enlightening way to understand the Stieltjes transforms $m_{1,2c}$ and $m_c$. 
Consider the vector solution $\mathbf v=(v_1,\cdots, v_n)$ to the following self-consistent vector equation \cite{Alt_Gram,AEK_Gram}:
\begin{equation}\label{self_vector}
\frac{1}{\mathbf v(z)} = - z + S\frac{1}{1+ S^T \mathbf v(z)}, \quad z\in \mathbb C_+,
\end{equation}
where $1/\mathbf v$ denotes the entrywise reciprocal, and $S$ is an $n\times N$ matrix with entries
\be\label{defS}
S_{i\mu}=\frac{1}{N}\sigma_i \wt \sigma_\mu, \quad i \in \llbracket 1,n\rrbracket, \quad \mu \in \llbracket 1,N\rrbracket.
\ee
In fact, if one regards $\mathcal X_1:= \llbracket 1,n\rrbracket$ and $\mathcal X_2:= \llbracket 1,N\rrbracket$ as measure spaces equipped with counting measures
$$\pi_1 = \sum_{i=1}^n \delta_i, \quad \pi_2 = \sum_{\mu=1}^N \delta_\mu, $$
then $S$ defines a linear operator $S:l^\infty(\mathcal X_2)\to l^\infty(\mathcal X_1)$ such that
$$(S\mathbf w)_i = \frac{\sigma_i }{N} \sum_{\mu=1}^N \wt \sigma_\mu w_\mu ,\quad \mathbf w\in l^\infty(\mathcal X_2), \quad i\in \mathcal X_1. $$
Now we can regard \eqref{self_vector} as a self-consistent equation of the function $\mathbf v: \mathbb C_+ \to l^\infty (\mathcal X_1)$.
Suppose $\mathbf v$ is a solution to \eqref{self_vector} with $\im \mathbf v(z)>0$, then it is easy to verify that
$$m_{1c}= \frac{1}{N}\sum_{i=1}^n \sigma_i v_i, \quad m_{2c}= \frac{1}{N}\sum_{\mu=1}^N  \frac{\wt \sigma_\mu}{-z(1+ \wt \sigma_\mu m_{1c})}, \quad m_c=\frac1n\sum_{i=1}^n v_i .$$
The structure of the solution $\mathbf v$ was well-studied in \cite{Alt_Gram,AEK_Gram}. In particular, one has the following preliminary result on the existence and uniqueness of the solution.

\begin{theorem}[Proposition 2.1 of \cite{Alt_Gram}]
There is a unique function $\mathbf v:\mathbb C_+\to l^\infty(\mathcal X_1)$ satisfying \eqref{self_vector} and $\im \mathbf v(z)>0$ for all $z\in \mathbb C_+$. Moreover, for each $k\in \mathcal X_1$, there is a unique probability measure $\mu_k$ on $\mathbb R$ such that $v_k$ is the Stieltjes transform of $\mu_k$, i.e.
$$v_k (z)=\int_0^\infty \frac{1}{E-z} \mu_k(\dd E), \quad z\in \mathbb C_+.$$
The measures $\mu_k$, $k\in \mathcal X_1$, all have the same support contained in $[0,\wh C]$, where 
$$\wh C:=4\max\left\{\|S\|_{l^\infty(\cal X_2)\to l^\infty(\cal X_1)},\|S^*\|_{l^\infty(\cal X_1)\to l^\infty(\cal X_2)} \right\}.$$
%
\end{theorem}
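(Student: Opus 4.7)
The plan is to carry out a fixed-point analysis in the Nevanlinna (Stieltjes) class, in the spirit of \cite{Alt_Gram,AEK_Gram}, and then to read off the measure-theoretic conclusions via the Herglotz representation. I would first rewrite \eqref{self_vector} as the fixed-point equation $\mathbf{v} = \Phi_z(\mathbf{v})$ with
$$
\Phi_z(\mathbf{v}) := \left(-z + S\frac{1}{1+S^T \mathbf{v}}\right)^{-1},
$$
where all reciprocals are componentwise. A sign check using the nonnegativity of the entries of $S$ gives $\Phi_z(\mathbb{C}_+^n)\subset \mathbb{C}_+^n$ for every $z\in\mathbb{C}_+$: if $\mathbf{v}\in\mathbb{C}_+^n$ then $\operatorname{Im}(1+S^T\mathbf{v})>0$ componentwise, so $\operatorname{Im}[(1+S^T\mathbf{v})^{-1}]<0$, hence $\operatorname{Im}[-z+S(1+S^T\mathbf{v})^{-1}]<0$, and the componentwise reciprocal returns a vector in $\mathbb{C}_+^n$. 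Equipping $\mathbb{C}_+^n$ with the maximum hyperbolic metric $d(\mathbf{u},\mathbf{w})=\max_k d_{\mathbb{C}_+}(u_k,w_k)$, the Schwarz--Pick lemma shows that the two componentwise inversions in $\Phi_z$ are each nonexpansions, while its quantitative version applied to the outer inversion $w\mapsto(-z+w)^{-1}$ yields a strict contraction constant depending on $\operatorname{Im} z > 0$. Banach's theorem then gives a unique fixed point $\mathbf{v}(z)$ in $\mathbb{C}_+^n$, and the implicit function theorem applied to \eqref{self_vector} upgrades it to a holomorphic function of $z$.

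For the Stieltjes representation of each $v_k$, I would apply the Herglotz--Nevanlinna theorem to the holomorphic map $v_k:\mathbb{C}_+\to\mathbb{C}_+$, obtaining
$$
v_k(z) = \alpha_k z + \beta_k + \int_{\mathbb{R}} \left(\frac{1}{E-z} - \frac{E}{1+E^2}\right)\nu_k(\dd E)
$$
for some $\alpha_k\geq 0$, $\beta_k\in\mathbb{R}$ and a positive Borel measure $\nu_k$ with $\int(1+E^2)^{-1}\nu_k(\dd E)<\infty$. To pin down $\alpha_k=0$, $\beta_k=0$, and $\nu_k(\mathbb{R})=1$, I would extract the asymptotic $v_k(i\eta)\sim -i/\eta$ as $\eta\to\infty$ directly from \eqref{self_vector}: a single iteration of the equation shows $\|\mathbf{v}(i\eta)\|_\infty = O(1/\eta)$, so that $S(1+S^T\mathbf{v}(i\eta))^{-1}$ stays bounded and $v_k(i\eta)^{-1} = -i\eta + O(1)$. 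This gives the standard characterization $\lim_{\eta\to\infty} i\eta\, v_k(i\eta) = -1$ of the Stieltjes transform of a probability measure. Nonnegativity of the support is obtained by observing that the same fixed-point construction also produces a real negative solution $\mathbf{v}(x)\in(-\infty,0)^n$ for real $x<0$, so $v_k$ extends holomorphically across $(-\infty,0)$ and $\operatorname{supp}\mu_k\subset[0,\infty)$.

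For the support bound $\operatorname{supp}\mu_k\subset[0,\widehat C]$ and the common-support statement, I would rerun the fixed-point argument at real $z = x > \widehat C$. The explicit bound $\widehat C = 4\max\{\|S\|_{l^\infty\to l^\infty},\|S^*\|_{l^\infty\to l^\infty}\}$ ensures that $\Phi_x$ contracts on a product of small negative intervals around $-1/x$, giving a real fixed point that by uniqueness agrees with the analytic continuation of $\mathbf{v}$ from $\mathbb{C}_+$; consequently each $v_k$ is holomorphic on $(\widehat C,\infty)$, forcing $\operatorname{supp}\mu_k\subset[0,\widehat C]$. The common-support conclusion is a structural property of the solution to \eqref{self_vector}: on any open interval in $\mathbb{R}\setminus\operatorname{supp}\mu_{k_0}$, the scalar expression $N^{-1}\sum_\mu \widetilde\sigma_\mu(1+\widetilde\sigma_\mu m_{1c})^{-1}$ on which every $v_j$ depends admits a holomorphic extension, which then forces each $v_j$ to extend holomorphically there as well. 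The main technical obstacle is that the strict contraction constant of $\Phi_z$ in the hyperbolic metric degenerates as $\operatorname{Im} z\downarrow 0$; controlling this degeneration is exactly what the separate real-axis fixed-point argument achieves, and it is what converts the abstract existence and uniqueness into the quantitative support bound $\widehat C$.
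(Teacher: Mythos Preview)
The paper does not give its own proof of this statement; it is simply quoted as Proposition~2.1 of \cite{Alt_Gram}. Your proposal reproduces the standard contraction--mapping argument in the Nevanlinna class that underlies that reference (and \cite{AEK_Gram}): self-map of $\mathbb C_+^n$, strict contraction in the hyperbolic metric, Herglotz representation, and a separate real-axis analysis for the support bound. This is the correct approach and matches the cited source in spirit.

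A few minor points worth tightening. In the sign check, when some $\sigma_i$ or $\wt\sigma_\mu$ vanish the intermediate inequalities $\im(S^T\mathbf v)>0$ can degenerate to $\ge 0$; your argument still goes through because the $-z$ term restores strict negativity at the last step, but you should state this explicitly. For the contraction, ``Banach's theorem'' in the max--hyperbolic metric is fine since $(\mathbb C_+,d_{\mathbb C_+})$ is complete, but the usual references invoke the Earle--Hamilton fixed-point theorem directly, which spares you from tracking an explicit Lipschitz constant. Finally, your common-support argument is on the right track but slightly circular as written: you cannot use that every $v_j$ depends only on the scalar $m_{1c}$ without first knowing the structure of the solution. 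The clean route is to note from \eqref{self_vector} that $v_k(z) = -\big(z+\sigma_k\,\frac1N\sum_\mu \wt\sigma_\mu (1+\wt\sigma_\mu m_{1c})^{-1}\big)^{-1}$, so all $v_k$ are determined by the single analytic function $m_{1c}$ and hence extend holomorphically across the same real set.
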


Now we go back to study the equations in (\ref{separa_m12}). If we define the function
\begin{equation}\label{separable_MP}
f(z,\al):=- \al + \int\frac{x}{-z+xd_N \int\frac{t}{1+t\al} \pi_A(\dd t)} \pi_B(\dd x) ,
\end{equation}
then $m_{2c}(z)$ can be characterized as the unique solution to the equation $f(z,\al)=0$ of $\al$ with $\Im \, \al> 0$, and $m_{1c}(z)$ is defined using the first equation in \eqref{separa_m12}.
Moreover, $m_{1,2c}(z)$ are the Stieltjes transforms of densities $\rho_{1,2c}$:
$$\rho_{1,2c}(E) = \lim_{\eta\downarrow 0} \frac{1}{\pi}\Im\, m_{1,2c}(E+\ii \eta).$$
Then we have the following result.

\begin{lemma}\label{lambdar}
The densities $\rho_{c}$ and $\rho_{1,2c}$ all have the same support on $(0,\infty)$, which is a union of intervals: 
\begin{equation}\label{support_rho1c}
{\rm{supp}} \, \rho_{c} \cap (0,\infty) ={\rm{supp}} \, \rho_{1,2c} \cap (0,\infty) = \bigcup_{k=1}^p [a_{2k}, a_{2k-1}] \cap (0,\infty),
\end{equation}
where $p\in \mathbb N$ depends only on $\pi_{A,B}$. Moreover, $(x,\al)=(a_k, m_{2c}(a_k))$ are the real solutions to the equations
\begin{equation}
f(x,\al)=0, \ \ \text{and} \ \ \frac{\partial f}{\partial \al}(x,\al) = 0. \label{equationEm2}
\end{equation}
Moreover, we have $m_{1c}(a_1) \in (-\wt \sigma_1^{-1}, 0)$ and $m_{2c}(a_1) \in (-\sigma_1^{-1}, 0)$. 
\end{lemma}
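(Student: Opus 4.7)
I would follow the classical Silverstein--Choi-type analysis of spectral edges for sample covariance matrices, adapted to the separable setting. The argument splits naturally into three pieces: coincidence of the positive parts of the supports of $\rho_c, \rho_{1c}, \rho_{2c}$; characterization of the edges $a_k$ as the real solutions of \eqref{equationEm2}; and the a priori bounds on $m_{1c}(a_1)$ and $m_{2c}(a_1)$.

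First, I would deduce the coincidence of supports on $(0,\infty)$ from the rational relations \eqref{separa_m12}--\eqref{def_mc}: each of $m_{1c}, m_{2c}, m_c$ extends analytically across a real point $E > 0$ iff the others do, as long as the denominators $1 + t\,m_{1,2c}(E)$ do not vanish there. Combined with Stieltjes inversion, this yields the first equality in \eqref{support_rho1c}.

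Next, to characterize the edges, I use that for $z \in \mathbb C_+$, $\alpha = m_{2c}(z)$ is the unique root of $f(z,\alpha) = 0$ with $\Im\alpha > 0$. Being the Stieltjes transform of a compactly supported measure on $[0,\infty)$, $m_{2c}$ extends to a real analytic function on the complement of $\supp \rho_{2c}$ in $\R$, where the identity $f(E, m_{2c}(E)) = 0$ persists. The implicit function theorem then says this extension can be continued across any real $E$ at which $\partial_\alpha f(E, m_{2c}(E)) \neq 0$, so at every edge $a_k$ we must have \eqref{equationEm2}; conversely, each real solution of \eqref{equationEm2} is a branch point where $m_{2c}$ acquires an imaginary part. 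Finiteness of $p$ follows because $\pi_A$ and $\pi_B$ are finitely supported, reducing \eqref{equationEm2} to a polynomial system in $(x,\alpha)$ of degree bounded in terms of the number of distinct eigenvalues of $A$ and $B$.

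The most delicate part---and the main obstacle---is the a priori bound $m_{2c}(a_1) \in (-\sigma_1^{-1}, 0)$; the twin bound $m_{1c}(a_1) \in (-\wt\sigma_1^{-1}, 0)$ follows by swapping the roles of $A$ and $B$. The large-$E$ expansion of \eqref{separa_m12} gives $m_{1,2c}(E) = -E^{-1}(1 + o(1))$, so both lie in the desired intervals for large $E$, and by continuity of the real analytic branch on $(a_1, \infty)$ it suffices to rule out the critical value $-\sigma_1^{-1}$. Suppose, for contradiction, that $m_{2c}(E) \to -\sigma_1^{-1}$ as $E \to E_0^+$ for some $E_0 \geq a_1$. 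Since $\pi_A$ is purely atomic with positive mass at $\sigma_1$, the inner integral $\int t\,(1 + t\, m_{2c}(E))^{-1}\pi_A(\dd t)$ blows up to $+\infty$ along this approach, which forces the outer integrand in \eqref{separable_MP} to vanish pointwise in $x > 0$; hence $f(E_0, -\sigma_1^{-1}) = \sigma_1^{-1} \neq 0$, contradicting $f(E, m_{2c}(E)) = 0$. This rules out the critical value and closes the argument.
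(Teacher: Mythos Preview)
The paper itself does not prove this lemma; it simply cites Section~3 of Couillet--Hachem~\cite{Separable_solution}. Your outline is essentially the Silverstein--Choi-type analysis carried out in that reference, so in spirit you are reproducing the cited argument rather than proposing a different route.

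That said, two of your three steps are looser than they should be. In Part~1, the equivalence ``$m_{1c}$ extends analytically iff $m_{2c}$ does'' is stated \emph{modulo} the non-vanishing of the denominators $1+t\,m_{2c}(E)$ and $1+x\,m_{1c}(E)$, but you never verify this non-vanishing off the rightmost edge; in principle a pole of one could sit inside a gap of the other. The clean way around this is to argue directly at the level of boundary values: for $z\in\mathbb C_+$ the system forces $\im m_{1c}(z)>0$ iff $\im m_{2c}(z)>0$, so their boundary densities on $(0,\infty)$ vanish simultaneously. In Part~2, the implicit-function direction (``$\partial_\alpha f\neq 0$ implies $E$ is not an edge'') is fine, but the converse you state---that every real solution of \eqref{equationEm2} is a branch point at which $m_{2c}$ picks up an imaginary part---does not follow from $\partial_\alpha f=0$ alone; one also needs $\partial_\alpha^2 f\neq 0$ (or an equivalent non-degeneracy), which is exactly what the paper later verifies in Lemma~\ref{lambdar_sqrt} under the regularity assumption~\eqref{assm_gap}. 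Without this, a solution of \eqref{equationEm2} could be a smooth inflection rather than an edge.

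Your Part~3 argument for $m_{2c}(a_1)\in(-\sigma_1^{-1},0)$ is correct and is the standard one: monotonicity of $m_{2c}$ on $(a_1,\infty)$, the large-$E$ asymptotics, and the blow-up of the inner integral as $m_{2c}\to-\sigma_1^{-1}$ force the self-consistent equation to fail in the limit, giving the contradiction. The symmetry giving $m_{1c}(a_1)\in(-\wt\sigma_1^{-1},0)$ is genuine, since the system \eqref{separa_m12} is invariant under swapping $(A,B,m_{1c},m_{2c})\leftrightarrow(B,A,m_{2c},m_{1c})$ up to the $d_N$ factor.
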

\begin{proof}
See Section 3 of \cite{Separable_solution}.
\end{proof}

 We shall call $a_k$ the spectral edges. In particular, we will focus on the rightmost edge $\lambda_+ := a_1$. 
Now we make the following assumption: there exists a constant $\tau>0$ such that 
\begin{equation}\label{assm_gap}
1 + m_{1c}(\lambda_+) \wt \sigma_1 \ge \tau, \quad 1 + m_{2c}(\lambda_+) \sigma_1\ge \tau. 
\end{equation}
This assumption guarantees a regular square-root behavior of the spectral densities $\rho_{1,2c}$ near $\lambda_+$ as shown by the following lemma.

\begin{lemma} \label{lambdar_sqrt}
Under the assumptions \eqref{assm2}, \eqref{assm3} and \eqref{assm_gap}, there exist constants $a_{1,2}>0$ such that
\be\label{sqroot3}
\rho_{1,2c}(\lambda_+ - x) = a_{1,2} x^{1/2} + \OO(x), \quad x\downarrow 0,
\ee
and
\be\label{sqroot4}
\quad m_{1,2c}(z) = m_{1,2c}(\lambda_+) + \pi a_{1,2}(z-\lambda_+)^{1/2} + \OO(|z-\lambda_+|), \quad z\to \lambda_+ , \ \ \im z\ge 0.
\ee
The estimates \eqref{sqroot3} and \eqref{sqroot4} also hold for $\rho_c$ and $m_c$ with a different constant. 
\end{lemma}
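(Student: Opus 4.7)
The plan is to establish the square-root expansion \eqref{sqroot4} for $m_{2c}$ by a local analysis of the scalar self-consistent equation $f(z,\al)=0$ near the edge $(\lambda_+,\al_+)$, where $\al_+:=m_{2c}(\lambda_+)$. By Lemma \ref{lambdar}, $f(\lambda_+,\al_+)=0$ and $\partial_\al f(\lambda_+,\al_+)=0$, so a two-variable Taylor expansion of the identity $f(z,m_{2c}(z))\equiv 0$ around $(\lambda_+,\al_+)$ gives
\begin{equation*}
\partial_z f(\lambda_+,\al_+)\,(z-\lambda_+)\;+\;\tfrac{1}{2}\,\partial_\al^2 f(\lambda_+,\al_+)\,(m_{2c}(z)-\al_+)^2 \;=\; \OO\bigl(|z-\lambda_+|^{3/2}\bigr).
\end{equation*}
Solving for $m_{2c}(z)-\al_+$ and choosing the branch of the square root determined by $\im m_{2c}(z)>0$ for $z\in\mathbb C_+$ yields \eqref{sqroot4} for $m_{2c}$ with $\pi a_2=\sqrt{-2\,\partial_z f(\lambda_+,\al_+)/\partial_\al^2 f(\lambda_+,\al_+)}$. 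The expansion \eqref{sqroot4} for $m_{1c}$ then follows from the first equation in \eqref{separa_m12}, and for $m_c$ from \eqref{def_mc}, since both quantities are smooth functions of $z$ and $m_{2c}(z)$ near the edge (using \eqref{assm_gap} to keep $1+x m_{2c}(\lambda_+)$ away from zero on $\mathrm{supp}\,\pi_A$), so they inherit the same $(z-\lambda_+)^{1/2}$ singularity. Finally, \eqref{sqroot3} for each of $\rho_c,\rho_{1,2c}$ follows from the corresponding version of \eqref{sqroot4} by the Stieltjes inversion \eqref{ST_inverse}: writing $z=\lambda_+-x+\ii 0^+$ with $x\downarrow 0$, the square root contributes $\ii\sqrt{x}$, and taking the imaginary part recovers $a_{1,2}\,x^{1/2}$.

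The technical core is to verify that $\partial_z f(\lambda_+,\al_+)$ is positive and bounded away from zero, and that $\partial_\al^2 f(\lambda_+,\al_+)$ is negative and bounded away from zero, so that the ratio under the square root is positive. Writing $h(\al):=d_N\int t(1+t\al)^{-1}\pi_A(\dd t)$, on the solution curve one has the identity $-\lambda_+ + xh(\al_+)=-\lambda_+(1+x m_{1c}(\lambda_+))$, which together with direct differentiation of \eqref{separable_MP} yields
\begin{equation*}
\partial_z f(\lambda_+,\al_+)=\frac{1}{\lambda_+^2}\int\frac{x}{(1+x m_{1c}(\lambda_+))^2}\,\pi_B(\dd x),\qquad \partial_\al^2 f(\lambda_+,\al_+)=-h''(\al_+)\,I_2 + 2\bigl[h'(\al_+)\bigr]^2 I_3,
\end{equation*}
where $I_k:=\int x^k/(-\lambda_+ + xh(\al_+))^k\,\pi_B(\dd x)$ for $k=2,3$. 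By Lemma \ref{lambdar} one has $m_{1c}(\lambda_+),m_{2c}(\lambda_+)<0$; combined with \eqref{assm_gap} this forces $1+x m_{1c}(\lambda_+)\geq\tau$ and $1+tm_{2c}(\lambda_+)\geq\tau$ uniformly on $\mathrm{supp}\,\pi_B$ and $\mathrm{supp}\,\pi_A$, respectively. Hence $h'(\al_+)<0$, $h''(\al_+)>0$, $I_2>0$, $I_3<0$, so $\partial_z f(\lambda_+,\al_+)>0$ and $\partial_\al^2 f(\lambda_+,\al_+)<0$, both of order $1$ by \eqref{assm3} (which ensures $\pi_{A,B}$ do not concentrate at $0$). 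This gives $-\partial_z f(\lambda_+,\al_+)/\partial_\al^2 f(\lambda_+,\al_+)>0$, precisely the sign needed for \eqref{sqroot4} to describe a positive density supported to the left of $\lambda_+$.

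The main obstacle is this quantitative sign and lower-bound analysis of $\partial_\al^2 f(\lambda_+,\al_+)$: without the gap assumption \eqref{assm_gap}, the factors $1+xm_{1c}(\lambda_+)$ or $1+tm_{2c}(\lambda_+)$ could become arbitrarily small for some $x$ or $t$ in the supports of $\pi_B,\pi_A$, causing some of the integrals $I_k$ to blow up or some derivative to become unbounded, and the edge can become degenerate (a cusp, or the spectral component near $\lambda_+$ splitting off from the rest of the spectrum). Assumption \eqref{assm_gap} is precisely the regularity condition that rules this out and enforces the usual square-root edge behavior; once it is in force, the remaining computations are a routine implicit-function expansion parallel to the standard Marchenko--Pastur edge analysis carried out in \cite{Separable_solution}.
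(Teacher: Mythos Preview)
Your proposal is correct and follows essentially the same route as the paper: both compute $\partial_z f(\lambda_+,\al_+)$ and $\partial_\al^2 f(\lambda_+,\al_+)$, use \eqref{assm3} and \eqref{assm_gap} to show these are bounded and bounded away from zero, and then deduce the square-root edge behavior for $m_{2c}$, propagating to $m_{1c}$ and $m_c$ via \eqref{separa_m12} and \eqref{def_mc}. The only difference is that the paper packages the final step as $|z''(m_r)|\sim 1$ and invokes Theorem~3.3 of \cite{Separable_solution}, whereas you spell out the two-variable Taylor expansion and the sign analysis directly; your version is slightly more self-contained but otherwise identical in substance.
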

\begin{proof}
Differentiating the equation $f(z,\al)=0$ with respect to $\al$, we can get that $z'(m_r)=0$ and $z''(m_r)=-{\partial_\al^2 f (\lambda_+,m_r)}/{\partial_z f(\lambda_+,m_r)}$, where $m_r:=m_{2c}(\lambda_+)$. 
After a straightforward calculation, we have
$${\partial_z f}(z, \al) = \int\frac{x}{z^2 \left[1  +x g(z,\al)\right]^2} \pi_B(\dd x), \quad g(z,\al): = d_N \int\frac{t}{ -z\left(1+t\al \right) } \pi_A(\dd t),$$
and
$$\partial_\al^2 f(z,\al) = - 2\int\frac{x^3}{z\left[1+x g(z,\al)\right]^3}\left(\partial_\al g(z,\al)\right)^2 \pi_B(\dd x) + \int\frac{x^2}{z\left[1+x g(z,\al)\right]^2}\partial_\al^2 g(z,\al) \pi_B(\dd x),$$
where 
$$\partial_\al g(z,\al)=  d_N \int\frac{t^2}{ z\left(1+t\al \right)^2 } \pi_A(\dd t),\quad  \partial_\al^2 g(z,\al)=  -2d_N \int\frac{t^3}{ z\left(1+t\al \right)^3 } \pi_A(\dd t).$$
Using \eqref{assm3} and \eqref{assm_gap}, it is easy to show that 
$$|\partial_z f (\lambda_+,m_r)| \sim 1, \quad  \left|\partial_\al^2 f (\lambda_+,m_r)\right| \sim 1.$$
Thus we have $|z''(m_r)|\sim 1$, which by Theorem 3.3 of \cite{Separable_solution}, implies \eqref{sqroot3} and \eqref{sqroot4} for $\rho_{2c}$ and $m_{2c}$. 
The estimates for $\rho_{1c}$, $m_{1c}$, $\rho_c$, and $m_c$ then follow from simple applications of \eqref{separa_m12} and \eqref{def_mc}.
\end{proof}


\subsection{Main result}

The main result of this paper is the following theorem. 

\begin{theorem} \label{main_thm}
Let $\mathcal Q_1:=A^{1/2}XBX^*A^{1/2}$ be an $n \times n$ separable covariance matrix with $A$, $B$ and $X$ satisfying Assumption \ref{assm_big1} and \eqref{assm_gap}. 
Let $\lambda_1$ be the largest eigenvalue of $\mathcal Q_1$. 
If the conditions (\ref{tail_cond}) and \eqref{assm_3rdmoment} hold, then we have
\begin{equation}
\nc \lim_{N\to \infty}\left[\mathbb{P}(N^{{2}/{3}}(\lambda_1 - \lambda_+) \leq s)- \mathbb{P}^G(N^{{2}/{3}}(\lambda_1 - \lambda_+) \leq s)\right] = 0  \nc \label{SUFFICIENT}
\end{equation}
for all $s\in \mathbb R$, where $\mathbb P^G$ denotes the law for {\nc $X=(x_{ij})$ with real {\it{i.i.d.}}\;Gaussian entries $N^{1/2}x_{ij}= q_{ij}$ satisfying \eqref{assm1}}.
The condition \eqref{assm_3rdmoment} is not necessary if $A$ or $B$ is diagonal.
\end{theorem}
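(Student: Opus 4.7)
The plan is to prove \eqref{SUFFICIENT} by a three-step reduction: a truncation step that handles the heavy-tail condition \eqref{tail_cond}, a Green function comparison step that exploits the matching of moments up to third order, and finally a passage from resolvent comparison to edge universality via rigidity. Throughout, the anisotropic local law (Theorem \ref{LEM_SMALL}) is the backbone that allows entrywise control of the resolvents $\mathcal G_{1,2}(X,z)$ down to the optimal scale $\im z \gg N^{-1}$ near $\lambda_+$.

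First I would perform a truncation and rescaling of the entries $q_{ij}$. The assumption $\lim_{s\to\infty} s^4\,\mathbb P(|q_{11}|\ge s)=0$ allows choosing a slowly growing cutoff $K=K_N\to\infty$ such that $\mathbb P(|q_{11}|\ge K)=o(N^{-2})$, so that by a union bound the truncated matrix $\tilde X$ (with entries $\tilde q_{ij}=q_{ij}\mathbf 1(|q_{ij}|\le K)$, appropriately recentered and rescaled) equals $X$ with probability $1-o(1)$. The recentering absorbs a deterministic shift of order $o(N^{-3/2})$ that does not affect the $N^{2/3}$-window around $\lambda_+$. Crucially, \eqref{assm_3rdmoment} forces $\mathbb E\tilde q^3 = o(1)$, while the variance is still $1+o(1)$; after a further trivial rescaling one obtains a bounded ensemble with matching first three moments to the Gaussian. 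Hence it suffices to prove \eqref{SUFFICIENT} for the truncated ensemble, which now has entries bounded by $K$ and arbitrarily high moments.

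Second I would apply the comparison theorem (Theorem \ref{lem_comparison}) to the truncated ensemble versus the Gaussian ensemble ${\mathcal Q}^G$. This is a self-consistent Green function comparison in the spirit of \cite{Anisotropic}: one interpolates from $X$ to $X^G$ by replacing entries one at a time and controls the resulting change in suitable smooth functionals of the resolvent $\mathcal G_1(X,z)$ evaluated at spectral parameters with $\im z$ just above $N^{-2/3-\epsilon}$. Each single-entry swap is estimated by a Taylor expansion in the entry; the zeroth, first, and second order contributions cancel by moment matching of the first two moments, the third-order contribution vanishes thanks to \eqref{assm_3rdmoment}, and the fourth-order terms are controlled by the anisotropic local law (Theorem \ref{LEM_SMALL}) together with the fact that after truncation the entries are bounded by $K=o(N^{1/2})$. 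Summing the $nN$ swaps yields a negligible total error. The comparison is then converted to a statement about eigenvalue counting functions using the formula $\pi^{-1}\im\,\tr \mathcal G_1(z) = \#\{\lambda_i\in(E-\eta,E+\eta)\}+\dots$, and using rigidity (Theorem \ref{thm_largerigidity}) to pin down $\lambda_1$ within $N^{-2/3+\epsilon}$ of $\lambda_+$.

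The hard part is the self-consistent comparison in the second step: because the resolvent of $\mathcal Q_1=A^{1/2}XBX^*A^{1/2}$ is quadratic in $X$, a single entry swap affects $\mathcal G_1$ in a more entangled way than for a sample covariance matrix, and the bounds need to track anisotropic contractions involving both $U,\Sigma$ and $V,\tilde\Sigma$. The role of the third moment condition \eqref{assm_3rdmoment} is precisely to kill a cubic term that otherwise would be of order $N^{-1}\cdot N^{-1/2}=N^{-3/2}$ per swap, giving a total of $N^{1/2}$ — too large to close. When $A$ or $B$ is diagonal, however, one can invoke the refined argument of \cite[Section 8]{Anisotropic}: the diagonal structure produces additional cancellations in the relevant sums over indices of products of resolvent entries, so the cubic contribution already vanishes after summation without needing per-entry moment matching. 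This is exactly why the last sentence of the theorem holds, and is the only place in the proof where the diagonal assumption enters.
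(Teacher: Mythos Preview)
Your truncation step contains a genuine gap. Under the tail condition \eqref{tail_cond} alone, you cannot choose a cutoff $K_N$ that is simultaneously (i) small enough to give a useful support bound $q\le N^{-\phi}$ for some fixed $\phi>0$ (as required by Theorems \ref{LEM_SMALL} and \ref{lem_comparison}), and (ii) large enough that $\mathbb P(|q_{11}|\ge K_N)=o(N^{-2})$. Writing $\delta(s):=s^4\,\mathbb P(|q_{11}|\ge s)$, condition (ii) at $K_N=N^{1/2-\phi}$ reads $\delta(N^{1/2-\phi})=o(N^{-4\phi})$, which is \emph{not} implied by $\delta(s)\to 0$. Hence the union bound you invoke fails in general, and the truncated matrix $\tilde X$ will differ from $X$ at many entries with non-negligible probability.

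The paper's proof (Section \ref{sec_cutoff}) confronts this head-on. It truncates at level $N^{1/2-\epsilon}$, giving the small matrix $X^s$ the required support $q=N^{-\epsilon}$, but it does \emph{not} claim $X^s=X$ with high probability. Instead it writes $X\stackrel{d}{=}X^s+R^c-Y$ where $Y$ is a deterministic rank-one shift of size $O(N^{-1+3\epsilon})$ and $R^c$ is a random perturbation supported on at most $N^{5\epsilon}$ entries (this uses $\alpha_N\le \delta N^{-2+4\epsilon}$, a large-deviation bound for the number of exceedances, and \eqref{prob_R} to control the sizes). The heart of the proof is then showing that this low-rank perturbation moves $\lambda_1$ by at most $N^{-3/4}$: one writes the eigenvalue condition via the determinant identity $\det(O^*G^s(\mu)O+(\gamma P_D)^{-1})=0$, and shows the matrix in brackets is nonsingular for all $\gamma\in(0,1]$ by bounding $O^*G^s(\mu)O$ using the anisotropic local law, the second-moment bound of Lemma \ref{thm_largebound}, delocalization \eqref{delocal}, and the eigenvalue repulsion estimate \eqref{repulsion_estimate}. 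This entire argument is absent from your plan, yet it is where the weak tail assumption \eqref{tail_cond} is actually handled; your Step 2 (invoking Theorem \ref{lem_comparison}) only takes you from $X^s$ to Gaussian, not from $X$ to $X^s$.
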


\begin{remark}\label{sharp}
The moment condition is actually sharp in the following sense. If the condition (\ref{tail_cond}) does not hold for $X$, then one can show that (see e.g. \cite[Section 4]{DY}) for any fixed $a > \lambda_+ $, 
\begin{equation}\nonumber
\limsup_{N \rightarrow \infty} \mathbb{P}\left(\lambda_1(XX^*) \geq a\right)>0, 
\end{equation}
where $\lambda_1(XX^*)$ denotes the largest eigenvalue of $XX^*$. Thus if $\min\{\sigma_n,\wt \sigma_N\} \ge \tau$ for some constant $\tau>0$, we then have 
$$\limsup_{N \rightarrow \infty} \mathbb{P}\left(\lambda_1(\mathcal Q_1) \geq a\right)>0$$
for any fixed $a > \lambda_+ $, and the edge universality \eqref{SUFFICIENT} cannot hold.
 \end{remark}
 
 \nc
 \begin{remark}
It is clear that \eqref{SUFFICIENT} gives the edge universality of the largest eigenvalues of separable covariance matrices. However, to the best of our knowledge, so far there is no explicit formula for the limiting distribution of the largest eigenvalue of $\mathcal Q_1$ when $X$ is Gaussian. In an ongoing work \cite{DY2}, we shall prove that the largest eigenvalue of $\mathcal Q_1$ actually converges weakly to the Tracy-Widom distribution. Here we state the precise result we expect to prove in \cite{DY2}, which may be of interest to some readers. 

Recall the proof of Lemma \ref{lambdar_sqrt}. We define $\gamma_0 \equiv \gamma_0(A,B)$ such that
$${\gamma_0^3} = \frac{\partial_z f(\lambda_+,m_{2c}(\lambda_+))}{-\frac{1}{2}\partial_\al^2 f (\lambda_+,m_{2c}(\lambda_+))}\left[ \int \frac{t}{\lambda_+ (1+tm_{2c}(\lambda_+))^2}\pi_A(\dd t)\right]^{2}  = \frac{I_1^2 J_1}{I_2^2 J_3 + I_3 J_2},$$
where we denote
$$I_1 (A,B):= \int \frac{t}{\lambda_+ (1+tm_{2c}(\lambda_+))^2}\pi_A(\dd t), \quad J_1 (A,B):= \int\frac{x}{\lambda_+^2 \left(1  +x m_{1c}(\lambda_+)\right)^2} \pi_B(\dd x),$$
and for $k=2,3$,
$$I_k (A,B):=d_N \int\frac{t^k}{ \lambda_+ \left(1+tm_{2c}(\lambda_+) \right)^k } \pi_A(\dd t), \quad J_k (A,B):= \int\frac{x^k}{ \lambda_+ \left(1+xm_{1c}(\lambda_+) \right)^k } \pi_B(\dd x).$$
Using \eqref{assm3} and \eqref{assm_gap}, it is easy to see that $\gamma_0\sim 1$. Then we have the following result: if $A$ and $B$ satisfy Assumption \ref{assm_big1} and \eqref{assm_gap}, then we have
\begin{equation}\label{TW_sep}
\begin{split}
 & \lim_{N\to \infty} \mathbb{P}^G(\gamma_0(A,B) N^{{2}/{3}}(\lambda_1(A,B) - \lambda_+(A,B)) \leq s)= F_1(s) 
 \quad \text{for all $s\in \mathbb R$,}
  \end{split}
\end{equation}
where $\lambda_1(A,B)$ denotes the largest eigenvalue of $\mathcal Q_1(A,B)=A^{1/2}XBX^*A^{1/2} $, and $F_1$ is the type-1 Tracy-Widom distribution. 
 (\ref{SUFFICIENT}) and \eqref{TW_sep} together show that the distribution of the rescaled largest eigenvalue of $\mathcal Q_1$ converges to the Tracy-Widom distribution if the conditions (\ref{tail_cond}) and \eqref{assm_3rdmoment} hold. In particular, in the case of sample covariance matrices, the condition \eqref{assm_3rdmoment} is not necessary. 
\end{remark}
 \nc


\begin{remark}\label{finite_correlation}
The universality result (\ref{SUFFICIENT}) can be extended to the joint distribution of the $k$ largest eigenvalues for any fixed $k$:
\begin{equation}\label{SUFFICIENT2}
\begin{split}
\lim_{N\to \infty}\left[\mathbb{P}\left( \left(N^{{2}/{3}}(\lambda_i - \lambda_+) \leq s_i\right)_{1\le i \le k} \right) - \mathbb{P}^G\left(\left(N^{{2}/{3}}(\lambda_i - \lambda_+) \leq s_i\right)_{1\le i \le k} \right) \right]=0, 
\end{split}
\end{equation}
for all $s_1 , s_2, \ldots, s_k \in \mathbb R$. Let $H^{GOE}$ be an $N\times N$ random matrix belonging to the Gaussian orthogonal ensemble. The joint distribution of the $k$ largest eigenvalues of $H^{GOE}$, $\mu^{GOE}_1 \ge \ldots \ge \mu_k^{GOE}$, can be written in terms of the Airy kernel for any fixed $k$ \cite{Forr}. In \cite{DY2}, 
we actually show that
\begin{align*} 
\lim_{N\to \infty} \mathbb{P}^G  \left(\left(\gamma_0(A,B) N^{{2}/{3}}(\lambda_i(A,B) - \lambda_+(A,B)) \leq s_i\right)_{1\le i \le k} \right) = \lim_{N\to \infty} \mathbb{P}\left(\left( N^{{2}/{3}}(\mu_i^{GOE} - 2) \leq s_i\right)_{1\le i \le k} \right), 
\end{align*}
for all $s_1 , s_2, \ldots, s_k \in \mathbb R$. Hence (\ref{SUFFICIENT2}) gives a complete description of the finite-dimensional correlation functions of the largest eigenvalues of $\mathcal Q_1$.
 \end{remark}

\begin{remark} 
A key input for the proof of \eqref{SUFFICIENT} is the anisotropic local law for the resolvents in \eqref{def_green}. Our basic strategy is first to prove the anisotropic local law for $\mathcal G_{1,2}$ when $X$ is Gaussian, and then to obtain the anisotropic local law for the general $X$ case through a comparison with the Gaussian case. Without (\ref{assm_3rdmoment}), the comparison argument cannot give the anisotropic local law up to the optimal scale. However, in the case where $A$ or $B$ is diagonal, the condition (\ref{assm_3rdmoment}) is not needed for the comparison argument in \cite{Anisotropic} to work. 
\nc We refer the reader to the discussion following Theorem \ref{LEM_SMALL}, which explains why and where the condition \eqref{assm_3rdmoment} is needed. \nc We will try to remove the assumption (\ref{assm_3rdmoment}) completely in future works.
\end{remark}

\begin{figure}[htb]
\centering
\subfigure[For $X$ satisfying \eqref{tail_cond}.]{\includegraphics[width=8cm]{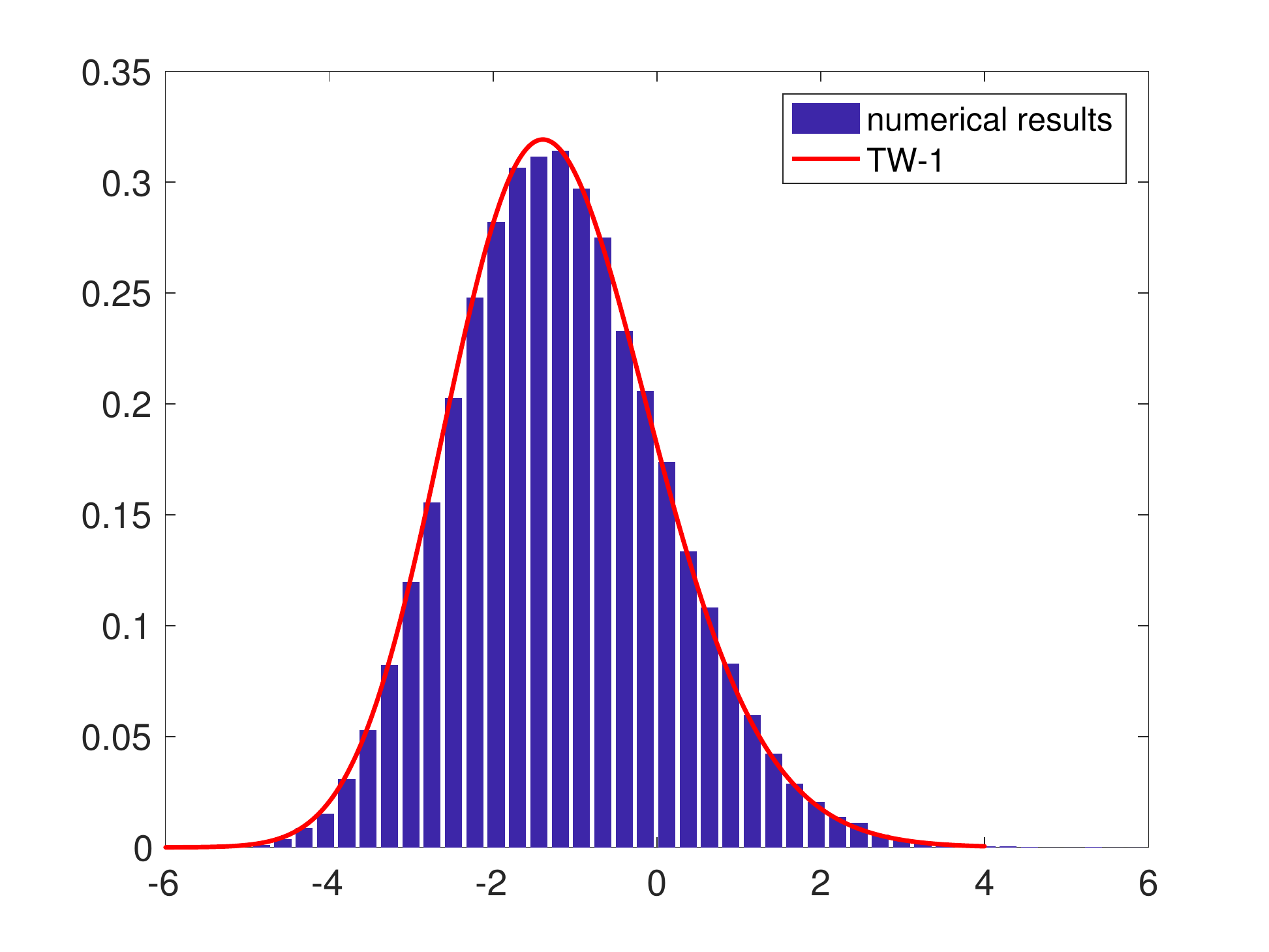}}
\subfigure[For Gaussian $X$.]{\includegraphics[width=8cm]{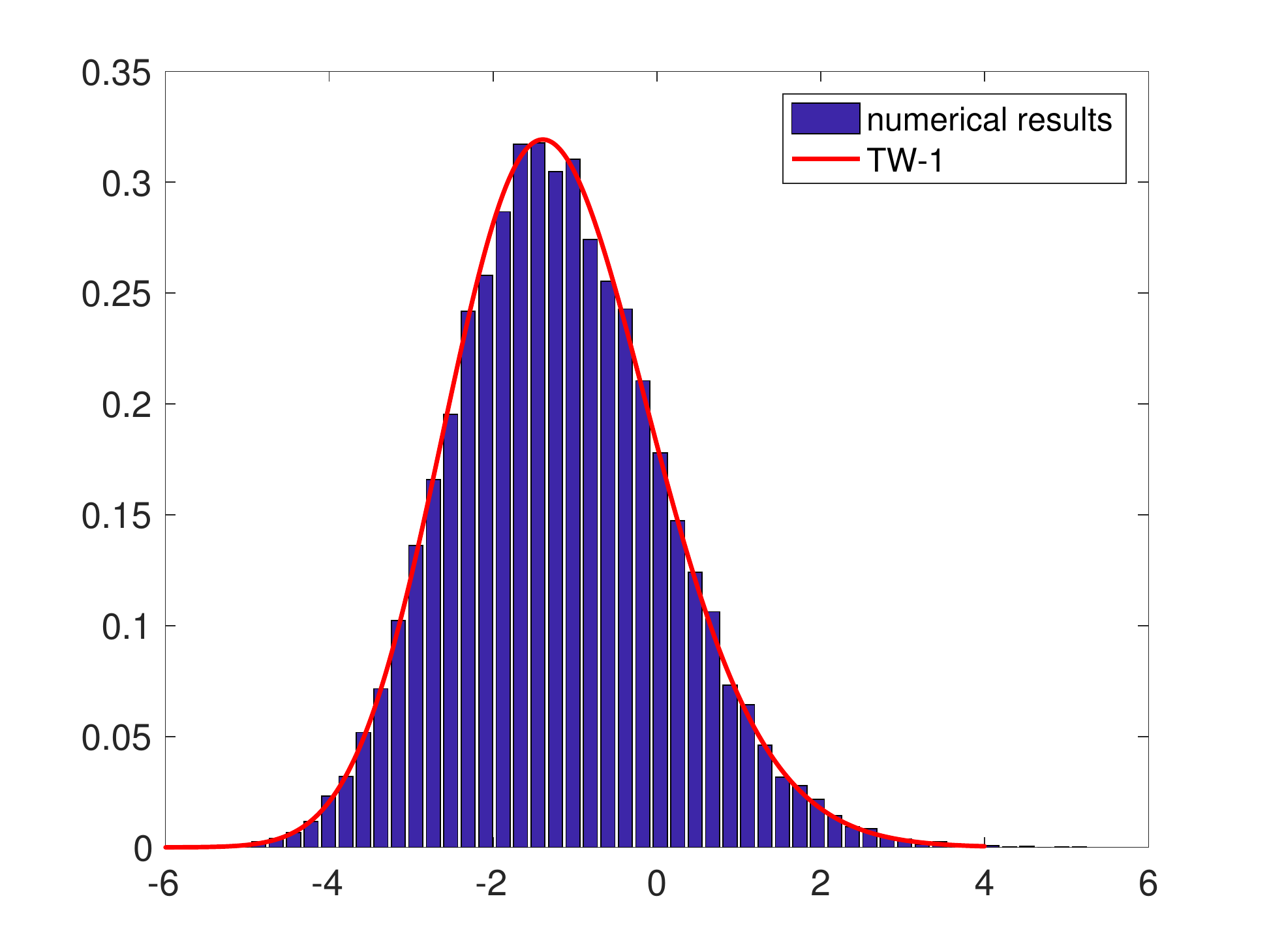}}
\caption{ Histograms for the largest eigenvalues of $20000$ ensembles. }
\label{fig1}
\end{figure}

Finally, we illustrate the edge universality result with some numerical simulations. Consider the following setting: (1) $N= 2n$, i.e. $d_N=0.5$; (2) we take
\begin{equation}\nonumber
\Sigma = {\text{diag}}(\underbrace{1,\cdots, 1}_{n/2},\underbrace{4,\cdots, 4}_{n/2}), \quad \wt\Sigma= {\text{diag}}(\underbrace{1,\cdots, 1}_{N/2},\underbrace{4,\cdots, 4}_{N/2});
\end{equation} 
(3) $U$ and $V$ are orthogonal matrices uniformly chosen from orthogonal groups $\mathbf O(n)$ and $\mathbf O(N)$. Then we take $n=1000$ and calculate the largest eigenvalues for $20000$ independently chosen matrices. The histograms are plotted in Fig.\;\ref{fig1}. In case (a), the entries $\sqrt{N}x_{ij}$ are drawn independently from a distribution with mean zero, variance 1 and satisfying \eqref{tail_cond}; in case (b), the entries $\sqrt{N}x_{ij}$ are $i.i.d.$\;Gaussian with mean zero and variance 1. We translate and rescale the numerical results properly, and one can observe that they fit the type-1 Tracy-Widom distribution very well.

\subsection{Statistical applications}

In this subsection, we briefly discuss some applications of our result to high-dimensional statistics. 

If we take $B=I$, then $\mathcal Q_1$ becomes the normal sample covariance matrix and Theorem \ref{main_thm} indicates that the edge universality of the largest eigenvalue of $\mathcal Q_1$ holds true for correlated data (i.e. non-diagonal $A$) with heavy tails as in (\ref{tail_cond}). So far, this is the strongest edge universality for sample covariance matrices compared with \cite{BPZ1,LS} (assuming high moments and diagonal $A$), \cite{Anisotropic} (assuming high moments) and \cite{DY} (assuming diagonal $A$). 
On the other hand, the separable data model $Y=A^{1/2}XB^{1/2}$ for some nontrivial $B$ is widely used in spatio-temporal data modeling, where $A$ is the spatial covariance matrix and $B$ is the temporal covariance matrix. 
If the entries of $X$ are symmetrically distributed and the singular values of $A,B$ are such that \eqref{assm_gap} holds, then Theorem \ref{main_thm} shows that the largest eigenvalue of $\mathcal Q_1$ satisfies the edge universality as long as (\ref{tail_cond}) holds. We now describe some possible applications of this result. 


Consider the following standard signal plus noise model in classic signal processing \cite{SMK}:
\begin{equation} \label{model_application}
\mathbf{y}=\Gamma \mathbf{s}+A^{1/2}\mathbf{x},
\end{equation}
where $\Gamma$ is an $n \times k$ deterministic matrix, $\mathbf{s}$ is a $k$-dimensional centered signal vector, $A$ is an $n\times n$ deterministic positive definite matrix, and $\mathbf{x}$ is an $n$-dimensional noise vector with $i.i.d.$ mean zero and variance one entries. Moreover, the signal vector and the noise vector are assumed to be independent. In practice, suppose we observe $N$ such samples, where the observations at different times are correlated such that the correlations are independent of the spatial locations. Denoting the temporal covariance matrix by $B$, we then have the spatio-temporal data matrix
$$Y= \Gamma S B^{1/2} + A^{1/2} X B^{1/2},  \quad  S:=(\mathbf s_1, \cdots, \mathbf s_N), \quad X:=(\mathbf x_1, \cdots, \mathbf x_N).$$ 
A fundamental task is to detect the signals via observed samples, and the very first step is to know whether there exists any such signal, i.e., 
\begin{equation}\label{model_null0}
\mathbf{H}_0: \ k=0 \quad \text{vs.} \quad \mathbf{H}_1: \ k \geq 1.
\end{equation}
For the above hypothesis testing problem \eqref{model_null0}, the largest eigenvalue of the observed samples serves as a natural choice for the tests: our result shows that, for {\it heavy-tailed correlated} data satisfying (\ref{tail_cond}), the largest singular value of $Y$ satisfies the Tracy-Widom distribution asymptotically under $\mathbf H_0$.

We can also consider to test whether the space-time data follows a specific separable covariance model with spatial and time covariance matrices $\wt A$ and $\wt B$.
Then we can use the largest singular value of $\wt A^{-1/2}Y\wt B^{-1/2}$ as a test static. Another interesting test static for this hypothesis testing problem is the eigenvector empirical spectral distribution (VESD); see \cite{XYY_VESD,XYZ2013,yang_thesis}. The convergence of VESD for separable covariance matrices has been proved in \cite{yang_thesis} using the anisotropic local law---Theorem \ref{LEM_SMALL} in this paper (which also serves as an important tool for the proof of Theorem \ref{main_thm}). 


Finally, we remark that one can also perform principal component analysis for separable covariance matrices, and study the phase transition phenomena caused by a few large isolated eigenvalues of $A$ or $B$ as in the case of spiked covariance matrices \cite{BBP,  BS_spike, principal, DP_spike}. We expect that our edge universality result will serve as an important input for the study of the  eigenvalues and eigenvectors for the principal components (the outliers) and the bulk components (the non-outliers). For example, in \cite{DY3} we studied the convergence of the outlier eigenvalues and eigenvectors, and the limiting distribution of extremal bulk eigenvalues for the spiked separable covariance model based on our main result, Theorem \ref{main_thm}, and the results given in Section \ref{sec_tools} below.


\section{Basic notations and tools}\label{sec_maintools}

\subsection{Notations}

We will use the following notion of stochastic domination, which was first introduced in \cite{Average_fluc} and subsequently used in many works on random matrix theory, such as \cite{isotropic,principal,local_circular,Delocal,Semicircle,Anisotropic}. It simplifies the presentation of the results and proofs by systematizing statements of the form ``$\xi$ is bounded by $\zeta$ with high probability up to a small power of $N$".

\begin{definition}[Stochastic domination]\label{stoch_domination}
(i) Let
\[\xi=\left(\xi^{(N)}(u):N\in\bbN, u\in U^{(N)}\right),\hskip 10pt \zeta=\left(\zeta^{(N)}(u):N\in\bbN, u\in U^{(N)}\right)\]
be two families of nonnegative random variables, where $U^{(N)}$ is a possibly $N$-dependent parameter set. We say $\xi$ is stochastically dominated by $\zeta$, uniformly in $u$, if for any fixed (small) $\epsilon>0$ and (large) $D>0$, 
\[\sup_{u\in U^{(N)}}\bbP\left[\xi^{(N)}(u)>N^\epsilon\zeta^{(N)}(u)\right]\le N^{-D}\]
for large enough $N\ge N_0(\epsilon, D)$, and we shall use the notation $\xi\prec\zeta$. Throughout this paper, the stochastic domination will always be uniform in all parameters that are not explicitly fixed (such as matrix indices, and $z$ that takes values in some compact set). Note that $N_0(\epsilon, D)$ may depend on quantities that are explicitly constant, such as $\tau$ in Assumption \ref{assm_big1} and \eqref{assm_gap}. If for some complex family $\xi$ we have $|\xi|\prec\zeta$, then we will also write $\xi \prec \zeta$ or $\xi=\OO_\prec(\zeta)$.

(ii) We extend the definition of $\OO_\prec(\cdot)$ to matrices in the weak operator sense as follows. Let $A$ be a family of random matrices and $\zeta$ be a family of nonnegative random variables. Then $A=\OO_\prec(\zeta)$ means that $\left|\left\langle\mathbf v, A\mathbf w\right\rangle\right|\prec\zeta \| \mathbf v\|_2 \|\mathbf w\|_2 $ uniformly in any deterministic vectors $\mathbf v$ and $\mathbf w$. Here and throughout the following, whenever we say ``uniformly in any deterministic vectors", we mean that ``uniformly in any deterministic vectors belonging to certain fixed set of cardinality $N^{\OO(1)}$".

(iii) We say an event $\Xi$ holds with high probability if for any constant $D>0$, $\mathbb P(\Xi)\ge 1- N^{-D}$ for large enough $N$.
\end{definition}

The following lemma collects basic properties of stochastic domination $\prec$, which will be used tacitly in the proof.

\begin{lemma}[Lemma 3.2 in \cite{isotropic}]\label{lem_stodomin}
Let $\xi$ and $\zeta$ be families of nonnegative random variables.

(i) Suppose that $\xi (u,v)\prec \zeta(u,v)$ uniformly in $u\in U$ and $v\in V$. If $|V|\le N^C$ for some constant $C$, then $\sum_{v\in V} \xi(u,v) \prec \sum_{v\in V} \zeta(u,v)$ uniformly in $u$.

(ii) If $\xi_1 (u)\prec \zeta_1(u)$ and $\xi_2 (u)\prec \zeta_2(u)$ uniformly in $u\in U$, then $\xi_1(u)\xi_2(u) \prec \zeta_1(u)\zeta_2(u)$ uniformly in $u$.

(iii) Suppose that $\Psi(u)\ge N^{-C}$ is deterministic and $\xi(u)$ satisfies $\mathbb E\xi(u)^2 \le N^C$ for all $u$. Then if $\xi(u)\prec \Psi(u)$ uniformly in $u$, we have $\mathbb E\xi(u) \prec \Psi(u)$ uniformly in $u$.
\end{lemma}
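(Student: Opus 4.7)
The plan is to prove the three properties of stochastic domination by standard probabilistic arguments: union bounds for parts (i) and (ii), and a moment-controlled tail estimate for part (iii). None of this uses the structure of separable covariance matrices; it is purely a manipulation of the definition of $\prec$ together with elementary inequalities. I expect part (iii) to be the only one requiring any care.

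For part (i), the point is that if $\xi(u,v) \leq N^{\epsilon} \zeta(u,v)$ for every $v \in V$, then summing yields $\sum_v \xi(u,v) \leq N^{\epsilon} \sum_v \zeta(u,v)$. Contrapositively,
\[ \Bigl\{ \sum_{v \in V} \xi(u,v) > N^{\epsilon}\sum_{v \in V} \zeta(u,v) \Bigr\} \;\subseteq\; \bigcup_{v \in V} \bigl\{ \xi(u,v) > N^{\epsilon} \zeta(u,v) \bigr\} . \]
Given $\epsilon > 0$ and $D > 0$, I apply the hypothesis $\xi(u,v) \prec \zeta(u,v)$ with tail exponent $D' := D + C$ and the same $\epsilon$; a union bound over the at most $N^C$ indices in $V$ then gives the required $N^{-D}$ tail, uniformly in $u$.

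Part (ii) follows by the same flavor of argument. On the intersection $\{\xi_1(u) \leq N^{\epsilon/2}\zeta_1(u)\} \cap \{\xi_2(u) \leq N^{\epsilon/2}\zeta_2(u)\}$ the product is bounded by $N^{\epsilon}\zeta_1(u)\zeta_2(u)$; the complement has probability at most $2N^{-D}$ by the two hypotheses applied with exponent $\epsilon/2$, and a trivial adjustment of $D$ yields $\xi_1\xi_2 \prec \zeta_1\zeta_2$.

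For part (iii) I split
\[ \mathbb{E}\,\xi(u) \;=\; \mathbb{E}\bigl[ \xi(u)\, \mathbf{1}_{\xi(u) \leq N^{\epsilon}\Psi(u)} \bigr] \;+\; \mathbb{E}\bigl[ \xi(u)\, \mathbf{1}_{\xi(u) > N^{\epsilon}\Psi(u)} \bigr]. \]
The first term is bounded pointwise by $N^{\epsilon}\Psi(u)$. For the second, Cauchy--Schwarz and the hypotheses give
\[ \mathbb{E}\bigl[ \xi(u)\, \mathbf{1}_{\xi(u) > N^{\epsilon}\Psi(u)} \bigr] \;\leq\; \sqrt{\mathbb{E}\, \xi(u)^{2}}\;\sqrt{\mathbb{P}[\xi(u) > N^{\epsilon}\Psi(u)]} \;\leq\; N^{C/2}\cdot N^{-D/2}. \]
Using $\Psi(u) \geq N^{-C}$, I pick $D$ sufficiently large (for instance $D \geq 3C + 2\epsilon$) so that $N^{(C-D)/2} \leq N^{-C-\epsilon} \leq N^{\epsilon}\Psi(u)$. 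Combining the two pieces yields $\mathbb{E}\,\xi(u) \leq 2 N^{\epsilon}\Psi(u)$, and since $\epsilon, D > 0$ are arbitrary, this is precisely $\mathbb{E}\,\xi \prec \Psi$. The one genuinely nontrivial point in the whole lemma is here: stochastic domination alone is too weak to control an expectation, because the exceptional event could a priori contribute an arbitrarily large amount; the hypothesis $\mathbb{E}\,\xi^{2} \leq N^{C}$ is precisely what tames this tail, and the lower bound $\Psi \geq N^{-C}$ is what lets the resulting polynomial remainder be absorbed back into $N^{\epsilon}\Psi$.
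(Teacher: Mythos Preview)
Your proof is correct. The paper itself does not prove this lemma; it simply cites it as Lemma~3.2 of \cite{isotropic} and uses it as a black box throughout. Your arguments for all three parts are the standard ones and match what one finds in the cited reference: union bounds for (i) and (ii), and a Cauchy--Schwarz tail splitting for (iii). One minor remark on (iii): since $\mathbb{E}\,\xi(u)$ and $\Psi(u)$ are deterministic, the conclusion $\mathbb{E}\,\xi \prec \Psi$ reduces to the statement that for every $\epsilon>0$ one has $\mathbb{E}\,\xi(u) \le N^{\epsilon}\Psi(u)$ for all large $N$, uniformly in $u$; the parameter $D$ plays no role in the conclusion (only in the hypothesis you invoke), so your closing phrase ``since $\epsilon, D>0$ are arbitrary'' is slightly off but harmless. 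To absorb the factor $2$ in $2N^{\epsilon}\Psi$, run the argument with $\epsilon/2$ in place of $\epsilon$.
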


\begin{definition}[Bounded support condition] \label{defn_support}
We say a random matrix $X=(x_{ij})$ satisfies the {\it{bounded support condition}} with $q$, if
\begin{equation}
\max_{i,j}\vert x_{ij}\vert \prec q. \label{eq_support}
\end{equation}
Here $q\equiv q(N)$ is a deterministic parameter and usually satisfies $ N^{-{1}/{2}} \leq q \leq N^{- \phi} $ for some (small) constant $\phi>0$. Whenever (\ref{eq_support}) holds, we say that $X$ has support $q$. 
\end{definition}


Next we introduce a convenient self-adjoint linearization trick, which has been proved to be useful in studying the local laws of random matrices of the Gram type \cite{Alt_Gram, AEK_Gram, Anisotropic, XYY_circular}. We define the following $(n+N)\times (n+N)$ self-adjoint block matrix, which is a linear function of $X$:
 \begin{equation}\label{linearize_block}
   H \equiv H(X): = \left( {\begin{array}{*{20}c}
   { 0 } & \Sig^{1/2} U^{*}X V\wt \Sig^{1/2}   \\
   {\wt\Sig^{1/2}V^*X^* U\Sig^{1/2} } & {0}  \\
   \end{array}} \right),
 \end{equation}
Then we define its resolvent (Green's function) as
 \begin{equation}\label{eqn_defG}
 G \equiv G (X,z):= \left[H(X)-\left( {\begin{array}{*{20}c}
   { I_{n\times n}} & 0  \\
   0 & { zI_{N\times N}}  \\
\end{array}} \right)\right]^{-1} , \quad z\in \mathbb C_+ .
 \end{equation}
By Schur complement formula, we can verify that (recall \eqref{def_green})
\begin{align} 
G = \left( {\begin{array}{*{20}c}
   { z\mathcal G_1} & \mathcal G_1 \Sig^{1/2} U^{*}X V\wt \Sig^{1/2}  \\
   {\wt\Sig^{1/2}V^*X^* U\Sig^{1/2} \mathcal G_1} & { \mathcal G_2 }  \\
\end{array}} \right) = \left( {\begin{array}{*{20}c}
   { z\mathcal G_1} & \Sig^{1/2} U^{*}X V\wt \Sig^{1/2} \mathcal G_2   \\
   {\mathcal G_2}\wt\Sig^{1/2}V^*X^* U\Sig^{1/2} & { \mathcal G_2 }  \\ 
\end{array}} \right). \label{green2}
\end{align}
Thus a control of $G$ yields directly a control of the resolvents $\mathcal G_{1,2}$. For simplicity of notations, we define the index sets
\[\mathcal I_1:=\{1,...,n\}, \quad \mathcal I_2:=\{n+1,...,n+N\}, \quad \mathcal I:=\mathcal I_1\cup\mathcal I_2.\]
Then we label the indices of the matrices according to 
$$X= (X_{i\mu}:i\in \mathcal I_1, \mu \in \mathcal I_2), \quad A=(A_{ij}: i,j\in \mathcal I_1),\quad B=(B_{\mu\nu}: \mu,\nu\in \mathcal I_2).$$  
In the rest of this paper, 
we will consistently use the latin letters $i,j\in\mathcal I_1$, greek letters $\mu,\nu\in\mathcal I_2$, and $a,b\in\mathcal I$. 

Next we introduce the spectral decomposition of $G$. Let
$$\Sig^{1/2} U^{*}X V\wt \Sig^{1/2}  = \sum_{k = 1}^{n\wedge N} {\sqrt {\lambda_k} \xi_k } \zeta _{k}^* ,$$
be a singular value decomposition of $\Sig^{1/2} U^{*}X V\wt \Sig^{1/2}$, where
$$\lambda_1\ge \lambda_2 \ge \ldots \ge \lambda_{n\wedge N} \ge 0 = \lambda_{n\wedge N+1} = \ldots = \lambda_{n\vee N},$$
$\{\xi_{k}\}_{k=1}^{n}$ are the left-singular vectors, and $\{\zeta_{k}\}_{k=1}^{N}$ are the right-singular vectors.
Then using (\ref{green2}), we can get that for $i,j\in \mathcal I_1$ and $\mu,\nu\in \mathcal I_2$,
\begin{align}
G_{ij} = \sum_{k = 1}^{n} \frac{z\xi_k(i) \xi_k^*(j)}{\lambda_k-z},\ \quad \ &G_{\mu\nu} = \sum_{k = 1}^{N} \frac{\zeta_k(\mu) \zeta_k^*(\nu)}{\lambda_k-z}, \label{spectral1}\\
G_{i\mu} = \sum_{k = 1}^{n\wedge N} \frac{\sqrt{\lambda_k}\xi_k(i) \zeta_k^*(\mu)}{\lambda_k-z}, \ \quad \ &G_{\mu i} = \sum_{k = 1}^{n\wedge N} \frac{\sqrt{\lambda_k}\zeta_k(\mu) \xi_k^*(i)}{\lambda_k-z}.\label{spectral2}
\end{align}

\subsection{Main tools}\label{sec_tools}

For any constants $c_0,C_0>0$ and $\omega \le 1$, we define a domain of the spectral parameter $z$ as
\begin{equation}
S(c_0,C_0,\omega):= \left\{z=E+ \ii \eta: \lambda_+ - c_0 \leq E \leq C_0 \lambda_+, N^{-1+\omega} \leq \eta \leq 1 \right\}. \label{SSET1}
\end{equation}
In particular, we shall denote
\begin{equation}
S(c_0,C_0,-\infty):= \left\{z=E+ \ii \eta: \lambda_+ - c_0 \leq E \leq C_0 \lambda_+, 0 \leq \eta \leq 1 \right\}.
\end{equation}
We define the distance to the rightmost edge as
\begin{equation}
\kappa \equiv \kappa_E := \vert E -\lambda_+\vert  \quad \text{for } z= E+\ii \eta.\label{KAPPA}
\end{equation}
Then we have the following lemma, which summarizes some basic properties of $m_{1,2c}$ and $\rho_{1,2c}$.

\begin{lemma}\label{lem_mbehavior}
Suppose the assumptions \eqref{assm2}, \eqref{assm3} and \eqref{assm_gap} hold. Then
there exists sufficiently small constant $\wt c>0$ such that the following estimates hold:
\begin{itemize}
\item[(1)]
\begin{equation}
\rho_{1,2c}(x) \sim \sqrt{\lambda_+-x}, \quad \ \ \text{ for } x \in \left[\lambda_+ - 2\wt c,\lambda_+ \right];\label{SQUAREROOT}
\end{equation}
\item[(2)] for $z =E+\ii \eta\in S(\wt c,C_0,-\infty)$, 
\begin{equation}\label{Immc}
\vert m_{1,2c}(z) \vert \sim 1,  \quad  \im m_{1,2c}(z) \sim \begin{cases}
    {\eta}/{\sqrt{\kappa+\eta}}, & \text{ if } E\geq \lambda_+ \\
    \sqrt{\kappa+\eta}, & \text{ if } E \le \lambda_+\\
  \end{cases};
\end{equation}
\item[(3)] there exists constant $\tau'>0$ such that
\begin{equation}\label{Piii}
\min_{\mu\in \mathcal I_2} \vert 1 + m_{1c}(z)\wt \sigma_\mu \vert \ge \tau', \quad \min_{i\in \mathcal I_1} \vert 1 + m_{2c}(z)\sigma_i  \vert \ge \tau',
\end{equation}
for any $z \in S(\wt c,C_0,-\infty)$.
\end{itemize}
The estimates \eqref{SQUAREROOT} and \eqref{Immc} also hold for $\rho_c$ and $m_c$. 
\end{lemma}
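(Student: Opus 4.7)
The three parts of the lemma have quite different flavors, so I would handle them separately but all by ``transfer from the edge''.

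Part (1) is essentially a restatement of \eqref{sqroot3}. Choose $\wt c$ small enough that (a) $\lambda_+-2\wt c$ lies strictly to the right of the next edge $a_2$ (so the claim is vacuous on no subinterval), and (b) the error term $\OO(x)$ in $\rho_{1,2c}(\lambda_+-x)=a_{1,2}x^{1/2}+\OO(x)$ is at most $\frac12 a_{1,2}x^{1/2}$ for $x\in[0,2\wt c]$. Combined with the lower bound $a_{1,2}\sim 1$ (which already appears in the proof of Lemma \ref{lambdar_sqrt} via $|z''(m_r)|\sim 1$), this yields $\rho_{1,2c}(x)\sim\sqrt{\lambda_+-x}$ on $[\lambda_+-2\wt c,\lambda_+]$.

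For part (2), I would first show $|m_{1,2c}(\lambda_+)|\sim 1$ and then transfer by continuity. The upper bound is immediate from Lemma \ref{lambdar}: $m_{1c}(\lambda_+)\in(-\wt\sigma_1^{-1},0)$, and $\wt\sigma_1^{-1}\lesssim 1$ is not automatic, but one checks directly: since $m_{2c}(\lambda_+)\in(-\sigma_1^{-1},0)$ and $1+\sigma_1 m_{2c}(\lambda_+)\ge\tau$ by \eqref{assm_gap}, the integrand in \eqref{separa_m12} obeys $(1+xm_{2c}(\lambda_+))^{-1}\le\tau^{-1}$ on $\supp\pi_A$, giving $|m_{1c}(\lambda_+)|\lesssim 1$. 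For the lower bound, the same integrand is bounded below using $(1+xm_{2c}(\lambda_+))^{-1}\ge 1$ and $\int x\,\pi_A(dx)\ge\tau^2$ from \eqref{assm3}; this gives $|m_{1c}(\lambda_+)|\gtrsim 1$. The symmetric argument handles $m_{2c}(\lambda_+)$. Now for $|z-\lambda_+|\le\delta$ small, apply \eqref{sqroot4}: the estimate $|m_{1,2c}(z)|\sim 1$ is then $|m_{1,2c}(\lambda_+)|+\OO(\delta^{1/2})\sim 1$, and the imaginary-part estimate reduces to computing $\im(z-\lambda_+)^{1/2}$. Writing $z-\lambda_+=re^{\ii\theta}$, a direct polar computation gives $\im(z-\lambda_+)^{1/2}=\sqrt{(r-\re(z-\lambda_+))/2}$, which checks case-by-case to be $\sim\eta/\sqrt{\kappa+\eta}$ for $E\ge\lambda_+$ and $\sim\sqrt{\kappa+\eta}$ for $E\le\lambda_+$. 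For $z$ in the complementary compact region $|z-\lambda_+|\ge\delta$ inside $S(\wt c,C_0,-\infty)$, $E$ is either away from the support of $\rho_{1,2c}$ (and $m_{1,2c}$ extends analytically across) or inside the bulk; in both cases direct integration $m_{1,2c}(z)=\int(x-z)^{-1}\rho_{1,2c}(dx)$ together with continuity gives the required bounds.

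For part (3), start at $z=\lambda_+$. Since $m_{1c}(\lambda_+)\in(-\wt\sigma_1^{-1},0)$ and $\wt\sigma_\mu\le\wt\sigma_1$, one has $-m_{1c}(\lambda_+)\wt\sigma_\mu\le-m_{1c}(\lambda_+)\wt\sigma_1\le 1-\tau$ by \eqref{assm_gap}, giving $1+m_{1c}(\lambda_+)\wt\sigma_\mu\ge\tau$ uniformly in $\mu$; the bound for $1+m_{2c}(\lambda_+)\sigma_i$ is symmetric. For $z$ in a small neighborhood of $\lambda_+$, \eqref{sqroot4} shows $m_{1c}(z)\wt\sigma_\mu-m_{1c}(\lambda_+)\wt\sigma_\mu=\OO(|z-\lambda_+|^{1/2})$ uniformly in $\mu$ (since $\wt\sigma_\mu\lesssim 1$), so the lower bound persists with $\tau'=\tau/2$ say. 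For $z$ in the remaining compact subset of $S(\wt c,C_0,-\infty)$ bounded away from $\lambda_+$, $m_{1,2c}$ depends continuously on $z$; the quantities $1+m_{1c}(z)\wt\sigma_\mu$ cannot vanish there (at $z=E+\ii\eta$ with $\eta>0$, $\im m_{1c}(z)>0$; on the real axis outside the spectrum, the self-consistent equations \eqref{separa_m12} and \eqref{equationEm2} characterize the zeros as spectral edges, which are already excluded). A compactness argument over the finite family indexed by the distinct values of $\wt\sigma_\mu$ then gives a uniform lower bound.

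The main obstacle is really the transition zone: showing that the edge expansion \eqref{sqroot4} combines cleanly with the compact-set/continuity argument for $z$ away from $\lambda_+$, so that the square-root estimates of type (2) and the non-vanishing of $1+m_{1c}(z)\wt\sigma_\mu$ of type (3) hold uniformly throughout $S(\wt c,C_0,-\infty)$ rather than only near $\lambda_+$. Nothing here is deep, but pinning down $\wt c$, $\delta$, and $\tau'$ consistently so that neighborhoods and complement match up requires care.
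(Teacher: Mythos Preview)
Your treatment of parts (1) and (2) aligns with the paper: both rely on Lemma~\ref{lambdar_sqrt}, and your added detail on $|m_{1,2c}(\lambda_+)|\sim 1$ and on the polar computation of $\im(z-\lambda_+)^{1/2}$ is correct and more explicit than the paper's one-line ``can be proved easily with \eqref{sqroot4}.''

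The gap is in part (3). The constant $\tau'$ must be \emph{uniform in $N$}, but your compactness argument is carried out at fixed $N$: the functions $m_{1,2c}$ and the eigenvalues $\wt\sigma_\mu,\sigma_i$ all depend on $N$, so ``compactness over the finite family indexed by the distinct values of $\wt\sigma_\mu$'' yields only an $N$-dependent lower bound, not a uniform $\tau'$. Your reasoning on the real axis is also off: zeros of $1+m_{1c}(E)\wt\sigma_\mu$ are not characterized by \eqref{equationEm2} as spectral edges. What you actually need there is simply that $m_{1c}$ (as a Stieltjes transform) is increasing on $(\lambda_+,\infty)$, so $1+\wt\sigma_\mu m_{1c}(E)\ge 1+\wt\sigma_\mu m_{1c}(\lambda_+)\ge\tau$ for all $\mu$.

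The paper replaces your compactness step by explicit, $\tau$-only bounds in three regimes. Near the edge ($\kappa+\eta\le 2c_0$) it argues exactly as you do, via \eqref{assm_gap} and \eqref{sqroot4}. For $E\ge\lambda_++c_0$ and $\eta\le c_1$ small, it uses the monotonicity just mentioned on the real axis together with the crude derivative bound $|\partial_z m_{2c}(z)|\le c_0^{-2}$ (from the Stieltjes integral, since $|x-z|\ge c_0$) to extend to small $\eta$. For $\eta\ge c_1$, it uses $|m_{2c}(z)|\sim\im m_{2c}(z)\sim 1$ from part (2) and splits on the size of $\sigma_i$: if $\sigma_i\le(2|m_{2c}|)^{-1}$ then $|1+\sigma_i m_{2c}|\ge 1/2$, and otherwise $|1+\sigma_i m_{2c}|\ge\sigma_i\,\im m_{2c}\ge\im m_{2c}/(2|m_{2c}|)\gtrsim 1$. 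This dichotomy over $\sigma_i$ is the quantitative substitute for compactness and is precisely what makes the bound uniform in $N$.
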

\begin{proof}
The estimate \eqref{SQUAREROOT} is already given by Lemma \ref{lambdar_sqrt}. The estimate \eqref{Immc} can be proved easily with \eqref{sqroot4}. 
It remains to prove \eqref{Piii}. By assumption \eqref{assm_gap} and the fact $m_{2c}(\lambda_+) \in (-\sigma_1^{-1}, 0)$, we have
$$\left| 1+ m_{2c}(\lambda_+) \sigma_i \right| \ge \tau,  \quad i\in \mathcal I_1.$$
With \eqref{sqroot4}, we see that if $\kappa+\eta \le 2c_0$ for some sufficiently small constant $c_0>0$, then
$$\left| 1+ m_{2c}(z)\sigma_i \right| \ge \tau/2,  \quad i\in \mathcal I_1.$$
Then we consider the case with $E \ge \lambda_+ + c_0$ and $\eta \le c_1$ for some constant $c_1>0$. In fact, for $\eta=0$ and $E> \lambda_+$, $m_{2c}(E)$ is real and it is easy to verify that $m_{2c}'(E)\ge 0$ using 
the Stieltjes transform formula 
\begin{equation}\label{Stj_app}
m_{2c}(z):=\int_{\mathbb R} \frac{\rho_{2c}(\dd x)}{x-z},
\end{equation}
Hence we have
$$ 1+ \sigma_i m_{2c}(E)  \ge 1+ \sigma_i m_{2c}(\lambda_+ ) \ge \tau, \ \ \text{ for }E\ge \lambda_+ + c_0.$$
Using (\ref{Stj_app}) again, we can get that 
$$\left|\frac{\dd m_{2c}(z)}{ \dd z }\right| \le c_0^{-2}, \ \ \text{for } E\ge \lambda_+ + c_0.$$ 
Thus if $c_1$ is sufficiently small, we have
$$\left| 1+ \sigma_i m_{2c}(E+\ii\eta) \right| \ge  \tau/2,  \quad i\in \mathcal I_1,$$
for $E\ge \lambda_+ + c_0$ and $\eta \le c_1$. Finally, it remains to consider the case with $\eta \ge c_1$. In this case, we have $|m_{2c}(z)| \sim \Im \, m_{2c}(z) \sim 1$ by (\ref{Immc}). For $i\in \cal I_1$, if $\sigma_i \le \left|2m_{2c}(z)\right|^{-1}$, then $\left| 1+ \sigma_i m_{2c}(z) \right| \ge 1/2$. Otherwise, we have 
$$\left| 1+ \sigma_i m_{2c}(z) \right| \ge \sigma_i \Im\, m_{2c}(z) \ge \frac{\Im\, m_{2c}(z)}{2 |m_{2c}(z)|}\gtrsim 1 .$$
In sum, we have proved the second estimate in \eqref{Piii}. The first estimate can be proved in a similar way. 
\end{proof}



\begin{definition} [Classical locations of eigenvalues]
The classical location $\gamma_j$ of the $j$-th eigenvalue of $\mathcal Q_1$ is defined as
\begin{equation}\label{gammaj}
\gamma_j:=\sup_{x}\left\{\int_{x}^{+\infty} \rho_{c}(x)dx > \frac{j-1}{n}\right\}.
\end{equation}
In particular, we have $\gamma_1 = \lambda_+$.
\end{definition}

In the rest of this section, we present some results that will be used in the proof of Theorem \ref{main_thm}. Their proofs will be given in subsequent sections. For any matrix $X$ satisfying Assumption \ref{assm_big1} and the tail condition (\ref{tail_cond}), we can construct a matrix $X^s$ that approximates $X$ with probability $1-\oo(1)$, and satisfies Assumption \ref{assm_big1}, the bounded support condition (\ref{eq_support}) with $q\le N^{-\phi}$ for some small constant $\phi>0$, and
\begin{equation}\label{conditionA2}
\mathbb{E}\vert  x^s_{ij} \vert^3 =\OO(N^{-{3}/{2}}), \quad   \mathbb{E} \vert  x^s_{ij} \vert^4  =\OO_\prec (N^{-2});
\end{equation}
see Section \ref{sec_cutoff} for the details. We will need the local laws (Theorem \ref{LEM_SMALL}), eigenvalues rigidity (Theorem \ref{thm_largerigidity}), eigenvector delocalization (Lemma \ref{delocal_rigidity}), and edge universality (Theorem \ref{lem_comparison}) for separable covariance matrices with $X^s$.


We define the deterministic limit $\Pi$ of the resolvent $G$ in (\ref{eqn_defG}) as
\begin{equation}\label{defn_pi}
\Pi (z): = \left( {\begin{array}{*{20}c}
   { -\left(1+m_{2c}(z)\Sigma \right)^{-1} } & 0  \\
   0 & { - z^{-1} (1+m_{1c}(z)\wt \Sigma )^{-1} }  \\
\end{array}} \right) .
\end{equation}
Note that we have
\be\label{mcPi}
\frac1{nz}\sum_{i\in \mathcal I_1} \Pi_{ii} =m_c. 
\ee
Define the control parameters
\begin{equation}\label{eq_defpsi}
\Psi (z):= \sqrt {\frac{\Im \, m_{2c}(z)}{{N\eta }} } + \frac{1}{N\eta}.
\end{equation}
Note that by (\ref{Immc}) and (\ref{Piii}), we have
\begin{equation}\label{psi12}
\|\Pi\|=\OO(1), \quad \Psi \gtrsim N^{-1/2} , \quad \Psi^2 \lesssim (N\eta)^{-1}, \quad \Psi(z) \sim  \sqrt {\frac{\Im \, m_{1c}(z)}{{N\eta }} } + \frac{1}{N\eta},
\end{equation}
for $z\in S(\wt c, C_0,-\infty)$. Now we are ready to state the local laws for $G(X,z)$. For the purpose of proving Theorem \ref{main_thm}, we shall relax the condition \eqref{assm_3rdmoment} a little bit. 


\begin{theorem} [Local laws]\label{LEM_SMALL} 

Suppose Assumption \ref{assm_big1} and \eqref{assm_gap} hold. Suppose $X$ satisfies the bounded support condition (\ref{eq_support}) with $q\le N^{-\phi}$ for some constant $\phi>0$. Furthermore, suppose $X$ satisfies \eqref{conditionA2} and
\be\label{assm_3moment}
\left|\mathbb E x_{ij}^3\right|\le b_N N^{-2}, \quad 1\le i \le n,\ \  1\le j \le N,
\ee
where $b_N$ is an $N$-dependent deterministic parameter satisfying $1 \leq b_N \le N^{1/2}$. Fix $C_0>1$ and let $c_0>0$ be a sufficiently small constant. Given any $\epsilon,\fa>0$, we define the domain
\be \label{tildeS}
\wt S(c_0,C_0,\fa, \e):= S(c_0,C_0,\epsilon) \cap \left\{z = E+ \ii \eta: b_N \left(\Psi^2(z) + \frac{q}{N\eta}\right)\le N^{-\mathfrak a}\right\}.
\ee
Then for any constants $\e>0$ and $\fa>0$, the following estimates hold. 
\begin{itemize}
\item[(1)] {\bf Anisotropic local law}: For any $z\in \wt S(c_0,C_0,\fa,\epsilon)$ and deterministic unit vectors $\mathbf u, \mathbf v \in \mathbb C^{\mathcal I}$,
\begin{equation}\label{aniso_law}
\left| \langle \mathbf u, G(X,z) \mathbf v\rangle - \langle \mathbf u, \Pi (z)\mathbf v\rangle \right| \prec q+ \Psi(z).
\end{equation}

\item[(2)] {\bf Averaged local law}: For any $z \in \wt S(c_0, C_0, \fa,\epsilon)$,  we have
\begin{equation}
 \vert m(z)-m_{c}(z) \vert \prec q^2 + (N \eta)^{-1}. \label{aver_in1} 
\end{equation}
where $m$ is defined in \eqref{defn_m}. Moreover, outside of the spectrum we have the following stronger estimate
\begin{equation}\label{aver_out1}
 | m(z)-m_{c}(z)|\prec q^2 +\nc \frac{N^{-\fa/2}}{N\eta} \nc+ \frac{1}{N(\kappa +\eta)} + \frac{1}{(N\eta)^2\sqrt{\kappa +\eta}},
\end{equation}
uniformly in $z\in \wt S(c_0,C_0,\fa, \epsilon)\cap \{z=E+\ii\eta: E\ge \lambda_+, N\eta\sqrt{\kappa + \eta} \ge N^\epsilon\}$, where $\kappa$ is defined in \eqref{KAPPA}. 
\end{itemize}
The above estimates are uniform in the spectral parameter $z$ and any set of deterministic vectors of cardinality $N^{\OO(1)}$. If $A$ or $B$ is diagonal, then \eqref{aniso_law} and \eqref{aver_in1} hold for $z\in S(c_0,C_0,\epsilon) $, \nc and \eqref{aver_out1} holds for $z\in S(c_0,C_0,\epsilon)\cap \{z=E+\ii\eta: E\ge \lambda_+, N\eta\sqrt{\kappa + \eta} \ge N^\epsilon\}$ without the term ${N^{-\fa/2}}/({N\eta})$. \nc
\end{theorem}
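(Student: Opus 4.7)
The plan is a two-step strategy: first establish both local laws when $X$ is Gaussian, then propagate them to an arbitrary $X$ satisfying \eqref{conditionA2} and \eqref{assm_3moment} via a Green-function comparison argument. This mirrors the approach of \cite{Anisotropic} for sample covariance matrices, but the essential new feature here is that both $A$ and $B$ can be non-diagonal, so rotation invariance no longer reduces the problem to a scalar diagonal model except in the Gaussian base case.

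For the Gaussian case I would exploit rotational invariance: since $X^G$ is bi-orthogonally invariant,
\begin{equation*}
\Sigma^{1/2} U^{*} X^G V \wt\Sigma^{1/2} \stackrel{d}{=} \Sigma^{1/2} X^G \wt\Sigma^{1/2},
\end{equation*}
so $H(X^G)$ reduces to its diagonal-$\Sigma$, diagonal-$\wt\Sigma$ counterpart. In that diagonal model I would follow the separable-diagonal approach of \cite{DY} (closely related to \cite{Alt_Gram, AEK_Gram}): use Schur complement on rows/columns of $H$ to derive approximate self-consistent equations for $G_{ii}, G_{\mu\mu}$ and for the vector $\mathbf{v}$ of \eqref{self_vector}; invoke stability of \eqref{separa_m12} near the edge (which holds under \eqref{assm_gap} by Lemma \ref{lambdar_sqrt}) to convert approximate equations into bounds $|G_{ii}-\Pi_{ii}|\prec \Psi$; then apply fluctuation averaging to sharpen the averaged estimate to $|m-m_c|\prec (N\eta)^{-1}$. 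The anisotropic form $|\langle\mathbf{u},G\mathbf{v}\rangle-\langle\mathbf{u},\Pi\mathbf{v}\rangle|\prec\Psi$ follows because rotating $\mathbf{u},\mathbf{v}$ back by $U,V$ transforms the (already diagonal) anisotropic law for the rotated Gaussian model into the general one; the $q$ term is trivially absent in the Gaussian case.

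The heart of the proof is then the comparison step. I would interpolate $X^t$ between $X$ and $X^G$ (either a continuous Ornstein--Uhlenbeck flow or a Lindeberg entry-by-entry swap) and study $\partial_t \mathbb{E} F(\langle \mathbf{u}, G(X^t,z)\mathbf{v}\rangle - \langle \mathbf{u},\Pi\mathbf{v}\rangle)$ for polynomial test functions $F$. Integrating by parts in the matrix entries $x_{ij}$ yields a cumulant expansion whose $k$-th term is controlled by the $(k+1)$-st cumulant of $x_{ij}$: the second cumulants match between $X$ and $X^G$ and therefore cancel; the third cumulants are bounded by $b_N N^{-2}$ through \eqref{assm_3moment} (this is precisely where \eqref{assm_3moment} enters, giving a gain of order $b_N/\sqrt{N}$); higher cumulants are controlled by the support condition through $\mathbb{E}|x_{ij}|^k \lesssim q^{k-2} N^{-1}$. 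The deterministic prefactors produced by the cumulant expansion are resolvent entries $G_{a b}$, which I bound a priori using the Gaussian anisotropic law transported along the interpolation path by a standard bootstrap in $\eta$: start at $\eta \sim 1$ where trivial $\|G\|\le \eta^{-1}$ bounds suffice, then push $\eta$ down through $N^{-1+\omega}$ using the self-improving estimate. The condition defining $\wt S(c_0,C_0,\fa,\epsilon)$ in \eqref{tildeS} is exactly what is needed for the third- and fourth-cumulant error terms $b_N(\Psi^2+q/(N\eta))$ to close the bootstrap.

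Once the anisotropic law is in place, the averaged law \eqref{aver_in1} follows by choosing $\mathbf{u}=\mathbf{v}=\mathbf{e}_i/\sqrt{n}$-type combinations and applying fluctuation averaging to gain an extra $\Psi$; for \eqref{aver_out1} outside the spectrum I would use the square-root behaviour \eqref{sqroot4} and the identity $\im m_c \sim \eta/\sqrt{\kappa+\eta}$ from \eqref{Immc} to amplify the generic bound at distance $\kappa$ above the edge, a standard edge-amplification argument which also explains the extra $N^{-\fa/2}/(N\eta)$ term (it is inherited from the comparison error). The main obstacle I expect is the anisotropic comparison with arbitrary unit vectors $\mathbf{u},\mathbf{v}\in\mathbb{C}^{\mathcal{I}}$: when neither $A$ nor $B$ is diagonal, the derivatives $\partial_{x_{ij}}\langle \mathbf{u},G\mathbf{v}\rangle$ produce products of generalized resolvent entries that cannot be simplified by the diagonality trick of \cite[Section 8]{Anisotropic}, and the third-cumulant contribution does not cancel — this is precisely why \eqref{assm_3moment} cannot be dropped in the fully non-diagonal case, and only when $A$ or $B$ is diagonal does that extra structure allow the $N^{-\fa/2}/(N\eta)$ error and the domain restriction to $\wt S$ to be removed.
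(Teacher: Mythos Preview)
Your plan is correct and matches the paper's two-step structure: establish the diagonal/Gaussian local laws via Schur-complement self-consistent equations plus fluctuation averaging (the paper's Section~\ref{sec_Gauss}, Proposition~\ref{prop_diagonal}), then extend to general $X$ by a self-consistent interpolation comparison bootstrapped in $\eta$ (Section~\ref{sec_comparison}). Two small refinements worth flagging: the anisotropic law in the diagonal model is obtained from the \emph{entrywise} law via the polynomialization method of \cite{isotropic} rather than by rotating test vectors, and the averaged laws \eqref{aver_in1}--\eqref{aver_out1} for general $X$ are not deduced from the anisotropic law by plugging in specific vectors and averaging but by running the comparison argument a second time directly on $\wt F=|m-m_c|$ (Section~\ref{section_averageTX}).
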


The main difficulty for the proof of Theorem \ref{LEM_SMALL} is due to the fact that the entries of $A^{1/2} XB^{1/2}$ are not independent anymore. However, notice that if $X\equiv X^{Gauss}$ is $i.i.d.$ Gaussian, 
we have 
$$\Sig^{1/2} U^{*}X^{Gauss} V\wt \Sig^{1/2} \stackrel{d}{=}  \Sig^{1/2} X^{Gauss} \wt \Sig^{1/2}.$$
In this case, the problem is reduced to proving the local laws for separable covariance matrices with diagonal spatial and temporal covariance matrices, which can be handled using the standard resolvent methods as in e.g. \cite{isotropic,PY}. To go from the Gaussian case to the general $X$ case, we adopt a continuous self-consistent comparison argument developed in \cite{Anisotropic}. In order for this argument to work, we need to assume 
\eqref{assm_3rdmoment}. \nc The main reason is that we need to match the third moment of $x_{ij}$ with that of the Gaussian random variables in the derivation of equation \eqref{only3rd} below. \nc Under the weaker condition \eqref{assm_3moment}, we cannot prove the local laws up to the optimal scale $\eta \gg N^{-1}$, but only up to the scale $\eta \gg \max\{\frac{qb_N}{N},\frac{\sqrt{b_N}}{N}\}$ near the edge. However, to prove the edge universality, we only need to have a good local law up to the scale $\eta \le N^{-2/3-\e}$, hence $b_N$ can take values up to $b_N \ll N^{1/3}$. (Actually in the proof of Theorem \ref{main_thm} in Section \ref{sec_cutoff}, we will take $b_N=N^{-\e}$ for some small constant $\e>0$; \nc see \eqref{estimate_qs} below for the estimate on $b_N$ that is obtained from \eqref{assm_3rdmoment}\nc.) Finally, if $A$ or $B$ is diagonal, one can prove the local laws up to the optimal scale for all $b_N=\OO( N^{1/2})$ by using an improved comparison argument in \cite{Anisotropic}. 

Following the above discussions, we divide the proof of Theorem \ref{LEM_SMALL} into two steps. In Section \ref{sec_Gauss}, we give the proof for separable covariance matrices of the form $\Sig^{1/2} X \wt \Sig X^* \Sig^{1/2}$, which implies the local laws in the Gaussian $X$ case. In Section \ref{sec_comparison}, we apply the self-consistent comparison argument in \cite{Anisotropic} to extend the result to the general $X$ case. Compared with \cite{Anisotropic}, there are two differences in our setting: (1) the support of $X$ in Theorem \ref{LEM_SMALL} is $q=\OO(N^{-\phi})$ for some constant $0<\phi \le 1/2$, while \cite{Anisotropic} only dealt with $X$ with small support $q=\OO(N^{-1/2})$; (2) one has $B=I$ in \cite{Anisotropic}, which simplifies the proof.

\vspace{5pt}

The second moment of the error $ \langle \mathbf u, (G-\Pi) \mathbf v\rangle $ in fact satisfies a stronger bound.

\begin{lemma}
\label{thm_largebound}
Suppose the assumptions in Theorem \ref{LEM_SMALL} hold. Then for any fixed $\epsilon,\fa>0$ and $z\in \wt S(c_0,C_0,\fa,\epsilon)$,  we have the following bound
\begin{equation}\label{weak_off}
\mathbb{E} \vert \langle \mathbf u , G(X,z)\mathbf v\rangle - \langle \mathbf u , \Pi(z)\mathbf v\rangle  \vert^2 \prec \Psi^2(z), 
\end{equation}
for any deterministic unit vectors $\mathbf u , \mathbf v  \in \mathbb C^{\mathcal I}$.
\end{lemma}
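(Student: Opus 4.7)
The bound \eqref{weak_off} is a strengthening of the pointwise anisotropic local law \eqref{aniso_law}: the latter only gives $|\langle\mathbf u,(G-\Pi)\mathbf v\rangle|\prec q+\Psi$ with high probability, whereas \eqref{weak_off} asserts that the typical $L^2$ fluctuation is bounded by $\Psi$ alone, with no $q$-contribution. My plan is to extract this estimate as the $p=1$ case of the family of moment bounds
\begin{equation*}
\mathbb{E}\absb{\langle \mathbf u, (G-\Pi)\mathbf v\rangle}^{2p} \prec \Psi^{2p}(z), \quad z\in \wt S(c_0,C_0,\fa,\epsilon), \quad p\in \mathbb{N},
\end{equation*}
uniform in deterministic unit vectors $\mathbf u,\mathbf v\in\mathbb{C}^{\mathcal I}$. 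This is the natural quantity propagated by the self-consistent moment argument used to prove Theorem \ref{LEM_SMALL}; in particular, the anisotropic local law itself is recovered from the case $p=p(\epsilon,D)$ large via Markov's inequality.

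The proof follows the two-step scheme of Sections \ref{sec_Gauss}--\ref{sec_comparison}. First, for the Gaussian ensemble $X=X^G$, exploit the distributional identity $\Sigma^{1/2}U^*X^GV\wt\Sigma^{1/2}\stackrel{d}{=}\Sigma^{1/2}X^G\wt\Sigma^{1/2}$ to reduce to a separable covariance matrix with diagonal spatial and temporal covariances. For this reduced model the classical resolvent approach---Schur complement identities, large-deviation bounds for quadratic forms, and Ward identities---yields a self-improving recursion for the $2p$-th moment whose fixed point is $\OO(\Psi^{2p})$, exactly as carried out in Section \ref{sec_Gauss}.

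Second, propagate this bound to general $X$ satisfying \eqref{conditionA2} and \eqref{assm_3moment} via the continuous self-consistent comparison of Section \ref{sec_comparison}. Differentiate $\mathbb{E}|\langle\mathbf u,(G-\Pi)\mathbf v\rangle|^{2p}$ along an interpolation path between the Gaussian ensemble and the target ensemble, apply Stein's formula in the Gaussian direction, and expand $G$ in the matrix entries via resolvent identities. The resulting higher-order partial derivatives of $G$ in the entries of $X$ are controlled pointwise using the a priori anisotropic local law \eqref{aniso_law} together with \eqref{psi12}. The second-order (Gaussian) contributions cancel between the two ensembles; third-moment mismatches produce error terms of order $b_N\Psi^{2p}/N$ per entry; fourth- and higher-order terms contribute $\Psi^{2p}/N$ per entry under \eqref{conditionA2}. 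Summing over the $\OO(nN)$ entries and using the defining condition $b_N(\Psi^2+q/(N\eta))\le N^{-\fa}$ on $\wt S$, all errors are absorbed into $\Psi^{2p}$.

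The principal obstacle is the third-moment contribution controlled by $b_N$; this is the very reason the estimate is restricted to the smaller domain $\wt S(c_0,C_0,\fa,\epsilon)$ rather than the full $S(c_0,C_0,\epsilon)$. A secondary difficulty is that the non-diagonal structure of both $A$ and $B$ prevents the entries of $\Sigma^{1/2}U^*XV\wt\Sigma^{1/2}$ from being independent, so the moment expansion must be performed at the level of $X$ rather than at the level of $A^{1/2}XB^{1/2}$; this complication is handled exactly as in Section \ref{sec_comparison}. When $A$ or $B$ is diagonal, the refined expansion from \cite[Section 8]{Anisotropic} bypasses the third-moment obstruction.
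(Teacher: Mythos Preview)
Your route is genuinely different from the paper's, and as sketched it does not deliver the full strength of the lemma. The paper does \emph{not} reprove the bound by rerunning the continuous Gaussian interpolation of Section~\ref{sec_comparison}. Instead it invokes Lemma~\ref{lem_decrease} to produce a matrix $\wt X$ with support $q=N^{-1/2}$ whose first \emph{four} moments match those of $X$, and then compares $G(X,z)$ with $G(\wt X,z)$ via the Lindeberg telescoping identity \eqref{teles_inequality4}. The point of the four-moment matching is that the terms with $|\mathbf k|\le 4$ cancel, so only $|\mathbf k|\ge 5$ contributions remain; moreover, all resolvent entries in the comparison formula are entries of $G^0=\wt G$, and since $\wt X$ has small support the anisotropic local law for $\wt G$ reads $\wt G-\Pi=\OO_\prec(\Psi)$ with \emph{no} $q$. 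The two appearances of $\mathbf v$ in each surviving product then yield, after the $N^{-2}\sum_\gamma$ average and the Ward identities of Lemma~\ref{lem_comp_gbound},
\[
\frac{\im\bigl(z^{-1}\wt G_{\mathbf v_1\mathbf v_1}\bigr)+\im \wt G_{\mathbf v_2\mathbf v_2}+\eta}{N\eta}\prec\frac{\im m_{2c}+\Psi}{N\eta}=\OO_\prec(\Psi^2),
\]
which is exactly the gain.

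Your scheme, by contrast, matches only the first two moments with the Gaussian baseline, so the $r=4$ (and, under \eqref{assm_3moment}, the $r=3$) contributions survive. The Ward-type estimate \eqref{eq_comp_r2} applied to the interpolated matrix $X^\theta$---which inherits the \emph{large} support $q$ from $X$---produces $\Psi^2+q/(N\eta)$ rather than $\Psi^2$, because $\im G^\theta_{\mathbf v\mathbf v}\prec \im m_{2c}+q+\Psi$. Feeding this into the Gr\"onwall loop of Lemma~\ref{lemm_comp_4} at $p=2$ yields $\mathbb E F_{\mathbf v}^2\prec (q+\Psi)^2$, i.e.\ nothing beyond the square of \eqref{aniso_law}. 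There is no mechanism in your sketch to kill the extra $q^2$; the domain condition $b_N(\Psi^2+q/(N\eta))\le N^{-\fa}$ on $\wt S$ does not imply $q\lesssim\Psi$. The missing idea is precisely the four-moment-matching reduction to a small-support reference matrix.
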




With Theorem \ref{LEM_SMALL} as a key input, we can prove a stronger estimate on $m(z)$ that is independent of $q$. This averaged local law implies the rigidity of eigenvalues for $\mathcal Q_1$. Note that for any fixed $E$, $\Psi^2(E+\ii\eta) + {q}/(N\eta)$ is monotonically decreasing with respect to $\eta$, hence there is a unique $\eta_1(E)$ such that 
$$b_N \left(\Psi^2(E+\ii\eta_1(E)) + \frac{q}{N\eta_1(E)}\right) =1.$$
Then we define $\eta_l(E):=\max_{E\le x \le \lambda_+} \eta_1(x)$ (``$l$" for lower bound) for $E\le \lambda_+$, and $\eta_l(E):= \eta_l(\lambda_+)$ for $E>\lambda_+$. Note that by \eqref{eq_defpsi}, we always have $\eta_l(E)=\OO( b_N/N)$.

\begin{theorem}[Rigidity of eigenvalues] \label{thm_largerigidity}
Suppose the assumptions in Theorem \ref{LEM_SMALL} hold. 
Fix the constants $c_0$ and $C_0$ as given in Theorem \ref{LEM_SMALL}. Then for any fixed $\epsilon,\fa>0$, we have
\begin{equation}
 \vert m(z)-m_{c}(z) \vert \prec (N \eta)^{-1}, \label{aver_in}
\end{equation}
uniformly in $z \in  \wt S(c_0, C_0, \fa,\epsilon)$. Moreover, outside of the spectrum we have the following stronger estimate 
\begin{equation}\label{aver_out0}
 | m(z)-m_{c}(z)|\prec \nc \frac{N^{-\fa/2}}{N\eta} \nc + \frac{1}{N(\kappa +\eta)} + \frac{1}{(N\eta)^2\sqrt{\kappa +\eta}},
\end{equation}
uniformly in $z\in \wt S(c_0,C_0,\fa,\epsilon)\cap \{z=E+\ii\eta: E\ge \lambda_+, N\eta\sqrt{\kappa + \eta} \ge N^\epsilon\}$ for any fixed $\epsilon>0$.  If $A$ or $B$ is diagonal, then \eqref{aver_in} holds for $z\in S(c_0,C_0,\epsilon) $ and \nc\eqref{aver_out0} holds for $z\in S(c_0,C_0,\epsilon)\cap \{z=E+\ii\eta: E\ge \lambda_+, N\eta\sqrt{\kappa + \eta} \ge N^\epsilon\}$ without the term ${N^{-\fa/2}}/({N\eta})$\nc. The bounds (\ref{aver_in}) and \eqref{aver_out0} imply that for any constant $0<c_1<c_0$, the following estimates hold.

\begin{itemize}
\item[(1)] For any $E\ge \lambda_+ - c_1 $, we have
\begin{equation}
 \vert n(E)-n_{c}(E) \vert \prec N^{-1} + (\eta_l(E))^{3/2} + \eta_l(E)\sqrt{\kappa_E} ,  \label{Kdist}
\end{equation}
where $\kappa_E$ is defined in \eqref{KAPPA}, and
\begin{equation}\label{ncE}
n(E):=\frac{1}{N} \# \{ \lambda_j \ge E\}, \ \ n_{c}(E):=\int^{+\infty}_E \rho_{2c}(x)dx.
\end{equation}

\item[(2)] If $b_N\le N^{1/3-c}$ for some constant $c>0$, then for any $j$ such that $\lambda_+ - c_1 \le \gamma_j \le \lambda_+$, we have
\begin{equation}\label{rigidity}
\vert \lambda_j - \gamma_j \vert \prec  j^{-1/3}N^{-2/3} + \eta_0 ,
\end{equation}
where $\eta_0 := \eta_l(\lambda_+ - c_1) =\OO( b_N/N)$. 
\end{itemize}
\end{theorem}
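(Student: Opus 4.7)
The plan is to establish the four conclusions in stages: first upgrade the averaged local law from Theorem~\ref{LEM_SMALL} to remove the $q^2$ term, then convert the Stieltjes-transform bound into an estimate on the counting function via Helffer--Sjöstrand, and finally invert to obtain rigidity of individual eigenvalues.

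For~(\ref{aver_in}) and~(\ref{aver_out0}), the starting observation is that
\[
m(z)-m_c(z)\;=\;\frac{1}{nz}\sum_{i\in\mathcal{I}_1}\bigl(G_{ii}(z)-\Pi_{ii}(z)\bigr),
\]
together with the key input provided by Lemma~\ref{thm_largebound}: entrywise we have the \emph{second-moment} bound $\mathbb{E}|G_{ii}-\Pi_{ii}|^2\prec\Psi(z)^2$, which is strictly stronger than the $(q+\Psi)^2$ that Theorem~\ref{LEM_SMALL}(1) alone would give after squaring. Plugging this into a fluctuation-averaging argument in the spirit of~\cite{Average_fluc,Anisotropic} -- i.e.\ estimating $\mathbb{E}|m-m_c|^{2p}$ for every $p\in\mathbb{N}$ by iterated resolvent/Schur-complement expansions on the linearized matrix $H$ in~(\ref{linearize_block}) -- gains an additional factor when averaging the diagonal entries over $i\in\mathcal{I}_1$, turning the $\Psi$ on each entry into $\Psi^2\lesssim(N\eta)^{-1}$ on the trace. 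This yields~(\ref{aver_in}). For~(\ref{aver_out0}), the same argument is sharper: by Lemma~\ref{lem_mbehavior} one has $\Im m_c\sim\eta/\sqrt{\kappa+\eta}$ for $E\ge\lambda_+$, so $\Psi^2$ itself decomposes into the two contributions $1/[N(\kappa+\eta)]$ and $1/[(N\eta)^2\sqrt{\kappa+\eta}]$. The residual $N^{-\mathfrak{a}/2}/(N\eta)$ term is the price paid for bounding the $q$-dependent remainder from the fluctuation averaging using the defining constraint $b_N(\Psi^2+q/(N\eta))\le N^{-\mathfrak{a}}$ of $\widetilde S(c_0,C_0,\mathfrak{a},\epsilon)$.

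For~(\ref{Kdist}), I would use a standard Helffer--Sjöstrand contour integral: write $n(E)-n_c(E)$ as an integral of $\Im(m-m_c)(x+\mathrm{i}y)$ against derivatives of a smooth cutoff supported on $[E,\lambda_++c_0]$, lifted to heights $y\gtrsim\eta_l(E)$. The bulk piece is controlled by~(\ref{aver_in}) at $\eta=\eta_l(E)$, producing the $N^{-1}$ contribution; the boundary piece near $\lambda_+$ is controlled by~(\ref{aver_out0}) together with the square-root behavior $\rho_c(x)\sim\sqrt{\lambda_+-x}$ from Lemma~\ref{lem_mbehavior}, which converts the cutoff height into $(\eta_l(E))^{3/2}+\eta_l(E)\sqrt{\kappa_E}$ after integrating $\Im m_c$ along the real axis. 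For~(\ref{rigidity}), I invert the counting-function bound using the same square-root profile: since $|n_c(\gamma_j)-n_c(\lambda_j)|\sim\rho_c(\gamma_j)|\lambda_j-\gamma_j|\sim(j/N)^{1/3}|\lambda_j-\gamma_j|$ near the edge, the $N^{-1}$ error translates into $j^{-1/3}N^{-2/3}$, while $(\eta_l)^{3/2}+\eta_l\sqrt{\kappa}$ is dominated by $\eta_0=\eta_l(\lambda_+-c_1)=O(b_N/N)=O(N^{-2/3-c})$ under the hypothesis $b_N\le N^{1/3-c}$.

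The main obstacle is the fluctuation-averaging step for general non-diagonal $A$ and $B$. Without the diagonal structure available to~\cite{Anisotropic,Average_fluc}, every expansion step mixes all index blocks $\mathcal{I}_1,\mathcal{I}_2$, producing a substantially larger combinatorial tree of error terms; controlling these uniformly and closing the induction on $p$ requires careful bookkeeping based on the block structure of $G$ in~(\ref{green2}) and the bounds~(\ref{Immc})--(\ref{Piii}) of Lemma~\ref{lem_mbehavior}. A secondary technical point is handling the boundary regime where $b_N$ approaches $N^{1/3}$: one must verify that the lower bound $\eta_l$ stays well below $N^{-2/3}$ so that the Helffer--Sjöstrand integration still resolves individual eigenvalues near the edge.
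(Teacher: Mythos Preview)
Your overall plan for parts~(\ref{Kdist}) and~(\ref{rigidity}) matches the paper's: once~(\ref{aver_in}) and~(\ref{aver_out0}) are in hand, both the counting-function estimate and the rigidity follow from the standard Helffer--Sj\"ostrand and inversion arguments you describe, and the paper simply cites~\cite{EKYY1,EYY,PY} for this. The paper does spell out one point you gloss over: the upper bound $\lambda_1\le\lambda_++N^{-2/3+\epsilon}$ needed for rigidity is derived directly from~(\ref{aver_out0}) even with the extra $N^{-\mathfrak a/2}/(N\eta)$ term, by checking that $\Im m$ cannot be large at $z=E+\mathrm{i}N^{-2/3}$ with $E\ge\lambda_++N^{-2/3+2\epsilon}$.

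Where you diverge substantially is the proof of~(\ref{aver_in}) and~(\ref{aver_out0}) themselves. You propose to feed Lemma~\ref{thm_largebound} into a direct fluctuation-averaging argument on the large-support matrix $X$, and you correctly identify the obstacle: with non-diagonal $A$ and $B$ the entries of $A^{1/2}XB^{1/2}$ are correlated and the usual combinatorics of fluctuation averaging breaks down. The paper does \emph{not} attempt this. Instead it invokes Lemma~\ref{lem_decrease} to construct a matrix $\widetilde X$ with support $q=N^{-1/2}$ matching the first four moments of $X$; for $\widetilde X$, Theorem~\ref{LEM_SMALL} already yields~(\ref{aver_in}) and~(\ref{aver_out0}) since $q^2=N^{-1}\lesssim(N\eta)^{-1}$. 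The transfer from $\widetilde X$ to $X$ is then done by the Lindeberg-type comparison Lemma~\ref{lem_comgreenfunction}, which controls $\mathbb{E}|m-m_c|^p$ by $\mathbb{E}|\widetilde m-m_c|^p+(\Psi^2+J^2+K)^p$ with $J=\Psi$ and $K$ the right-hand side of~(\ref{aver_in}) or~(\ref{aver_out0}). In short, the paper sidesteps the fluctuation-averaging obstacle entirely by pushing the work into a four-moment comparison, whereas your route would require solving exactly the combinatorial problem you flag as the main difficulty.
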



The anisotropic local law (\ref{aniso_law}) implies the following delocalization properties of eigenvectors. 

\begin{lemma}[Isotropic delocalization of eigenvectors] \label{delocal_rigidity}
Suppose \eqref{aniso_law} and \eqref{rigidity} hold. Then for any deterministic unit vectors $\mathbf u \in \mathbb C^{\mathcal I_1}$, $\mathbf v  \in \mathbb C^{\mathcal I_2}$ and constant $0<c_1<c_0$, we have
\begin{equation}
\max_{k: \lambda_+ - c_1 \le \gamma_k \le \lambda_+} \left\{ \left|\langle \mathbf u,\xi_k\rangle \right|^2+\left|\langle \mathbf v ,\zeta_k\rangle \right|^2\right\} \prec \eta_0,\label{delocal}
\end{equation}
where $\eta_0$ is defined below \eqref{rigidity}.
\end{lemma}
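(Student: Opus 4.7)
I will derive \eqref{delocal} in the standard way, by isolating individual eigenvector overlaps from the imaginary parts of diagonal entries of $G$ and controlling those entries via the anisotropic local law \eqref{aniso_law}.

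From the spectral decompositions \eqref{spectral1}--\eqref{spectral2}, combined with the identities $\im(z/(\lambda_k-z)) = \eta\lambda_k/\abs{\lambda_k-z}^2$ and $\im(1/(\lambda_k-z)) = \eta/\abs{\lambda_k-z}^2$, one obtains for any $z = E+\ii\eta\in\mathbb C_+$ the representations
\[
\im\langle \mathbf u, G(z)\mathbf u\rangle = \eta\sum_{k=1}^n \frac{\lambda_k\abs{\langle\mathbf u,\xi_k\rangle}^2}{\abs{\lambda_k-z}^2},\qquad \im\langle\mathbf v,G(z)\mathbf v\rangle = \eta\sum_{k=1}^N\frac{\abs{\langle\mathbf v,\zeta_k\rangle}^2}{\abs{\lambda_k-z}^2}.
\]
Retaining only the nonnegative $k$-th term, and using that $\lambda_k\sim 1$ whenever $\gamma_k\geq \lambda_+-c_1$ (which follows from rigidity together with $\lambda_+\sim 1$), I obtain the single-term bounds
\[
\abs{\langle\mathbf u,\xi_k\rangle}^2 + \abs{\langle\mathbf v,\zeta_k\rangle}^2 \lesssim \frac{\abs{\lambda_k-z}^2}{\eta}\bigl(\im\langle\mathbf u,G(z)\mathbf u\rangle + \im\langle\mathbf v,G(z)\mathbf v\rangle\bigr).
\]

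For each $k$ with $\lambda_+-c_1\leq\gamma_k\leq\lambda_+$ I will choose the probe point $z_k := \gamma_k + \ii\eta_k^*$ with $\eta_k^* := \eta_0 + k^{-1/3}N^{-2/3}$ (so that, modulo harmless $N^\epsilon$ factors, $z_k$ lies in $\wt S(c_0,C_0,\fa,\epsilon)$). The rigidity bound \eqref{rigidity} then gives $\abs{\lambda_k-\gamma_k}\prec \eta_k^*$, hence $\abs{\lambda_k-z_k}^2\prec(\eta_k^*)^2$. Applying the anisotropic local law \eqref{aniso_law} at $z_k$ and using \eqref{defn_pi} together with Lemma~\ref{lem_mbehavior} and \eqref{Piii} yields
\[
\im\langle\mathbf u,G(z_k)\mathbf u\rangle + \im\langle\mathbf v,G(z_k)\mathbf v\rangle \prec \im m_{1c}(z_k) + \im m_{2c}(z_k) + q + \Psi(z_k),
\]
and the right-hand side is $\lesssim \sqrt{\kappa_{z_k}+\eta_k^*}$ by \eqref{Immc} (the $q+\Psi$ contribution being absorbed using $b_N(\Psi^2+q/(N\eta))\lesssim 1$ at $\eta\geq\eta_0$). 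Combining, with $\kappa_{z_k}=\lambda_+-\gamma_k\sim (k/N)^{2/3}$ from \eqref{SQUAREROOT} and \eqref{gammaj}, gives
\[
\abs{\langle\mathbf u,\xi_k\rangle}^2 + \abs{\langle\mathbf v,\zeta_k\rangle}^2 \prec \eta_k^*\sqrt{\kappa_{z_k}+\eta_k^*}.
\]
A short case analysis on which of $\eta_0, k^{-1/3}N^{-2/3}$ dominates $\eta_k^*$, and on which of $\kappa_{z_k},\eta_k^*$ dominates under the square root, together with the trivial lower bound $\eta_0\geq cN^{-1}$ coming from $\Psi^2\gtrsim (N\eta)^{-2}$ in the definition of $\eta_0$, shows that the right-hand side is bounded by $C\eta_0$ uniformly in $k$. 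Taking the maximum over admissible $k$ then yields \eqref{delocal}; uniformity in the deterministic vectors $\mathbf u, \mathbf v$ is inherited from the anisotropic local law via Definition~\ref{stoch_domination}(ii).

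\textbf{Main obstacle.} The only delicate point is the interplay of three competing scales: the local-law scale $\eta_0$, the rigidity scale $k^{-1/3}N^{-2/3}$, and the $\sqrt{\kappa+\eta}$ behavior of $\im m_{1,2c}$. Near the spectral edge the rigidity scale dominates $\eta_0$, so naively the single-term bound would give only $\abs{\langle\mathbf u,\xi_k\rangle}^2\prec N^{-2/3}$, substantially weaker than $\eta_0$. The saving is precisely the square-root improvement of $\im m_{1,2c}$ at the edge, which converts the naive $\eta_k^*$ bound into $(\eta_k^*)^{3/2}\sim N^{-1}\leq\eta_0$ and delivers the claimed estimate.
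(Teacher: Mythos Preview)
Your overall strategy---extract a single eigenvector overlap from $\im G_{\mathbf v\mathbf v}$ and bound the latter via the anisotropic local law---is the same as the paper's. However, your choice of probe point creates a genuine gap.

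You take $z_k=\gamma_k+\ii\eta_k^*$ with $\eta_k^*=\eta_0+k^{-1/3}N^{-2/3}$, so for small $k$ you are forced up to scale $\eta_k^*\sim N^{-2/3}\gg\eta_0$, and you then need the refined estimate $\im G_{\mathbf v\mathbf v}\prec \sqrt{\kappa+\eta}+q+\Psi$ to recover the loss. Your case analysis handles the $\sqrt{\kappa+\eta}$ and $\Psi$ contributions correctly (both give $\eta_k^*\sqrt{\kappa_{z_k}+\eta_k^*}\lesssim N^{-1}\lesssim\eta_0$ and $\eta_k^*\Psi(z_k)\lesssim N^{-1}$). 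But the $q$ term does not enjoy any square-root improvement: for $k\sim 1$ you get $\eta_k^*\,q\sim N^{-2/3}q$, and this is not $\prec\eta_0$ in the full parameter range of the lemma. Indeed $\eta_0\sim b_N/N$, so you would need $q\prec b_N N^{-1/3}$; with $b_N=\OO(1)$ and $q=N^{-\phi}$ for small $\phi<1/3$ (both allowed by the hypotheses of Theorem~\ref{LEM_SMALL}), this fails. The constraint $b_N(\Psi^2+q/(N\eta))\lesssim 1$ you invoke only gives $q\lesssim N\eta_k^*/b_N$, hence $\eta_k^* q\lesssim N(\eta_k^*)^2/b_N\sim N^{-1/3}/b_N$, which again is not $\lesssim\eta_0$ unless $b_N\gtrsim N^{1/3}$.

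The paper avoids this entirely by probing at $E=\lambda_k$ (random) and $\eta=N^{\epsilon}\eta_0$. Since $E$ hits the eigenvalue exactly, one has $|\lambda_k-z|=N^{\epsilon}\eta_0$ and only the crude bound $\im G_{\mathbf v\mathbf v}=\OO(1)$ is needed; rigidity is used only to ensure $\lambda_k+\ii N^{\epsilon}\eta_0\in\wt S$. This gives $|\langle\mathbf v,\zeta_k\rangle|^2\lesssim N^{\epsilon}\eta_0$ directly, with no case analysis and no dependence on the size of $q$. Replacing your deterministic probe $\gamma_k$ by the random $\lambda_k$ (and using uniformity of \eqref{aniso_law} over a net in $z$) closes the gap and in fact shortens the argument considerably.
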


\begin{proof}
Choose $z_0=E+\ii N^\e \eta_0\in \wt S(c_0, C_0, \fa,\epsilon)$. 
By (\ref{aniso_law}) and \eqref{psi12}, we have $\im  \langle \mathbf v, G(z_0) \mathbf v\rangle = \OO(1) $ with high probability. Then using the spectral decomposition (\ref{spectral1}), we get
\begin{equation}\label{spectraldecomp}
\sum_{k=1}^N \frac{N^\e \eta_0 \vert \langle \mathbf v, \zeta_k\rangle \vert^2}{(\lambda_k-E)^2+N^{2\e}\eta_0^2}  = \im\, \langle \mathbf v, {G}(z_0)\mathbf v\rangle =  \OO(1)  \quad \text{ with high probability.}
\end{equation}
By (\ref{rigidity}), we have that $\lambda_k + \ii N^\e \eta_0 \in \wt S(c_0, C_0, \fa, \epsilon)$ with high probability for every $k$ such that $\lambda_+ - c_1 \le \gamma_k \le \lambda_+$. Then choosing $E=\lambda_k$ in (\ref{spectraldecomp}) yields that
\begin{equation*}
\vert \langle \mathbf v, \zeta_k\rangle \vert^2 \lesssim N^\e \eta_0 \quad \text{with high probability.}
\end{equation*} 
Since $\epsilon$ is arbitrary, we get $\vert \langle \mathbf v, \zeta_k\rangle \vert^2  \prec \eta_0$. In a similar way, we can prove $\left|\langle \mathbf u,\xi_k\rangle \right|^2 \prec \eta_0$.
\end{proof}

Finally, we have the following edge universality result for separable covariance matrices with support $q\le N^{-\phi}$ and satisfying the condition (\ref{conditionA2}).

\begin{theorem}\label{lem_comparison}
Let $X^{(1)}$ and $X^{(2)}$ be two separable covariance matrices satisfying the assumptions in Theorem \ref{LEM_SMALL}. 
Suppose $b_N \le N^{1/3-c}$ for some constant $c>0$. Then there exist constants $\epsilon,\delta >0$ such that for any $s\in \mathbb R$, 
\be\label{EDDDD} 
\begin{split}
\mathbb{P}^{(1)} \left(N^{{2}/{3}}(\lambda_1-\lambda_{+}) \leq s-N^{-\epsilon}\right)-N^{-\delta} \leq \mathbb{P}^{(2)}\left(N^{{2}/{3}}(\lambda_1-\lambda_{+})\leq s\right) &\\
 \leq \mathbb{P}^{(1)}\left(N^{{2}/{3}}(\lambda_1-\lambda_{+}) \leq s+N^{-\epsilon}\right)+N^{-\delta} &, 
 \end{split}
\ee
where $\mathbb{P}^{(1)}$ and $\mathbb{P}^{(2)}$ denote the laws of $X^{(1)}$ and $X^{(2)}$, respectively. 
\end{theorem}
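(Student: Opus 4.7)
The plan is to prove Theorem \ref{lem_comparison} via the standard Green function comparison (GFT) scheme for edge universality. The key inputs are the anisotropic local law \eqref{aniso_law}, the second-moment refinement \eqref{weak_off}, the eigenvalue rigidity \eqref{rigidity}, and the eigenvector delocalization \eqref{delocal}. The first step is to rewrite the probability in \eqref{EDDDD} as the expectation of a smooth functional of the resolvent; the second is to compare those expectations between the two ensembles by a Lindeberg interpolation that swaps one entry of $X$ at a time.

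For the reduction, fix a small $\epsilon>0$ and pick a smooth nonincreasing $\chi:\mathbb R\to[0,1]$ with $\chi\equiv 1$ on $(-\infty,1/9]$ and $\chi\equiv 0$ on $[2/9,\infty)$, together with a compactly supported nonnegative window $K_\ell$ of width $\ell=N^{-2/3-9\epsilon}$ centered near $\lambda_++sN^{-2/3}$. Using the rigidity estimate \eqref{rigidity} to link the event $\{\lambda_1\le \lambda_++sN^{-2/3}\}$ to the eigenvalue count in the corresponding half-line, and replacing that count by an integral of $\mathrm{Im}\,m(x+\ii\eta)$ at $\eta=N^{-2/3-\epsilon}$ (which is allowable because of the local law and delocalization), one obtains
\[
\mathbb P\bigl(N^{2/3}(\lambda_1-\lambda_+)\le s\bigr)\;=\;\mathbb E\Bigl[\chi\Bigl(\tfrac{n}{\pi}\!\int K_\ell(x)\,\mathrm{Im}\,m(x+\ii \eta)\,\mathrm d x\Bigr)\Bigr]\;+\;\OO(N^{-c}),
\]
with analogous formulas at the shifted thresholds $s\pm N^{-\epsilon}$. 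The theorem thus reduces to proving $|\mathbb E^{(1)}F-\mathbb E^{(2)}F|=\OO(N^{-\delta})$, where $F$ denotes the right-hand side functional.

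For the Lindeberg swap, order the $nN$ entries of $X$ and let $X_\gamma$ denote the interpolating matrix in which the first $\gamma$ come from $X^{(1)}$ and the rest from $X^{(2)}$. Fix the swap position $(i_0,j_0)$. Using the linearization \eqref{linearize_block}, the dependence of $H$ on $x_{i_0 j_0}$ is linear, with the rank-two derivative $\partial_{x_{i_0 j_0}}H=\mathbf u\mathbf w^*+\mathbf w\mathbf u^*$, where $\mathbf u\in\mathbb C^{\mathcal I}$ embeds $\Sigma^{1/2}U^*e_{i_0}$ into the $\mathcal I_1$-block and $\mathbf w$ embeds $\wt\Sigma^{1/2}V^*e_{j_0}$ into the $\mathcal I_2$-block. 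By the bounded support of $X$, Taylor-expand $F(X_\gamma)$ in $x_{i_0 j_0}$ to fifth order; every coefficient becomes a polynomial in the anisotropic entries $\mathbf u^*G\mathbf u,\ \mathbf u^*G\mathbf w,\ \mathbf w^*G\mathbf w$, together with traces of $K_\ell(x)\,\mathrm{Im}\,G(x+\ii\eta)$ coming from derivatives of $\chi$. By the anisotropic local law these are all bounded, and in particular $\mathbf u^*G\mathbf w\prec\Psi$ since the off-diagonal block of $\Pi$ vanishes. Matching of the first two moments of $x_{i_0 j_0}^{(1)}$ and $x_{i_0 j_0}^{(2)}$ kills the $k=1,2$ contributions; the $k=3$ contribution carries the prefactor $|\mathbb E(x^{(1)})^3-\mathbb E(x^{(2)})^3|\le 2b_N N^{-2}$ from \eqref{assm_3moment}, and $k=4$ uses $\mathbb E|x|^4=\OO_\prec(N^{-2})$. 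A careful power counting with $\eta=N^{-2/3-\epsilon}$ and $\Psi^2\lesssim(N\eta)^{-1}$ then shows that the total error after summing $nN$ swaps is $\prec b_N N^{-1/3+\OO(\epsilon)}$, which is $\OO(N^{-\delta})$ thanks to $b_N\le N^{1/3-c}$.

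The principal obstacle is the bookkeeping forced by the non-diagonal nature of $A$ and $B$: one cannot apply the standard cumulant expansion on individual entries of $H$, because a single swap of $x_{i_0 j_0}$ is a rank-two perturbation of $H$ dressed by the arbitrary rotations $U$ and $V$. Every term in the Taylor expansion must be controlled as an anisotropic quantity, and one must track which products of $G$-entries inherit the sharper $\Psi$-bound from the vanishing off-diagonal block of $\Pi$ (without this improvement the $k=3$ contributions would fail the summability check). The threshold $b_N\le N^{1/3-c}$ is precisely tight for this power counting, which explains why this is exactly the regime in which the comparison succeeds.
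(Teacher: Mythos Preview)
Your outline follows the standard edge Green-function comparison scheme, but it has a genuine gap that makes the direct $X^{(1)}\leftrightarrow X^{(2)}$ swap fail in the large-support regime covered by the theorem. You write that ``$\mathbf u^*G\mathbf w\prec\Psi$ since the off-diagonal block of $\Pi$ vanishes''; however, the anisotropic local law \eqref{aniso_law} only gives $|G_{\mathbf u\mathbf w}|\prec q+\Psi(z)$. The sharper bound $\prec\Psi$ holds only when $q\lesssim\Psi$, i.e.\ essentially when $\phi\ge 1/3$, whereas the hypotheses of Theorem \ref{LEM_SMALL} allow any fixed $\phi>0$. With the correct bound, the third-order term in your expansion contributes (after summing the $nN$ swaps) a quantity of order $b_N(q+\Psi)$ rather than $b_N\Psi$; at $\eta=N^{-2/3-\epsilon}$ this is $N^{1/3-c-\phi}$ up to $N^{O(\epsilon)}$, which is \emph{not} small for small $\phi$. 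The second-moment refinement \eqref{weak_off} you list as an input does not rescue this step, because in the Lindeberg scheme the derivative $f^{(3)}$ is a product of several $G$-entries, and a pointwise (not variance) control is what feeds into $\sum_\gamma|\mathbb E f^{(3)}_\gamma|$. The fourth-order term suffers from the same loss.

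The paper sidesteps exactly this difficulty by \emph{not} comparing $X^{(1)}$ and $X^{(2)}$ directly. Instead it passes through auxiliary matrices $\wt X^{(1)},\wt X^{(2)}$ of small support $N^{-1/2}$ whose first four moments match those of $X^{(1)},X^{(2)}$ (Lemma \ref{lem_decrease}). The comparison $X^{(j)}\leftrightarrow\wt X^{(j)}$ then has vanishing contributions at orders $1$--$4$, and the terms of order $\ge 5$ are handled by the high-order (Lee--Yin type) resolvent expansion of Lemma \ref{Greenfunctionrepresent}, yielding an error $N^{-\phi+C\delta}$ (Lemma \ref{lem_compdiffsupport}, Lemma \ref{eq_edgeuniv}). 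Only the remaining comparison $\wt X^{(1)}\leftrightarrow\wt X^{(2)}$ is done by the standard edge GFT (Lemma \ref{lem_smallcomp}), and there your bound $|G_{\mathbf u\mathbf w}|\prec\Psi$ \emph{is} legitimate because $q=N^{-1/2}\lesssim\Psi$. In short, your scheme is correct for the small-support step but cannot replace the four-moment reduction needed to handle the large-support matrices of Theorem \ref{lem_comparison}.
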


\begin{remark} \label{rigid_multi}
As in \cite{EKYY,EYY,LY}, Theorem \ref{lem_comparison} can be can be generalized to finite correlation functions of the $k$ largest eigenvalues for any fixed $k$:
\begin{equation}\label{EDDDD_ext}
\begin{split}
 \mathbb{P}^{(1)} \left( \left(N^{{2}/{3}}(\lambda_i-\lambda_{+}) \leq s_i -N^{-\epsilon}\right)_{1\le i \le k}\right)-N^{-\delta} \le \mathbb{P}^{(2)} \left( \left(N^{{2}/{3}}(\lambda_i-\lambda_{+})  \leq s_i \right)_{1\le i \le k}\right) & \\
  \le \mathbb{P}^{(1)} \left( \left(N^{{2}/{3}}(\lambda_i-\lambda_{+}) \leq s_i + N^{-\epsilon}\right)_{1\le i \le k}\right)+ N^{-\delta} &. \end{split}
\end{equation}
The proof of (\ref{EDDDD_ext}) is similar to that of (\ref{EDDDD}) except that it uses a general form of the Green function comparison theorem; see e.g.\;\cite[Theorem 6.4]{EYY}. As a corollary, we can get the stronger edge universality result (\ref{SUFFICIENT2}).
\end{remark}

The proofs for Lemma \ref{thm_largebound}, Theorem \ref{thm_largerigidity} and Theorem \ref{lem_comparison} follow essentially the same path as discussed below. First, for random matrix $\wt X$ with small suppoort $q=\OO(N^{-1/2})$, we have the averaged local laws \eqref{aver_in}-\eqref{aver_out0} and the following anisotropic local law 
$$\left| \langle \mathbf u, G(\wt X,z) \mathbf v\rangle - \langle \mathbf u, \Pi (z)\mathbf v\rangle \right| \prec \Psi(z).$$
With these estimates, one can prove that Lemma \ref{thm_largebound}, Theorem \ref{thm_largerigidity} and Theorem \ref{lem_comparison} hold in the small support case using the methods in e.g. \cite{EKYY,EYY,PY}. Then it suffices to use a comparison argument to show that the large support case is ``sufficiently close" to the small support case. In fact, given any matrix $X$ satisfying the assumptions in Theorem \ref{LEM_SMALL}, we can construct a matrix $\wt X$ having the same first four moments as $X$ but with smaller support $q=\OO(N^{-1/2})$, which is the content of the next lemma.

\begin{lemma} [Lemma 5.1 in \cite{LY}]\label{lem_decrease}
Suppose $X$ satisfies the assumptions in Theorem \ref{LEM_SMALL}. Then there exists another matrix $\wt{X}=(\wt x_{ij})$, such that $\wt{X}$ satisfies the bounded support condition (\ref{eq_support}) with $q=N^{-1/2}$, and the first four moments of the $X$ entries and $\wt{X}$ entries match, i.e.
\begin{equation}\label{match_moments}
\mathbb Ex_{ij}^k =\mathbb E\wt x_{ij}^k, \ \ k=1,2,3,4.
\end{equation}
\end{lemma}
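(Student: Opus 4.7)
The plan is to construct $\wt X$ entrywise, exploiting the i.i.d.\ structure: it suffices to build a single random variable $\wt x$ matching the first four moments of $x := x_{11}$ and satisfying $|\wt x| \prec N^{-1/2}$, then populate $\wt X$ with i.i.d.\ copies. After rescaling by $\sqrt N$, set $\xi := \sqrt N\,x$; the hypotheses on $X$ in Theorem~\ref{LEM_SMALL} translate to $\E\xi = 0$, $\E\xi^2 = 1$, $\mu_3 := \E\xi^3 = \OO(b_N/\sqrt N) = \OO(1)$ (since $b_N \le N^{1/2}$), and $\mu_4 := \E\xi^4 = \OO_\prec(1)$. The goal thus reduces to finding a random variable $\wt\xi$ with these four moments and satisfying $|\wt\xi| \le C$ for some deterministic $C = \OO_\prec(1)$; then $\wt x := \wt\xi/\sqrt N$ is the desired object.

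The natural construction is to take $\wt\xi$ as a three-atom discrete distribution on $\{-\alpha, 0, \beta\}$ with probabilities $p_-, p_0, p_+$. This provides exactly four free parameters (two atom locations plus two independent probabilities after normalization), matching the four moment constraints. Using $\E\wt\xi = 0$ to write $p_-\alpha = p_+\beta =: q$, the remaining three equations become
\[
q(\alpha+\beta) = 1,\qquad q(\beta^2-\alpha^2) = \mu_3,\qquad q(\alpha^3+\beta^3) = \mu_4,
\]
which I would solve explicitly via $s := \alpha + \beta = \sqrt{4\mu_4 - 3\mu_3^2}$, $\alpha = (s - \mu_3)/2$, $\beta = (s + \mu_3)/2$, and consequently $p_\pm = 2/[s(s \pm \mu_3)]$, $p_0 = 1 - 1/(\mu_4 - \mu_3^2)$.

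The key remaining step is verifying that these formulas yield a bona fide probability distribution, i.e.\ $\alpha, \beta > 0$ and $p_-, p_0, p_+ \in [0,1]$. Everything reduces to the single inequality $\mu_4 \ge 1 + \mu_3^2$, which is precisely Cauchy--Schwarz applied to $\xi(\xi^2 - 1)$: $\mu_3^2 = (\E[\xi(\xi^2-1)])^2 \le \E\xi^2 \cdot \E(\xi^2-1)^2 = \mu_4 - 1$. Once positivity is established, the support bound is immediate: $|\wt\xi| \le \max(\alpha,\beta) \le s \le 2\sqrt{\mu_4} = \OO_\prec(1)$, so $|\wt x| \prec N^{-1/2}$, matching the bounded-support condition with $q = N^{-1/2}$, while the identities $\E\wt x^k = \E x^k$ for $k = 1, 2, 3, 4$ hold by construction. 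The main obstacle---though in the end routine---is recognizing the Hankel-positivity condition $\mu_4 \ge 1 + \mu_3^2$ as the only nontrivial consistency requirement; this is the classical $2\times 2$ determinant obstruction of the truncated Hamburger moment problem and is automatic for any centered unit-variance random variable, so no additional hypothesis beyond Theorem~\ref{LEM_SMALL} is needed.
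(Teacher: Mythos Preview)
Your construction is correct and is precisely the standard three-atom moment-matching argument; the paper does not supply its own proof but simply cites \cite{LY}, whose Lemma 5.1 uses exactly this construction. Your identification of $\mu_4 \ge 1 + \mu_3^2$ as the sole consistency condition, and its verification via Cauchy--Schwarz on $\xi(\xi^2-1)$, is the crux of the argument and is handled cleanly.
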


It is known that the Lindeberg replacement strategy combined with the four moment matching usually implies some universality results in random matrix theory, see e.g.\;\cite{TaoVu_edge,TaoVu_4moment,TaoVu_local}. This is actually also true in our case. We shall extend the Green function comparison method developed in \cite{LY} (which is essentially an iterative application of the Lindeberg strategy using the four moment matching), and prove that Lemma \ref{thm_largebound}, Theorem \ref{thm_largerigidity} and Theorem \ref{lem_comparison} also hold for the large support case. The proofs are given in Section \ref{sec_Lindeberg}. 

\section{Proof of of Theorem \ref{main_thm}}\label{sec_cutoff}

In this section, we prove Theorem \ref{main_thm} with the results in Section \ref{sec_tools}. 
Given the matrix $X$ satisfying Assumption \ref{assm_big1} and the tail condition (\ref{tail_cond}), we introduce a cutoff on its matrix entries at the level $N^{-\epsilon}$. For any fixed $\epsilon>0$, define
\begin{equation*}
\alpha_N:=\mathbb{P}\left(\vert q_{11} \vert > N^{{1}/{2}-\epsilon}\right), \ \ \beta_N:=\mathbb{E}\left[\mathbf{1}{\left(|q_{11}|> N^{{1}/{2}-\epsilon}\right)}q_{11} \right].
\end{equation*}
By (\ref{tail_cond}) and integration by parts, we get that for any fixed $\delta>0$ and large enough $N$,
\begin{equation}
\alpha_N \leq \delta N^{-2+4\epsilon}, \ \ \vert \beta_N \vert \leq \delta N^{-{3}/{2}+3 \epsilon} . \label{BBBOUNDS}
\end{equation}
\nc Let $\rho(\dd x)$ be the law of $q_{11}$. \nc Then we define independent random variables $q_{ij}^s$, $q_{ij}^l$, $c_{ij}$, $1\le i \le n$ and $1\le j \le N$, in the following ways.
\begin{itemize}
\item \nc $q_{ij}^s$ has law $\rho_s$, which is defined such that
\begin{equation}
 \rho_s(\cal E)= \frac1{1-\alpha_N}\int \mathbf 1\left( x+\frac{\beta_N}{1-\al_N} \in \cal E \right)\mathbf{1}\left(\left| x \right| \leq N^{{1}/{2}-\epsilon} \right)  \rho(\dd x) \nonumber
\end{equation}
for any event $\cal E$. Note that if $q_{11}$ has density $\rho(x)$, then the density for $q_{11}^s$ is 
\begin{equation}
\rho_s(x)= \mathbf{1}\left(\left| x-\frac{\beta_N}{1-\alpha_N}  \right| \leq N^{{1}/{2}-\epsilon} \right) \frac{\rho\left(x-\frac{\beta_N}{1-\alpha_N}\right)}{1-\alpha_N}. \nonumber
\end{equation}
\item \nc $q_{ij}^l$ has law $\rho_l$, such that 
\begin{equation}
 \rho_l(\cal E)= \frac1{\alpha_N}\int \mathbf 1\left( x+\frac{\beta_N}{1-\al_N} \in \cal E \right)\mathbf{1}\left(\left| x \right| > N^{{1}/{2}-\epsilon} \right)  \rho(\dd x) \nonumber
\end{equation}
for any event $\cal E$.\nc
\item $c_{ij}$ is a Bernoulli 0-1 random variable with $\mathbb{P}(c_{ij}=1)=\alpha_N$ and $\mathbb{P}(c_{ij}=0)=1-\alpha_N$.
\end{itemize}
Let $X^s$, $X^l$ and $X^c$ be random matrices such that $X^s_{ij} = N^{-1/2}q_{ij}^s$, $X^l_{ij} = N^{-1/2}q_{ij}^l$ and $X^c_{ij} = c_{ij}$.
It is easy to check that for independent $X^s$, $X^l$ and $X^c$,
\begin{equation}
X_{ij} \stackrel{d}{=} X^s_{ij}\left(1-X^c_{ij}\right)+X^l_{ij}X^c_{ij} - \frac{1}{\sqrt{N}}\frac{\beta_N}{1-\alpha_N}. \label{T3}
\end{equation}
\nc The purpose of this decomposition (in distribution) is to write $X$ into a well-behaved random matrix $X^s$ with bounded support $q=\OO(N^{-\e})$ plus a perturbation matrix $(X^l- X^s) X^c$. Here the matrix $X^c$ gives the locations of the nonzero entries of the perturbation matrix, and its rank is at most $N^{5\e}$ with high probability; see \eqref{LDP_B} below. The matrix $X^l$ contains the ``abnormal" large entries above the cutoff, but the tail condition \eqref{tail_cond} guarantees that the sizes of these entries are of order $\oo(1)$ in probability; see \eqref{prob_R}. Hence the perturbation $(X^l- X^s) X^c$ is of low rank and has small strengths. Then as in the famous BBP transition \cite{BBP}, we will show that the effect of this perturbation on the largest eigenvalue is negligible.   \nc

If we define the $n\times N$ matrix $Y=(Y_{ij})$ by 
$$Y_{ij}=\frac{1}{\sqrt{N}}\frac{\beta_N}{1-\alpha_N}=\OO(\delta N^{-2+3\epsilon}), \quad 1\le i \le n,\ \ 1\le  j \le N,$$
then we have $ \| Y \| =\OO (N^{-1+3\epsilon}) $. 
In the proof below, one will see that (recall \eqref{Qtilde})
$$ \left\| \Sig^{1/2} U^{*}(X+Y) V\wt \Sig^{1/2} \right\| = \lambda^{1/2}_1\left(\wt {\mathcal Q}_1(X+Y)\right) =\OO(1)$$
with probability $1-\oo(1)$. 
Thus with probability $1-\oo(1)$, we have 
\begin{align}\label{const_err}
\left|\lambda_1\left(\wt {\mathcal Q}_1(X+Y)\right) - \lambda_1\left(\wt {\mathcal Q}_1(X)\right)\right| = \OO\left( N^{-1+3\epsilon} \right).
\end{align}
Hence the deterministic part in (\ref{T3}) is negligible under the scaling $N^{2/3}$.

By (\ref{tail_cond}), \eqref{assm_3rdmoment} and integration by parts, it is easy to check that
\begin{align}\label{estimate_qs}
\mathbb{E} q^{s}_{11} =0, \ \ \mathbb{E}\vert q^s_{11}\vert^2=1-\OO(N^{-1+2 \epsilon}), \ \ \mathbb{E}|q^s_{11}|^3 = \OO(1),  \ \ \mathbb{E}( q^s_{11})^3 = \OO(N^{-1/2+\e}),  \ \ \mathbb{E}\vert q^s_{11} \vert^4=\OO(\log N).
\end{align}
\nc Note that this is the only place where \eqref{assm_3rdmoment} is used in order to get the estimate on $\mathbb{E}( q^s_{11})^3$. For the reason why this estimate is needed, we refer the reader to the discussion below Theorem \ref{LEM_SMALL}. \nc
Thus $X_1:=(\mathbb{E}\vert q^s_{11} \vert^2)^{-{1}/{2}}X^s$ is a matrix that satisfies the assumptions for $X$ in Theorem \ref{LEM_SMALL} with $b_N=\OO(N^\e)$ and $q=\OO(N^{-\e})$. 
Then by Theorem \ref{lem_comparison}, there exist constants $\epsilon',\delta'>0$ such that for any $s\in \mathbb R$,
\be\label{univ_small}
\begin{split}
\mathbb{P}^G \left( N^{{2}/{3}}(\lambda_1-\lambda_{+}) 
 \leq  s-N^{-\epsilon'}\right)-N^{-\delta'} \leq
\mathbb{P}^s\left(N^{{2}/{3}}(\lambda_1-\lambda_{+})\leq s \right) & \\
\leq \mathbb{P}^G\left(N^{{2}/{3}}(\lambda_1-\lambda_{+}) \leq s+N^{-\epsilon'}\right)+N^{-\delta'} &,
\end{split}
\ee
where $\mathbb P^s$ denotes the law for $X^s$ and $\mathbb P^G$ denotes the law for $i.i.d.$ Gaussian matrix. Now we write the first two terms on the right-hand side of (\ref{T3}) as
$$X^s_{ij}(1-X^c_{ij})+X^l_{ij}X^c_{ij} = X^s_{ij} + R_{ij} X^c_{ij}, \quad R_{ij}:=X^l_{ij}-X^s_{ij}.$$
We define the matrix $R^c :=(R_{ij} X^c_{ij})$. It remains to show that the effect of $R^c$ on $\lambda_1$ is negligible. Note that $X^c_{ij}$ is independent of $X^s_{ij}$ and $R_{ij}$. 

We first introduce a cutoff on matrix $X^c$ as $\wt X^c : =\mathbf{1}_{\mathscr A}X^c$, where
\begin{align*}
\mathscr A:=& \left\{\#\{(i,j):X^c_{ij}=1\}\leq N^{5\epsilon}\right\} \cap \left\{X^c_{ij}=X^c_{kl}=1  {\Rightarrow} \{i,j\}=\{k,l\} \ \text{or} \  \{i,j\} \cap \{k,l\}=\emptyset \right\}.
\end{align*}
If we regard the matrix $X^c$ as a sequence $\mathbf X^c$ of $nN$ {\it{i.i.d.}} Bernoulli random variables, it is easy to obtain from the large deviation formula that
\begin{equation}\label{LDP_B}
\mathbb{P}\left(\sum_{i=1}^{nN} \mathbf X^c_i \leq N^{5 \epsilon}\right) \geq 1- \exp(- N^{\epsilon}),
\end{equation}
for sufficiently large $N$. Suppose the number $n_0$ of the nonzero elements in $X^c$ is given with $n_0 \le N^{5\epsilon} $. Then it is easy to check that
\begin{align}\label{LDP_C}
\mathbb P\left(\exists \, i = k, j\ne l \ \text{or} \  i\ne k, j =l \text{ such that } X^c_{ij}=X^c_{kl}=1 \left| \sum_{i=1}^{nN} \mathbf X^c_i = n_0 \right. \right) = \OO(n_0^2N^{-1}). 
\end{align}
Combining the estimates (\ref{LDP_B}) and (\ref{LDP_C}), we get that 
\begin{equation}\label{prob_A}
\mathbb P(\mathscr A) \ge 1 -  \OO(N^{-1+10\epsilon}).
\end{equation}
On the other hand, by condition (\ref{tail_cond}), we have
\begin{equation}\label{prob_R}
\mathbb{P}\left(|R_{ij}| \geq \omega \right) \leq \mathbb{P}\left(|q_{ij}| \geq \frac{\omega}{2}N^{1/2}\right) = \oo(N^{-2}) ,
\end{equation}
for any fixed constant $\omega>0$. Hence if we introduce the matrix 
$$E = \mathbf 1\left(\mathscr A\cap \left\{\max_{i,j} |R_{ij}| \leq \omega \right\}\right) R^c,$$
then we have 
\begin{equation}\label{EneR}
\mathbb P( E = R^c)=1-\oo(1)
\end{equation}
by (\ref{prob_A}) and (\ref{prob_R}). Thus we only need to study the largest eigenvalue of $\wt {\mathcal Q}_1(X^s+E)$, where $\max_{i,j} |E_{ij}| \le \omega$ and $\text{rank}(E) \le N^{5\epsilon}$. In fact, it suffices to prove that
\begin{equation}\label{eq_claim}
\mathbb P\left(\left|\lambda_1^s - \lambda_1^E\right| \le N^{-3/4}\right) = 1-\oo(1), \quad \lambda_1^s:=\lambda_1\left(\wt {\mathcal Q}_1(X^s) \right),\quad  \lambda_1^E:=\lambda_1\left(\wt {\mathcal Q}_1(X^s+E) \right).
\end{equation}
The estimate (\ref{eq_claim}), combined with (\ref{const_err}), (\ref{univ_small}) and (\ref{EneR}), concludes (\ref{SUFFICIENT}).

Now we prove (\ref{eq_claim}). Since $ \wt{X}^c $ is independent of $X^s$, \nc the positions of the nonzero elements of $\wt X^c$ are independent of $X^s$\nc. \nc Without loss of generality, we assume the positions of the $n_0$ nonzero entries of $\wt X^c$ are $(1,1), (2,2),\cdots, (n_0,n_0)$, which correspond to the following entries of $E$: \nc
\begin{equation}
e_{11}, \ e_{22}, \ \cdots,  \ e_{n_0n_0}, \ \ n_0 \le N^{5\epsilon}. \label{STRUCTURE}
\end{equation}
For other choices of the positions of nonzero entries, the proof is exactly the same, but we make this assumption to simplify the notations. By the definition of $E$, we have $|e_{ii}| \le \omega$, $1\le i \le n_0$. We define the matrices
$$  H^s : = \left( {\begin{array}{*{20}c}
   { 0 } & \Sig^{1/2} U^{*}X^s V\wt \Sig^{1/2}  \\
   {(\Sig^{1/2} U^{*}X^s V\wt \Sig^{1/2})^*} & {0}  \\
   \end{array}} \right)$$ 
and $H^{E} : = H^s + P$, where
\begin{align*}
P: &= \left( {\begin{array}{*{20}c}
   { 0 } & \Sig^{1/2} U^{*}E V\wt \Sig^{1/2}   \\
   { (\Sig^{1/2} U^{*}E V\wt \Sig^{1/2} )^*} & {0}  \\
   \end{array}} \right)= \left( {\begin{array}{*{20}c}
   {\Sig^{1/2} U^{*}} & 0  \\
   { 0} & {\wt \Sig^{1/2}V^*}  \\
   \end{array}} \right)\left( {\begin{array}{*{20}c}
   { 0 } & E  \\
   { E^*} & {0}  \\
   \end{array}} \right)\left( {\begin{array}{*{20}c}
   {U\Sig^{1/2} } & 0  \\
   { 0} & {V\wt \Sig^{1/2}}  \\
   \end{array}} \right)\\
  & = \left( {\begin{array}{*{20}c}
   {\Sig^{1/2} U^{*}} & 0  \\
   { 0} & {\wt \Sig^{1/2}V^*}  \\
   \end{array}} \right)W P_DW^* \left( {\begin{array}{*{20}c}
   {U\Sig^{1/2} } & 0  \\
   { 0} & {V\wt \Sig^{1/2}}  \\
   \end{array}} \right),
   \end{align*}
where $P_D$ is a $2n_0 \times 2n_0$ diagonal matrix
$$P_D= \text{diag}\left(e_{11}, \ldots, e_{n_0n_0}, -e_{11},\ldots, -e_{n_0n_0}\right),$$
and $W$ is an $(n+N)\times 2n_0$ matrix such that
$$W_{ab}= \begin{cases}
\delta_{a,i}/\sqrt{2} + \delta_{a,(n+i)}/\sqrt{2}, \ &b = i , \, i\le n_0\\
\delta_{a,i}/\sqrt{2} - \delta_{a,(n+i)}/\sqrt{2}, \ &b = i+n_0 , \, i\le n_0
\end{cases}.$$
\nc Without loss of generality, we assume that $e_{ii} \ne 0$, $1\le i \le n_0$ (otherwise we only need to use a matrix $W$ with smaller rank). \nc
With the identity
$$\det\left( {\begin{array}{*{20}c}
   { - I_{n\times n}} & \Sig^{1/2} U^{*}X V\wt \Sig^{1/2}  \\
   {(\Sig^{1/2} U^{*}X V\wt \Sig^{1/2})^*} & { - zI_{N\times N}}  \\
\end{array}} \right) =(-1)^Nz^{N-n}\det\left(\wt{\mathcal Q}_1(X) - zI_{n\times n}\right), $$
and Lemma 6.1 of \cite{KY}, we find that if $\mu\notin \sigma(\wt{\mathcal Q}_1(X^s))$, then $\mu$ is an eigenvalue of $\wt {\mathcal Q}_1( X^s + \gamma E)$ if and only if
\begin{equation}
\det\left(O^* G^s (\mu)O + (\gamma P_D)^{-1}\right)=0, \quad 0<\gamma<1,
\end{equation}
where
$$G^s(\mu):=\left(H^s-\left( {\begin{array}{*{20}c}
   {  I_{n\times n}} & 0  \\
   {0} & { \mu I_{N\times N}}  \\
\end{array}} \right)\right)^{-1}, \quad O:=\left( {\begin{array}{*{20}c}
   {\Sig^{1/2} U^{*}} & 0  \\
   { 0} & {\wt \Sig^{1/2}V^*}  \\
   \end{array}} \right)W.$$ 
Define $R^\gamma:=O^* G^s O + (\gamma P_D)^{-1}$ for $0<\gamma < 1$, and let $\mu: = \lambda_1^s \pm N^{-3/4}.$ We claim that 
\begin{equation}\label{suff_claim}
{\mathbb P\left(\det R^\gamma (\mu) \ne 0 \text{ for all }0< \gamma \le 1\right) = 1-\oo(1).}
\end{equation}
If (\ref{suff_claim}) holds, then $\mu$ is not an eigenvalue of $\wt{\mathcal Q}_1(X+\gamma E)$ with probability $1-\oo(1)$. Denote the largest eigenvalue of $\wt{\mathcal Q}_1(X+\gamma E)$ by $\lambda^\gamma_1$, $0<\gamma \leq 1$, and define $\lambda^{0}_1:=\lim_{\gamma\downarrow 0}\lambda_1^\gamma$. Then we have $\lambda^0_1= \lambda_1^s $ and $\lambda^1_1= \lambda_1^E$. With the continuity of $\lambda_1^\gamma$ with respect to $\gamma$ and the fact that $\lambda^0_1\in (\lambda_1^s -N^{-3/4},\lambda_1^s +N^{-3/4})$, we find that
$$ \lambda_1^E = \lambda^1_1 \in (\lambda_1^s-N^{-3/4},\lambda_1^s+N^{-3/4}),$$
with probability $1-\oo(1)$, which proves (\ref{eq_claim}).


Finally, we prove (\ref{suff_claim}). Note that $\eta_0=\OO(b_N/N)=\OO(N^{-1+\e})$, hence $z=\lambda_++\ii N^{-{2}/{3}}$ is in $\wt S(c_0, C_0,\delta, \delta)$ for a small constant $\delta>0$. Now we write
\be\label{Rgamma}
R^\gamma(\mu)=O^* \left( G^s(\mu) - G^s(z)\right) O+ O^* \left( G^s(z) - \Pi(z)\right) O + O^*  \Pi(z) O + (\gamma P_D)^{-1} . 
\ee
With \eqref{psi12}, 
we have 
\be\label{Pi part}
\|O^*  \Pi(z) O\| =\OO(1)
\ee
By Lemma \ref{thm_largebound}, we have
$$\mathbb E \left| \left[O^* \left( G^s(z) - \Pi(z)\right) O\right]_{ab} \right|^2 \prec \Psi^2(z) = \OO(N^{-2/3}), \quad 1\le a,b \le 2m,$$
where we used (\ref{Immc}) and \eqref{eq_defpsi} in the second step. Then with Markov's inequality and a union bound, we can get that
\begin{equation}\label{BOUND2}
\max_{1\le a , b \le 2n_0} \left| \left[O^* \left( G^s(z) - \Pi(z)\right) O\right]_{ab} \right| \le N^{-1/6}
\end{equation}
holds with probability $1- \OO(n_0N^{-1/3})$. Thus we have
\be\label{Gz part}
\left\|O^* \left( G^s(z) - \Pi(z)\right) O\right\| = \OO(n_0N^{-1/6})= \OO(1) \quad \text{with probability $1- \OO(n_0N^{-1/3})$}.
\ee
It remains to bound the first term in \eqref{Rgamma}. As pointed out in Remark \ref{rigid_multi}, we can extend (\ref{univ_small}) to the finite correlation functions of the largest eigenvalues. Since the largest eigenvalues in the Gaussian case are separated in the scale $N^{-2/3}$, we conclude that
\begin{equation}\label{repulsion_estimate}
\mathbb P\left( \min_{i}|\lambda_i(\wt Q_1(X^s) ) - \mu| \ge N^{-3/4} \right) = 1-\oo(1).
\end{equation}
On the other hand, the rigidity result (\ref{rigidity}) gives that 
\begin{equation}\label{rigid_estimate}
|\mu - \lambda_+ | \prec N^{-2/3} .
\end{equation}
Using (\ref{delocal}), (\ref{repulsion_estimate}), (\ref{rigid_estimate}) and the rigidity estimate (\ref{rigidity}), we can get that for any set $\Omega$ of deterministic unit vectors of cardinality $N^{\OO(1)}$, 
\begin{equation}
\sup_{\mathbf u,\mathbf v\in \Omega}\left\vert \left\langle \mathbf u, \left(G^{s}(z)-G^{s}(\mu)\right) \mathbf v\right\rangle \right\vert \le N^{-{1}/{4}+3\e} \label{REALCOMPLEX}
\end{equation}
with probability $1-\oo(1)$. For instance, for deterministic unit vectors $\mathbf u,\mathbf v \in \mathbb C^{\mathcal I_2}$ and any constant $0<c\le \e$, we have with probability $1-\oo(1)$ that
\begin{align*}
& \left| \left\langle \mathbf u, \left(G^{s}(z)-G^{s}(\mu)\right) \mathbf v\right\rangle\right|  \le \sum_{k} \left|\langle \mathbf u, \zeta_k \rangle \langle \mathbf v, \zeta_k\rangle \right|\left| \frac{1}{\lambda_k - z} - \frac{1}{\lambda_k - \mu}\right| \\
& \prec \frac{1}{N^{2/3}}\sum_{\gamma_k \le \lambda_+- c_1} \left|\langle \mathbf u, \zeta_k \rangle \langle \mathbf v, \zeta_k\rangle \right| +  \frac{N^\e}{N^{5/3}}\sum_{ \gamma_k > \lambda_+ - c_1} \frac{1}{|\lambda_k - z||\lambda_k - \mu|}  \\
& \le \frac{1}{N^{2/3}} + \frac{N^\e}{N^{5/3}}\sum_{1\le k \le N^c} \frac{1}{|\lambda_k - z||\lambda_k - \mu|} + \frac{N^\e}{N^{5/3}}\sum_{k> N^c, \gamma_k>\lambda_+ - c_1} \frac{1}{|\lambda_k - z||\lambda_k - \mu|} \\
& \prec \frac{1}{N^{2/3}} + \frac{N^{c+\e}}{N^{1/4}} + \frac{N^\e}{N^{2/3}}\left(\frac{1}{N}\sum_{k> N^c, \gamma_k>\lambda_+ - c_1} \frac{1}{|\lambda_k - z||\lambda_k - \mu|}\right) \prec N^{-1/4+c+\e},
\end{align*}
where in the first step we used (\ref{spectral1}), in the second step (\ref{delocal}) (\nc with $\eta_0= \OO(N^{-1+\e})$\nc) and $|\lambda_k - z||\lambda_k - \mu| \gtrsim 1$ for $\gamma_k \le \lambda_+-c_1$ due to (\ref{rigidity}), in the third step the Cauchy-Schwarz inequality, in the fourth step (\ref{repulsion_estimate}), and in the last step $|\lambda_k - z||\lambda_k - \mu| \sim (k/N)^{-4/3}$ for $k>N^c$ by the rigidity estimate (\ref{rigidity}). 
For the other choices of deterministic unit vectors $\mathbf u ,\mathbf v \in \mathbb C^{\mathcal I_{1,2}}$, we can prove \eqref{REALCOMPLEX} in a similar way. Now with (\ref{REALCOMPLEX}), we can get that 
\be\label{Gmu part}
\left\|O^* \left( G^s(\mu) - G^s(z)\right) O\right\| = \OO(n_0N^{-1/4+3\e}) \quad \text{with probability $1- \oo(1)$}.
\ee
With \eqref{Pi part}, \eqref{Gz part} and \eqref{Gmu part}, we see that as long as $\omega$ is chosen to be sufficiently small, we have
$$\left\| O^* \left( G^s(\mu) - G^s(z)\right) O+ O^* \left( G^s(z) - \Pi(z)\right) O + O^*  \Pi(z) O\right\| < (\gamma \omega)^{-1}$$
for all $0<\gamma\le 1$ with probability $1-\oo(1)$. This proves the claim (\ref{suff_claim}), which further gives (\ref{eq_claim}) and completes the proof.

\section{Proof of Theorem \ref{LEM_SMALL}: Gaussian $X$}\label{sec_Gauss}


As discussed below Theorem \ref{LEM_SMALL}, in this section we prove Theorem \ref{LEM_SMALL} for separable covariance matrices of the form $\Sig^{1/2} X \wt \Sig X^* \Sig^{1/2}$, which will imply the local laws in the Gaussian $X$ case. Thus in this section, we use the following resolvent:
\begin{equation}\label{eqn_comparison1}
G(X,z) {=}  \left[\left( {\begin{array}{*{20}c}
   { 0 } & \Sig^{1/2} X \wt \Sig^{1/2}   \\
   {\wt\Sig^{1/2}X^*\Sig^{1/2} } & {0}  \\
   \end{array}} \right)-\left( {\begin{array}{*{20}c}
   { I_{n\times n}} & 0  \\
   0 & { zI_{N\times N}}  \\
\end{array}} \right)\right]^{-1},
\end{equation}
with $X$ satisfying \eqref{eq_support} with $q=N^{-1/2}$. More precisely, we will prove the following result. 

\begin{proposition}\label{prop_diagonal}
Suppose Assumption \ref{assm_big1} and \eqref{assm_gap} hold. Suppose $X$ satisfies the bounded support condition (\ref{eq_support}) with $q= N^{-1/2}$. Suppose $A$ and $B$ are diagonal, i.e. $U=I_{n\times n}$ and $V=I_{N\times N}$. Fix $C_0>1$ and let $c_0>0$ be a sufficiently small constant. Then for any fixed $\epsilon>0$, the following estimates hold. 
\begin{itemize}
\item[(1)] {\bf Anisotropic local law}:  For any $z\in S(c_0,C_0,\epsilon)$ and deterministic unit vectors $\mathbf u, \mathbf v \in \mathbb C^{\mathcal I}$,
\begin{equation}\label{aniso_diagonal}
\left| \langle \mathbf u, G(X,z) \mathbf v\rangle - \langle \mathbf u, \Pi (z)\mathbf v\rangle \right| \prec \Psi(z).
\end{equation}

\item[(2)] {\bf Averaged local law}: We have 
\begin{equation}\label{aver_diagonal}
 | m(z)-m_{c}(z)|\prec ({N\eta})^{-1}
\end{equation}
for any $z\in S(c_0,C_0,\epsilon)$, and 
\begin{equation}\label{aver_out}
 | m(z)-m_{c}(z)|\prec \frac{1}{N(\kappa +\eta)} + \frac{1}{(N\eta)^2\sqrt{\kappa +\eta}},
\end{equation}
for any $z\in S(c_0,C_0,\epsilon)\cap \{z=E+\ii\eta: E\ge \lambda_+, N\eta\sqrt{\kappa + \eta} \ge N^\epsilon\}$. 
\end{itemize}
Both of the above estimates are uniform in the spectral parameter $z$ and the deterministic vectors $\mathbf u, \mathbf v$.

\end{proposition}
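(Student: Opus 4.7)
\textbf{Proof proposal for Proposition \ref{prop_diagonal}.}

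Since $A$ and $B$ are diagonal, the entries of $\Sigma^{1/2}X\wt\Sigma^{1/2}$ are independent with $[\Sigma^{1/2}X\wt\Sigma^{1/2}]_{i\mu}=\sqrt{\sigma_i\wt\sigma_\mu}\,X_{i\mu}$. The plan is to follow the standard resolvent scheme that has been developed for sample covariance matrices in \cite{PY} and for general Gram-type ensembles in \cite{Alt_Gram,AEK_Gram,isotropic}. First I would derive approximate self-consistent equations for the diagonal entries of $G$. Using the Schur complement formula for the linearized block matrix in \eqref{eqn_comparison1}, for $i\in\mathcal I_1$ and $\mu\in\mathcal I_2$ one has identities of the form
\begin{equation*}
\frac{1}{G_{ii}}=-1-\sigma_i\sum_{\alpha,\beta\in\mathcal I_2}\sqrt{\wt\sigma_\alpha\wt\sigma_\beta}\,X_{i\alpha}G^{(i)}_{\alpha\beta}X_{i\beta}, \qquad \frac{1}{G_{\mu\mu}}=-z-\wt\sigma_\mu\sum_{j,k\in\mathcal I_1}\sqrt{\sigma_j\sigma_k}\,X_{j\mu}G^{(\mu)}_{jk}X_{k\mu},
\end{equation*}
where $G^{(a)}$ denotes the resolvent with row and column $a$ removed. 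Standard large deviation estimates for quadratic forms of independent random variables with bounded support $q=N^{-1/2}$ (see e.g. Lemma 3.8 of \cite{isotropic}) let me replace these quadratic forms by $\sigma_i m_2$ and $\wt\sigma_\mu m_1$ up to a tolerance of order $\Psi$, where $m_{1,2}$ were defined in Definition~\ref{resol_not}. Applying the minor expansion $G^{(a)}_{bb}=G_{bb}+\OO(\Psi^2 G_{ab}G_{ba}/G_{aa})$, I obtain the approximate equations $G_{ii}^{-1}\approx -(1+\sigma_i m_2)$ and $G_{\mu\mu}^{-1}\approx -z(1+\wt\sigma_\mu m_1)$, which are precisely the entries of $\Pi^{-1}$, and taking weighted averages recovers \eqref{separa_m12} with error $\OO_\prec(\Psi)$.

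Next I would run a bootstrap/continuity argument in $\eta$. At $\eta=1$ the bound $|G_{ab}-\Pi_{ab}|\prec N^{-1/2}$ is immediate from the trivial deterministic bound $\|G\|\le 1$ and a direct perturbation argument. I would then propagate the entrywise estimate down to $\eta\ge N^{-1+\epsilon}$ using the stability of the self-consistent system near $\lambda_+$. The required stability is a consequence of Lemma~\ref{lem_mbehavior}: the estimates \eqref{Immc} and \eqref{Piii} ensure that the linearised operator associated with \eqref{separa_m12}--\eqref{def_mc} has an inverse of bounded norm, with an extra factor $(\kappa+\eta)^{-1/2}$ appearing near the edge that is exactly what is needed to solve the stability equation and to turn an approximate equation with error $\Psi$ into the bound $|m_{1,2}-m_{1,2c}|\prec\Psi$ (as in \cite{BPZ1,Semicircle}). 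Combined with the entrywise Schur analysis, this gives the weak entrywise local law $\max_{a,b}|G_{ab}-\Pi_{ab}|\prec\Psi$.

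To upgrade the averaged estimate from the Ward-type bound $\Psi^2\lesssim (N\eta)^{-1}$ down to the sharp bound \eqref{aver_diagonal}, I would apply the fluctuation averaging lemma (see \cite{Average_fluc} and its generalizations in \cite{PY,Anisotropic}) to the off-diagonal averages appearing in the right-hand side of the self-consistent equation. This gives an extra gain of a factor $\Psi$ on the averaged error, yielding $|m-m_c|\prec (N\eta)^{-1}$. For \eqref{aver_out}, in the regime $E\ge\lambda_+$ with $N\eta\sqrt{\kappa+\eta}\ge N^\epsilon$, the stability operator has a better inverse (controlled by $(\kappa+\eta)^{-1/2}$ rather than $\eta^{-1/2}\sqrt{\kappa+\eta}^{-1/2}$), and one of the two terms in the quadratic stability equation dominates; iterating the stability analysis with this improved control, as in Section~10 of \cite{PY} or Theorem~3.12 of \cite{Anisotropic}, produces the sharper bound \eqref{aver_out}.

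Finally, for the anisotropic local law \eqref{aniso_diagonal}, I would use the polarization identity together with the representation $\langle\mathbf u,G\mathbf v\rangle=\sum_{a,b}\bar u_a G_{ab}v_b$, splitting the diagonal and off-diagonal contributions. The diagonal part $\sum_a \bar u_a v_a(G_{aa}-\Pi_{aa})$ is controlled by the entrywise law, while the off-diagonal sum is handled by re-expressing $G_{ab}$ for $a\ne b$ via Schur-complement expansions that recast it as a resolvent-weighted linear combination of the entries of $X$ (as in Sections~5--6 of \cite{isotropic} or Section~7 of \cite{Anisotropic}); then generalized large deviation bounds for the resulting linear and quadratic forms in the independent variables $X_{i\mu}$ give the desired $\OO_\prec(\Psi)$ bound, uniformly in deterministic unit vectors. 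The principal technical obstacle will be obtaining the initial entrywise control with error precisely $\Psi$ (rather than some weaker power) under the separable structure, since the stability operator of \eqref{separa_m12} is two-dimensional in $(m_{1c},m_{2c})$ and its near-edge analysis must be performed carefully via \eqref{sqroot4} and \eqref{Piii} to confirm that the diagonal and edge regimes are both covered by the same continuity argument.
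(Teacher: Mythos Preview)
Your proposal is correct and follows essentially the same route as the paper: Schur-complement identities yield approximate self-consistent equations for $G_{ii}$ and $G_{\mu\mu}$, a continuity argument in $\eta$ together with the stability lemma for $f(z,\alpha)=0$ gives first a weak entrywise law, fluctuation averaging is then iterated to upgrade this to the sharp entrywise bound $\Psi$ and to the averaged laws \eqref{aver_diagonal}--\eqref{aver_out}, and finally the anisotropic law \eqref{aniso_diagonal} is obtained from the entrywise law via the polynomialization method of \cite[Section~5]{isotropic}. Two small points where the paper is more explicit: (i) the initial estimate at $\eta\ge 1$ is not literally ``immediate'' from $\|G\|\le C$ but requires the self-consistent computation \eqref{selfcons_lemm2} plus stability; (ii) the entrywise bound $\Lambda\prec\Psi$ is not obtained directly from stability but via a bootstrap that first proves the weak law $\Lambda\prec (N\eta)^{-1/4}$ and then iterates fluctuation averaging to close the self-improving estimate---your sketch elides this intermediate step.
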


Under a different set of assumptions, the local law as in Proposition \ref{prop_diagonal} has been proved in \cite{Alt_Gram}. \nc However, in order to satisfy their assumptions in our setting, we need to assume that the eigenvalues of $A$ and $B$ are both upper and lower bounded by some constants $\tau \le \sigma_i,  \wt\sigma_i \le \tau^{-1}$, which rules out the possibility of zero or very small (that is, ${\rm o}(1)$) eigenvalues of $A$ and $B$. On the other hand, our assumptions in \eqref{assm3} and \eqref{assm_gap} are slightly more general, and allow for a large portion of small or zero eigenvalues of $A$ and $B$. \nc For reader's convenience, we shall give the proof of Proposition \ref{prop_diagonal} in our setting. This proof is similar to the previous proof of the local laws, such as \cite{isotropic, DY, Anisotropic, XYY_circular}. Thus instead of giving all the details, we only describe briefly the proof. In particular, we shall focus on the key self-consistent equation argument, which is (almost) the only part that departs significantly from the previous proof in e.g. \cite{isotropic}. In the proof, we always denote the spectral parameter by $z=E+\ii\eta$. 

\subsection{Basic tools}
In this subsection, we collect some basic tools that will be used. For simplicity, we denote $Y:=\Sig^{1/2} X \wt \Sig^{1/2}$. 

\begin{definition}[Minors]
For any $ (n+N)\times (n+N)$ matrix $\cal A$ and $\mathbb T \subseteq \mathcal I$, we define the minor $\cal A^{(\mathbb T)}:=(\cal A_{ab}:a,b \in \mathcal I\setminus \mathbb T)$ as the $ (n+N-|\mathbb T|)\times (n+N-|\mathbb T|)$ matrix obtained by removing all rows and columns indexed by $\mathbb T$. Note that we keep the names of indices when defining $\cal A^{(\mathbb T)}$, i.e. $(\cal A^{(\mathbb{T})})_{ab}= \cal A_{ab}$ for $a,b \notin \mathbb{{T}}$. Correspondingly, we define the resolvent minor as
\begin{align*}
G^{(\mathbb T)}:&=\left[\left(H - \left( {\begin{array}{*{20}c}
   { I_{n\times n}} & 0  \\
   0 & { zI_{N\times N}}  \\
\end{array}} \right)\right)^{(\mathbb T)}\right]^{-1} = \left( {\begin{array}{*{20}c}
   { z\mathcal G_1^{(\mathbb T)}} & \mathcal G_1^{(\mathbb T)} Y^{(\mathbb T)}  \\
   {\left(Y^{(\mathbb T)}\right)^*\mathcal G_1^{(\mathbb T)}} & { \mathcal G_2^{(\mathbb T)} }  \\
\end{array}} \right)  = \left( {\begin{array}{*{20}c}
   { z\mathcal G_1^{(\mathbb T)}} & Y^{(\mathbb T)}\mathcal G_2^{(\mathbb T)}   \\
   {\mathcal G_2^{(\mathbb T)}}\left(Y^{(\mathbb T)}\right)^* & { \mathcal G_2^{(\mathbb T)} }  \\
\end{array}} \right),
\end{align*}
and the partial traces
$$m_1^{(\mathbb T)}:=\frac{1}{Nz}\sum_{i\notin \mathbb T}\sigma_i G_{ii}^{(\mathbb T)},\ \ m_2^{(\mathbb T)}:= \frac{1}{N}\sum_{\mu \notin \mathbb T}\wt \sigma_\mu G_{\mu\mu}^{(\mathbb T)}.$$ 
For convenience, we will adopt the convention that for any minor $\cal A^{(T)}$ defined as above, $\cal A^{(T)}_{ab} = 0$ if $a \in \mathbb T$ or $b \in \mathbb T$. We will abbreviate $(\{a\})\equiv (a)$, $(\{a, b\})\equiv (ab)$, and $\sum_{a}^{(\mathbb T)} := \sum_{a\notin \mathbb T} .$
\end{definition}


\begin{lemma}{(Resolvent identities).}

\begin{itemize}
\item[(i)]
For $i\in \mathcal I_1$ and $\mu\in \mathcal I_2$, we have
\begin{equation}
\frac{1}{{G_{ii} }} =  - 1 - \left( {YG^{\left( i \right)} Y^*} \right)_{ii} ,\ \ \frac{1}{{G_{\mu \mu } }} =  - z  - \left( {Y^*  G^{\left( \mu  \right)} Y} \right)_{\mu \mu }.\label{resolvent2}
\end{equation}

 \item[(ii)]
 For $i\ne j \in \mathcal I_1$ and $\mu \ne \nu \in \mathcal I_2$, we have
\begin{equation}
G_{ij}   = G_{ii} G_{jj}^{\left( i \right)} \left( {YG^{\left( {ij} \right)} Y^* } \right)_{ij},\ \ G_{\mu \nu }  = G_{\mu \mu } G_{\nu \nu }^{\left( \mu  \right)} \left( {Y^*  G^{\left( {\mu \nu } \right)} Y} \right)_{\mu \nu }. \label{resolvent3}
\end{equation}
For $i\in \mathcal I_1$ and $\mu\in \mathcal I_2$, we have
\begin{equation}\label{resolvent6}
\begin{split}
& G_{i\mu } = G_{ii} G_{\mu \mu }^{\left( i \right)} \left( { - Y_{i\mu }  +  {\left( {YG^{\left( {i\mu } \right)} Y} \right)_{i\mu } } } \right), \ \  G_{\mu i}  = G_{\mu \mu } G_{ii}^{\left( \mu  \right)} \left( { - Y_{\mu i}^*  + \left( {Y^*  G^{\left( {\mu i} \right)} Y^*  } \right)_{\mu i} } \right).
\end{split}
\end{equation}

 \item[(iii)]
 For $a \in \mathcal I$ and $b, c \in \mathcal I \setminus \{a\}$,
\begin{equation}
G_{bc}  = G_{bc}^{\left( a \right)} + \frac{G_{ba} G_{ac}}{G_{aa}}, \ \ \frac{1}{{G_{bb} }} = \frac{1}{{G_{bb}^{(a)} }} - \frac{{G_{ba} G_{ab} }}{{G_{bb} G_{bb}^{(a)} G_{aa} }}. \label{resolvent8}
\end{equation}

 \item[(iv)]
All of the above identities hold for $G^{(\mathbb T)}$ instead of $G$ for $\mathbb T \subset \mathcal I$, and in the case where $A$ and $B$ are not diagonal.
\end{itemize}
\label{lemm_resolvent}
\end{lemma}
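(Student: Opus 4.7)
The plan is to derive all four parts from the Schur complement formula applied to the block matrix
\[
M \;:=\; H - \begin{pmatrix} I_n & 0 \\ 0 & zI_N \end{pmatrix}
  \;=\; \begin{pmatrix} -I_n & Y \\ Y^{*} & -zI_N \end{pmatrix}, \qquad Y := \Sigma^{1/2} X \wt\Sigma^{1/2}
\]
(with $X$ replaced by $U^{*}XV$ in the general non-diagonal setting), whose inverse is $G$ by definition. Crucially, by the definition of the resolvent minor, $(M^{(\mathbb T)})^{-1} = G^{(\mathbb T)}$ for every subset $\mathbb T \subset \mathcal{I}$, so every inversion of a minor of $M$ can be immediately rewritten as a resolvent minor.

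For (i), I would fix $i \in \mathcal{I}_1$ and partition $M$ into the row/column at index $i$ versus its complement. Since the upper-left block of $M$ is $-I_n$, the $i$-th row of $M$ vanishes on $\mathcal{I}_1\setminus\{i\}$ and equals $(Y_{i\mu})_{\mu\in\mathcal{I}_2}$ on $\mathcal{I}_2$ (and similarly for the column, with $Y^{*}$). The scalar Schur complement formula then collapses to
\[
\frac{1}{G_{ii}} \;=\; M_{ii} - \sum_{\mu,\nu\in\mathcal{I}_2} Y_{i\mu}\, G^{(i)}_{\mu\nu}\, Y^{*}_{\nu i} \;=\; -1 - (YG^{(i)}Y^{*})_{ii},
\]
since only the $(\mathcal{I}_2,\mathcal{I}_2)$ block of $G^{(i)}$ contributes. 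The identity for $1/G_{\mu\mu}$ is parallel, with $M_{\mu\mu} = -z$ in place of $-1$.

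For (ii), I would perform the same Schur complement but against the $2\times 2$ block indexed by the chosen pair; inverting this $2\times 2$ block and reading off its off-diagonal entry yields the stated identities, with the mixed $(i,\mu)$ case simply requiring one to track the asymmetric $-z$ term. The identities in (iii) are the standard ``one-index-removal'' formulas for block-matrix inverses: the first is read directly from the Schur-complement expression of $G_{bc}$ with respect to the index $a$, and the second follows by a direct algebraic rearrangement of the first with $b = c$. Finally, (iv) is immediate, because the proofs above use only the block form of $M$, and this block form is preserved both under removing any further subset of indices and under replacing $X$ by $U^{*}XV$. The only real work is bookkeeping---index set conventions, the ``keep-the-names'' rule for minors, and the asymmetric appearance of $-z$ in the $\mathcal{I}_2$-block; no analytic input is required, and the statement is in effect just a catalog of standard resolvent identities that will be invoked repeatedly in the sections that follow.
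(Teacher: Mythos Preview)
Your proposal is correct and follows precisely the approach the paper takes: the paper's own proof consists of the single sentence ``All these identities can be proved using Schur's complement formula'' together with a reference to \cite[Lemma 4.4]{Anisotropic}. Your write-up simply spells out that Schur-complement computation in detail, so there is nothing to add or correct.
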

\begin{proof}
All these identities can be proved using Schur's complement formula. The reader can refer to, for example, \cite[Lemma 4.4]{Anisotropic}.
\end{proof}

\begin{lemma}\label{Ward_id}
Fix constants $c_0,C_0>0$. The following estimates hold uniformly for all $z\in S(c_0,C_0,a)$ for any $a\in \mathbb R$:
\begin{equation}
\left\| G \right\| \le C\eta ^{ - 1} ,\ \ \left\| {\partial _z G} \right\| \le C\eta ^{ - 2}. \label{eq_gbound}
\end{equation}
Furthermore, we have the following identities:
\begin{align}
& \sum_{i \in \mathcal I_1 }  \left| {G_{j i} } \right|^2 = \sum_{i \in \mathcal I_1 }  \left| {G_{ij} } \right|^2  = \frac{|z|^2}{\eta}\Im\left(\frac{G_{jj}}{z}\right) ,  \label{eq_gsq2} \\
& \sum_{\mu  \in \mathcal I_2 } {\left| {G_{\nu \mu } } \right|^2 } = \sum_{\mu  \in \mathcal I_2 } {\left| {G_{\mu \nu} } \right|^2 }  = \frac{{\Im \, G_{\nu\nu} }}{\eta}, \label{eq_gsq1}\\ 
& \sum_{i \in \mathcal I_1 } {\left| {G_{\mu i} } \right|^2 } = \sum_{i \in \mathcal I_1 } {\left| {G_{i\mu} } \right|^2 } = {G}_{\mu \mu}  + \frac{\bar z}{\eta} \Im \, G_{\mu\mu} , \label{eq_gsq3} \\ 
&\sum_{\mu \in \mathcal I_2 } {\left| {G_{i \mu} } \right|^2 } = \sum_{\mu \in \mathcal I_2 } {\left| {G_{\mu i} } \right|^2 } =  \frac{{G}_{ii}}{z}  + \frac{\bar z}{\eta} \Im\left(\frac{{G_{ii} }}{z}\right) . \label{eq_gsq4} 
 \end{align}
All of the above estimates remain true for $G^{(\mathbb T)}$ instead of $G$ for any $\mathbb T \subseteq \mathcal I$, and in the case where $A$ and $B$ are not diagonal.
\label{lemma_Im}
\end{lemma}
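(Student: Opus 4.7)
The plan is to deduce all statements from two ingredients: the block formulas \eqref{green2} for $G$ and the spectral decompositions \eqref{spectral1}--\eqref{spectral2}. Neither of these uses diagonality of $A, B$ or $\Sigma, \wt\Sigma$, so the argument applies verbatim in the non-diagonal case; moreover, since each minor $G^{(\mathbb{T})}$ is a resolvent of the same form based on the submatrix $Y^{(\mathbb{T})}$, the same proofs yield the analogous estimates for all minors.

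For the norm bounds \eqref{eq_gbound}, I would first observe that $\mathcal{G}_{1,2}$ are resolvents of the positive semidefinite matrices $YY^*, Y^*Y$ at $z \in \mathbb{C}_+$, so $\|\mathcal{G}_{1,2}\| \le \eta^{-1}$. The diagonal block $z\mathcal{G}_1$ is then bounded by $|z|/\eta \le C/\eta$ uniformly on $S(c_0, C_0, a)$. For the off-diagonal block $\mathcal{G}_1 Y = \sum_k \sqrt{\lambda_k}(\lambda_k - z)^{-1}\xi_k\zeta_k^*$, the operator norm equals $\max_k \sqrt{\lambda_k}/|\lambda_k - z|$, which a short one-variable calculus argument bounds by $C/\eta$ uniformly in $\lambda_k \ge 0$ and $z \in S(c_0, C_0, a)$. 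Combining the four blocks gives $\|G\| \le C/\eta$. The derivative bound follows from differentiating the identity $G\bigl(H - \mathrm{diag}(I_{n\times n}, zI_{N\times N})\bigr) = I$, which yields $\partial_z G = G\,\mathrm{diag}(0_{n\times n}, I_{N\times N})\,G$, and hence $\|\partial_z G\| \le \|G\|^2 \le C/\eta^2$.

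For the Ward-type identities \eqref{eq_gsq2}--\eqref{eq_gsq4}, I would substitute the spectral representations \eqref{spectral1}--\eqref{spectral2} into each left-hand sum and use orthonormality of $\{\xi_k\}, \{\zeta_k\}$ to collapse the resulting double sum via $\delta_{kl}$. For example, \eqref{eq_gsq1} reduces to $\sum_k |\zeta_k(\nu)|^2/|\lambda_k - z|^2$, which is exactly $\eta^{-1}\Im G_{\nu\nu}$ since $\Im(\lambda_k - z)^{-1} = \eta/|\lambda_k - z|^2$. The identity \eqref{eq_gsq2} is the same computation with an extra factor $|z|^2$ coming from the $z\xi_k\xi_k^*/(\lambda_k - z)$ spectral representation of $G_{ij}$. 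For \eqref{eq_gsq3} and \eqref{eq_gsq4}, the corresponding sum produces $\sum_k \lambda_k|\zeta_k(\mu)|^2/|\lambda_k - z|^2$ (respectively, with $\xi_k(i)$); writing $\lambda_k = (\lambda_k - z) + z$ in the numerator then converts this into $G_{\mu\mu} + (\bar z/\eta)\Im G_{\mu\mu}$ (respectively, $G_{ii}/z + (\bar z/\eta)\Im(G_{ii}/z)$), matching the stated right-hand sides.

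No step presents a genuine obstacle; the only point requiring care is the bookkeeping of the $z$ and $\sqrt{\lambda_k}$ prefactors in the spectral representations of the four blocks of $G$, so that the right-hand sides of \eqref{eq_gsq2}--\eqref{eq_gsq4} emerge with their correct coefficients.
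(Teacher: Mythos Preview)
Your proposal is correct and follows precisely the approach indicated in the paper, which merely states that the estimates and identities ``can be proved through simple calculations with \eqref{green2}, \eqref{spectral1} and \eqref{spectral2}'' and refers to \cite[Lemma 4.6]{Anisotropic} and \cite[Lemma 3.5]{XYY_circular}. Your write-up spells out exactly those calculations, including the decomposition $\lambda_k = (\lambda_k - \bar z) + \bar z$ needed for \eqref{eq_gsq3}--\eqref{eq_gsq4}, so there is nothing to add.
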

\begin{proof}
These estimates and identities can be proved through simple calculations with (\ref{green2}), (\ref{spectral1}) and (\ref{spectral2}). We refer the reader to \cite[Lemma 4.6]{Anisotropic} and \cite[Lemma 3.5]{XYY_circular}.
\end{proof}

\begin{lemma}
Fix constants $c_0,C_0>0$. For any $\mathbb T \subseteq \mathcal I$ and $a\in \mathbb R$, the following bounds hold uniformly in $z\in S(c_0,C_0,a)$:
\begin{equation}\label{m_T}
\big| {m_1  - m_1^{\left( \mathbb T \right)} } \big| + \big| {m_2  - m_2^{\left( \mathbb T \right)} } \big| \le \frac{{C\left| \mathbb T \right|}}{{N\eta }}, 
\end{equation}
where $C>0$ is a constant depending only on $\tau$.
\end{lemma}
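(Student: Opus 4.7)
The strategy is to establish the bound for $|\mathbb T|=1$ and extend by telescoping. Enumerating $\mathbb T=\{a_1,\dots,a_k\}$ with $k:=|\mathbb T|$ and setting $\mathbb T_j:=\{a_1,\dots,a_j\}$, write
\begin{align*}
m_1-m_1^{(\mathbb T)} \;=\; \sum_{j=1}^{k}\bigl(m_1^{(\mathbb T_{j-1})}-m_1^{(\mathbb T_j)}\bigr),
\end{align*}
and the analogous identity for $m_2$. Since every identity in Lemmas \ref{lemm_resolvent} and \ref{lemma_Im} remains valid when $G$ is replaced by an arbitrary minor $G^{(\mathbb T')}$ (by part (iv) of Lemma \ref{lemm_resolvent} and the final remark of Lemma \ref{lemma_Im}), the single-index estimate applies uniformly to each telescoping summand and yields the factor $|\mathbb T|$ on the right-hand side.

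The real content is therefore the case $\mathbb T=\{a\}$: show that $|m_1-m_1^{(a)}|+|m_2-m_2^{(a)}|\le C/(N\eta)$ for any single index $a\in\mathcal I$. Using the resolvent identity from Lemma \ref{lemm_resolvent}(iii), namely $G_{bb}=G_{bb}^{(a)}+G_{ba}G_{ab}/G_{aa}$ for $b\ne a$, and being careful about the ``missing'' diagonal entry $a$ itself (which contributes only when $a\in\mathcal I_1$ for $m_1$, or when $a\in\mathcal I_2$ for $m_2$), one obtains
\begin{align*}
m_1-m_1^{(a)} \;=\; \mathbf 1(a\in\mathcal I_1)\,\frac{\sigma_a G_{aa}}{Nz}\;+\;\frac{1}{NzG_{aa}}\sum_{i\in\mathcal I_1\setminus\{a\}}\sigma_i\, G_{ia}G_{ai},
\end{align*}
and similarly for $m_2$. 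The isolated diagonal term is immediately $\OO((N\eta)^{-1})$, using $\sigma_a\le\tau^{-1}$, $|G_{aa}|\le C|z|/\eta$ from (\ref{eq_gbound}), and $|z|\sim 1$ on $S(c_0,C_0,a)$ (since $\lambda_+>0$).

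For the main sum, the plan is to crudely bound $|\sigma_i G_{ia}G_{ai}|\le\tau^{-1}|G_{ia}|^2$ and invoke the Ward identity (\ref{eq_gsq2}) to get $\sum_{i\in\mathcal I_1}|G_{ia}|^2=\frac{|z|^2}{\eta}\Im(G_{aa}/z)$. Using $|\Im(G_{aa}/z)|\le|G_{aa}/z|$, the ratio
\begin{align*}
\frac{1}{|G_{aa}|}\sum_{i\in\mathcal I_1}|G_{ia}|^2 \;\le\; \frac{|z|^2}{\eta}\,\frac{|G_{aa}/z|}{|G_{aa}|} \;=\; \frac{|z|}{\eta}
\end{align*}
is $\OO(1/\eta)$, and the second piece becomes $\OO((N\eta)^{-1})$ as desired. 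The analysis of $m_2-m_2^{(a)}$ is entirely parallel: split on $a\in\mathcal I_1$ versus $a\in\mathcal I_2$ and invoke (\ref{eq_gsq4}) or (\ref{eq_gsq1}) respectively, each of which again produces $\sum|G_{\mu a}|^2\lesssim |G_{aa}|\cdot|z|/\eta$, exactly cancelling the $|G_{aa}|$ in the denominator.

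The only real bookkeeping obstacle is the asymmetric normalization between indices in $\mathcal I_1$ and $\mathcal I_2$: entries $G_{ii}$ for $i\in\mathcal I_1$ differ from $\mathcal G_{1,ii}$ by a factor of $z$, and $m_1$ carries a compensating $1/z$ in its definition, so the various $|z|$ factors must be tracked through the Ward identities (which each produce a different power of $z$) to verify that the $C/(N\eta)$ bound is uniform. Since $|z|$ is bounded above by $C_0\lambda_+$ and below by a positive constant on $S(c_0,C_0,a)$, this is a routine check and poses no essential difficulty.
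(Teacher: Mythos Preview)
Your proposal is correct and follows essentially the same approach as the paper: reduce to the single-index case by telescoping, apply the resolvent identity (\ref{resolvent8}) to express the difference as $\frac{1}{N}\sum \sigma_i G_{ia}G_{ai}/G_{aa}$ (or the analogous expression for $m_2$), and then bound the resulting sum via the appropriate Ward identity from Lemma~\ref{lemma_Im}. The only cosmetic difference is that the paper absorbs the ``missing diagonal'' term into the sum (since $G_{aa}G_{aa}/G_{aa}=G_{aa}$ is already controlled by the Ward identity), whereas you split it off explicitly; both are fine.
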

\begin{proof}
For $\mu\in\mathcal I_2$, we have
\begin{align*}
\left|m_2-m_2^{(\mu)}\right|& =\frac{1}{N}\left|\sum_{\nu\in\mathcal I_2}  \wt \sigma_\nu\frac{G_{\nu\mu}G_{\mu\nu}}{G_{\mu\mu}}\right| \le \frac{C}{N|G_{\mu\mu}|} \sum_{\nu\in\mathcal I_2} |G_{\nu\mu}|^2 = \frac{C\Im\, G_{\mu\mu}}{N\eta |G_{\mu\mu}|} \le \frac{C}{N\eta}, 
\end{align*}
where in the first step we used (\ref{resolvent8}), and in the second and third steps we used (\ref{eq_gsq1}). Similarly, using (\ref{resolvent8}) and (\ref{eq_gsq3}) we get
\begin{align*}
\left|m_2 -m_2^{(i)}\right| & = \frac{1}{N}\left|\sum_{\nu \in\mathcal I_2}\wt \sigma_\nu\frac{G_{\nu i}G_{i\nu}}{G_{ii}}\right| \le \frac{C}{N|G_{ii}|} \left( \frac{{G}_{ii}}{z}  + \frac{\bar z}{\eta} \Im\left(\frac{{G_{ii} }}{z}\right)\right)   \le \frac{C}{N\eta}.
\end{align*}
Similarly, we can prove the same bounds for $m_1$. Then (\ref{m_T}) can be proved by induction on the indices in $\mathbb T$. 
\end{proof}

The following lemma gives large deviation bounds for bounded supported random variables. 

\begin{lemma}[Lemma 3.8 of \cite{EKYY1}]\label{largederivation}
Let $(x_i)$, $(y_j)$ be independent families of centered and independent random variables, and $(A_i)$, $(B_{ij})$ be families of deterministic complex numbers. Suppose the entries $x_i$, $y_j$ have variance at most $N^{-1}$ and satisfy the bounded support condition (\ref{eq_support}) with $q\le N^{-\epsilon}$ for some constant $\epsilon>0$. Then we have the following bound:
\begin{align}
\Big\vert \sum_i A_i x_i \Big\vert \prec  q \max_{i} \vert A_i \vert+ \frac{1}{\sqrt{N}}\Big(\sum_i |A_i|^2 \Big)^{1/2} , \quad  & \Big\vert \sum_{i,j} x_i B_{ij} y_j \Big\vert \prec q^2 B_d  + qB_o + \frac{1}{N}\Big(\sum_{i\ne j} |B_{ij}|^2\Big)^{{1}/{2}} , \\
 \Big\vert \sum_{i} \bar x_i B_{ii} x_i - \sum_{i} (\mathbb E|x_i|^2) B_{ii}  \Big\vert  \prec q B_d  ,\quad & \Big\vert \sum_{i\ne j} \bar x_i B_{ij} x_j \Big\vert  \prec qB_o + \frac{1}{N}\left(\sum_{i\ne j} |B_{ij}|^2\right)^{{1}/{2}} ,
\end{align}
where $B_d:=\max_{i} |B_{ii} |$ and $B_o:= \max_{i\ne j} |B_{ij}|.$
\end{lemma}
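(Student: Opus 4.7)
The plan is to prove each of the four bounds via the moment method. That is, for each sum $S$ in the lemma, I would estimate $\E |S|^{2p}$ for an arbitrarily large fixed integer $p$, then apply Markov's inequality: $\P(|S|>N^\epsilon \Psi) \leq N^{-2p\epsilon} \E|S|^{2p}/\Psi^{2p}$, which yields $|S| \prec \Psi$ provided $\E|S|^{2p}\leq C_p \Psi^{2p}$ for every $p$. The bounded support hypothesis is crucial: it provides both the $L^\infty$-bound $|x_i|\leq N^\epsilon q$ with high probability, and (through the variance bound $N^{-1}$) the $L^2$-bound we need for off-diagonal cancellations.

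For the linear form $\sum_i A_i x_i$, the standard expansion gives $\E|\sum_i A_i x_i|^{2p} = \sum_{\bf i, j} A_{i_1}\cdots A_{i_p}\bar A_{j_1}\cdots \bar A_{j_p} \E(x_{i_1}\cdots x_{i_p}\bar x_{j_1}\cdots \bar x_{j_p})$. By independence and centering, only terms in which every index appears at least twice survive. Each $x$-factor is bounded by $q$ in absolute value and has variance $\leq N^{-1}$, so an index appearing exactly $k$ times contributes $q^{k-2} N^{-1}$. Grouping by the partition of $\{1,\ldots,2p\}$ into blocks of equal indices and using $|A_{i_1}\cdots A_{i_p}| \leq \max_i |A_i|^{k}\cdot (\sum_j |A_j|^2)^{p-k/2}$ type estimates for the appropriate $k$, one obtains $\E|\sum_i A_ix_i|^{2p}\leq C_p (q\max_i |A_i| + N^{-1/2}\|A\|_2)^{2p}$.

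For the quadratic forms, the same strategy applies but with more bookkeeping. Writing $S = \sum_{i,j} x_i B_{ij}y_j$ (or its diagonal/off-diagonal variants), I would expand $\E |S|^{2p}$, use independence of the $x$'s and $y$'s, and again retain only partitions in which each row-index and each column-index appears at least twice. The contributions naturally split into: (a) terms where the dominant contribution comes from diagonal $B_{ii}$ entries, giving the $q^2 B_d$ piece; (b) terms controlled by $\max_{i\neq j}|B_{ij}| = B_o$ multiplied by one factor of $q$; and (c) the ``fully paired'' Gaussian-like contribution $N^{-1}(\sum_{i\neq j}|B_{ij}|^2)^{1/2}$. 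The diagonal Hermitian form $\sum_i \bar x_i B_{ii}x_i$ requires the centering $\sum_i (\E|x_i|^2)B_{ii}$ to remove the leading-order deterministic piece, after which the residual is a sum of independent centered random variables of magnitude $\leq q^2 B_d$ handled exactly as the linear case, giving the $qB_d$ bound.

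The main obstacle is purely combinatorial bookkeeping: one must organize the partition-sum estimates so that the bounds come out sharp in all regimes (small $q$ vs.\ large $q$, balanced vs.\ spiked $B$). The cleanest way to carry this out is to fix the partition shape first, count the number of free indices $\ell$ (giving a factor $N^\ell$), use the variance bound on the paired factors and the sup-bound $q$ on higher-multiplicity factors, then observe that for each shape the resulting combination of $q$, $N^{-1}$, $B_d$, $B_o$, and $(\sum_{i\neq j}|B_{ij}|^2)^{1/2}$ is dominated by one of the terms in the stated bound. Since $p$ is fixed, the number of partition shapes is bounded by a $p$-dependent constant, so the argument closes after summing over shapes.
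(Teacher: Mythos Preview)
Your moment-method approach is correct and is exactly the standard proof of this large deviation lemma; the paper itself does not prove it but simply cites it as Lemma~3.8 of \cite{EKYY1}, where the argument proceeds precisely along the lines you sketch. One minor technical point worth making explicit in your write-up: since the bounded support condition is phrased via stochastic domination rather than a deterministic bound, you should first restrict to the high-probability event $\{\max_i |x_i|\le N^{\epsilon'} q\}$ before computing moments, and then let $\epsilon'\downarrow 0$ at the end.
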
  

For the proof of Proposition \ref{prop_diagonal}, it is convenient to introduce the following random control parameters.

\begin{definition}[Control parameters]
We define the random errors
\begin{equation}\label{eqn_randomerror}
\Lambda : = \mathop {\max }\limits_{a,b \in \mathcal I} \left| {\left( {G - \Pi } \right)_{ab} } \right|,\ \ \Lambda _o : = \mathop {\max }\limits_{a \ne b \in \mathcal I} \left| {G_{ab} } \right|, \ \ \theta:= |m_1-m_{1c}| +  |m_2-m_{2c}| ,
\end{equation}
and the random control parameter (recall $\Psi$ defined in \eqref{eq_defpsi})
\begin{equation}\label{eq_defpsitheta}
\Psi _\theta  : = \sqrt {\frac{{\Im \, m_{2c}  + \theta }}{{N\eta }}} + \frac{1}{N\eta}.
\end{equation}
\end{definition}

\subsection{Entrywise local law}

The main goal of this subsection is to prove the following entrywise local law. The anisotropic local law \eqref{aniso_diagonal} then follows from the entrywise local law combined with a polynomialization method as we will explain in next subsection.

\begin{proposition}\label{prop_entry}
Suppose the assumptions in Proposition \ref{prop_diagonal} hold. Fix $C_0>0$ and let $c_0>0$ be a sufficiently small constant. Then for any fixed $\epsilon>0$, the following estimate holds uniformly for $z\in S(c_0,C_0,\epsilon)$: 
\begin{equation}\label{entry_diagonal}
\max_{a,b}\left| G_{ab}(X,z)  - \Pi_{ab} (z) \right| \prec \Psi(z).
\end{equation} 
\end{proposition}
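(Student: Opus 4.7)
The plan is to derive a perturbed version of the self-consistent system \eqref{separa_m12} from the resolvent identities of Lemma \ref{lemm_resolvent}, and then close the argument via a stability analysis of that system combined with the standard continuity-in-$\eta$ bootstrap, following the template of \cite{isotropic,Anisotropic,XYY_circular,DY}. Throughout I will use the control parameters $\Lambda$, $\Lambda_o$, $\theta$ and $\Psi_\theta$ from \eqref{eqn_randomerror}-\eqref{eq_defpsitheta}.

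First I would use the Schur complement identities \eqref{resolvent2} to write, for $i \in \mathcal I_1$ and $\mu \in \mathcal I_2$,
\begin{align*}
\frac{1}{G_{ii}} &= -1 - \sigma_i \sum_{\alpha,\beta \in \mathcal I_2} \wt\sigma_\alpha^{1/2}\wt\sigma_\beta^{1/2}\, X_{i\alpha}\, G^{(i)}_{\alpha\beta}\, X_{i\beta},\\
\frac{1}{G_{\mu\mu}} &= -z - \wt\sigma_\mu \sum_{j,k \in \mathcal I_1}\sigma_j^{1/2}\sigma_k^{1/2}\, X_{j\mu}\, G^{(\mu)}_{jk}\, X_{k\mu},
\end{align*}
using the diagonal structure of $A,B$ and the factorization $Y=\Sigma^{1/2}X\wt\Sigma^{1/2}$. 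Since the row $\{X_{i\alpha}\}_\alpha$ (resp.\ the column $\{X_{j\mu}\}_j$) consists of i.i.d.\ centered bounded-support entries independent of $G^{(i)}$ (resp.\ $G^{(\mu)}$), Lemma \ref{largederivation} combined with the Ward identities \eqref{eq_gsq1}-\eqref{eq_gsq4} gives
$$(YG^{(i)}Y^*)_{ii} = \sigma_i\, m_2^{(i)} + O_\prec(\Psi_\theta),\qquad (Y^*G^{(\mu)}Y)_{\mu\mu} = \wt\sigma_\mu\, z\, m_1^{(\mu)} + O_\prec(\Psi_\theta),$$
where I used $q=N^{-1/2}\le \Psi_\theta$ to absorb the $qB_o$-type contributions. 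Replacing partial traces by full traces via \eqref{m_T} and inverting the resulting scalar identities yields $G_{ii}=-(1+\sigma_i m_2)^{-1}+O_\prec(\Psi_\theta)$ and $G_{\mu\mu}=-[z(1+\wt\sigma_\mu m_1)]^{-1}+O_\prec(\Psi_\theta)$. Averaging with weights $\sigma_i$ and $\wt\sigma_\mu$ produces a perturbed version of the coupled system \eqref{separa_m12} for $(m_1,m_2)$, while the off-diagonal $G$-entries are controlled via \eqref{resolvent3}-\eqref{resolvent6}, again by reducing them to bilinear forms in independent entries of $X$ and applying Lemma \ref{largederivation}.

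The main technical obstacle is the stability analysis of the perturbed system near the spectral edge $\lambda_+$. Linearizing \eqref{separa_m12} around the true solution $(m_{1c},m_{2c})$ yields a $2\times 2$ linear map whose Jacobian becomes degenerate at $\lambda_+$, losing invertibility precisely at the rate $\sqrt{\kappa+\eta}$ dictated by the square-root singularity of Lemma \ref{lambdar_sqrt}. A careful inversion of this linear map, exploiting the gap condition \eqref{assm_gap} to rule out further degeneracies (so that the off-diagonal terms $\min_\mu|1+m_{1c}\wt\sigma_\mu|$ and $\min_i|1+m_{2c}\sigma_i|$ remain uniformly away from zero), leads to the implicit self-consistent bound
$$\theta \prec \Psi_\theta = \sqrt{\frac{\Im\, m_{2c}+\theta}{N\eta}}+\frac{1}{N\eta},$$
which is self-improving once a crude a priori bound $\theta\ll 1$ is available, and whose unique small solution is $\theta \prec \Psi$. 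Substituting this back into the individual diagonal estimates gives $|G_{aa}-\Pi_{aa}|\prec\Psi$ for all $a$, and the off-diagonal estimate $\Lambda_o\prec\Psi$ then follows from \eqref{resolvent3}-\eqref{resolvent6} together with Lemma \ref{largederivation}.

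Finally, to establish the a priori smallness of $\theta$ and $\Lambda_o$ uniformly on $S(c_0,C_0,\epsilon)$, I would run the standard continuity-in-$\eta$ argument. At $\eta=1$, direct norm bounds from \eqref{eq_gbound} give $\Lambda=O(1)$, so the self-improving step immediately yields the strong bound $\Lambda\prec\Psi$ there. Decreasing $\eta$ along a polynomially dense grid in $S(c_0,C_0,\epsilon)$ and using the Lipschitz control $\|\partial_z G\|\le C\eta^{-2}$ to interpolate the bound between grid points, the estimate $\Lambda\prec\Psi$ propagates down to the smallest scale $\eta=N^{-1+\epsilon}$. Combining the diagonal and off-diagonal bounds yields \eqref{entry_diagonal}.
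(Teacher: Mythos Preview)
Your overall structure (Schur complement + large deviation to derive a perturbed self-consistent system, stability analysis, continuity in $\eta$) matches the paper's, but there is a genuine gap in the core step: you are missing the \emph{fluctuation averaging} input (Lemma \ref{abstractdecoupling} in the paper), and without it the argument only yields the weak local law, not \eqref{entry_diagonal}.

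Concretely, the perturbed self-consistent equation you derive has error $|f(z,m_2)| \prec \Psi_\theta$, and you yourself note that the Jacobian of the linearized system degenerates at rate $\sqrt{\kappa+\eta}$ at the edge. The stability estimate (Lemma \ref{stability}) therefore gives
\[
\theta \;\prec\; \frac{\Psi_\theta}{\sqrt{\kappa+\eta+\Psi_\theta}},
\]
not $\theta\prec\Psi_\theta$ as you claim. Near the edge this extra $1/\sqrt{\kappa+\eta}$ is a genuine loss: iterating the bound only produces the weak estimate $\Lambda\prec (N\eta)^{-1/4}$ of Lemma \ref{alem_weak}, which is far from $\Psi$ at small $\eta$ (by a factor $N^{1/4}$ at the optimal scale). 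The paper closes this gap by observing that the error in the self-consistent equation is governed by the averaged quantities $[Z]_1,[Z]_2$ of \eqref{def_Zaver}; fluctuation averaging gives the crucial \emph{squared} gain $|[Z]_1|+|[Z]_2|\prec \Lambda_o^2$, hence $|f(z,m_2)|\prec \Psi_\theta^2$ via \eqref{selfcons_improved}. Feeding this into stability yields $\theta\prec \Psi_\theta^2/\sqrt{\kappa+\eta}$, and it is precisely this extra power of $\Psi_\theta$ that compensates the edge degeneracy and lets the iteration converge to $\theta\prec (N\eta)^{-1}$, from which $\Lambda\prec\Psi$ follows. Your proposal needs to insert this fluctuation averaging step between the large-deviation estimate and the stability analysis.
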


In analogy to \cite[Section 3]{EKYY1} and \cite[Section 5]{Anisotropic}, we introduce the $Z$ variables
\begin{equation*}
  Z_{a}^{(\mathbb T)}:=(1-\mathbb E_{a})\big(G_{aa}^{(\mathbb T)}\big)^{-1}, \ \ a\notin \mathbb T,
\end{equation*}
where $\mathbb E_{a}[\cdot]:=\mathbb E[\cdot\mid H^{(a)}],$ i.e. it is the partial expectation over the randomness of the $a$-th row and column of $H$. By (\ref{resolvent2}), we have
\begin{equation}
Z_i = (\mathbb E_{i} - 1) \left( {YG^{\left( i \right)} Y^*} \right)_{ii} = \sigma_i \sum_{\mu ,\nu\in \mathcal I_2} \sqrt{\wt \sigma_\mu \wt \sigma_\nu}G^{(i)}_{\mu\nu} \left(\frac{1}{N}\delta_{\mu\nu} - X_{i\mu}X_{i\nu}\right),\label{Zi}
\end{equation}
and
\begin{equation}
\begin{split}
Z_\mu &= (\mathbb E_{\mu} - 1) \left( {Y^*  G^{\left( \mu  \right)} Y} \right)_{\mu \mu } = \wt \sigma_\mu \sum_{i,j \in \mathcal I_1} \sqrt{\sigma_i \sigma_j}G^{(\mu)}_{ij} \left(\frac{1}{N} \delta_{ij} - X_{i\mu}X_{j\mu}\right).\label{Zmu}
\end{split}
\ee
The following lemma plays a key role in the proof of local laws.

\begin{lemma}\label{Z_lemma}
Suppose the assumptions in Proposition \ref{prop_diagonal} hold. Let $c_0>0$ be a sufficiently small constant and fix $C_0, \epsilon >0$. Define the $z$-dependent event $\Xi(z):=\{\Lambda(z) \le (\log N)^{-1}\}$. Then there exists constant $C>0$ such that the following estimates hold uniformly for all $a\in \mathcal I$ and $z\in S(c_0,C_0,\epsilon)$:
\begin{align}
{\mathbf 1}(\Xi)\left(\Lambda_o + |Z_{a}|\right) \prec \Psi_\theta, \label{Zestimate1}
\end{align}
and 
\begin{align}
{\mathbf 1}\left(\eta \ge 1 \right)\left(\Lambda_o + |Z_{a}|\right)\prec \Psi_\theta. \label{Zestimate2}
\end{align}
\end{lemma}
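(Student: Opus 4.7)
The plan is to expand each $Z_a$ and each off-diagonal entry $G_{ab}$ as a centered (bi)linear form in independent entries of $X$, apply the large deviation estimates of Lemma~\ref{largederivation}, and simplify the resulting second-moment expressions via the Ward identities of Lemma~\ref{lemma_Im}. Throughout, the crucial structural fact is that $G^{(T)}$ is independent of any row or column of $X$ indexed by $T$.

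First I would bound $|Z_i|$ for $i \in \mathcal I_1$. The explicit formula \eqref{Zi} presents $Z_i$ as a centered quadratic form in $(X_{i\mu})_{\mu}$ with deterministic (given $G^{(i)}$) coefficients $B_{\mu\nu} := \sigma_i\sqrt{\wt\sigma_\mu\wt\sigma_\nu}\,G^{(i)}_{\mu\nu}$. Lemma~\ref{largederivation} with $q = N^{-1/2}$ yields
$$|Z_i| \prec q \max_{\mu}|B_{\mu\mu}| + q \max_{\mu\ne\nu}|B_{\mu\nu}| + \frac{1}{N}\Bigl(\sum_{\mu\ne\nu}|B_{\mu\nu}|^2\Bigr)^{1/2}.$$
The Hilbert--Schmidt sum is simplified by the Ward identity \eqref{eq_gsq1}:
$$\sum_{\mu,\nu}|B_{\mu\nu}|^2 \le C\sum_{\mu}\wt\sigma_\mu\sum_{\nu}|G^{(i)}_{\mu\nu}|^2 \le \frac{C}{\eta}\sum_{\mu}\wt\sigma_\mu\,\Im\, G^{(i)}_{\mu\mu} = \frac{CN\,\Im m_2^{(i)}}{\eta},$$
so the third term is $\le C\sqrt{\Im m_2^{(i)}/(N\eta)}$. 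On $\Xi$, combining \eqref{m_T} with the definition of $\theta$ gives $\Im m_2^{(i)} \le \Im m_{2c} + \theta + C(N\eta)^{-1}$, which produces exactly $\Psi_\theta$; when $\eta \ge 1$ the same bound follows directly from $|G^{(i)}_{\mu\mu}| \le \eta^{-1} \le 1$ via \eqref{eq_gbound}. The two $q$-terms contribute at most $CN^{-1/2}(1 + \Lambda_o^{(i)})$, both absorbed by $\Psi_\theta$ once $\Lambda_o$ itself is controlled. The case $a = \mu \in \mathcal I_2$ is identical after interchanging $(\sigma,\wt\sigma)$ and using \eqref{eq_gsq2}.

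Next I would bound $\Lambda_o$, treating the three families of off-diagonal entries through the resolvent identities \eqref{resolvent3}, \eqref{resolvent6}. For instance, $G_{ij} = G_{ii}G^{(i)}_{jj}(YG^{(ij)}Y^*)_{ij}$ writes $G_{ij}$ as a bilinear form in two independent rows of $X$, and Lemma~\ref{largederivation} combined with the same Ward-identity computation reduces the leading term to $\Psi_\theta$. On $\Xi$ the prefactors $|G_{ii}|$, $|G^{(i)}_{jj}|$ are of order $1$, since $|G_{aa} - \Pi_{aa}| \le (\log N)^{-1}$ while $|\Pi_{aa}|\sim 1$ by \eqref{Piii}; when $\eta\ge 1$ one uses $\|G\|\le\eta^{-1}\le 1$ instead. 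The entry $G_{i\mu}$ contains an additional bare term $-G_{ii}G^{(i)}_{\mu\mu}Y_{i\mu}$, which is deterministically $\prec q = N^{-1/2}\le \Psi_\theta$ by \eqref{psi12}. The analogous arguments for $G_{\mu\nu}$ and $G_{\mu i}$ are identical.

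The remaining technical obstacle is a mild bootstrap: the $q$-terms in the large deviation bound involve $\max_{a\ne b}|G^{(T)}_{ab}|$ for $|T| \le 2$, i.e.\ an estimate of the same type as what we are trying to prove. This is handled in the standard way using the resolvent identity \eqref{resolvent8}, which gives $G^{(T)}_{ab} = G_{ab} + O(\Lambda_o^2/|G_{aa}|)$; on $\Xi$ this yields $\Lambda_o^{(T)} \le \Lambda_o + C\Lambda_o^2$, and the quadratic contribution is absorbed into the left-hand side of the resulting inequality $\Lambda_o \le C\Psi_\theta + CN^{-1/2}\Lambda_o$. Collecting the estimates yields \eqref{Zestimate1}, and the same argument with $\|G\|\le\eta^{-1}\le 1$ replacing the role of $\Xi$ delivers \eqref{Zestimate2}.
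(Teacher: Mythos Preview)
Your proposal is correct and follows essentially the same route as the paper: large deviation bounds from Lemma~\ref{largederivation} on the centered (bi)linear forms defining $Z_a$ and the off-diagonal entries, Ward identities to collapse the Hilbert--Schmidt sums to $\Im m_2^{(\mathbb T)}/(N\eta)$, and \eqref{m_T} to pass from $m_2^{(\mathbb T)}$ to $m_{2c}+\theta$. One small point you glossed over: for the mixed entry $G_{i\mu}$ the relevant Ward identity is the cross version \eqref{eq_gsq3}, which produces an extra term $\sqrt{|m_2^{(i\mu)}|/N}$ in addition to $\sqrt{\Im m_2^{(i\mu)}/(N\eta)}$; the paper bounds this separately using $|m_{2c}|\sim 1$ and $N^{-1/2}\lesssim\Psi$, which is trivial but worth making explicit so that ``the same Ward-identity computation'' is not literally claimed.
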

\begin{proof}
 Applying Lemma \ref{largederivation} to $Z_{i}$ in (\ref{Zi}), we get that on $\Xi$,
\begin{equation}\label{estimate_Zi}
\begin{split}
\left| Z_{i}\right| & \prec  q+\frac{1}{N} \left( \sum_{\mu, \nu} \wt \sigma_\mu {\left| G_{\mu\nu}^{(i)}  \right|^2 }  \right)^{1/2} = q+ \frac{1}{N}\left( {\sum_\mu \frac{\wt \sigma_\mu  \im G_{\mu\mu}^{(i)} }{\eta} } \right)^{1/2}= q + \sqrt { \frac{ \Im\, m_2^{(i)}  } {N\eta} },
\end{split}
\end{equation}
where we used (\ref{assm3}), (\ref{eq_gsq1}) and the fact that $\max_{a,b}|G_{ab}|= \OO(1)$ on event $\Xi$. Now by (\ref{eqn_randomerror}), (\ref{eq_defpsitheta}) and the bound (\ref{m_T}), we have that
\begin{align}\label{m2psi}
\sqrt{\frac{{\Im\, m_2^{(i)} }}{N\eta} } = \sqrt {\frac{{\Im\,m_{2c}  + \Im ( {m_2^{(i)}  - m_2 }) + \Im ( {m_2  - m_{2c} } )}}{{N\eta }}}  \le C \Psi _\theta .
\end{align}
Together with the fact that $q= N^{-1/2} \lesssim \Psi_\theta$ by \eqref{psi12}, we get \eqref{Zestimate1} for ${\mathbf 1}(\Xi) |Z_{i}|$. Similarly, we can prove the same estimate for ${\mathbf 1}(\Xi) |Z_{\mu}| $, where in the proof we need to use  (\ref{eq_gsq2}) and \eqref{psi12}. If $\eta\ge 1$, we also have $\max_{a,b}|G_{ab}|= \OO(1)$ by (\ref{eq_gbound}). Then repeating the above proof, we obtain \eqref{Zestimate2} for ${\mathbf 1}(\eta\ge 1) |Z_{a}|$.
Similarly, using (\ref{resolvent3}) and Lemmas \ref{Ward_id}-\ref{largederivation}, we can prove that 
\begin{equation}\label{2blocks}
{\mathbf 1}(\Xi) \left( |G_{ij}| + |G_{\mu\nu}|\right) + {\mathbf 1}(\eta\ge 1) \left( |G_{ij}| + |G_{\mu\nu}|\right) \prec \Psi_\theta.
\end{equation}

It remains to prove the bounds for $G_{i\mu}$ and $G_{\mu i}$ entries. Using (\ref{resolvent6}), \eqref{eq_support}, 
the bound $\max_{a,b}|G_{ab}|= \OO(1)$ on $\Xi$, Lemma \ref{Ward_id} and Lemma \ref{largederivation}, we get that 
\begin{equation*}
\begin{split}
\left|G_{i\mu }\right| & \prec q+  \frac{1}{N}\left( {\sum^{(i\mu)}_{j,\nu } \wt \sigma_\nu {\left| {G_{\nu j}^{(i\mu)} } \right|^2 } } \right)^{1/2}= q+  \frac{1}{N}\left( \sum_{\nu}^{(\mu)}\wt \sigma_\nu \left({G}^{(i\mu)}_{\nu \nu}  + \frac{\bar z}{\eta} \Im \, G_{\nu\nu}^{(i\mu)}\right)\right)^{1/2} \lesssim q+\sqrt{ \frac{|m_2^{(i\mu)}|}{N} } + \sqrt{\frac{ \Im \, m_2^{(i\mu)}}{N\eta}} .
\end{split}
\end{equation*}
As in (\ref{m2psi}), we can show that
\begin{equation}\label{estimatel1} \sqrt{\frac{ \Im \, m_2^{(i\mu)}}{N\eta}}= \OO(\Psi_\theta).\end{equation}
For the other term, we have
\begin{equation}\label{estimatel2}
\begin{split}
  \sqrt{ \frac{|m_2^{(i\mu)}|}{N} } & \le  \sqrt {\frac{|m_{2c}|  + |m_2^{(i\mu)}  - m_2| + |m_2  - m_{2c} |}{{N}}} \lesssim \frac{1}{N\sqrt{\eta}}+\sqrt { \frac{\theta }{{N}}}  + \sqrt {\frac{{\left| {m_{2c} } \right|}}{N}}   \lesssim  \Psi _\theta , 
 \end{split}
\end{equation}
where we used (\ref{m_T}) and ${{\left| {m_{2c} } \right|}}{{N}^{-1}} = O(\Psi^2) $ by \eqref{psi12}. With (\ref{estimatel1}) and (\ref{estimatel2}), we obtain that ${\mathbf 1}(\Xi) |G_{i\mu}| \prec \Psi_\theta $. Together with (\ref{2blocks}), we get the estimate (\ref{Zestimate1}) for $\mathbf 1(\Xi)\Lambda_o$. Finally, the estimate (\ref{Zestimate2}) for ${\mathbf 1}\left(\eta \ge 1 \right)\Lambda_o$ can be proved in a similar way with the bound $\mathbf{1}(\eta\ge 1)\max_{a,b}|G_{ab}|= \OO(1)$.
\end{proof}

A key component of the proof for Proposition \ref{prop_entry} is an analysis of the self-consistent equation. Recall the equations in \eqref{separa_m12} and the function $f(z,\al)$ in \eqref{separable_MP}.


\begin{lemma}\label{lemm_selfcons_weak}
Let $c_0>0$ be a sufficiently small constant and fix $C_0,\epsilon>0$. Then the following estimates hold uniformly in $z \in S(c_0, C_0,\epsilon)$: 
\begin{equation}
{\mathbf 1}(\eta \ge 1)\left| f(z, m_2) \right|\prec N^{-1/2}, \quad {\mathbf 1}(\eta\ge 1)\left|  m_1(z) - d_N \int\frac{x}{-z\left[1+xm_{2}(z) \right]} \pi_A^{(n)}(\dd x)\right| \prec N^{-1/2}, \label{selfcons_lemm2}
\end{equation}
and
\begin{equation}
{\mathbf 1}(\Xi)\left| f(z, m_2) \right| \prec \Psi_\theta, \quad {\mathbf 1}(\Xi)\left|  m_1(z) - d_N \int\frac{x}{-z\left[1+xm_{2}(z) \right]} \pi_A^{(n)}(\dd x)\right| \prec \Psi_\theta, \label{selfcons_lemm}
\end{equation}
where $\Xi$ is as given in Lemma \ref{Z_lemma}. Moreover, we have the finer estimates
\begin{equation}
{\mathbf 1}(\Xi)\left|f(z, m_2)\right| \prec  {\mathbf 1}(\Xi)\left(\left|[Z]_1\right| + \left|[Z]_2\right|\right) + \Psi^2_\theta, \label{selfcons_improved}
\end{equation}
and
\be\label{selfcons_improved2}
{\mathbf 1}(\Xi)\left|  m_1(z) - d_N \int\frac{x}{-z\left[1+xm_{2}(z) \right]} \pi_A^{(n)}(\dd x)\right| \prec {\mathbf 1}(\Xi)\left|[Z]_1\right| + \Psi^2_\theta,
\ee
where
\begin{equation}\label{def_Zaver}
[Z]_1:=\frac{1}{N}\sum_{i\in \mathcal I_1} \frac{\sigma_i}{(1+\sigma_i m_2)^2} Z_i, \ \ [Z]_2:=\frac{1}{N}\sum_{\mu \in \mathcal I_2} \frac{\wt \sigma_\mu }{\left(1 + \wt \sigma_\mu m_1\right)^2}Z_\mu.
\end{equation}
\end{lemma}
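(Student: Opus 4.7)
The plan is to derive approximate self-consistent equations for $m_1$ and $m_2$ by expanding the diagonal entries of $G$ via the Schur complement identities \eqref{resolvent2}. Because $A$ and $B$ are diagonal in the present setting, direct computation gives $\mathbb E_i[(YG^{(i)}Y^*)_{ii}] = \sigma_i m_2^{(i)}$ and $\mathbb E_\mu[(Y^* G^{(\mu)} Y)_{\mu\mu}] = \wt\sigma_\mu z\, m_1^{(\mu)}$, so that
\begin{equation*}
\frac{1}{G_{ii}} = -(1+\sigma_i m_2) + Z_i + \sigma_i(m_2 - m_2^{(i)}), \qquad \frac{1}{G_{\mu\mu}} = -z(1+\wt\sigma_\mu m_1) + Z_\mu + \wt\sigma_\mu z(m_1 - m_1^{(\mu)}).
\end{equation*}
By \eqref{m_T} the partial-trace corrections are $\OO((N\eta)^{-1}) = \OO(\Psi_\theta^2)$ and can be absorbed into the error.

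First I would work on the event $\Xi$, where $\Lambda \le (\log N)^{-1}$. Combining this with the deterministic lower bounds \eqref{Piii} for $m_{1c}, m_{2c}$ yields $|1+\sigma_i m_2| \gtrsim 1$ and $|1 + \wt\sigma_\mu m_1| \gtrsim 1$ uniformly in $i, \mu$. Together with $|Z_a| \prec \Psi_\theta$ from Lemma \ref{Z_lemma}, a Neumann expansion of $G_{aa} = (G_{aa}^{-1})^{-1}$ gives
\begin{equation*}
G_{ii} = -\frac{1}{1+\sigma_i m_2} - \frac{Z_i}{(1+\sigma_i m_2)^2} + \OO_\prec(\Psi_\theta^2), \quad G_{\mu\mu} = -\frac{1}{z(1+\wt\sigma_\mu m_1)} - \frac{Z_\mu}{z^2(1+\wt\sigma_\mu m_1)^2} + \OO_\prec(\Psi_\theta^2).
\end{equation*}
Averaging against the weights $\sigma_i/N$ and $\wt\sigma_\mu/N$ and using the identities $z m_1 = N^{-1}\sum_i \sigma_i G_{ii}$ and $m_2 = N^{-1}\sum_\mu \wt\sigma_\mu G_{\mu\mu}$ produces \eqref{selfcons_improved2} together with the companion identity $m_2 - \int \frac{x}{-z(1+x m_1)}\pi_B(\dd x) = -[Z]_2/z^2 + \OO_\prec(\Psi_\theta^2)$.

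Substituting the formula for $m_1$ into this companion identity via a first-order expansion of the denominator in the small perturbation (legitimized by another application of \eqref{Piii}) yields $f(z, m_2) = \OO_\prec(|[Z]_1| + |[Z]_2| + \Psi_\theta^2)$, which is \eqref{selfcons_improved}; the cruder bound \eqref{selfcons_lemm} then follows from $|[Z]_{1,2}| \prec \Psi_\theta$. For the regime $\eta \ge 1$, the same expansion works with $|G_{ab}| \le 1$ from \eqref{eq_gbound} and $|Z_a| \prec N^{-1/2}$ from Lemma \ref{Z_lemma}, which readily gives \eqref{selfcons_lemm2}. The main obstacle in this regime is establishing the denominator lower bounds $|1+\sigma_i m_2|, |1+\wt\sigma_\mu m_1| \gtrsim 1$ without having access to the closeness of $m_{1,2}$ to their deterministic limits. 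I would handle this by a continuity/bootstrap argument starting from $|z| \to \infty$, where $m_{1,2}(z) \to 0$ and the denominators are close to $1$, and then descending to $\eta = 1$ while exploiting the Herglotz property $\Im m_{1,2}(z) \ge 0$ for $z \in \mathbb C_+$ that follows from the spectral decomposition \eqref{spectral1}.
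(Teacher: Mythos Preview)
Your overall approach is essentially the same as the paper's: expand $G_{ii}^{-1}$ and $G_{\mu\mu}^{-1}$ via the Schur complement, isolate the $Z$-variables and partial-trace corrections, invert, and average. There is, however, one concrete gap. You assert that the partial-trace corrections $m_2 - m_2^{(i)}$ and $m_1 - m_1^{(\mu)}$ are $\OO((N\eta)^{-1}) = \OO(\Psi_\theta^2)$ via \eqref{m_T}, but $(N\eta)^{-1}$ is \emph{not} $\OO(\Psi_\theta^2)$ in general: for instance, outside the spectrum with $\Im m_{2c} \sim \eta/\sqrt{\kappa}$ and $\theta$ small on $\Xi$, one has $\Psi_\theta^2 \sim (\Im m_{2c} + \theta)/(N\eta) \ll (N\eta)^{-1}$. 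The crude bound \eqref{m_T} suffices for \eqref{selfcons_lemm} (since $(N\eta)^{-1} \le \Psi_\theta$), but not for the finer estimates \eqref{selfcons_improved}--\eqref{selfcons_improved2}. The paper instead uses the resolvent identity \eqref{resolvent8} to write
\[
m_2 - m_2^{(i)} = \frac{1}{N}\sum_{\nu\in\mathcal I_2} \wt\sigma_\nu \frac{G_{\nu i}G_{i\nu}}{G_{ii}}
\]
explicitly as a sum of squared off-diagonal entries, which on $\Xi$ is $\prec \Psi_\theta^2$ by Lemma \ref{Z_lemma}. This is a small but essential refinement.

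For the regime $\eta \ge 1$, your proposed continuity argument descending from $\eta \to \infty$ is plausible but more work than needed, and the Herglotz property $\Im m_{1,2} \ge 0$ alone is not enough to produce a uniform \emph{lower} bound on the imaginary part. The paper obtains the denominator bounds $|1+\sigma_i m_2|, |1+\wt\sigma_\mu m_1| \gtrsim 1$ directly: from the Schur identity one has $\Im(G_{\mu\mu}^{-1}) \le -\eta + \OO_\prec(N^{-1/2})$, which with $|G_{\mu\mu}| = \OO(1)$ forces $\Im G_{\mu\mu} \gtrsim \eta$ and hence $\Im m_2 \gtrsim 1$; then either $\sigma_i \le |2m_2|^{-1}$ (so $|1+\sigma_i m_2| \ge 1/2$) or $\sigma_i > |2m_2|^{-1}$ (so $|1+\sigma_i m_2| \ge \sigma_i \Im m_2 \gtrsim 1$). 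No bootstrap is required.
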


\begin{proof}
We first prove (\ref{selfcons_improved}) and \eqref{selfcons_improved2}, from which (\ref{selfcons_lemm}) follows due to (\ref{Zestimate1}) and (\ref{Piii}). By (\ref{resolvent2}), (\ref{Zi}) and (\ref{Zmu}), we have
\begin{equation}\label{self_Gii}
\frac{1}{{G_{ii} }}=  - 1 - \frac{\sigma_i}{N} \sum_{\mu\in \mathcal I_2}\wt \sigma_\mu G^{\left( i \right)}_{\mu\mu} + Z_i =  - 1 - \sigma_i m_2 + \epsilon_i,
\end{equation}
and
\begin{equation}\label{self_Gmu}
\frac{1}{{G_{\mu\mu} }}=  - z - \frac{\wt \sigma_\mu}{N} \sum_{i\in \mathcal I_1}\sigma_i G^{\left( \mu\right)}_{ii}+ Z_{\mu} =  - z - z \wt \sigma_\mu m_1 + \epsilon_\mu,
\end{equation}
where 
$$\epsilon_i := Z_i + \sigma_i\left(m_2 - m_2^{(i)}\right) \ \ \text{and} \ \ \epsilon_\mu := Z_{\mu} + z \wt \sigma_\mu\left(m_1 -m_1^{(\mu)}\right).$$
By (\ref{m_T}) and (\ref{Zestimate1}), we have for all $i$ and $\mu$,
\begin{equation}\label{erri}
\mathbf 1(\Xi)\left(|\epsilon_i | + |\epsilon_\mu| \right)\prec \Psi_\theta. 
\end{equation}
Moreover, by (\ref{resolvent8}) we have
\begin{equation}\label{high_err}
\mathbf 1(\Xi)\left( |m_2 - m_2^{(i)}| + |m_1 -m_1^{(\mu)}| \right)\le \mathbf 1(\Xi)\frac{1}{N}\left(\sum_{\nu\in \mathcal I_2} \wt \sigma_\nu \left|\frac{G_{\nu i} G_{i\nu}}{G_{ii}}\right| + \sum_{j\in \mathcal I_1}\sigma_j \left|\frac{G_{j\mu} G_{\mu j}}{G_{\mu\mu}}\right| \right)\prec \Psi_\theta^2,
\end{equation}
where we used (\ref{Zestimate1}) and $|G_{ii}| \sim |G_{\mu\mu}| \sim 1$ on $\Xi$ in the second step.
Now using \eqref{self_Gii}, \eqref{erri}, \eqref{high_err}, \eqref{Zestimate1}, \eqref{Piii} and the definition of $\Xi$, we can obtain that
\be\label{Gii0}
\mathbf 1(\Xi)G_{ii}=\mathbf 1(\Xi)\left[\frac{1}{-(1 + \sigma_i m_2)} - \frac{Z_i}{\left(1 +  \sigma_i m_2\right)^2}  +O_\prec\left(\Psi_\theta^2\right)\right].
\ee
Taking average $\frac{1}{Nz}\sum_i \sigma_i$, we get
\be\label{Gii}
\mathbf 1(\Xi)m_1 =\mathbf 1(\Xi)\left[\frac1N\sum_i\frac{\sigma_i}{-z(1 + \sigma_i m_2)} - z^{-1}[Z]_1  +O_\prec\left(\Psi_\theta^2\right)\right],
\ee
which proves \eqref{selfcons_improved2}. On the other hand, using (\ref{self_Gmu}), \eqref{erri}, \eqref{high_err}, \eqref{Zestimate1}, \eqref{Piii} and the definition of $\Xi$, we obtain that
\be\label{Gmumu0}
\mathbf 1(\Xi)G_{\mu\mu}=\mathbf 1(\Xi)\left[\frac{1}{- z(1 + \wt \sigma_\mu m_1)} - \frac{Z_\mu}{z^2\left(1 + \wt \sigma_\mu m_1\right)^2}  +O_\prec\left(\Psi_\theta^2\right)\right].
\ee
Taking average $N^{-1}\sum_\mu \wt \sigma_\mu$, we get
\be\label{Gmumu}
\mathbf 1(\Xi)m_2=\mathbf 1(\Xi)\left[\frac1N\sum_{\mu}\frac{\wt \sigma_\mu}{- z(1 + \wt \sigma_\mu m_1)} -z^{-2}[Z]_2 +O_\prec\left(\Psi_\theta^2\right)\right].
\ee
Plugging \eqref{Gii} into \eqref{Gmumu}, and using \eqref{Piii} and the definition of $\Xi$, we can obtain that
\be\label{end_rep}
\mathbf 1(\Xi)m_2=\mathbf 1(\Xi)\left[\frac1N\sum_{\mu}\frac{\wt \sigma_\mu}{- z + \frac{\wt \sigma_\mu }N\sum_i\frac{\sigma_i}{1 + \sigma_i m_2} } + O_\prec\left(|[Z]_1|+|[Z]_2|+\Psi_\theta^2\right)\right].
\ee
Comparing with \eqref{separable_MP}, we have proved \eqref{selfcons_improved}.

Then we prove (\ref{selfcons_lemm2}). Using the bound $\mathbf{1}(\eta\ge 1)\max_{a,b}|G_{ab}|=\OO(1)$, we trivially have $|m_1|+ |m_2| + \theta=\OO(1)$. Thus we have 
$\mathbf 1(\eta\ge 1) \Psi_\theta =\OO(N^{-1/2})$. Then \eqref{m_T} and (\ref{Zestimate2}) together give that
\begin{equation}\label{epsilonL}
\mathbf 1(\eta\ge 1) (|\epsilon_i|+|\epsilon_\mu|) \prec N^{-1/2}.
\end{equation}
First we claim that in the case $\eta \ge 1$, with high probability,
\begin{equation}\label{estimate_m2L}
|m_1| \ge \Im\, m_1 \ge c , \quad |m_2| \ge \Im\, m_2 \ge c , 
\end{equation} 
for some constant $c>0$. By the spectral decomposition (\ref{spectral1}), we have
$$\im G_{ii} = \Im \sum_{k = 1}^{n} \frac{z|\xi_k(i)|^2}{\lambda_k-z}= \sum_{k = 1}^{n} |\xi_k(i)|^2 \Im \left( -1 + \frac{\lambda_k}{\lambda_k-z}\right)\ge 0.$$
Then applying it to (\ref{self_Gmu}), $G_{\mu\mu}^{-1}$ is of order $\OO(1)$ and has imaginary part $\le - \eta + O_\prec\left( N^{-1/2}\right)$. This implies $ \Im\, G_{\mu\mu} \gtrsim \eta$ with high probability, which gives the second estimate of (\ref{estimate_m2L}) by \eqref{assm3}. Moreover, with \eqref{assm3} we also get that $\im ( 1 + \sigma_i m_2)\gtrsim 1$ for $i \le \tau n.$ 
Then with \eqref{self_Gii} and a similar argument as above, we obtain the first estimate of (\ref{estimate_m2L}). Next, we claim that in the case $\eta\ge 1$, with high probability,
\begin{equation}\label{estimate_m23}
| 1+ \wt \sigma_\mu m_1| \ge c', \quad | 1+ \sigma_i m_2| \ge c' , 
\end{equation} 
for some constant $c'>0$. In fact, if $\sigma_i \le |2m_2|^{-1}$, we trivially have $| 1+ \sigma_i m_2|\ge 1/2$. Otherwise, we have 
$$|1+ \sigma_i m_2| \ge \frac{ \Im\, m_2}{2|m_2|} \ge c'$$
by \eqref{estimate_m2L}. The first estimate in \eqref{estimate_m23} can be proved in the same way. Finally, with (\ref{epsilonL}), (\ref{estimate_m2L}) and (\ref{estimate_m23}), we can repeat the previous arguments between \eqref{self_Gii} and \eqref{end_rep} to get (\ref{selfcons_lemm2}).
\end{proof}

The following lemma gives the stability of the equation $ f(z,\al)=0$. Roughly speaking, it states that if $f(z, m_{2}(z))$ is small and $m_2(\wt z)-m_{2c}(\wt z)$ is small for $\Im\, \wt z \ge \Im\, z$, then $m_{2}(z)-m_{2c}(z)$ is small. For an arbitrary $z\in S(c_0,C_0, \e)$, we define the discrete set
\begin{align*}
L(z):=\{z\}\cup \{z'\in S(c_0,C_0, \e): \text{Re}\, z' = \text{Re}\, z, \text{Im}\, z'\in [\text{Im}\, z, 1]\cap (N^{-10}\mathbb N)\} .
\end{align*}
Thus, if $\text{Im}\, z \ge 1$, then $L(z)=\{z\}$; if $\text{Im}\, z<1$, then $L(z)$ is a 1-dimensional lattice with spacing $N^{-10}$ plus the point $z$. Obviously, we have $|L(z)|\le N^{10}$. 

\begin{lemma}\label{stability}
Let $c_0>0$ be a sufficiently small constant and fix $C_0,\epsilon>0$. The self-consistent equation $f(z,\al)=0$ is stable on $S(c_0,C_0, \epsilon)$ in the following sense. Suppose the $z$-dependent function $\delta$ satisfies $N^{-2} \le \delta(z) \le (\log N)^{-1}$ for $z\in S(c_0,C_0, \epsilon)$ and that $\delta$ is Lipschitz continuous with Lipschitz constant $\le N^2$. Suppose moreover that for each fixed $E$, the function $\eta \mapsto \delta(E+\ii\eta)$ is non-increasing for $\eta>0$. Suppose that $u_2: S(c_0,C_0,\epsilon)\to \mathbb C$ is the Stieltjes transform of a probability measure. Let $z\in S(c_0,C_0,\epsilon)$ and suppose that for all $z'\in L(z)$ we have 
\begin{equation}\label{Stability0}
\left| f(z', u_2)\right| \le \delta(z').
\end{equation}
Then we have
\begin{equation}
\left|u_2(z)-m_{2c}(z)\right|\le \frac{C\delta}{\sqrt{\kappa+\eta+\delta}},\label{Stability1}
\end{equation}
for some constant $C>0$ independent of $z$ and $N$, where $\kappa$ is defined in (\ref{KAPPA}). 
\end{lemma}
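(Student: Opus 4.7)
The strategy is a standard bootstrap/continuity argument based on Taylor expanding $f(z,\cdot)$ around the true root $m_{2c}(z)$ and exploiting the square-root behavior of $\partial_\alpha f$ near the edge, which gives a quadratic relation for the difference $g(z):=u_2(z)-m_{2c}(z)$.

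\textbf{Step 1: Taylor expansion and coefficient estimates.} Since $f(z,m_{2c}(z))=0$ by definition, a second-order expansion yields
\[
f(z,u_2) \;=\; \partial_\alpha f(z,m_{2c}(z))\,g(z) \;+\; \tfrac12 \partial_\alpha^2 f(z,\tilde\alpha)\,g(z)^2
\]
for some $\tilde\alpha$ on the segment between $u_2(z)$ and $m_{2c}(z)$. The assumptions \eqref{assm3}, \eqref{assm_gap} together with the control \eqref{Immc}--\eqref{Piii} imply $|\partial_\alpha^2 f(z,\alpha)|=O(1)$ uniformly for $\alpha$ in a neighborhood of $m_{2c}(z)$. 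The crucial estimate is
\[
\bigl|\partial_\alpha f(z,m_{2c}(z))\bigr| \;\sim\; \sqrt{\kappa+\eta},
\]
which follows from the same calculation as in the proof of Lemma~\ref{lambdar_sqrt}: at $z=\lambda_+$ one has $\partial_\alpha f(\lambda_+,m_r)=0$ but $|\partial_z f(\lambda_+,m_r)|\sim 1$ and $|\partial_\alpha^2 f(\lambda_+,m_r)|\sim 1$, and then the square-root expansion \eqref{sqroot4} of $m_{2c}$ combined with $\partial_\alpha f(z,m_{2c}(z))=-z''(m_r)\bigl(m_{2c}(z)-m_r\bigr)+O(|z-\lambda_+|)$ gives the claim.

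\textbf{Step 2: The quadratic inequality.} Combining Step 1 with the hypothesis $|f(z,u_2)|\le\delta$ gives, for a suitable constant $C>0$,
\[
\sqrt{\kappa+\eta}\,|g(z)| \;\le\; C\bigl(\delta + |g(z)|^2\bigr),
\]
equivalently $C|g|^2 - \sqrt{\kappa+\eta}\,|g| + C\delta \ge 0$. Standard analysis of this quadratic shows that $|g(z)|$ must lie either in the \emph{small} regime $|g|\le C'\delta/\sqrt{\kappa+\eta+\delta}$ or in the \emph{large} regime $|g|\ge c'\sqrt{\kappa+\eta+\delta}$, and these two regimes are separated by a gap of size $\sim \sqrt{\kappa+\eta+\delta}$.

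\textbf{Step 3: Selecting the small branch via continuity on $L(z)$.} This is the main obstacle, and it is precisely why the hypothesis is imposed on the whole grid $L(z)$ rather than at a single point. Fix $E=\Re z$ and consider $g$ along the vertical segment $\{E+\ii\eta':\eta'\in[\Im z,1]\}$. At $\eta'=1$, both $u_2$ and $m_{2c}$ are Stieltjes transforms of probability measures evaluated away from the real line, so $|g|=O(1)$ is easily seen to lie in the small branch (since $\sqrt{\kappa+\eta'+\delta}\sim 1$ there while the small branch upper bound is $O(\delta)$—a direct application of the quadratic bound at $\eta'=1$ forces $|g|$ small provided $\delta$ is small). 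One then descends in $\eta'$ along the grid $L(z)$. Since $u_2$ and $m_{2c}$ are Stieltjes transforms, their derivatives in $\eta'$ are bounded by $\eta'^{-2}\le N^{2}$ on $S(c_0,C_0,\epsilon)$, and the grid spacing is $N^{-10}$, so $|g|$ changes by at most $O(N^{-8})$ between consecutive grid points. Meanwhile the gap between the two branches is at least $c'\sqrt{\kappa+\eta'+\delta}\ge c'\sqrt{N^{-1+\epsilon}}\gg N^{-8}$, and the monotonicity and Lipschitz assumptions on $\delta$ ensure this gap does not collapse as $\eta'$ decreases. Therefore $|g|$ cannot jump from the small branch to the large branch, and the small-branch bound propagates all the way down to the target point $z$.

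\textbf{Step 4: Conclusion.} At $z$ itself, the small-branch bound reads $|u_2(z)-m_{2c}(z)|\le C\delta(z)/\sqrt{\kappa+\eta+\delta(z)}$, which is exactly \eqref{Stability1}. The main technical point throughout is the square-root behavior of $\partial_\alpha f(z,m_{2c}(z))$ near $\lambda_+$; the branch-selection argument is standard once the grid $L(z)$ and the Lipschitz/monotonicity assumptions on $\delta$ are in place.
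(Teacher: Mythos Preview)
Your proposal is correct and follows precisely the standard stability argument the paper defers to (it cites \cite[Lemma 4.5]{isotropic} and \cite[Appendix A.2]{Anisotropic}, noting that Lemma~\ref{lambdar_sqrt} is the only input). You have correctly identified that the square-root behavior $|\partial_\alpha f(z,m_{2c}(z))|\sim\sqrt{\kappa+\eta}$ coming from Lemma~\ref{lambdar_sqrt} is the essential ingredient, and the quadratic dichotomy plus the branch-selection continuity argument along $L(z)$ is exactly the method of those references.
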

\begin{proof}
This lemma can proved with the same method as in e.g. \cite[Lemma 4.5]{isotropic} and \cite[Appendix A.2]{Anisotropic}. The only input is Lemma \ref{lambdar_sqrt}. 
\end{proof}

Note that by Lemma \ref{stability} and (\ref{selfcons_lemm2}), we immediately get that
\begin{equation}\label{average_L}
\mathbf 1(\eta\ge 1)\theta(z) \prec N^{-1/2}.
\end{equation}
Then from (\ref{Zestimate2}), we obtain the off-diagonal estimate
\begin{equation}\label{offD_L}
\mathbf 1(\eta\ge 1)\Lambda_o(z) \prec N^{-1/2}.
\end{equation}
Using (\ref{self_Gii}), \eqref{self_Gmu} and (\ref{average_L}), we get that 
\begin{equation}\label{diag_L}
\mathbf 1(\eta\ge 1)\left(\left|G_{ii} - \Pi_{ii}\right| + |G_{\mu\mu}-\Pi_{\mu\mu}|\right) \prec N^{-1/2},
\end{equation}
which gives the diagonal estimate. These bounds can be easily generalized to the case $\eta \ge c$ for any fixed $c>0$. Compared with (\ref{entry_diagonal}), one can see that the bounds (\ref{offD_L}) and (\ref{diag_L}) are optimal for the $\eta\ge c$ case. Now it remains to deal with the small $\eta$ case (in particular, the local case with $\eta\ll 1$). We first prove the following weak bound.

\begin{lemma}[Weak entrywise local law]\label{alem_weak} 
Let $c_0>0$ be a sufficiently small constant and fix $C_0,\epsilon>0$. Then we have 
\begin{equation} \label{localweakm}
\Lambda(z) \prec (N\eta)^{-1/4},
\end{equation}
uniformly in $z \in S(c_0,C_0,\epsilon)$.
\end{lemma}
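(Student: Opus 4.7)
The plan is to establish this weak bound by a bootstrap (continuity) argument in the imaginary part $\eta$ of the spectral parameter, descending from the scale $\eta=1$, where (\ref{offD_L}) and (\ref{diag_L}) already furnish the much stronger bound $\Lambda \prec N^{-1/2}$. Since $\|G\|$ and $\|\partial_z G\|$ are bounded by $C\eta^{-2}\le CN^{2}$ on $S(c_0,C_0,\epsilon)$ via (\ref{eq_gbound}), the function $\Lambda$ is Lipschitz in $z$ with polynomial constant, so it suffices to establish the bound on a lattice of mesh $N^{-10}$ and extend it to all of $S(c_0,C_0,\epsilon)$ by continuity together with a union bound over the $N^{O(1)}$ lattice points.

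The core single-scale step is a self-improving implication at a fixed lattice point $z$: on the event $\Xi(z)=\{\Lambda(z)\le(\log N)^{-1}\}$, one upgrades the a priori smallness of $\Lambda$ to the quantitative bound $(N\eta)^{-1/4}$. Indeed, Lemma \ref{Z_lemma} supplies $\Lambda_o+\max_a|Z_a|\prec\Psi_\theta$, and Lemma \ref{lemm_selfcons_weak} yields $|f(z,m_2)|\prec\Psi_\theta$ together with the companion relation (\ref{selfcons_improved2}) linking $m_1$ to $m_2$. Feeding $\Psi_\theta$ into the stability Lemma \ref{stability} on the discrete set $L(z)$ then gives
\begin{equation*}
|m_2(z)-m_{2c}(z)|\le \frac{C\Psi_\theta}{\sqrt{\kappa+\eta+\Psi_\theta}}\le C\sqrt{\Psi_\theta},
\end{equation*}
and the separation bound (\ref{Piii}) propagates this control to the full $\theta$. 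Since $\theta\lesssim 1$ and $\im m_{2c}\lesssim 1$ on $\Xi$, the crude deterministic upper bound $\Psi_\theta\lesssim(N\eta)^{-1/2}$ holds throughout $S(c_0,C_0,\epsilon)$, and substituting it into the previous display yields $\theta\lesssim (N\eta)^{-1/4}$. The off-diagonal estimate $\Lambda_o\prec\Psi_\theta\lesssim(N\eta)^{-1/4}$ is then immediate from (\ref{Zestimate1}), and the diagonal estimates $|G_{ii}-\Pi_{ii}|+|G_{\mu\mu}-\Pi_{\mu\mu}|\prec(N\eta)^{-1/4}$ follow from the expansions (\ref{Gii0})--(\ref{Gmumu0}) together with the controls on $\theta$ and on the $Z$-variables.

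The bootstrap is then closed by stepping along the lattice in decreasing $\eta$: at each step, Lipschitz continuity transfers the bound $\Lambda\lesssim(N\eta)^{-1/4}$ obtained at the previous point into the weaker bound $\Lambda(z)\le(\log N)^{-1}$ at the new point (which is valid for large $N$ since $(N\eta)^{-1/4}=\oo((\log N)^{-1})$ on $S(c_0,C_0,\epsilon)$), placing us on $\Xi(z)$ and triggering the self-improving step above. A union bound over the lattice completes the argument.

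The main technical subtlety I anticipate lies in handling the implicit $\theta$-dependence of $\Psi_\theta$ without circularity, and in verifying that the function used as $\delta$ in Lemma \ref{stability} can simultaneously be made Lipschitz, nonincreasing in $\eta$, and bounded below by the random $\Psi_\theta$ uniformly on $L(z)$. This is resolved by working on a deterministic grid and replacing $\Psi_\theta$ with its deterministic upper bound $C(N\eta)^{-1/2}$, which holds on $\Xi$ throughout $S(c_0,C_0,\epsilon)$ and thus decouples the stability step from the random quantities.
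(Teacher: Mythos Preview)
Your proposal is correct and follows precisely the standard continuity/bootstrap argument the paper invokes by reference: the key inputs are Lemmas \ref{Z_lemma}--\ref{stability} together with the large-$\eta$ initial bounds (\ref{average_L})--(\ref{diag_L}), and you have fleshed out exactly this route. Your handling of the potential circularity---replacing $\Psi_\theta$ by the deterministic envelope $C(N\eta)^{-1/2}$ on $\Xi$ before invoking Lemma \ref{stability}---is the standard resolution and matches the literature the paper cites.
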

\begin{proof}
One can prove this lemma using a continuity argument as in e.g. \cite[Section 4.1]{isotropic}, \cite[Section 5.3]{Semicircle} or \cite[Section 3.6]{EKYY1}. The key inputs are Lemmas \ref{Z_lemma}-\ref{stability}, and the estimates (\ref{average_L})-(\ref{diag_L}) in the $\eta \ge 1$ case. All the other parts of the proof are essentially the same. 
\end{proof}

To get the strong entrywise local law as in \eqref{entry_diagonal}, we need stronger bounds on $[Z]_1$ and $[Z]_2$ in (\ref{selfcons_improved}) and (\ref{selfcons_improved2}). They follow from the following {\it{fluctuation averaging lemma}}. 

\begin{lemma}[Fluctuation averaging] \label{abstractdecoupling}
Suppose $\Phi$ and $\Phi_o$ are positive, $N$-dependent deterministic functions on $S(c_0,C_0,\epsilon)$ satisfying $N^{-1/2} \le \Phi, \Phi_o \le N^{-c}$ for some constant $c>0$. Suppose moreover that $\Lambda \prec \Phi$ and $\Lambda_o \prec \Phi_o$. Then for all $z \in S(c_0,C_0,\epsilon)$ we have
\begin{equation}\label{flucaver_ZZ}
\left|[Z]_1 \right| + \left|[Z]_2 \right| \prec   {\Phi _o^2}.
\end{equation}
\end{lemma}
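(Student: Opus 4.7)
The proof will follow the polynomial moment method that has become standard in fluctuation averaging analysis. I focus on the bound for $[Z]_1$; the estimate for $[Z]_2$ is entirely symmetric after swapping the roles of the row indices $i$ and the column indices $\mu$. The strategy is to bound the $2p$-th moment of $[Z]_1$ for each fixed integer $p$ and then invoke Markov's inequality together with the arbitrariness of $p$ and the definition of $\prec$ to deduce $|[Z]_1|\prec \Phi_o^2$.

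Let $w_i := \sigma_i/(1+\sigma_i m_2)^2$. By \eqref{assm3} and \eqref{Piii} together with the hypothesis $\Lambda\prec\Phi$, I have $|w_i|=\OO_\prec(1)$ uniformly in $i$. Expanding the $2p$-th moment yields
\begin{equation*}
\mathbb{E}|[Z]_1|^{2p} \;=\; N^{-2p} \sum_{i_1,\ldots, i_{2p}} \mathbb{E}\!\left[\prod_{k=1}^{p} w_{i_k} Z_{i_k}\prod_{k=p+1}^{2p} \bar{w}_{i_k} \bar Z_{i_k}\right].
\end{equation*}
Set $\mathbf{I}:=\{i_1,\ldots,i_{2p}\}$, and define the ``fully decoupled'' variable $Z^{(\mathbf{I}\setminus\{i_k\})}_{i_k}$ as the analogue of $Z_{i_k}$ in \eqref{Zi} with $G^{(i_k)}$ replaced by $G^{(\mathbf{I})}$. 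The resolvent identity $G^{(T)}_{ab}=G^{(T\cup\{c\})}_{ab}+G^{(T)}_{ac}G^{(T)}_{cb}/G^{(T)}_{cc}$, applied iteratively, lets me write each $Z_{i_k}$ as $Z^{(\mathbf{I}\setminus\{i_k\})}_{i_k}$ plus a sum of error terms. Each error term introduces at least two off-diagonal entries of the form $G^{(T)}_{\cdot\,i_j}$, so by the hypothesis $\Lambda_o\prec\Phi_o$ each carries a gain of $\OO_\prec(\Phi_o^2)$.

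After this decoupling, $Z^{(\mathbf{I}\setminus\{i_k\})}_{i_k}$ is independent of the rows $X_{i_j}$ for $j\ne k$ and satisfies $\mathbb{E}_{i_k}Z^{(\mathbf{I}\setminus\{i_k\})}_{i_k}=0$. Hence any non-vanishing contribution to the expectation requires every row index $i_k$ to be ``paired'' either with a repeated index in $\mathbf{I}$ or with a decoupling error carrying an off-diagonal entry indexed by $i_k$. A standard combinatorial count shows that in every surviving term the $2p$ indices must fall into at most $p$ distinct values, contributing at most $N^{p}$ to the sum, while the remaining $p$ pairings each contribute $\OO_\prec(\Phi_o^2)$. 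Combined with the $N^{-2p}$ prefactor, this gives $\mathbb{E}|[Z]_1|^{2p}\prec \Phi_o^{2p}$, and Markov's inequality then yields $|[Z]_1|\prec \Phi_o^2$.

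The main obstacle is the bookkeeping in the iterated resolvent expansion: after $2p$ iterations one generates exponentially many terms, and one must verify case by case that each such term either vanishes after the partial expectations $\mathbb{E}_{i_k}$ or contributes no more than $\Phi_o^{2p}$ to the final sum. This combinatorial analysis is parallel to fluctuation averaging arguments previously carried out for Wigner and sample covariance matrices, and the only new ingredients here are the weights $\sigma_i/(1+\sigma_i m_2)^2$ and $\wt\sigma_\mu/(1+\wt\sigma_\mu m_1)^2$ appearing in \eqref{def_Zaver}. These are uniformly of order one with high probability by \eqref{assm3}, \eqref{Piii} and $\Lambda\prec\Phi$, and hence cause no essential difficulty; in particular, the bound $|[Z]_2|\prec\Phi_o^2$ follows by repeating the argument with the roles of the row and column indices exchanged.
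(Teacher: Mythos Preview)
Your overall approach---the high-moment expansion followed by iterated resolvent decoupling and a combinatorial count of surviving terms---is the standard fluctuation-averaging machinery and is exactly what the paper invokes (it cites \cite[Lemma 4.9]{isotropic} and \cite[Theorem 4.7]{Semicircle} for this). In that sense you are on the right track.

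However, there is a real gap in how you treat the weights $w_i=\sigma_i/(1+\sigma_i m_2)^2$. You write the moment as
\[
N^{-2p}\sum_{i_1,\dots,i_{2p}}\mathbb E\Big[\prod_k w_{i_k}Z_{i_k}\cdots\Big],
\]
decouple only the $Z_{i_k}$'s, and then claim that any lone index vanishes because $\mathbb E_{i_k}Z_{i_k}^{(\mathbf I\setminus\{i_k\})}=0$. But $m_2$, and therefore every $w_{i_j}$, depends on \emph{all} rows of $X$, including row $i_k$. Hence $\mathbb E_{i_k}\big[\prod_j w_{i_j}\cdot Z_{i_k}^{(\mathbf I\setminus\{i_k\})}\big]$ is not zero, and your pairing argument as stated does not go through. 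The remark that the weights are ``uniformly of order one \dots and hence cause no essential difficulty'' is precisely where the argument is incomplete: boundedness alone does not give the independence needed for the partial-expectation cancellation.

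The paper singles this out as the \emph{only} complication and fixes it before invoking the standard machinery: using \eqref{resolvent8} one has $m_2=m_2^{(i)}+\OO_\prec(\Lambda_o^2)$, so
\[
[Z]_1=\frac{1}{N}\sum_i\frac{\sigma_i}{(1+m_2^{(i)}\sigma_i)^2}Z_i+\OO_\prec(\Lambda_o^2)
=\frac{1}{N}\sum_i(1-\mathbb E_i)\!\left[\frac{\sigma_i}{(1+m_2^{(i)}\sigma_i)^2}G_{ii}^{-1}\right]+\OO_\prec(\Lambda_o^2).
\]
Now each summand genuinely has the form $(1-\mathbb E_i)A_i$ with $A_i$ measurable with respect to $H^{(i)}$ after decoupling, and the textbook moment argument applies verbatim. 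You should insert this replacement (or equivalently decouple the weights alongside the $Z$'s in your expansion) before carrying out the combinatorics; once that is done, the rest of your sketch is correct.
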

\begin{proof}
We suppose that the event $\Xi$ holds. The bound \eqref{flucaver_ZZ} can be proved in a similar way as \cite[Lemma 4.9]{isotropic} and \cite[Theorem 4.7]{Semicircle}. Take $[Z]_1$ as an example. The only complication of the proof is that the coefficients ${\sigma_i}/{(1+\sigma_i m_2)^2}$ are random and depend on $i$. This can be dealt with by writing, for any $i\in \mathcal I_1$,
$$m_2 = m_2^{(i)} + \frac{1}{N}\sum_{\mu\in\mathcal I_2} \wt \sigma_\mu \frac{G_{\mu i} G_{i\mu}}{G_{ii}} = m_2^{(i)} + \OO(\Lambda_o^2).$$
Then we write
\begin{align}
[Z]_1 & =\frac{1}{N}\sum_{i\in \mathcal I_1} \frac{\sigma_i}{\big(1+m_2^{(i)}\sigma_i \big)^2} Z_i + \OO(\Lambda_o^2)  = \frac{1}{N}\sum_{i\in \mathcal I_1} (1-\mathbb E_i)\Bigg[\frac{\sigma_i}{\big(1+m_2^{(i)}\sigma_i\big)^2}G_{ii}^{-1}\Bigg]+ \OO(\Lambda_o^2) \nonumber\\
& = \frac{1}{N}\sum_{i\in \mathcal I_1} (1-\mathbb E_i)\left[\frac{\sigma_i}{\left(1+m_2 \sigma_i\right)^2}G_{ii}^{-1}\right] + \OO(\Lambda_o^2).\label{Z1_aver}
\end{align}
Now the method to bound the first term in the line (\ref{Z1_aver}) is only a slight modification of the one in \cite{isotropic} or \cite{Semicircle}. For the proof of an even more complicated fluctuation averaging lemma, one can also refer to \cite[Lemma 4.9]{XYY_circular}. Finally, we use that $\Xi$ holds with high probability by Lemma \ref{alem_weak} to conclude the proof. 
\end{proof}

Now we give the proof of Proposition \ref{prop_entry}.

\begin{proof}[Proof of Proposition \ref{prop_entry}]
By Lemma \ref{alem_weak}, the event $\Xi$ holds with high probability. Then by Lemma \ref{alem_weak} and Lemma \ref{Z_lemma}, we can take
\be\label{initial_phio}
\Phi_o = \sqrt{\frac{\im m_{2c} + (N\eta)^{-1/4}}{N\eta}} + \frac{1}{N\eta},\quad \Phi= \frac{1}{(N\eta)^{1/4}},
\ee
 in Lemma \ref{abstractdecoupling}. 
Then (\ref{selfcons_improved}) gives
$$|f(z,m_2)| \prec\frac{ \im m_{2c} + (N\eta)^{-1/4}}{N\eta}.$$
Using Lemma \ref{stability}, we get
\be\label{m2}
|m_2-m_{2c}|\prec\frac{\im m_{2c}}{N\eta\sqrt{\kappa+\eta}}+\frac{1}{(N\eta)^{5/8}} \prec \frac{1}{(N\eta)^{5/8}} ,
\ee
where we used $\im m_{2c}=\OO(\sqrt{\kappa+\eta})$ by \eqref{Immc} in the second step. With (\ref{selfcons_improved2}) and \eqref{m2}, we get the same bound for $m_1$, which gives
\be\label{m1}
\theta \prec {(N\eta)^{-5/8}} ,
\ee
Then using Lemma \ref{Z_lemma} and (\ref{m1}), we obtain that
\begin{align}\label{1iteration}
\Lambda_o \prec  \sqrt{\frac{\im m_{2c} + (N\eta)^{-5/8}}{N\eta}} + \frac{1}{N\eta}
\end{align}
uniformly in $z\in S(c_0,C_0,\epsilon)$, which is a better bound than the one in (\ref{initial_phio}). Taking the RHS of \eqref{1iteration} as the new $\Phi_o$, we can obtain an even better bound for $\Lambda_o$. Iterating the above arguments, we get the bound
$$\theta \prec \left({N\eta}\right)^{-\sum_{k=1}^l 2^{-k} - 2^{-l-2} }$$
after $l$ iterations. This implies 
\be\label{aver_proof}
\theta\prec(N\eta)^{-1}
\ee
since $l$ can be arbitrarily large. Now with \eqref{aver_proof}, Lemma \ref{Z_lemma}, \eqref{Gii0} and \eqref{Gmumu0}, we can obtain \eqref{entry_diagonal}. 
\end{proof}

\subsection{Proof of Proposition \ref{prop_diagonal}}

We now can finish the proof of Proposition \ref{prop_diagonal} using Proposition \ref{prop_entry}. By \eqref{Gii0} and \eqref{aver_proof}, we have
\be\label{addm}m=\frac1n\sum_i \frac{1}{-z(1 + \sigma_i m_2)} - \frac1n\sum_i \frac{Z_i}{z\left(1 +  \sigma_i m_2\right)^2}  +O_\prec\left(\Psi^2\right).\ee
Using the same method as in Lemma \ref{abstractdecoupling}, we can obtain that
$$\left|\frac1n\sum_i \frac{Z_i}{\left(1 +  \sigma_i m_2\right)^2}\right| \prec \Psi^2.$$
Together with \eqref{def_mc}, \eqref{Piii} and \eqref{aver_proof}, we get that
$$|m-m_c| \prec (N\eta)^{-1} + \Psi^2 \prec (N\eta)^{-1},$$
where we used \eqref{psi12} in the second step. This proves \eqref{aver_diagonal}. 

For $z\in S_{out}(c_0,C_0,\epsilon): =S(c_0,C_0,\epsilon)\cap \{z=E+\ii\eta: E\ge \lambda_+, N\eta\sqrt{\kappa + \eta} \ge N^\epsilon\}$, we have
$$\Psi^2 \le 2\left[{\frac{\Im \, m_{2c}(z)}{{N\eta }} } + \frac{1}{(N\eta)^2}\right] \lesssim \frac{1}{N\sqrt{\kappa+\eta}} + \frac{1}{(N\eta)^2} ,$$
where we used \eqref{Immc} in the second step. Thus by \eqref{addm}, to prove \eqref{aver_out}, it suffices to show that 
\be\label{aver_proof2}
|m_2-m_{2c}|\prec \frac{1}{N(\kappa +\eta)} + \frac{1}{(N\eta)^2\sqrt{\kappa +\eta}} ,\quad z\in S_{out}(c_0,C_0,\epsilon).
\ee
In fact, taking $\Phi_o=\Phi=\Psi$ in Lemma \ref{abstractdecoupling} and then using Lemma \ref{stability}, we get that
$$|m_2-m_{2c}|\prec \frac{\Psi^2}{\sqrt{\kappa + \eta}} \lesssim \frac{1}{N(\kappa +\eta)} + \frac{1}{(N\eta)^2\sqrt{\kappa +\eta}}.$$
This finishes the proof of \eqref{aver_proof2}, and hence \eqref{aver_out}.

Finally, with (\ref{entry_diagonal}), one can repeat the polynomialization method in \cite[Section 5]{isotropic} to get the anisotropic local law (\ref{aniso_diagonal}). The only difference is that one need to use the first bound in \eqref{assm3}.

\section{Proof of Theorem \ref{LEM_SMALL}: self-consistent comparison}\label{sec_comparison}

In this section, we finish the proof of Theorem \ref{LEM_SMALL} for a general $X$ satisfying \eqref{conditionA2}, \eqref{assm_3moment} and the bounded support condition (\ref{eq_support}) with $q\le N^{-\phi}$ for some constant $\phi>0$. 
Proposition \ref{prop_diagonal} implies that \eqref{aniso_law} holds for Gaussian $X^{Gauss}$ as discussed below Theorem \ref{LEM_SMALL}. Thus the basic idea of this section is to prove that for $X$ satisfying the assumptions in Theorem \ref{LEM_SMALL}, 
\begin{equation*}
\left\langle \mathbf u, \left( G(X,z) -  G(X^{Gauss},z)\right) \mathbf v\right\rangle \prec q+\Psi(z)
\end{equation*}
uniformly for deterministic unit vectors $\mathbf u,\mathbf v\in{\mathbb C}^{\mathcal I}$ and $z\in \wt S(c_0,C_0,\fa,\e)$. 
 
For simplicity of notations, we introduce the following notion of generalized entries. For $\mathbf v,\mathbf w \in \mathbb C^{\mathcal I}$ and $a\in \mathcal I$, we shall denote
\begin{equation}
G_{\mathbf{vw}}:=\langle \mathbf v,G\mathbf w\rangle, \quad G_{\mathbf{v}a}:=\langle \mathbf v,G\mathbf e_a\rangle, \quad G_{a\mathbf{w}}:=\langle \mathbf e_a,G\mathbf w\rangle,
\end{equation}
where $\mathbf e_a$ is the standard unit vector along $a$-th axis. Given vectors $\mathbf x\in \mathbb C^{\mathcal I_1}$ and $\mathbf y\in \mathbb C^{\mathcal I_2}$, we always identify them with their natural embeddings $\left( {\begin{array}{*{20}c}
   {\mathbf x}  \\
   0 \\
\end{array}} \right)$ and $\left( {\begin{array}{*{20}c}
   0  \\
   \mathbf y \\
\end{array}} \right)$ in $\mathbb C^{\mathcal I}$.
The exact meanings will be clear from the context. Now similar to Lemma \ref{lemma_Im}, we can prove the following estimates for $\mathcal G$.

\begin{lemma}\label{lem_comp_gbound}
For $i\in \mathcal I_1$ and $\mu\in \mathcal I_2$, we define $\mathbf u_i=U^* \mathbf e_i  \in \mathbb C^{\mathcal I_1}$ and $\mathbf v_\mu=V^* \mathbf e_\mu  \in \mathbb C^{\mathcal I_2}$, i.e. $\mathbf u_i$ is the $i$-th row vector of $U$ and $\mathbf v_\mu$ is the $\mu$-th row vector of $V$. Let $\mathbf x \in \mathbb C^{\mathcal I_1}$ and $\mathbf y \in \mathbb C^{\mathcal I_2}$. Then we have 
  \begin{align}
 & \sum_{i \in \mathcal I_1 }  \left| {G_{\mathbf x \mathbf u_i} } \right|^2  =\sum_{i \in \mathcal I_1 }  \left| {G_{ \mathbf u_i \mathbf x} } \right|^2  = \frac{|z|^2}{\eta}\im\left(\frac{ G_{\mathbf x\mathbf x}}{z}\right) , \label{eq_sgsq2} \\
& \sum_{\mu  \in \mathcal I_2 } {\left| {G_{\mathbf y \mathbf v_\mu } } \right|^2 }=\sum_{\mu  \in \mathcal I_2 } {\left| {G_{\mathbf v_\mu \mathbf y } } \right|^2 }  = \frac{{\im G_{\mathbf y\mathbf y} }}{\eta }, \label{eq_sgsq1}\\ 
& \sum_{i \in \mathcal I_1 } {\left| {G_{\mathbf y \mathbf u_i} } \right|^2 } =\sum_{i \in \mathcal I_1 } {\left| {G_{ \mathbf u_i \mathbf y} } \right|^2 } = {G}_{\mathbf y\mathbf y}  +\frac{\bar z}{\eta} \im G_{\mathbf y\mathbf y}  , \label{eq_sgsq3} \\
& \sum_{\mu \in \mathcal I_2 } {\left| {G_{\mathbf x \mathbf v_\mu} } \right|^2 }= \sum_{\mu \in \mathcal I_2 } {\left| {G_{\mathbf v_\mu \mathbf x } } \right|^2 }= \frac{G_{\mathbf x\mathbf x}}{z}  + \frac{\bar z}{\eta} \im \left(\frac{G_{\mathbf x\mathbf x}}{z}\right) .\label{eq_sgsq4}
 \end{align}
 All of the above estimates remain true for $G^{(\mathbb T)}$ instead of $G$ for any $\mathbb T \subseteq \mathcal I$. 
\end{lemma}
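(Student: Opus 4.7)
The plan is to reduce Lemma \ref{lem_comp_gbound} to the polarized versions of the Ward-type identities \eqref{eq_gsq1}--\eqref{eq_gsq4} already established (for possibly non-diagonal $A$ and $B$) in Lemma \ref{lemma_Im}.

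First, I would observe that under the standard embedding, $\mathbf u_i = \wt U^* \mathbf e_i$ for $i \in \mathcal I_1$, where $\wt U := \diag(U, I_{N\times N})$ acts on $\mathbb C^{\mathcal I}$. Since $\wt U$ is block-diagonal and orthogonal, the family $\{\mathbf u_i\}_{i \in \mathcal I_1}$ is an orthonormal basis of the coordinate subspace $P_{\mathcal I_1}\mathbb C^{\mathcal I}$, where $P_{\mathcal I_1}$ denotes the orthogonal projection onto $\mathbb C^{\mathcal I_1}$; analogously $\{\mathbf v_\mu\}_{\mu \in \mathcal I_2}$ is an orthonormal basis of $P_{\mathcal I_2}\mathbb C^{\mathcal I}$. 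The sum $\sum_a |\langle \mathbf w, \mathbf f_a\rangle|^2 = \|P \mathbf w\|^2$ over an orthonormal basis of $\mathrm{Ran}(P)$ is basis-independent, so specialising to $\mathbf w = G^*\mathbf x$ and to $\mathbf w = G\mathbf x$ yields
\begin{equation*}
\sum_{i \in \mathcal I_1}|G_{\mathbf x\mathbf u_i}|^2 = \sum_{i \in \mathcal I_1}|G_{\mathbf x i}|^2, \qquad \sum_{i \in \mathcal I_1}|G_{\mathbf u_i\mathbf x}|^2 = \sum_{i \in \mathcal I_1}|G_{i\mathbf x}|^2,
\end{equation*}
together with the analogous identities over $\mathcal I_2$. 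This removes the rotated basis vectors $\mathbf u_i, \mathbf v_\mu$ from the left-hand sides of \eqref{eq_sgsq2}--\eqref{eq_sgsq4}.

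It then suffices to verify the polarized forms of \eqref{eq_gsq1}--\eqref{eq_gsq4}, e.g.
\begin{equation*}
\sum_{i \in \mathcal I_1}|G_{\mathbf x i}|^2 = \frac{|z|^2}{\eta}\im\frac{G_{\mathbf x\mathbf x}}{z} \quad (\mathbf x \in \mathbb C^{\mathcal I_1}), \qquad \sum_{\mu \in \mathcal I_2}|G_{\mathbf y \mu}|^2 = \frac{\im G_{\mathbf y\mathbf y}}{\eta} \quad (\mathbf y \in \mathbb C^{\mathcal I_2}),
\end{equation*}
together with the two ``mixed'' identities. I would prove each by direct expansion using the spectral decomposition \eqref{spectral1}--\eqref{spectral2}. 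For instance, orthonormality of $\{\xi_k\}$ gives $\sum_{i\in\mathcal I_1}|G_{\mathbf x i}|^2 = |z|^2 \sum_k |\langle \mathbf x, \xi_k\rangle|^2/|\lambda_k - z|^2$, which matches $\frac{|z|^2}{\eta}\im(G_{\mathbf x\mathbf x}/z)$ via $\im(\lambda_k - z)^{-1} = \eta/|\lambda_k - z|^2$; the mixed identity \eqref{eq_sgsq3} follows by the same computation combined with the elementary relation $\overline{\lambda_k - z} + \bar z = \lambda_k$, which supplies the extra factor of $\lambda_k$ in the numerator. The same argument carries over verbatim to any minor $G^{(\mathbb T)}$, whose spectral decomposition has identical structure.

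There is no substantive obstacle; the only point worth flagging is that the inhomogeneous block $\diag(I_{n\times n}, zI_{N\times N})$ in $G^{-1}$ causes \eqref{eq_sgsq2} and \eqref{eq_sgsq4} to feature $\im(G_{\mathbf x\mathbf x}/z)$ rather than the simpler $\im G_{\mathbf x\mathbf x}$. At the matrix level this manifests as the fact that the clean identity $GP_{\mathcal I_2}G^* = G^*P_{\mathcal I_2}G = (G - G^*)/(2\ii\eta)$ (obtained by multiplying $G^{-1} - (G^{-1})^* = -2\ii\eta P_{\mathcal I_2}$ appropriately by $G$ and $G^*$) has no analog for $GP_{\mathcal I_1}G^*$; working at the level of the quadratic form $\mathbf x^*(\cdot)\mathbf x$---equivalently, via the spectral decomposition---is what permits all eight identities to be verified uniformly.
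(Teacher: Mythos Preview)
Your proposal is correct and follows essentially the same route as the paper: both exploit that $\{\mathbf u_i\}$ and $\{\mathbf v_\mu\}$ are orthonormal bases of the respective coordinate subspaces (so the sums reduce to $\langle \mathbf w, P_{\mathcal I_a} \mathbf w\rangle$ for $\mathbf w = G^*\mathbf x$ etc.), and then evaluate via the spectral decomposition \eqref{spectral1}--\eqref{spectral2}. The only cosmetic difference is that for the mixed identity \eqref{eq_sgsq3} the paper works at the block-matrix level, writing $\sum_i |G_{\mathbf y\mathbf u_i}|^2 = (\mathcal G_2 Y^* Y \mathcal G_2^*)_{\mathbf y\mathbf y}$ and using $\mathcal G_2^* = (Y^* Y - \bar z)^{-1}$, whereas you carry out the equivalent computation directly in the spectral expansion via $\overline{\lambda_k - z} + \bar z = \lambda_k$; the two are the same calculation.
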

\begin{proof}
We only prove \eqref{eq_sgsq1} and \eqref{eq_sgsq3}. The proof for \eqref{eq_sgsq2} and \eqref{eq_sgsq4} is very similar. With  \eqref{spectral1}, we get that
\begin{align}\label{middle}
\sum_{\mu  \in \mathcal I_2 } {\left| {G_{\mathbf y \mathbf v_\mu } } \right|^2 } =& \sum_{\mu  \in \mathcal I_2 } \left\langle \mathbf y,G {\mathbf v_\mu  } \right\rangle \left\langle {\mathbf v_\mu}, G^\dag \mathbf y \right\rangle  = \sum_{k = 1}^N {\frac{{\left| {\left\langle {\mathbf y,\zeta _k } \right\rangle } \right|^2  }}{{\left( {\lambda _k  - E} \right)^2  + \eta ^2 }} }   =\frac{{\im  G_{\mathbf y\mathbf y} }}{\eta }.
\end{align}
For simplicity, we denote $Y:=\Sig^{1/2} U^{*}X V\wt \Sig^{1/2}$. Then with \eqref{green2} and \eqref{spectral2}, we get that
\begin{align*}
 \sum_{i \in \mathcal I_1 } {\left| {G_{\mathbf y\mathbf u_i} } \right|^2 } =  \left( {{\mathcal G_2} Y^\dag Y \mathcal G_2^\dag  } \right)_{\mathbf y\mathbf y}=  \left( {{\mathcal G_2} \left(Y^\dag Y-\bar z\right) \mathcal G_2^\dag  } \right)_{\mathbf y\mathbf y} + \bar z \left( {{\mathcal G_2} \mathcal G_2^\dag  } \right)_{\mathbf y\mathbf y} =  {G}_{\mathbf y\mathbf y}  +\frac{\bar z}{\eta} \im G_{\mathbf y\mathbf y}  ,
 \end{align*}
 where we used $\mathcal G_2^\dag= \left(Y^\dag Y-\bar z\right)^{-1}$ and \eqref{middle} in the last step.
\end{proof}

Our proof basically follows the arguments in \cite[Section 7]{Anisotropic} with some modifications. Thus we will not give all the details. We first focus on proving the anisotropic local law \eqref{aniso_law}, and the proof of \eqref{aver_in1}-\eqref{aver_out1} will be given at the end of this section. By polarization, to prove \eqref{aniso_law} it suffices to prove that 
 \begin{equation}\label{goal_ani2}
\left\langle \mathbf v, \left(G(X,z)- \Pi(z)\right) \mathbf v \right\rangle \prec q+\Psi(z)
\end{equation}
uniformly in $z\in \wt S(c_0,C_0,\fa,\e)$ and any deterministic unit vector $ \mathbf v\in{\mathbb C}^{\mathcal I}$. In fact, we can obtain the more general bound \eqref{aniso_law}
by applying (\ref{goal_ani2}) to the vectors $\mathbf u + \mathbf v$ and $\mathbf u + i\mathbf v$, respectively.


The proof consists of a bootstrap argument from larger scales to smaller scales in multiplicative increments of $N^{-\delta}$, where
\begin{equation}
 \delta \in\left(0,\frac{\min\{\epsilon,\fa,\phi\}}{2C_a}\right). \label{assm_comp_delta}
\end{equation}
Here $\e,\fa>0$ are the constants in $\wt S(c_0,C_0,\fa,\e)$, $\phi>0$ is a constant such that $q\le N^{-\phi}$, $C_a> 0$ is an absolute constant that will be chosen large enough in the proof. For any $\eta\ge N^{-1+\e}$, we define
\begin{equation}\label{eq_comp_eta}
\eta_l:=\eta N^{\delta l} \text{ for } \ l=0,...,L-1,\ \ \ \eta_L:=1.
\end{equation}
where $L\equiv L(\eta):=\max\left\{l\in\mathbb N|\ \eta N^{\delta(l-1)}<1\right\}.$ Note that $L\le \delta^{-1}$.

By (\ref{eq_gbound}), the function $z\mapsto G(z)- \Pi(z)$ is Lipschitz continuous in $\wt S(c_0,C_0,\fa,\e)$ with Lipschitz constant bounded by $N^2$. Thus to prove (\ref{goal_ani2}) for all $z\in \wt S(c_0,C_0,\fa,\e)$, it suffices to show that (\ref{goal_ani2}) holds for all $z$ in some discrete but sufficiently dense subset ${\mathbf S} \subset \wt S(c_0,C_0,\fa,e)$. We will use the following discretized domain $\bS$.
\begin{definition}
Let $\mathbf S$ be an $N^{-10}$-net of $\wt S(c_0,C_0,\fa,\e)$ such that $ |\mathbf S |\le N^{20}$ and
\[E+\ii\eta\in\mathbf S\Rightarrow E+\ii\eta_l\in\mathbf S\text{ for }l=1,...,L(\eta).\]
\end{definition}

The bootstrapping is formulated in terms of two scale-dependent properties ($\bA_m$) and ($\bC_m$) defined on the subsets
\[\mathbf S_m:=\left\{z\in\mathbf S\mid\text{Im} \, z\ge N^{-\delta m}\right\}.\]
${(\bA_m)}$ For all $z\in\mathbf S_m$, all deterministic unit vectors $\mathbf x \in \mathbb C^{\mathcal I_1}$ and $\mathbf y \in \mathbb C^{\mathcal I_2}$, and all $X$ satisfying the assumptions in Theorem \ref{LEM_SMALL}, we have
\begin{equation}\label{eq_comp_Am}
 \im \left(\frac{G_{\mathbf x\mathbf x}(z)}{z}\right) + \im G_{\mathbf y\mathbf y}(z)\prec \im m_{2c}(z) +N^{C_a\delta}(q+\Psi(z)).
\end{equation}
${(\bC_m)}$ For all $z\in\mathbf S_m$, all deterministic unit vector $\mathbf v\in \mathbb C^{\mathcal I}$, and all $X$ satisfying the assumptions in Theorem \ref{LEM_SMALL}, 
we have
\begin{equation}\label{eq_comp_Cm}
 \left|G_{\mathbf v\mathbf v}(z)-\Pi_{\mathbf v\mathbf v}(z)\right|\prec N^{C_a\delta}(q+\Psi(z)).
\end{equation}
It is trivial to see that ${(\mathbf A_0)}$ holds by \eqref{eq_gbound} and \eqref{Immc}. Moreover, it is easy to observe the following result.

\begin{lemma}\label{lemm_boot2}
For any $m$, property ${(\mathbf C_m)}$ implies property $(\mathbf A_m)$.
\end{lemma}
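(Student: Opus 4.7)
The plan is to derive $(\mathbf A_m)$ from $(\mathbf C_m)$ by producing the $\im m_{2c}(z)$ term on the right-hand side of \eqref{eq_comp_Am} entirely from the \emph{deterministic} approximants $\Pi_{\mathbf x\mathbf x}/z$ and $\Pi_{\mathbf y\mathbf y}$, rather than from $G$ itself. The key point is that $(\mathbf C_m)$ already controls $|G_{\mathbf v\mathbf v}-\Pi_{\mathbf v\mathbf v}|$ by $N^{C_a\delta}(q+\Psi)$, so the only thing that needs to be shown is the deterministic a priori bound
$$\left|\im \frac{\Pi_{\mathbf x \mathbf x}(z)}{z}\right|+\left|\im \Pi_{\mathbf y \mathbf y}(z)\right| \;\lesssim\; \im m_{2c}(z), \qquad z\in \mathbf S_m.$$

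I would apply $(\mathbf C_m)$ with $\mathbf v$ equal to the natural embeddings of the unit vectors $\mathbf x\in\mathbb C^{\mathcal I_1}$ and $\mathbf y\in\mathbb C^{\mathcal I_2}$ into $\mathbb C^{\mathcal I}$, obtaining
$$G_{\mathbf x\mathbf x}(z)=\Pi_{\mathbf x\mathbf x}(z)+O_\prec(N^{C_a\delta}(q+\Psi(z))),\quad G_{\mathbf y\mathbf y}(z)=\Pi_{\mathbf y\mathbf y}(z)+O_\prec(N^{C_a\delta}(q+\Psi(z))).$$
Since $|z|\gtrsim 1$ on $\wt S(c_0,C_0,\fa,\e)$ (as $E\ge \lambda_+-c_0>0$ for $c_0$ small), dividing the first identity by $z$ preserves the stochastic error. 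Taking imaginary parts and combining with the deterministic bound above then yields $(\mathbf A_m)$ at once.

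To establish the deterministic bound, I would use the block-diagonal form of $\Pi$ from \eqref{defn_pi} to write
$$\Pi_{\mathbf y\mathbf y}=-\frac{1}{z}\sum_{\mu\in\mathcal I_2}\frac{|y_\mu|^2}{1+m_{1c}\wt\sigma_\mu},\qquad \frac{\Pi_{\mathbf x\mathbf x}}{z}=-\frac{1}{z}\sum_{i\in\mathcal I_1}\frac{|x_i|^2}{1+m_{2c}\sigma_i},$$
i.e.\ as convex combinations (with $\sum_\mu|y_\mu|^2=\sum_i|x_i|^2=1$) of scalar quantities of the form $-1/[z(1+m_{\ast c}\tau)]$. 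A short direct calculation gives, for instance,
$$\im\left(\frac{-1}{z(1+m_{1c}\wt\sigma_\mu)}\right)=\frac{\eta(1+\wt\sigma_\mu\re m_{1c})+E\wt\sigma_\mu\im m_{1c}}{|z|^2\,|1+m_{1c}\wt\sigma_\mu|^2}.$$
The denominator is bounded below by a positive constant by \eqref{Piii} and $|z|\gtrsim 1$; the numerator is $O(\eta+\im m_{1c})$ using the uniform bounds $\sigma_i,\wt\sigma_\mu\le\tau^{-1}$ from \eqref{assm3} and $|m_{1c}|\sim 1$ from \eqref{Immc}. The analogous computation for $\im(-1/[z(1+m_{2c}\sigma_i)])$ gives $O(\eta+\im m_{2c})$, and summing against $|y_\mu|^2$, $|x_i|^2$ respectively and using $\im m_{1c}\sim \im m_{2c}$ from \eqref{Immc} reduces everything to the elementary comparison $\eta\lesssim \im m_{1,2c}$ on $S(c_0,C_0,\e)$.

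The only step requiring any thought, and the main (minor) obstacle, is this comparison $\eta \lesssim \im m_{1,2c}$, which I would verify by distinguishing the two cases in \eqref{Immc}: when $E\ge \lambda_+$, $\im m_{1,2c}/\eta\sim 1/\sqrt{\kappa+\eta}\gtrsim 1$ because $\kappa+\eta$ is bounded above by a constant on our domain; when $E<\lambda_+$, $\im m_{1,2c}/\eta\sim \sqrt{\kappa+\eta}/\eta\ge \eta^{-1/2}\gtrsim 1$ because $\eta\le 1$. This closes the deterministic estimate and concludes the proof of the lemma.
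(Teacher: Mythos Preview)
Your proposal is correct and follows essentially the same approach as the paper: both reduce $(\mathbf A_m)$ to the deterministic bound $\im(\Pi_{\mathbf x\mathbf x}/z)+\im\Pi_{\mathbf y\mathbf y}\lesssim \im m_{2c}$, invoking \eqref{Immc}, \eqref{Piii}, and the block-diagonal form \eqref{defn_pi} of $\Pi$. The paper states this bound in one line without details, whereas you have spelled out the computation of $\im(-1/[z(1+m_{\ast c}\tau)])$ and the comparison $\eta\lesssim \im m_{2c}$ explicitly; there is no substantive difference.
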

\begin{proof}
By \eqref{Immc}, \eqref{Piii} and the definition of $\Pi$ in \eqref{defn_pi}, it is easy to get that 
$$\im \left(\frac{\Pi_{\mathbf x\mathbf x}(z)}{z}\right) + \im \Pi_{\mathbf y\mathbf y}(z)\lesssim \im m_{2c}(z) ,$$
which finishes the proof.
\end{proof}

The key step is the following induction result.
\begin{lemma}\label{lemm_boot}
For any $1\le m\le \delta^{-1}$, property $(\mathbf A_{m-1})$ implies property $(\mathbf C_m)$.
\end{lemma}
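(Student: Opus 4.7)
My plan is to follow the self-consistent comparison method of \cite{Anisotropic}, adapted to accommodate the non-diagonal $A,B$ and the weakened third-moment assumption \eqref{assm_3moment}. Fix $z \in \mathbf{S}_m$ with $\eta = \Im z$, and set $F(X) := G_{\mathbf{vv}}(X,z) - \Pi_{\mathbf{vv}}(z)$. By Markov's inequality, a union bound over the $N^{O(1)}$ points of $\mathbf{S}_m$, and a standard polynomialization step, property $(\mathbf{C}_m)$ reduces to showing that
$$\mathbb{E} |F(X)|^{2p} \lesssim \bigl(N^{C_a\delta}(q+\Psi(z))\bigr)^{2p}$$
for every fixed $p \in \mathbb{N}$. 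The base of the comparison is the Gaussian case $X = X^{Gauss}$, for which Proposition \ref{prop_diagonal} (after the unitary reduction $\Sigma^{1/2} U^* X^{Gauss} V\wt\Sigma^{1/2} \stackrel{d}{=} \Sigma^{1/2} X^{Gauss} \wt\Sigma^{1/2}$) yields the stronger bound $|F(X^{Gauss})| \prec \Psi$.

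Next, I would interpolate via a Lindeberg swap: enumerate the $nN$ independent entries of $X$, and for $t \in \{0, 1, \ldots, nN\}$ let $X^{(t)}$ be the matrix obtained by replacing the first $t$ entries of $X^{Gauss}$ by the corresponding entries of $X$. Each increment $\mathbb{E}|F(X^{(t)})|^{2p} - \mathbb{E}|F(X^{(t-1)})|^{2p}$ is analyzed by Taylor-expanding $F$ to fourth order in the single entry $x_{i\mu}$ being swapped. Because $\mathbb{E}x_{i\mu} = 0$ and $\mathbb{E}x_{i\mu}^2 = N^{-1}$ match the Gaussian, the zeroth, first, and second order contributions cancel in the increment, and only the third and higher cumulant contributions survive. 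The derivative $\partial_{x_{i\mu}} G$ is computed via the chain rule through \eqref{linearize_block}: each differentiation produces a product of two generalized entries of the form $G_{\bullet\mathbf{u}_i}$ and $G_{\mathbf{v}_\mu\bullet}$ (with $\mathbf{u}_i = U^*\mathbf{e}_i$, $\mathbf{v}_\mu = V^*\mathbf{e}_\mu$), multiplied by $\sqrt{\sigma_i \wt\sigma_\mu}$.

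The resulting monomials are summed over $(i,\mu)$ using the Ward-style identities of Lemma \ref{lem_comp_gbound}. The key input is property $(\mathbf{A}_{m-1})$: applied at the point $z' := E + \ii N^\delta \eta \in \mathbf{S}_{m-1}$, together with the elementary monotonicity $\eta \mapsto \eta \Im\,G_{\mathbf{v}\mathbf{v}}(E+\ii\eta)$, it gives $\Im G_{\mathbf{vv}}(z) \le N^\delta \Im G_{\mathbf{vv}}(z')$ and hence control on paired sums such as $\sum_\mu |G_{\mathbf{v}\mathbf{v}_\mu}|^2 = \eta^{-1}\Im G_{\mathbf{vv}}$ by $N^{C_a\delta}(\Im m_{2c} + q + \Psi)/\eta \lesssim N^{C_a\delta} N\Psi^2$, using \eqref{psi12}. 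Unpaired generalized entries are handled by the a priori anisotropic bound at the coarser scale together with $\|G\| \le \eta^{-1}$. Each $\partial^k$ contribution to the increment is then bounded by a product of a cumulant factor $|\kappa^{(k+1)}(x_{i\mu})|$ and a combinatorial factor controlled by powers of $q + \Psi$.

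The main obstacle, and the reason the assumption \eqref{assm_3moment} appears, is the third-order cumulant contribution. For $k \ge 4$, bounded support and \eqref{conditionA2} give $|\kappa^{(k)}(x_{i\mu})| \prec q^{k-2}/N$, and after the Ward sums these terms are suppressed by a positive power of $(q+\Psi)$. For $k=3$, however, only $|\mathbb{E}x_{i\mu}^3| \le b_N N^{-2}$ is available, and summing over $(i,\mu)$ the worst schematic bound one obtains is
$$\sum_{i,\mu} |\kappa^{(3)}(x_{i\mu})| \cdot N^{C_a\delta}(q+\Psi) \cdot (\text{pair factor}) \;\lesssim\; b_N\Bigl(\Psi^2 + \frac{q}{N\eta}\Bigr) \cdot N^{C_a\delta}(q+\Psi)^{2p}.$$
The definition of $\wt S(c_0,C_0,\fa,\e)$ in \eqref{tildeS} is precisely tailored so that $b_N(\Psi^2 + q/(N\eta)) \le N^{-\fa}$, which, provided $C_a\delta < \fa/2$ (satisfied by \eqref{assm_comp_delta}), renders this contribution negligible. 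Summing the increments from $t=0$ to $t=nN$ and combining with the Gaussian base case closes the induction and yields $(\mathbf{C}_m)$. The same scheme, with averaged rather than entrywise expansions, will then give \eqref{aver_in1}--\eqref{aver_out1}; the diagonal special case follows by the improved third-moment cancellation from \cite{Anisotropic}, which removes the $b_N$ constraint entirely.
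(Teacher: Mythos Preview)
Your outline has the right list of ingredients but contains a structural gap in how the Ward identities enter. In the discrete Lindeberg swap you propose, the increment at step $t$ is governed by the resolvent $G(X^{(t)}_{(i_t\mu_t),0})$ at the \emph{single} index pair $(i_t,\mu_t)$. When you then ``sum the monomials over $(i,\mu)$'' you are really summing over the swap index $t$, but the identities of Lemma~\ref{lem_comp_gbound} (e.g.\ $\sum_\mu |G_{\mathbf v \mathbf v_\mu}|^2 = \eta^{-1}\Im G_{\mathbf v\mathbf v}$) require a \emph{fixed} resolvent while the summation index runs; here $G(X^{(t)})$ itself changes with $t$, so no Ward-type averaging gain is available across steps. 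Without it, individual entries such as $G_{\mathbf v \mathbf u_{i_t}}$ are only controlled by the weak a priori bound $\OO_\prec(N^{2\delta})$ from Lemma~\ref{lemm_comp_2}, and summing $nN$ increments of size $\OO(N^{-2+C\delta})$ yields at best $\mathbb E|F|^{2p}\prec N^{C\delta}$, far from $(q+\Psi)^{2p}$.

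The paper sidesteps this by using the continuous interpolation $X^\theta$ with entry laws $(1-\theta)\rho^0+\theta\rho^1$ (Lemma~\ref{lemm_comp_3}). The $\theta$-derivative then produces, at each fixed $\theta$, a genuine sum $\sum_{i,\mu}$ evaluated at the \emph{single} matrix $X^\theta$, so the Ward bounds \eqref{eq_comp_r2}--\eqref{eq_comp_r3} apply cleanly. This yields the self-consistent differential inequality $\tfrac{d}{d\theta}\mathbb{E}F_{\mathbf v}^p(X^\theta) = \OO\bigl([N^{C_a\delta}(q+\Psi)]^p + \mathbb{E}F_{\mathbf v}^p(X^\theta)\bigr)$, closed by Gr\"onwall --- a step absent from your outline but essential, since the empty-word factors $F_{\mathbf v}^{p-l}$ in \eqref{eq_comp_r1} are only bounded a priori by $N^{2\delta(p-l)}$. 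Your third-moment estimate has the same defect and additionally omits the inner self-improving iteration of Lemma~\ref{lemm_comparison_big}, which feeds back an assumed bound $G-\Pi=\OO_\prec(\Phi)$ and gains a factor $N^{-\fa/2}$ per pass; the single-line estimate you wrote effectively presupposes the conclusion before the condition $b_N(\Psi^2+q/(N\eta))\le N^{-\fa}$ can be exploited.
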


Combining Lemmas \ref{lemm_boot2} and \ref{lemm_boot}, we conclude that (\ref{eq_comp_Cm}) holds for all $w\in\mathbf S$. Since $\delta$ can be chosen arbitrarily small under the condition (\ref{assm_comp_delta}), we conclude that (\ref{goal_ani2}) holds for all $w\in\mathbf S$, and \eqref{aniso_law} follows for all $z\in \wt S(c_0,C_0,\fa,\e)$. What remains now is the proof of Lemma \ref{lemm_boot}. Denote
\begin{equation}\label{eq_comp_F(X)}
 F_{\mathbf v}(X,z):=\left|G_{\mathbf{vv}}(X,z)-\Pi_{\mathbf {vv}}(z)\right|.
\end{equation}
By Markov's inequality, it suffices to prove the following lemma.
\begin{lemma}\label{lemm_comp_0}
Fix $p\in \mathbb N$ and $m\le \delta^{-1}$. Suppose that the assumptions of Theorem \ref{LEM_SMALL} and property $(\mathbf A_{m-1})$ hold. Then we have
 \begin{equation}
  \mathbb EF_{\mathbf v}^p(X,z)\le\left[ N^{C_a\delta}\left(q+\Psi(z)\right)\right]^p
 \end{equation}
 for all $z\in{\mathbf S}_m$ and any deterministic unit vector $\mathbf v$.
\end{lemma}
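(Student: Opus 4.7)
The plan is to adapt the self-consistent comparison argument of \cite{Anisotropic} to the present setting. Since Proposition \ref{prop_diagonal} already supplies the anisotropic local law at the Gaussian endpoint, I would reduce the lemma to controlling the difference $\mathbb{E} F_{\mathbf v}^p(X,z) - \mathbb{E} F_{\mathbf v}^p(X^G,z)$, where $X^G$ is a matrix of \emph{i.i.d.} centered Gaussians of variance $N^{-1}$. To this end, I would introduce the Ornstein--Uhlenbeck interpolation $X^\theta := e^{-\theta/2} X + \sqrt{1-e^{-\theta}}\, X^G$, which preserves all second moments along the flow, and uniformly estimate $|\partial_\theta \mathbb{E} F_{\mathbf v}^{2p}(X^\theta,z)|$ for $\theta\ge 0$. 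To get a smooth polynomial in the resolvent entries, one writes $F_{\mathbf v}^{2p} = (G_{\mathbf{vv}} - \Pi_{\mathbf{vv}})^p \, \overline{(G_{\mathbf{vv}} - \Pi_{\mathbf{vv}})^p}$ and handles odd $p$ by Cauchy--Schwarz.

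Applying the chain rule followed by the cumulant expansion
\[
\mathbb E\bigl[ X^\theta_{i\mu}\, \mathcal F(X^\theta) \bigr] = \sum_{k\ge 1} \frac{\kappa_{k+1}(X^\theta_{i\mu})}{k!}\, \mathbb E \bigl[ \partial_{X_{i\mu}}^{k}\, \mathcal F(X^\theta) \bigr] + R_{K+1},
\]
the Gaussian ($k=1$) contribution cancels against the corresponding term from $X^G$, so $\partial_\theta \mathbb E F_{\mathbf v}^{2p}$ is driven entirely by the higher cumulants. From \eqref{assm_3moment}, \eqref{conditionA2} and the bounded support $q\le N^{-\phi}$ we have $|\kappa_3(X^\theta_{i\mu})|\lesssim b_N N^{-2}$, $|\kappa_4(X^\theta_{i\mu})|\prec N^{-2}$, and $|\kappa_k(X^\theta_{i\mu})|\prec q^{k-2}N^{-1}$ for $k\ge 5$. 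Each derivative $\partial_{X_{i\mu}} G = -G\,(\partial_{X_{i\mu}}H)\,G$ produces a pair of generalised resolvent entries of the form $G_{\ast\mathbf u_i} G_{\mathbf v_\mu \ast}$ with $\mathbf u_i,\mathbf v_\mu$ as in Lemma \ref{lem_comp_gbound}, and the remainder $R_{K+1}$ is rendered negligible by the bounded support condition once $K$ is chosen large enough depending on $\phi, \e, p$.

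The resulting sums over $i\in\mathcal I_1$ and $\mu\in\mathcal I_2$ of products of these entries are reduced, via the Ward identities in Lemma \ref{lem_comp_gbound}, to expressions controlled by $\im(G_{\mathbf{xx}}(z)/z)+\im G_{\mathbf{yy}}(z)$. At the current scale $\eta_m$, the bound on these quantities is provided by the induction hypothesis $(\mathbf A_{m-1})$ at the larger scale $\eta_{m-1}=N^{\delta}\eta_m$, combined with the monotonicity of $\eta\mapsto\eta\,\im G(E+\ii\eta)$, which only costs an additional factor $N^\delta$. A careful power counting then shows that the $\kappa_3$ contribution to $\partial_\theta \mathbb E F_{\mathbf v}^{2p}$ is of order $b_N \Psi^2 \cdot N^{C_a\delta}(q+\Psi)^{2p-1}$, the $\kappa_4$ contribution is of order $\Psi^2 \cdot N^{C_a\delta}(q+\Psi)^{2p-1}$, and higher-cumulant terms are absorbed by powers of $q$. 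Integrating in $\theta$ and invoking the bound $b_N(\Psi^2+q/(N\eta))\le N^{-\fa}$ from the definition of $\wt S(c_0,C_0,\fa,\e)$ yields $\mathbb E F_{\mathbf v}^{2p}(X,z)\le [N^{C_a\delta}(q+\Psi)]^{2p}$, from which the claim follows by taking $2p$-th roots. The main obstacle—and the step that forces the restriction to $\wt S(c_0,C_0,\fa,\e)$ in place of $S(c_0,C_0,\e)$—is precisely the $\kappa_3$ term: without the matching condition \eqref{assm_3rdmoment}, the unavoidable factor $b_N$ can be absorbed only by shrinking the admissible range of $\eta$, which is exactly the mechanism behind the discussion following Theorem \ref{LEM_SMALL}.
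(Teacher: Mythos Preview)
Your overall strategy---interpolation from the Gaussian ensemble, derivative/cumulant expansion of $\partial_\theta \mathbb E F_{\mathbf v}^{2p}$, and use of the Ward identities in Lemma \ref{lem_comp_gbound} together with the hypothesis $(\mathbf A_{m-1})$---is the right one, but there is a genuine gap in the power counting. When you distribute the $k$ derivatives $\partial_{X_{i\mu}}^{k}$ among the $2p$ factors $(G_{\mathbf{vv}}-\Pi_{\mathbf{vv}})$, only $l\le k$ of them are ever touched; the remaining $2p-l$ factors stay equal to $G_{\mathbf{vv}}-\Pi_{\mathbf{vv}}$, and for those you do \emph{not} yet have the bound $q+\Psi$. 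Thus your claimed estimates of the form $\Psi^{2}\cdot N^{C_a\delta}(q+\Psi)^{2p-1}$ for the $\kappa_3,\kappa_4$ contributions are circular: they already presuppose the conclusion. What one actually obtains, after bounding the touched factors via the Ward identities and leaving the untouched ones as $F_{\mathbf v}^{2p-l}$, is a \emph{self-consistent} inequality of the type
\[
\bigl|\partial_\theta \mathbb E F_{\mathbf v}^{2p}(X^\theta,z)\bigr| \ \le\ C\Bigl(\bigl[N^{C_a\delta}(q+\Psi)\bigr]^{2p}+\mathbb E F_{\mathbf v}^{2p}(X^\theta,z)\Bigr),
\]
obtained from $\mathbb E F_{\mathbf v}^{2p-l}\cdot [N^{C_a\delta}(q+\Psi)]^{l}$ via H\"older. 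This then has to be closed by a Gr\"onwall argument along the interpolation; this is precisely the mechanism in the paper (Lemmas \ref{lemm_comp_3}--\ref{lemm_comp_4} and the sentence following them). Simply ``integrating in $\theta$'' does not give the lemma without this self-consistency step.

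A secondary point: the paper uses a \emph{linear interpolation in law}, $\rho_{i\mu}^\theta=(1-\theta)\rho^0_{i\mu}+\theta\rho^1_{i\mu}$ on $\theta\in[0,1]$, together with a Taylor expansion of $f_{(i\mu)}(\lambda):=F_{\mathbf v}^{p}\bigl(X^{\theta,\lambda}_{(i\mu)}\bigr)$, rather than the Ornstein--Uhlenbeck flow plus cumulant expansion that you propose. Both routes can be made to work, but with the OU flow you must also (i) control the infinite time horizon (the Gr\"onwall factor is $e^{C\theta}$, so one needs $\theta=\OO(\log N)$ to reach the Gaussian endpoint and must absorb the resulting polynomial loss into $C_a$), and (ii) verify that $X^\theta$ still satisfies the assumptions of Theorem \ref{LEM_SMALL}, in particular the bounded support condition, which fails literally because of the Gaussian component and requires a truncation. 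The paper's interpolation in law avoids both issues and keeps the interpolated matrix manifestly within the class covered by $(\mathbf A_{m-1})$.
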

In the rest of this section, we focus on proving Lemma \ref{lemm_comp_0}. 
First, in order to make use of the assumption $(\mathbf A_{m-1})$, which has spectral parameters in $\mathbf S_{m-1}$, to get some estimates for $G$ with spectral parameters in $\mathbf S_{m}$, we shall use the following rough bounds for $ G_{\mathbf{xy}}$.

\begin{lemma}\label{lemm_comp_1}
For any $z=E+\ii\eta\in\mathbf S$ and unit vectors $\mathbf x,\mathbf y\in \mathbb C^{\mathcal I}$,  we have 
\begin{align*}
\left|G_{\mathbf x\mathbf y}(z)-\Pi_{\mathbf x\mathbf y}(z)\right|\prec & N^{2\delta}\sum_{l=1}^{L(\eta)} \left[\im \left(\frac{G_{\mathbf x_1\mathbf x_1}(E+\ii\eta_l)}{E+\ii\eta_l}\right)+\im G_{\mathbf x_2\mathbf x_2}(E+\ii\eta_l) \right.\\
& \left. +\im \left(\frac{G_{\mathbf y_1\mathbf y_1}(E+\ii\eta_l)}{E+\ii\eta_l}\right)+\im G_{\mathbf y_2\mathbf y_2}(E+\ii\eta_l)\right]+1,
\end{align*}
where $\mathbf x=\left( {\begin{array}{*{20}c}
   {\mathbf x}_1   \\
   {\mathbf x}_2 \\
   \end{array}} \right)$ and $\mathbf y=\left( {\begin{array}{*{20}c}
   {\mathbf y}_1   \\
   {\mathbf y}_2 \\
   \end{array}} \right)$ for ${\mathbf x}_1,{\mathbf y}_1\in\mathbb C^{\mathcal I_1}$ and ${\mathbf x}_2,{\mathbf y}_2\in\mathbb C^{\mathcal I_2}$, and $\eta_l$ is defined in (\ref{eq_comp_eta}).
\end{lemma}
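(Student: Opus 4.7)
The plan is a dyadic decomposition of the spectral parameter. Setting $z_l := E + \ii\eta_l$ (so $z_0 = z$, $z_L = E + \ii$), I will write telescopically
\[
G_{\mathbf x\mathbf y}(z) - \Pi_{\mathbf x\mathbf y}(z) = \sum_{l=1}^{L(\eta)} \bigl[G_{\mathbf x\mathbf y}(z_{l-1}) - G_{\mathbf x\mathbf y}(z_l)\bigr] + \bigl[G_{\mathbf x\mathbf y}(z_L) - \Pi_{\mathbf x\mathbf y}(z_L)\bigr] + \bigl[\Pi_{\mathbf x\mathbf y}(z_L) - \Pi_{\mathbf x\mathbf y}(z)\bigr].
\]
The two boundary contributions are each $O(1)$ for unit vectors $\mathbf x,\mathbf y$: the first from $\|G(z_L)\| \lesssim 1$ by \eqref{eq_gbound} (since $\eta_L = 1$), and the other from the uniform bound $\|\Pi(w)\| = O(1)$ on $S(c_0,C_0,-\infty)$, which is immediate from \eqref{Immc}, \eqref{Piii} and the block form \eqref{defn_pi}. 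Both get absorbed into the trailing ``$+1$''.

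For each scale step I will use the resolvent identity. From $G(w)^{-1} = H - \diag(I_n, wI_N)$ one obtains
\[
G(z_{l-1}) - G(z_l) = (z_l - z_{l-1})\, G(z_{l-1}) \,P\, G(z_l), \qquad P := \diag(0, I_N),
\]
so that
\[
\bigl[G(z_{l-1}) - G(z_l)\bigr]_{\mathbf x\mathbf y} = (z_l - z_{l-1}) \sum_{\mu \in \mathcal I_2} G_{\mathbf x\mu}(z_{l-1})\, G_{\mu\mathbf y}(z_l),
\]
and Cauchy--Schwarz reduces matters to bounding $\sum_\mu |G_{\mathbf x\mu}(z_{l-1})|^2$ and $\sum_\mu |G_{\mu\mathbf y}(z_l)|^2$.

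The crucial estimate is a scale-shift: the spectral representations \eqref{spectral1}--\eqref{spectral2} show that $\sum_\mu |G_{\mathbf x\mu}(z')|^2 = \sum_k |c_k(\mathbf x)|^2/\bigl((\lambda_k - E)^2 + \eta'^2\bigr)$ for appropriate coefficients $c_k(\mathbf x)$, and the pointwise bound
\[
\frac{1}{(\lambda_k - E)^2 + \eta_{l-1}^2} \le \frac{N^{2\delta}}{(\lambda_k - E)^2 + \eta_l^2}
\]
(valid since $\eta_{l-1} \le \eta_l$) then yields $\sum_\mu |G_{\mathbf x\mu}(z_{l-1})|^2 \le N^{2\delta}\sum_\mu |G_{\mathbf x\mu}(z_l)|^2$. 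This is the source of the $N^{2\delta}$ factor in the statement. Decomposing $\mathbf x = \mathbf x_1 + \mathbf x_2$ with $\mathbf x_i \in \mathbb C^{\mathcal I_i}$ and applying the Ward-type identities \eqref{eq_sgsq1} and \eqref{eq_sgsq4} of Lemma \ref{lem_comp_gbound}, together with the Stieltjes-transform Cauchy--Schwarz $|G_{\mathbf x_1\mathbf x_1}(z_l)/z_l| \le \bigl(\im(G_{\mathbf x_1\mathbf x_1}(z_l)/z_l)/\eta_l\bigr)^{1/2}$ (which absorbs the contribution of $G_{\mathbf x_1\mathbf x_1}/z$ into the imaginary part up to an $O(1)$ additive error), gives
\[
\sum_{\mu \in \mathcal I_2} |G_{\mathbf x\mu}(z_l)|^2 \lesssim \frac{1}{\eta_l}\left[\im\frac{G_{\mathbf x_1\mathbf x_1}(z_l)}{z_l} + \im G_{\mathbf x_2\mathbf x_2}(z_l)\right] + 1,
\]
and analogously for $\mathbf y$.

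Combining these ingredients with $|z_l - z_{l-1}| \le \eta_l$, AM--GM $2\sqrt{AB} \le A + B$, and the bound $L(\eta) \le \delta^{-1}$ (so that the residual $O(\eta_l)$-terms sum to $O(1)$) produces the lemma. The only substantive ingredient is the scale-shift inequality $\sum_\mu |G_{\mathbf x\mu}(z_{l-1})|^2 \le N^{2\delta}\sum_\mu |G_{\mathbf x\mu}(z_l)|^2$; the rest is routine Cauchy--Schwarz and AM--GM bookkeeping.
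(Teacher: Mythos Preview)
Your proof is correct and follows the same approach as the paper's reference \cite[Lemma 7.12]{Anisotropic}: telescope in $\eta$ along the scales $\eta_l$, apply the resolvent identity at each step, Cauchy--Schwarz over $\mu\in\mathcal I_2$, and use the spectral monotonicity $\frac{1}{(\lambda_k-E)^2+\eta_{l-1}^2}\le \frac{N^{2\delta}}{(\lambda_k-E)^2+\eta_l^2}$ to push everything to the larger scale. One small bookkeeping remark: your residual error is $\sum_l N^\delta\eta_l=O(N^\delta)$ rather than $O(1)$, but since $\sum_l[\cdots]\ge A_{\mathbf x}(z_L)+A_{\mathbf y}(z_L)\gtrsim 1$ with high probability (using $\eta_L=1$ and the boundedness of the spectrum), this is absorbed into the $N^{2\delta}\sum_l[\cdots]$ term, so the stated inequality holds as written.
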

\begin{proof} The proof is the same as the one for \cite[Lemma 7.12]{Anisotropic}.\end{proof}

Recall that for a given family of random matrices $\cal M$, we use $\cal M=\OO_\prec(\zeta)$ to mean $\left|\left\langle\mathbf v, \cal M\mathbf w\right\rangle\right|\prec\zeta \| \mathbf v\|_2 \|\mathbf w\|_2 $ uniformly in any deterministic vectors $\mathbf v$ and $\mathbf w$ (see Definition \ref{stoch_domination} (ii)).

\begin{lemma}\label{lemm_comp_2}
Suppose $(\mathbf A_{m-1})$ holds, then
 \begin{equation}\label{eq_comp_apbound}
  G(z)-\Pi(z)=\OO_{\prec}(N^{2\delta}),
 \end{equation}
 and
\begin{equation}\label{eq_comp_apbound2}
\im \left(\frac{G_{\mathbf x\mathbf x}(z)}{z}\right) + \im G_{\mathbf y\mathbf y}(z) \prec N^{2\delta}\left[ \im m_{2c}(z)+N^{C_a\delta}(q+\Psi(z))\right],
\end{equation}
 for all $z \in \mathbf S_m$ and any deterministic unit vectors $\mathbf x \in \mathbb C^{\mathcal I_1}$ and $\mathbf y \in \mathbb C^{\mathcal I_2}$.
\end{lemma}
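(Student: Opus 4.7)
My plan would be as follows. For $z = E + \ii\eta \in \mathbf S_m$, observe that $\eta_l = \eta N^{\delta l} \geq N^\delta \eta \geq N^{-\delta(m-1)}$ for every $l \geq 1$, so each point $z_l := E + \ii\eta_l$ lies in $\mathbf S_{m-1}$, where property $(\mathbf A_{m-1})$ is available. Moreover, since $\eta_l \in [N^{-\delta(m-1)}, 1]$ and $N\eta_l \geq N^\e$, we get the uniform constant-order bounds $\im m_{2c}(z_l) = O(1)$ via \eqref{Immc} and $\Psi(z_l) = O(1)$ via \eqref{psi12}.

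For the sharp estimate \eqref{eq_comp_apbound2}, I would avoid Lemma \ref{lemm_comp_1} and argue directly via the monotonicity of Poisson-type spectral sums. From the spectral decompositions \eqref{spectral1}--\eqref{spectral2}, both $\im(G_{\mathbf x\mathbf x}(z)/z)$ and $\im G_{\mathbf y\mathbf y}(z)$ have the form $\sum_k |c_k|^2 \eta/[(\lambda_k - E)^2 + \eta^2]$ with non-negative weights, so their ratios with $\eta$ are non-increasing in $\eta$. Consequently
\begin{equation*}
\im\left(\frac{G_{\mathbf x\mathbf x}(z)}{z}\right) + \im G_{\mathbf y\mathbf y}(z) \leq \frac{\eta_1}{\eta}\left[\im\left(\frac{G_{\mathbf x\mathbf x}(z_1)}{z_1}\right) + \im G_{\mathbf y\mathbf y}(z_1)\right],
\end{equation*}
which equals $N^\delta$ times the bracketed quantity at $z_1 \in \mathbf S_{m-1}$. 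Applying $(\mathbf A_{m-1})$ dominates this by $\im m_{2c}(z_1) + N^{C_a\delta}(q + \Psi(z_1))$. The same monotonicity applied to the Stieltjes transform $m_{2c}$ gives $\im m_{2c}(z_1) \leq N^\delta \im m_{2c}(z)$, and a direct check using $\Psi^2(z_l) \sim \im m_{2c}(z_l)/(N\eta_l) + (N\eta_l)^{-2}$ together with this monotonicity yields $\Psi(z_1) \lesssim \Psi(z)$. Combining these ingredients produces \eqref{eq_comp_apbound2}.

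For the rough a priori bound \eqref{eq_comp_apbound}, I would invoke Lemma \ref{lemm_comp_1}: after splitting $\mathbf x, \mathbf y \in \mathbb C^{\mathcal I}$ along $\mathcal I = \mathcal I_1 \cup \mathcal I_2$, the lemma bounds $|G_{\mathbf x\mathbf y}(z) - \Pi_{\mathbf x\mathbf y}(z)|$ by $N^{2\delta}$ times a sum over $l = 1, \ldots, L(\eta)$ of four imaginary parts at $z_l \in \mathbf S_{m-1}$, plus $1$. Property $(\mathbf A_{m-1})$ (applied after normalising the subvectors, whose norms are at most $1$) dominates each such quantity by $\im m_{2c}(z_l) + N^{C_a\delta}(q + \Psi(z_l)) \prec N^{C_a\delta}$, using the uniform bounds from the setup and $q \leq 1$. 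Summing over the $L \leq \delta^{-1}$ values of $l$ and absorbing yields a bound of order $N^{O(\delta)}$, consistent with $\OO_\prec(N^{2\delta})$ once $\delta$ is chosen sufficiently small relative to the absolute constant $C_a$.

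The main obstacle is the bookkeeping of $N^\delta$-powers: each application of $(\mathbf A_{m-1})$ introduces a factor $N^{C_a\delta}$, each scale jump $z \mapsto z_1$ costs another $N^\delta$ via monotonicity, and Lemma \ref{lemm_comp_1} contributes $L \leq \delta^{-1}$ terms together with its $N^{2\delta}$ prefactor. To keep the induction stable through the $O(\delta^{-1})$ iterations needed to traverse $\mathbf S_0 \supset \mathbf S_1 \supset \cdots$, $\delta$ must be chosen sufficiently small relative to $\e$, $\fa$ and $\phi$, as already stipulated in \eqref{assm_comp_delta}. The monotonicity route taken for \eqref{eq_comp_apbound2} is cleaner precisely because it invokes only the single scale $z_1$, and thereby produces the structurally correct form---linear in $\im m_{2c}(z)$ with an $N^{C_a\delta}$ multiplicative error on $q+\Psi(z)$---that is needed to feed back into Lemma \ref{lemm_boot}.
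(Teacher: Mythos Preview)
Your approach is correct and matches the paper's intended argument (the paper simply cites \cite[Lemma 7.13]{Anisotropic}, whose proof proceeds exactly as you describe: Lemma~\ref{lemm_comp_1} for \eqref{eq_comp_apbound} and the Poisson-kernel monotonicity for \eqref{eq_comp_apbound2}).

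One point of bookkeeping in your treatment of \eqref{eq_comp_apbound} needs tightening. You bound each summand by $\im m_{2c}(z_l) + N^{C_a\delta}(q+\Psi(z_l)) \prec N^{C_a\delta}$ and then land on a final estimate of order $N^{(2+C_a)\delta}$, which you describe as ``consistent with $\OO_\prec(N^{2\delta})$ once $\delta$ is chosen sufficiently small.'' That reasoning is circular: $\delta$ is already fixed and the lemma asserts the specific power $N^{2\delta}$. The correct observation is sharper: by \eqref{assm_comp_delta} we have $C_a\delta < \min\{\phi,\e\}/2$, so $N^{C_a\delta}q \le N^{C_a\delta-\phi} = o(1)$ and $N^{C_a\delta}\Psi(z_l) \lesssim N^{C_a\delta-\e/2} = o(1)$ (using $N\eta_l \ge N^\e$). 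Hence each summand in Lemma~\ref{lemm_comp_1} is $\prec 1$, not merely $\prec N^{C_a\delta}$. Since $L(\eta) \le \delta^{-1}$ is an $N$-independent constant, the sum over $l$ contributes only an $O(1)$ factor, and the $N^{2\delta}$ prefactor from Lemma~\ref{lemm_comp_1} gives exactly \eqref{eq_comp_apbound}. Your argument for \eqref{eq_comp_apbound2} is clean and needs no modification.
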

\begin{proof} The proof is the same as the one for \cite[Lemma 7.13]{Anisotropic}.\end{proof}

Now we are ready to perform the self-consistent comparison. 
We divide the proof into three subsections. In Sections \ref{subsec_interp}-\ref{section_words}, we prove Lemma \ref{lemm_comp_0} under the condition 
\be\label{3moment}
\mathbb E x_{ij}^3=0, \quad 1\le i \le n,\ \  1\le j \le N,
\ee
for $z\in S(c_0,C_0,\e)$. Then in Section \ref{subsec_3moment}, we show how to relax \eqref{3moment} to \eqref{assm_3moment} for $z\in \wt S(c_0,C_0,\fa,\e)$.

\begin{subsection}{Interpolation and expansion} \label{subsec_interp}
\begin{definition}[Interpolating matrices]
Introduce the notations $X^0:=X^{Gauss}$ and $X^1:=X$. Let $\rho_{i\mu}^0$ and $\rho_{i\mu}^1$ be the laws of $X_{i\mu}^0$ and $X_{i\mu}^1$, respectively. For $\theta\in [0,1]$, we define the interpolated law
$$\rho_{i\mu}^\theta := (1-\theta)\rho_{i\mu}^0+\theta\rho_{i\mu}^1.$$
\nc Let $\{X^\theta: \theta\in (0,1) \}$ be a collection of random matrices such that the following properties hold. For any fixed $\theta\in (0,1)$, $(X^0,X^\theta, X^1)$ is a triple of independent $\mathcal I_1\times \mathcal I_2$ random matrices, \nc and the matrix $X^\theta=(X_{i\mu}^\theta)$ has law
\begin{equation}\label{law_interpol}
\prod_{i\in \mathcal I_1}\prod_{\mu\in \mathcal I_2} \rho_{i\mu}^\theta(\dd X_{i\mu}^\theta).
\end{equation}
\nc Note that we do not require $X^{\theta_1}$ to be independent of $X^{\theta_2}$ for $\theta_1\ne \theta_2 \in (0,1)$. \nc
For $\lambda \in \mathbb R$, $i\in \mathcal I_1$ and $\mu\in \mathcal I_2$, we define the matrix $X_{(i\mu)}^{\theta,\lambda}$ through
\[\left(X_{(i\mu)}^{\theta,\lambda}\right)_{j\nu}:=\begin{cases}X_{i\mu}^{\theta}, &\text{ if }(j,\nu)\ne (i,\mu)\\ \lambda, &\text{ if }(j,\nu)=(i,\mu)\end{cases}.\]
We also introduce the matrices \[G^{\theta}(z):=G\left(X^{\theta},z\right),\ \ \ G^{\theta, \lambda}_{(i\mu)}(z):=G\left(X_{(i\mu)}^{\theta,\lambda},z\right).\]
\end{definition}

We shall prove Lemma \ref{lemm_comp_0} through interpolation matrices $X^\theta$ between $X^0$ and $X^1$. It holds for $X^0$ by Proposition \ref{prop_diagonal}.
\begin{lemma}\label{Gaussian_case}
Lemma \ref{lemm_comp_0} holds if $X=X^0$.
\end{lemma}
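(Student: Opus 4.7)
The plan is to reduce to the diagonal-$A$, diagonal-$B$ setting already handled by Proposition~\ref{prop_diagonal}, by exploiting the orthogonal invariance of the i.i.d.\;Gaussian ensemble. Since $X^0$ has i.i.d.\;centered Gaussian entries with variance $N^{-1}$, we have $U^* X^0 V \stackrel{d}{=} X^0$ for every deterministic orthogonal $U$, $V$. Consequently the linearization block satisfies $\Sig^{1/2} U^* X^0 V \wt\Sig^{1/2} \stackrel{d}{=} \Sig^{1/2} X^0 \wt\Sig^{1/2}$, and by the definition \eqref{eqn_defG} of $G$ this yields $G(X^0,z) \stackrel{d}{=} G_{\mathrm d}(X^0,z)$, where $G_{\mathrm d}$ denotes the resolvent built from the diagonal ensemble ($U=I$, $V=I$). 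Moreover, the deterministic limit $\Pi$ in \eqref{defn_pi} depends on $A$, $B$ only through the eigenvalue matrices $\Sig$, $\wt\Sig$, so it is identical in both settings.

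Next I would apply Proposition~\ref{prop_diagonal} to $G_{\mathrm d}(X^0,z)$. The Gaussian tail yields $|X^0_{i\mu}| \prec N^{-1/2}$, so the bounded support condition \eqref{eq_support} with $q = N^{-1/2}$ holds; the hypotheses on $A$, $B$, $d_N$ are inherited from Theorem~\ref{LEM_SMALL}. The anisotropic law \eqref{aniso_diagonal} then gives, for every deterministic unit vector $\mathbf v \in \mathbb C^{\mathcal I}$,
\begin{equation*}
\left|\langle \mathbf v, G_{\mathrm d}(X^0,z) \mathbf v\rangle - \langle \mathbf v, \Pi(z) \mathbf v\rangle\right| \prec \Psi(z),
\end{equation*}
uniformly for $z \in S(c_0,C_0,\epsilon)$. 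Transferring through the distributional identity, one concludes $F_{\mathbf v}(X^0,z) \prec \Psi(z)$ throughout $\mathbf S_m \subset \wt S(c_0,C_0,\fa,\epsilon) \subset S(c_0,C_0,\epsilon)$.

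Finally, to convert this high-probability control into the required $p$-th moment bound, I would use the crude deterministic estimate $F_{\mathbf v}(X^0,z) \le \|G(X^0,z)\| + \|\Pi(z)\| \le C\eta^{-1} \le CN$, which in particular yields $\mathbb E F_{\mathbf v}^{2p}(X^0,z) \le (CN)^{2p}$. Splitting the expectation over the event $\{F_{\mathbf v} \le N^{C_a\delta/2}\Psi\}$ and its complement, and then applying Definition~\ref{stoch_domination} with $D$ chosen sufficiently large together with Cauchy--Schwarz on the unfavourable event, one obtains
\begin{equation*}
\mathbb E F_{\mathbf v}^p(X^0,z) \le \left(N^{C_a\delta/2}\Psi(z)\right)^p + (CN)^p N^{-D} \le \bigl[N^{C_a\delta}(q+\Psi(z))\bigr]^p,
\end{equation*}
where the last inequality uses $\Psi(z) \gtrsim N^{-1/2}$ from \eqref{psi12}. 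I do not anticipate any substantive obstacle here: all of the analytic work has been done in Proposition~\ref{prop_diagonal}, and Gaussian orthogonal invariance is precisely what neutralises the non-diagonal rotations $U$, $V$ appearing in \eqref{eqn_defG}, which is the only additional feature of the present setup.
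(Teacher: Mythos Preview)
Your proposal is correct and follows exactly the approach the paper intends: the paper states Lemma~\ref{Gaussian_case} without proof, having already noted (below Theorem~\ref{LEM_SMALL} and at the start of Section~\ref{sec_comparison}) that Gaussian orthogonal invariance reduces $G(X^0,z)$ in distribution to the diagonal-$A$, diagonal-$B$ resolvent handled by Proposition~\ref{prop_diagonal}. Your moment-conversion step is likewise standard (cf.\ Lemma~\ref{lem_stodomin}(iii)).
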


Using (\ref{law_interpol}) and fundamental calculus, we get the following basic interpolation formula.
\begin{lemma}\label{lemm_comp_3}
 For $F:\mathbb R^{\mathcal I_1 \times\mathcal I_2}\rightarrow \mathbb C$ we have
\begin{equation}\label{basic_interp}
\frac{\dd}{\dd\theta}\mathbb E F(X^\theta)=\sum_{i\in\mathcal I_1}\sum_{\mu\in\mathcal I_2}\left[\mathbb E F\left(X^{\theta,X_{i\mu}^1}_{(i\mu)}\right)-\mathbb E F\left(X^{\theta,X_{i\mu}^0}_{(i\mu)}\right)\right]
\end{equation}
 provided all the expectations exist.
\end{lemma}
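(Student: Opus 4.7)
The plan is to write $\mathbb E F(X^\theta)$ explicitly as an integral against the product measure defining the law of $X^\theta$, and then differentiate entry by entry using the fact that $\rho_{i\mu}^\theta$ is affine in $\theta$.

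First, by \eqref{law_interpol} I would write
\[
\mathbb E F(X^\theta)=\int F(Y)\prod_{i\in\mathcal I_1}\prod_{\mu\in\mathcal I_2}\rho_{i\mu}^\theta(\dd Y_{i\mu}).
\]
Because $\rho_{i\mu}^\theta=(1-\theta)\rho_{i\mu}^0+\theta\rho_{i\mu}^1$, each factor is affine in $\theta$ with derivative $\rho_{i\mu}^1-\rho_{i\mu}^0$, viewed as a signed measure of total mass zero. Applying the Leibniz rule for differentiating the finite product of measures (and exchanging differentiation with the outer integration, justified by the integrability assumption in the statement), I would obtain
\[
\frac{\dd}{\dd\theta}\mathbb E F(X^\theta)=\sum_{i,\mu}\int F(Y)\,(\rho_{i\mu}^1-\rho_{i\mu}^0)(\dd Y_{i\mu})\prod_{(j,\nu)\ne(i,\mu)}\rho_{j\nu}^\theta(\dd Y_{j\nu}).
\]

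Next, I would identify each summand with the right-hand side of \eqref{basic_interp}. The product $\prod_{(j,\nu)\ne(i,\mu)}\rho_{j\nu}^\theta$ is the joint law of the entries $(X^\theta_{j\nu})_{(j,\nu)\ne(i,\mu)}$, while integration of $Y_{i\mu}$ against $\rho_{i\mu}^1$ (resp. $\rho_{i\mu}^0$) reproduces, by the independence of the triple $(X^0,X^\theta,X^1)$ and the fact that $X^1_{i\mu}\sim\rho_{i\mu}^1$ (resp. $X^0_{i\mu}\sim\rho_{i\mu}^0$), the expectation of $F$ evaluated at the matrix in which the $(i,\mu)$ entry of $X^\theta$ has been replaced by $X^1_{i\mu}$ (resp. $X^0_{i\mu}$). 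This is exactly the notation $X^{\theta,X^1_{i\mu}}_{(i\mu)}$ and $X^{\theta,X^0_{i\mu}}_{(i\mu)}$, and subtracting the two yields \eqref{basic_interp}.

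The only technical point is the interchange of differentiation and integration, which is routine since in every application $F$ will be a polynomial in resolvent entries that is bounded by $\eta^{-C}$ on the bounded support of $X^\theta_{i\mu}$; dominated convergence then applies uniformly in $\theta\in[0,1]$. I do not expect any genuine obstacle at this step, making the lemma essentially a bookkeeping statement in preparation for the Lindeberg-style expansion that follows.
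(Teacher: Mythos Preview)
Your proposal is correct and is exactly the computation the paper has in mind: the paper simply says the lemma follows from \eqref{law_interpol} ``and fundamental calculus,'' and what you have written is precisely that fundamental calculus spelled out. There is nothing to add.
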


We shall apply Lemma \ref{lemm_comp_3} to $F(X)=F_{\mathbf v}^p(X,z)$ with $F_{\mathbf v}(X,z)$ defined in (\ref{eq_comp_F(X)}). The main work is devoted to proving the following self-consistent estimate for the right-hand side of (\ref{basic_interp}).

\begin{lemma}\label{lemm_comp_4}
 Fix $p\in 2\mathbb N$ and $m\le \delta^{-1}$. Suppose (\ref{3moment}) and $\mathbf{(A_{m-1})}$ hold, then we have
 \begin{equation}
  \sum_{i\in\mathcal I_1}\sum_{\mu\in\mathcal I_2}\left[\mathbb EF_{\mathbf v}^p\left(X^{\theta,X_{i\mu}^1}_{(i\mu)},z\right)-\mathbb EF_{\mathbf v}^p\left(X^{\theta,X_{i\mu}^0}_{(i\mu)},z\right)\right]=
  \OO\left(\left[N^{C_a\delta} (q+\Psi(z))\right]^p+\mathbb EF_{\mathbf v}^p(X^\theta,z)\right)
 \end{equation}
 for all $\theta\in[0,1]$, $z\in\mathbf S_m$ and any deterministic unit vector $\mathbf v$.
\end{lemma}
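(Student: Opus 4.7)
The plan is to apply a Taylor expansion in the single variable $\lambda$ for each pair $(i,\mu)\in\mathcal I_1\times\mathcal I_2$, and then invoke moment matching plus the a priori bounds from $(\mathbf A_{m-1})$. For each fixed $(i,\mu)$, expanding $F_{\mathbf v}^p(X^{\theta,\lambda}_{(i\mu)},z)$ in $\lambda$ about $\lambda=0$ to some large order $K=K(p,\phi)$ and integrating against the laws of $X_{i\mu}^0$ and $X_{i\mu}^1$, we obtain
\[
\mathbb E F_{\mathbf v}^p\!\left(X^{\theta,X_{i\mu}^j}_{(i\mu)},z\right)=\sum_{k=0}^{K}\frac{\mathbb E(X_{i\mu}^j)^k}{k!}\,\mathbb E\!\left[\partial_\lambda^k F_{\mathbf v}^p\!\left(X^{\theta,\lambda}_{(i\mu)},z\right)\Big|_{\lambda=0}\right]+\mathcal R_K^j,\qquad j\in\{0,1\}.
\]
Subtracting the two values of $j$, the contributions with $k\le 3$ drop out: the cases $k=0,1,2$ cancel because the first two moments match ($\mathbb E X_{i\mu}^j=0$ and $\mathbb E(X_{i\mu}^j)^2=1/N$), while the $k=3$ term vanishes because $\mathbb E(X_{i\mu}^0)^3=0$ by Gaussianity and $\mathbb E(X_{i\mu}^1)^3=0$ by \eqref{3moment}. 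This is the \emph{only} place where \eqref{3moment} is needed. For $k\ge 4$, the hypothesis \eqref{conditionA2} together with the bounded support \eqref{eq_support} yields $|\mathbb E(X_{i\mu}^j)^k|\prec q^{k-4}N^{-2}$.

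Next I would compute the derivatives via the resolvent identity $\partial_\lambda G=-G\,\Delta^{(i\mu)}\,G$, where $\Delta^{(i\mu)}=\sqrt{\sigma_i\tilde\sigma_\mu}\,(\mathbf u_i\mathbf v_\mu^*+\mathbf v_\mu\mathbf u_i^*)$ is the rank-two perturbation induced by the swap, with $\mathbf u_i,\mathbf v_\mu$ as in Lemma \ref{lem_comp_gbound}. Thus $\partial_\lambda^k F_{\mathbf v}^p$ is a finite linear combination of products of generalized resolvent entries of the form $G_{\mathbf v\mathbf u_i}$, $G_{\mathbf v\mathbf v_\mu}$, $G_{\mathbf u_i\mathbf u_i}$, $G_{\mathbf v_\mu\mathbf v_\mu}$, $G_{\mathbf u_i\mathbf v_\mu}$ (and conjugates), multiplied by $p-r$ factors of $F_{\mathbf v}$, where $1\le r\le k$ is the number of copies of $F_{\mathbf v}^p$ that receive at least one derivative. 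Each individual such entry is bounded by $N^{2\delta}$ via \eqref{eq_comp_apbound}, but a naïve bound is lossy: $q^{k-4}N^{-2}$ summed over $N^2$ pairs $(i,\mu)$ only yields $q^{k-4}$, which is far from the target.

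The main obstacle, and the heart of the argument, is to extract the summation gain. This is done by applying the Ward-type identities of Lemma \ref{lem_comp_gbound}: sums such as $\sum_i\sigma_i|G_{\mathbf v\mathbf u_i}|^2$ and $\sum_\mu\tilde\sigma_\mu|G_{\mathbf v\mathbf v_\mu}|^2$ collapse to quantities bounded by $\mathrm{Im}\,G_{\mathbf v\mathbf v}/\eta$ (up to $z$-dependent factors). Combined with the a priori input $(\mathbf A_{m-1})$ in the form of Lemma \ref{lemm_comp_2}, each such Ward reduction produces a factor $\lesssim N^{2\delta}[\mathrm{Im}\,m_{2c}+N^{C_a\delta}(q+\Psi)]/\eta$, which is of size $N^{C\delta}(q+\Psi)^2$ after using $\mathrm{Im}\,m_{2c}/(N\eta)\lesssim\Psi^2$. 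Pairing off the $\mathbf u_i$- and $\mathbf v_\mu$-factors and iterating this reduction on the $k\ge 4$ vertices of the resulting graphical expansion, summing over $(i,\mu)$ upgrades each $q^{k-4}N^{-2}$ to $[N^{C_a\delta}(q+\Psi)]^k$ on the right-hand side, leaving the claimed bound $[N^{C_a\delta}(q+\Psi)]^p$ plus a term of the form $C_p\,\mathbb E F_{\mathbf v}^p$ (arising from summands with $r<p$) which is absorbed via Young's inequality. The Taylor remainders $\mathcal R_K^{0,1}$ are controlled by $\mathbb E|X_{i\mu}^j|^{K+1}\sup_{|\lambda|\le q}|\partial_\lambda^{K+1}F_{\mathbf v}^p|$ and, for $K$ chosen large enough depending on $p$ and $\phi$, become negligible after summing over $(i,\mu)$ thanks to $q\le N^{-\phi}$. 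The argument parallels Section 7 of \cite{Anisotropic}; the essential new feature here is the simultaneous presence of both $\mathbf u_i$- and $\mathbf v_\mu$-directions coming from non-diagonal $A$ and $B$, handled symmetrically using both Ward identities in Lemma \ref{lem_comp_gbound}.
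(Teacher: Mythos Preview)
Your proposal is correct and follows essentially the same route as the paper: Taylor-expand in the single entry, use moment matching through order three (this is exactly where \eqref{3moment} enters, as you note), then control the $k\ge 4$ contributions via the Ward identities of Lemma \ref{lem_comp_gbound} combined with the a priori bound \eqref{eq_comp_apbound2}, and close by H\"older against the undifferentiated factors $F_{\mathbf v}^{p-r}$. One small clarification on the power counting: you cannot ``iterate'' the Ward reduction beyond two applications (there are only two summation indices, $i$ and $\mu$), so the mechanism is really $q^{k-4}\cdot N^{-2}\sum_{i,\mu}(\cdots)\prec q^{k-4}\cdot[N^{C_a\delta}(q+\Psi)]^{2\text{ or }4}$---two powers from a single Ward sum always, four when pigeonhole forces at least two differentiated factors to receive exactly one derivative---and the remaining powers of $(q+\Psi)$ come from the explicit $q^{k-4}$; this is precisely the case split encoded in the paper's word combinatorics (Lemma \ref{lem_comp_A} and \eqref{eq_comp_r1}--\eqref{eq_comp_r3}), yielding $[N^{C_a\delta}(q+\Psi)]^l$ with $l$ the number of differentiated factors, which is what H\"older requires.
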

Combining Lemmas \ref{Gaussian_case}-\ref{lemm_comp_4} with a Gr\"onwall's argument, we can conclude Lemma \ref{lemm_comp_0} and hence \eqref{goal_ani2} by Markov's inequality. 
In order to prove Lemma \ref{lemm_comp_4}, we compare $X^{\theta,X_{i\mu}^0}_{(i\mu)}$ and $X^{\theta,X_{i\mu}^1}_{(i\mu)}$ via a common $X^{\theta,0}_{(i\mu)}$, i.e. we will prove that
\begin{equation}\label{lemm_comp_5}
\sum_{i\in\mathcal I_1}\sum_{\mu\in\mathcal I_2}\left[\mathbb EF_{\mathbf v}^p\left(X^{\theta,X_{i\mu}^u}_{(i\mu)},z\right)-\mathbb EF_{\mathbf v}^p\left(X^{\theta,0}_{(i\mu)},z\right)\right]= \OO\left(\left[N^{C_a\delta} (q+\Psi(z))\right]^p+\mathbb EF_{\mathbf v}^p(X^\theta,z)\right)
 \end{equation}
for all $u\in \{0,1\}$, $\theta\in[0,1]$, $z\in \mathbf{S}_m$, and any deterministic unit vector $\mathbf v$.

Underlying the proof of (\ref{lemm_comp_5}) is an expansion approach which we will describe below. 
During the proof, we always assume that $(\mathbf A_{m-1})$ holds. Also the rest of the proof is performed at a fixed $z\in \mathbf S_m$. We define the $\mathcal I \times \mathcal I$ matrix $\Delta_{(i\mu)}^\lambda$ as
\begin{equation}\label{deltaimu}
\Delta_{(i\mu)}^{\lambda} :=\lambda \left( {\begin{array}{*{20}c}
   { 0 } & \Sig^{1/2} \mathbf u_i \mathbf v_\mu^* \wt \Sig^{1/2}   \\
   { \wt \Sig^{1/2}\mathbf v_\mu \mathbf u_i^* \Sig^{1/2} } & {0}  \\
   \end{array}} \right),
\end{equation}
where we recall the definitions of $\mathbf u_i$ and $\mathbf v_\mu$ in Lemma \ref{lem_comp_gbound}. Then we have for any $\lambda,\lambda'\in \mathbb R$ and $K\in \mathbb N$,
\begin{equation}\label{eq_comp_expansion}
\G_{(i \mu)}^{\theta,\lambda'} = G_{(i\mu)}^{\theta,\lambda}+\sum_{k=1}^{K}  G_{(i\mu)}^{\theta,\lambda}\left( \Delta_{(i\mu)}^{\lambda-\lambda'} G_{(i\mu)}^{\theta,\lambda}\right)^k+ G_{(i\mu)}^{\theta,\lambda'}\left(\Delta_{(i\mu)}^{\lambda-\lambda'} G_{(i\mu)}^{\theta,\lambda}\right)^{K+1}.
\end{equation}
The following result provides a priori bounds for the entries of $G_{(i\mu)}^{\theta,\lambda}$.
\begin{lemma}\label{lemm_comp_6}
 Suppose that $y$ is a random variable satisfying $|y|\prec q$. Then
 \begin{equation}\label{comp_eq_apriori}
   G_{(i\mu)}^{\theta,y}-\Pi=\OO_{\prec}(N^{2\delta}),\quad i\in\sI_1, \ \mu\in\sI_2.
 \end{equation}
\end{lemma}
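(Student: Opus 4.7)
The plan is to apply the resolvent expansion \eqref{eq_comp_expansion} with the base point $\lambda = X_{i\mu}^\theta$ and the target $\lambda' = y$, using Lemma \ref{lemm_comp_2} (which gives $G^\theta - \Pi = \OO_\prec(N^{2\delta})$) to control each term. Since $X^\theta$ satisfies the bounded support condition with parameter $q$, we have $|X_{i\mu}^\theta|\prec q$, and combined with the hypothesis $|y|\prec q$ this yields $|X_{i\mu}^\theta - y|\prec q$ for the increment appearing in the perturbation $\Delta := \Delta_{(i\mu)}^{X_{i\mu}^\theta - y}$.

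The key structural observation is that $\Delta$ has rank $2$: writing $\tilde{\mathbf u}_i := (\Sigma^{1/2}\mathbf u_i, 0)\in\mathbb C^{\mathcal I}$ and $\tilde{\mathbf v}_\mu := (0,\wt\Sigma^{1/2}\mathbf v_\mu)\in\mathbb C^{\mathcal I}$, both of Euclidean norm $\lesssim 1$ by \eqref{assm3}, we have
\begin{equation*}
\Delta = (X_{i\mu}^\theta - y)\bigl(\tilde{\mathbf u}_i\tilde{\mathbf v}_\mu^* + \tilde{\mathbf v}_\mu\tilde{\mathbf u}_i^*\bigr).
\end{equation*}
Thus, for any deterministic unit vectors $\mathbf x, \mathbf y\in\mathbb C^{\mathcal I}$, the $k$-th term of the expansion $\langle\mathbf x, G^\theta(\Delta G^\theta)^k\mathbf y\rangle$ reduces to a sum of at most $2^{k+1}$ products, each having the form of a telescoping chain whose links are matrix elements of $G^\theta$ of the form $\langle\mathbf x, G^\theta \tilde{\mathbf w}\rangle$, $\langle\tilde{\mathbf w}, G^\theta\tilde{\mathbf w}'\rangle$, $\langle\tilde{\mathbf w}, G^\theta\mathbf y\rangle$ with $\tilde{\mathbf w},\tilde{\mathbf w}'\in\{\tilde{\mathbf u}_i,\tilde{\mathbf v}_\mu\}$. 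Each such factor is $\OO_\prec(N^{2\delta})$ by Lemma \ref{lemm_comp_2} together with $\|\Pi\|=\OO(1)$, and there are $k+1$ such factors, together with $k$ scalar factors $(X_{i\mu}^\theta - y) = \OO_\prec(q)$. Hence
\begin{equation*}
\bigl|\langle\mathbf x, G^\theta(\Delta G^\theta)^k\mathbf y\rangle\bigr| \prec q^k\, N^{2(k+1)\delta}.
\end{equation*}

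Because $\delta$ is chosen sufficiently small according to \eqref{assm_comp_delta}, the product $qN^{2\delta}\le N^{-\phi+2\delta}$ is a negative power of $N$, so the geometric sum over $k\ge 1$ is dominated by its first term and is $\OO_\prec(qN^{4\delta})$. To dispose of the remainder $G_{(i\mu)}^{\theta,y}(\Delta G^\theta)^{K+1}$, I exploit the same rank-$2$ structure: iterating the action of $\Delta G^\theta$ keeps the image inside $\mathrm{span}\{\tilde{\mathbf u}_i,\tilde{\mathbf v}_\mu\}$, so the remainder equals $(X_{i\mu}^\theta - y)^{K+1}$ times a bounded linear combination of terms $\langle\mathbf x, G_{(i\mu)}^{\theta,y}\tilde{\mathbf w}\rangle$, with the combining coefficients controlled by the $2\times 2$ matrix $\bigl(\langle\tilde{\mathbf w}_a, G^\theta\tilde{\mathbf w}_b\rangle\bigr)$ raised to the $K$-th power and therefore of size $\OO_\prec(N^{2K\delta})$. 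Using the trivial bound $\|G_{(i\mu)}^{\theta,y}\|\le\eta^{-1}\le N$ for the remaining factor, I obtain a remainder $\prec N\cdot q^{K+1}\cdot N^{2(K+1)\delta}$, which for fixed large $K$ is $o(N^{-D})$ for any fixed $D>0$.

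Combining these estimates gives $G_{(i\mu)}^{\theta,y} - G^\theta = \OO_\prec(qN^{4\delta})$, and adding $G^\theta - \Pi = \OO_\prec(N^{2\delta})$ from Lemma \ref{lemm_comp_2} completes the proof. The main technical nuisance will be carefully keeping track of the weak-operator (i.e. deterministic-vector-paired) bounds versus operator-norm bounds: one must not pay a factor of $\eta^{-1}$ in the bulk of the expansion, which is why the rank-$2$ factorization of $\Delta$ is essential. Given that $\tilde{\mathbf u}_i, \tilde{\mathbf v}_\mu$ are uniformly bounded in Euclidean norm by \eqref{assm3}, applying $(\mathbf A_{m-1})$ only through the already-proved conclusion of Lemma \ref{lemm_comp_2} suffices, so no new estimate on $G^\theta$ is required.
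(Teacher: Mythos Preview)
Your proof is correct and follows essentially the same approach as the paper, which simply cites \cite[Lemma 7.14]{Anisotropic}: expand $G_{(i\mu)}^{\theta,y}$ around $G^\theta$ via \eqref{eq_comp_expansion}, use the rank-$2$ structure of $\Delta_{(i\mu)}^{X_{i\mu}^\theta-y}$ to reduce each term to products of generalized entries of $G^\theta$ along the deterministic vectors $\tilde{\mathbf u}_i,\tilde{\mathbf v}_\mu$, control these by the a~priori bound \eqref{eq_comp_apbound} from Lemma~\ref{lemm_comp_2}, and absorb the remainder with the crude $\|G_{(i\mu)}^{\theta,y}\|\le C\eta^{-1}$ for $K$ large. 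The minor bookkeeping slip (you write $2^{k+1}$ terms where $2^k$ suffices, and count $K$ rather than $K+1$ factors of $G^\theta$ in the remainder) does not affect the argument.
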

\begin{proof} The proof is the same as the one for \cite[Lemma 7.14]{Anisotropic}. \end{proof}

In the following proof, for simplicity of notations, we introduce $f_{(i\mu)}(\lambda):=F_{\mathbf v}^p(X_{(i\mu)}^{\theta, \lambda})$. We use $f_{(i\mu)}^{(r)}$ to denote the $r$-th derivative of $f_{(i\mu)}$. With Lemma \ref{lemm_comp_6} and (\ref{eq_comp_expansion}), it is easy to prove the following result.
\begin{lemma}
Suppose that $y$ is a random variable satisfying $|y|\prec q$. Then for fixed $r\in\bbN$,
  \begin{equation}
  \left|f_{(i\mu)}^{(r)}(y)\right|\prec N^{2\delta(r+p)}.
 \end{equation}
\end{lemma}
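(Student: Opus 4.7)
The plan is to express each derivative of $f_{(i\mu)}$ as a sum of products of generalized entries of $G^{\theta,\lambda}_{(i\mu)}$, and then to apply Lemma~\ref{lemm_comp_6} entrywise after setting $\lambda = y$.

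Since $\Delta^{\lambda}_{(i\mu)}$ in \eqref{deltaimu} is linear in $\lambda$, the resolvent differentiation identity gives $\partial_\lambda G^{\theta,\lambda}_{(i\mu)} = -G^{\theta,\lambda}_{(i\mu)} \Delta^{1}_{(i\mu)} G^{\theta,\lambda}_{(i\mu)}$. Writing $\Delta^{1}_{(i\mu)} = \mathbf a \mathbf b^{*} + \mathbf b \mathbf a^{*}$ with $\mathbf a := (\Sig^{1/2}\mathbf u_i, 0)^{\top}$ and $\mathbf b := (0, \wt\Sig^{1/2}\mathbf v_\mu)^{\top}$, and iterating, one finds that $\partial_\lambda^{k} G^{\theta,\lambda}_{\mathbf v \mathbf v}$ is a sum of at most $(k+1)!$ terms, each a product of exactly $k+1$ generalized entries of $G^{\theta,\lambda}_{(i\mu)}$ whose boundary vectors lie in $\{\mathbf v, \mathbf a, \mathbf b\}$. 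Assumption \eqref{assm3} ensures $\|\mathbf a\|, \|\mathbf b\| \lesssim 1$, so Lemma~\ref{lemm_comp_6}, applicable since $|y| \prec q$, gives that each such entry evaluated at $\lambda = y$ is $\prec N^{2\delta}$ (the part coming from $\Pi$ being even $O(1)$). Multiplying the $k+1$ entries yields
$$\Big|\partial_\lambda^{k} G^{\theta,\lambda}_{\mathbf v \mathbf v}\big|_{\lambda=y}\Big| \prec N^{2\delta(k+1)}$$
for every fixed $k \geq 0$; the $k=0$ case reduces to the bound $|G^{\theta,y}_{\mathbf v \mathbf v} - \Pi_{\mathbf v \mathbf v}| \prec N^{2\delta}$ of Lemma~\ref{lemm_comp_6}.

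Next, setting $h(\lambda) := G^{\theta,\lambda}_{\mathbf v \mathbf v} - \Pi_{\mathbf v \mathbf v}$, and using that $p$ is even in the enclosing Lemma~\ref{lemm_comp_4}, we may write $f_{(i\mu)}(\lambda) = h(\lambda)^{p/2}\, \overline{h(\lambda)}^{p/2}$. The generalized Leibniz rule then expresses $f_{(i\mu)}^{(r)}$ as a sum of $O_{r,p}(1)$ products of the form $\prod_{j=1}^{p/2} h^{(k_j)}(\lambda) \prod_{j=p/2+1}^{p} \overline{h^{(k_j)}(\lambda)}$, with $k_1 + \cdots + k_p = r$. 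The previous paragraph gives $|h^{(k_j)}(y)| \prec N^{2\delta(k_j+1)}$ for each $j$ (and the analogous estimate for the conjugate factors, by taking complex conjugates throughout the resolvent expansion). Multiplying these bounds produces
$$\left| f_{(i\mu)}^{(r)}(y) \right| \prec N^{2\delta \sum_{j=1}^{p} (k_j + 1)} = N^{2\delta (r+p)},$$
which is the desired estimate.

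This lemma is essentially bookkeeping on top of Lemma~\ref{lemm_comp_6}, so I do not foresee a substantive obstacle. The only point requiring care is correctly tallying the $r+p$ resolvent factors that arise when the iterated resolvent differentiation is combined with the Leibniz expansion of $|h|^{p}$; everything else is handled uniformly by the entrywise bound $\prec N^{2\delta}$ already supplied at the scale $\lambda = y$.
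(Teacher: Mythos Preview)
Your proof is correct and follows essentially the same approach as the paper, which simply states that the result follows from Lemma~\ref{lemm_comp_6} and the resolvent expansion \eqref{eq_comp_expansion}; you have just written out those details explicitly. The only minor inaccuracy is the combinatorial count ``at most $(k+1)!$'' for the number of terms in $\partial_\lambda^k G^{\theta,\lambda}_{\mathbf v\mathbf v}$ (the actual count is $2^k k!$ since each differentiation of a product of $j$ factors yields $2j$ new terms), but this is immaterial since any $O_{k}(1)$ bound suffices.
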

By this lemma, the Taylor expansion of $f_{(i\mu)}$ gives
\begin{equation}\label{eq_comp_taylor}
f_{(i\mu)}(y)=\sum_{r=0}^{4p+4}\frac{y^r}{r!}f^{(r)}_{(i\mu)}(0)+\OO_\prec\left( q^{p+4}\right),
\end{equation}
provided $C_a$ is chosen large enough in (\ref{assm_comp_delta}). Therefore we have for $u\in\{0,1\}$,
\begin{align}
&\mathbb EF_{\mathbf v}^p\left(X^{\theta,X_{i\mu}^u}_{(i\mu)}\right)-\mathbb EF_{\mathbf v}^p\left(X^{\theta,0}_{(i\mu)}\right)=\bbE\left[f_{(i\mu)}\left(X_{i\mu}^u\right)-f_{(i\mu)}(0)\right] \nonumber\\
& =\bbE f_{(i\mu)}(0)+\frac{1}{2N}\bbE f_{(i\mu)}^{(2)}(0)+\sum_{r=4}^{4p+4}\frac{1}{r!}\bbE f^{(r)}_{(i\mu)}(0)\bbE\left(X_{i\mu}^u\right)^r+\OO_\prec(q^{p+4}),\label{only3rd}
\end{align}
where we used that $X_{i\mu}^u$ has vanishing first and third moments and its variance is $1/N$. (Note that this is the only place where we need the condition \eqref{3moment}.) By \eqref{conditionA2} and the bounded support condition, we have
\be\label{moment-4}
\left|\bbE\left(X_{i\mu}^u\right)^r\right| \prec N^{-2}q^{r-4} , \quad r \ge 4.
\ee
Thus to show (\ref{lemm_comp_5}), we only need to prove for $r=4,5,...,4p+4$,
\begin{equation}\label{eq_comp_est}
N^{-2}q^{r-4}\sum_{i\in\mathcal I_1}\sum_{\mu\in\mathcal I_2}\left|\bbE f^{(r)}_{(i\mu)}(0)\right|=\OO\left(\left[N^{C_a\delta} (q+\Psi)\right]^p+\mathbb EF_{\mathbf v}^p(X^\theta,z)\right).\end{equation}
In order to get a self-consistent estimate in terms of the matrix $X^\theta$ on the right-hand side of (\ref{eq_comp_est}), we want to replace $X^{\theta,0}_{(i\mu)}$ in $f_{(i\mu)}(0)=F_{\mathbf v}^p(X_{(i\mu)}^{\theta, 0})$ with $X^\theta = X_{(i\mu)}^{\theta, X_{i\mu}^\theta}$. 
\begin{lemma}
Suppose that
\begin{equation}\label{eq_comp_selfest}
N^{-2}q^{r-4}\sum_{i\in\mathcal I_1}\sum_{\mu\in\mathcal I_2}\left|\bbE f^{(r)}_{(i\mu)}(X_{i\mu}^\theta)\right|=\OO\left(\left[N^{C_a\delta} (q+\Psi)\right]^p+\mathbb EF_{\mathbf v}^p(X^\theta,z)\right)
\end{equation}
holds for $r=4,...,4p+4$. Then (\ref{eq_comp_est}) holds for $r=4,...,4p+4$.
\end{lemma}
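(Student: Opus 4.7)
The plan is to deduce \eqref{eq_comp_est} from \eqref{eq_comp_selfest} by a Taylor expansion at the ``shifted base point'' $X_{i\mu}^\theta$ rather than at $0$. Concretely, I would fix $r\in\{4,\ldots,4p+4\}$ and $K:=4p+4-r$, and write
\begin{equation*}
f^{(r)}_{(i\mu)}(0)\;=\;\sum_{s=0}^{K}\frac{(-X_{i\mu}^\theta)^{s}}{s!}\,f^{(r+s)}_{(i\mu)}\bigl(X_{i\mu}^\theta\bigr)\;+\;R_{(i\mu)},
\end{equation*}
where $R_{(i\mu)}$ is the usual integral remainder. Since $X_{i\mu}^\theta$ satisfies the bounded support condition $|X_{i\mu}^\theta|\prec q$ (being a mixture of two such variables), the estimate $|f^{(r+K+1)}_{(i\mu)}(\lambda)|\prec N^{2\delta(r+K+1+p)}$ from the lemma just above \eqref{eq_comp_taylor} (applied with the admissible point $\lambda$ between $0$ and $X_{i\mu}^\theta$) gives $|R_{(i\mu)}|\prec q^{K+1}N^{2\delta(r+K+1+p)}$.

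Next I would take expectations, use $|(X_{i\mu}^\theta)^s|\prec q^s$, and sum $i\in\mathcal I_1$, $\mu\in\mathcal I_2$ after multiplying by $N^{-2}q^{r-4}$. This produces
\begin{equation*}
N^{-2}q^{r-4}\sum_{i,\mu}\bigl|\mathbb E f^{(r)}_{(i\mu)}(0)\bigr|\;\prec\;\sum_{s=0}^{K}\Bigl(N^{-2}q^{(r+s)-4}\sum_{i,\mu}\mathbb E\bigl|f^{(r+s)}_{(i\mu)}(X_{i\mu}^\theta)\bigr|\Bigr)\;+\;\mathcal R,
\end{equation*}
where the remainder contribution $\mathcal R$ is at most $nN\cdot N^{-2}q^{r-4}\cdot q^{K+1}N^{2\delta(r+K+1+p)}\lesssim q^{4p+1}N^{2\delta(5p+5)}$. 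Since $q\le N^{-\phi}$ and $\delta$ is taken sufficiently small relative to $\phi$ (via \eqref{assm_comp_delta} with $C_a$ large), this remainder is much smaller than $[N^{C_a\delta}(q+\Psi)]^{p}$ and hence negligible. Every surviving sum on the right has index $r+s\in\{r,\ldots,4p+4\}$, which is precisely the range in which the hypothesis \eqref{eq_comp_selfest} is assumed, so each term is $\OO\bigl([N^{C_a\delta}(q+\Psi)]^p+\mathbb E F^p_{\mathbf v}(X^\theta,z)\bigr)$, yielding \eqref{eq_comp_est}.

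The routine pieces are the standard Taylor remainder estimate and the bookkeeping for $q$ and $N^{\delta}$-powers. The only genuinely delicate point is ensuring that the $a\ priori$ derivative bound $|f^{(r)}_{(i\mu)}(\lambda)|\prec N^{2\delta(r+p)}$, which was established for $\lambda$ with $|\lambda|\prec q$ via the resolvent expansion \eqref{eq_comp_expansion} and the a priori bound \eqref{comp_eq_apriori}, can be applied uniformly for $\lambda$ on the segment joining $0$ and $X_{i\mu}^\theta$; this is immediate because any such $\lambda$ still satisfies $|\lambda|\prec q$, so \eqref{comp_eq_apriori} holds for $G^{\theta,\lambda}_{(i\mu)}$ throughout the segment. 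With this uniformity in hand, the remainder control becomes automatic and the choice $K=4p+4-r$ closes the argument.
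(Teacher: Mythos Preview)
Your expansion has a genuine gap at the point where you invoke the hypothesis. After Taylor expanding $f^{(r)}_{(i\mu)}(0)$ around the \emph{random} point $\xi:=X_{i\mu}^\theta$, the $s$-th term is $\frac{(-\xi)^s}{s!}f^{(r+s)}_{(i\mu)}(\xi)$. Taking expectations and bounding $|\xi|^s\prec q^s$ gives, at best,
\[
\bigl|\E[\xi^s f^{(r+s)}_{(i\mu)}(\xi)]\bigr|\;\le\;\E\bigl[|\xi|^s\,|f^{(r+s)}_{(i\mu)}(\xi)|\bigr]\;\prec\;q^s\,\E\bigl|f^{(r+s)}_{(i\mu)}(\xi)\bigr|,
\]
i.e.\ the absolute value lands \emph{inside} the expectation. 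But the hypothesis \eqref{eq_comp_selfest} only controls $\sum_{i,\mu}\bigl|\E f^{(r+s)}_{(i\mu)}(\xi)\bigr|$ with the absolute value \emph{outside}. You cannot factor $\E[\xi^s f^{(r+s)}_{(i\mu)}(\xi)]$ into $\E\xi^s\cdot\E f^{(r+s)}_{(i\mu)}(\xi)$ because $f^{(r+s)}_{(i\mu)}(\xi)$ is evaluated on the full matrix $X^\theta$ and therefore depends on $\xi$. If you try to fall back on the a priori bound $\E|f^{(r+s)}_{(i\mu)}(\xi)|\prec N^{2\delta(r+s+p)}$, then for the worst case $r=4$, $s=1$ you get $N^{-2}q^{r-4}\sum_{i,\mu}q^s\,\E|f^{(5)}(\xi)|\prec q\,N^{2\delta(5+p)}$, which is nowhere near $[N^{C_a\delta}(q+\Psi)]^p$.

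The paper avoids this by expanding in the opposite direction: it Taylor expands $f^{(l)}(\xi)$ around $0$, obtaining
\[
\E f^{(l)}(\xi)=\sum_{k=0}^{4p+4-l}\frac{\E\xi^k}{k!}\,\E f^{(l+k)}(0)+\OO_\prec(q^{p+4-l}),
\]
where the factorisation is legitimate because $f^{(l+k)}_{(i\mu)}(0)$ depends only on $X^{\theta,0}_{(i\mu)}$ and is therefore \emph{independent} of $\xi=X_{i\mu}^\theta$. Solving for $\E f^{(l)}(0)$ and iterating expresses $\E f^{(r)}(0)$ as a finite sum of terms $\E f^{(r')}(\xi)\prod_j\E\xi^{k_j}$ with $r'\le 4p+4$; the moments $\E\xi^{k_j}$ are genuinely small (vanishing for $k_j=1$, of size $N^{-1}$ for $k_j=2$, and $\prec N^{-2}q^{k_j-4}$ for $k_j\ge 4$ by \eqref{moment-4}), and each surviving factor $|\E f^{(r')}(\xi)|$ has the absolute value outside, exactly matching \eqref{eq_comp_selfest}. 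In short, the choice of base point $0$ rather than $\xi$ is not cosmetic: it is what makes the expectation factor and lets you gain moments of $\xi$ instead of merely the support bound $q$.
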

\begin{proof}
We abbreviate $ f_{(i\mu)}\equiv f$ and $X_{i\mu}^\theta \equiv \xi$. Then with (\ref{eq_comp_taylor}) we can get
\begin{equation}\label{eq_comp_taylor2}
\E f^{(l)}(0)=\E f^{(l)}(\xi)-\sum_{k=1}^{4p+4-l}\E f^{(l+k)}(0)\frac{\E \xi^k}{k!}+\OO_\prec(q^{p+4-l}).
\end{equation}
The estimate \eqref{eq_comp_est} then follows from a repeated application of (\ref{eq_comp_taylor2}).  Fix $r=4,...,4p+4$. Using (\ref{eq_comp_taylor2}), we get
\begin{align*}
\mathbb E f^{(r)}(0)&=\mathbb E f^{(r)}(\xi) - \sum_{k_1\ge 1} \mathbf 1(r+k_1 \le 4p +4)\mathbb E f^{(r+k_1)}(0) \frac{\mathbb E\xi^{k_1}}{k_1!}+\OO_\prec(q^{p+4-r}) \\
&=\mathbb E f^{(r)}(\xi) - \sum_{k_1\ge 1} \mathbf 1(r+k_1 \le 4p +4)\mathbb E f^{(r+k_1)}(\xi) \frac{\mathbb E\xi^{k_1}}{k_1!} \\
&+\sum_{k_1,k_2\ge 1} \mathbf 1(r+k_1+k_2 \le 4p +4)\mathbb E f^{(r+k_1+k_2)}(0) \frac{\mathbb E\xi^{k_1}}{k_1!} \frac{\mathbb E\xi^{k_2}}{k_2!} + \OO_\prec(q^{p+4-r}) \\
&=\cdots=\sum_{t=0}^{4p+4-r}(-1)^t \sum_{k_1,\cdots, k_t \ge 1}\mathbf 1\left(r+\sum_{j=1}^t k_j \le 4p +4\right)\mathbb E f^{(r+\sum_{j=1}^t k_j)}(\xi)\prod_{j=1}^t \frac{\mathbb E\xi^{k_j}}{k_j!} + \OO_\prec(q^{p+4-r}).
\end{align*}
The lemma now follows easily by using \eqref{moment-4}.
\end{proof}
\end{subsection}

\begin{subsection}{Conclusion of the proof with words}\label{section_words}

What remains now is to prove (\ref{eq_comp_selfest}). For simplicity, we abbreviate $X^\theta \equiv X$. 
In order to exploit the detailed structure of the derivatives on the left-hand side of (\ref{eq_comp_selfest}), we introduce the following algebraic objects.

\begin{definition}[Words]\label{def_comp_words}
Given $i\in \mathcal I_1$ and $\mu\in \mathcal I_2$, let $\sW$ be the set of words of even length in two letters $\{\mathbf i, \bm{\mu}\}$. We denote the length of a word $w\in\sW$ by $2{\bm l}(w)$ with ${\bm l}(w)\in \mathbb N$. We use bold symbols to denote the letters of words. For instance, $w=\mathbf t_1\mathbf s_2\mathbf t_2\mathbf s_3\cdots\mathbf t_r\mathbf s_{r+1}$ denotes a word of length $2r$.
Define $\sW_r:=\{w\in \mathcal W: {\bm l}(w)=r\}$ to be the set of words of length $2r$, and such that
each word $w\in \sW_r$ satisfies that $\mathbf t_l\mathbf s_{l+1}\in\{\mathbf i\bm{\mu},\bm{\mu}\mathbf i\}$ for all $1\le l\le r$.

Next we assign to each letter a value $[\cdot]$ through $[\mathbf i]:=\Sigma^{1/2} \bu_i$, $[\bm {\mu}]:=\wt \Sigma^{1/2} \mathbf v_\mu,$ where $\mathbf u_i$ and $\bv_\mu$ are defined in Lemma \ref{lem_comp_gbound} and are regarded as summation indices. Note that it is important to distinguish the abstract letter from its value, which is a summation index. Finally, to each word $w$ we assign a random variable $A_{\mathbf v, i, \mu}(w)$ as follows. If ${\bm l}(w)=0$ we define
 $$A_{\mathbf v, i, \mu}(w):=G_{\mathbf v\mathbf v}-\Pi_{\mathbf v\mathbf v}.$$
 If ${\bm l}(w)\ge 1$, say $w=\mathbf t_1\mathbf s_2\mathbf t_2\mathbf s_3\cdots\mathbf t_r\mathbf s_{r+1}$, we define
 \begin{equation}\label{eq_comp_A(W)}
 A_{\mathbf v, i, \mu}(w):=G_{\bv[\mathbf t_1]} G_{[\mathbf s_2][\mathbf t_2]}\cdots G_{[\mathbf s_r][\mathbf t_r]} G_{[\mathbf s_{r+1}]\bv}.
 \end{equation}
\end{definition}

Notice the words are constructed such that, by \eqref{deltaimu} and (\ref{eq_comp_expansion}) ,
\[\left(\frac{\partial}{\partial X_{i\mu}}\right)^r \left( G_{\mathbf v\mathbf v}-\Pi_{\mathbf v\mathbf v}\right)=(-1)^r r!\sum_{w\in \mathcal W_r} A_{\mathbf v, i, \mu}(w),\quad r\in \mathbb N,\]
with which we get that
\begin{align*}
\left(\frac{\partial}{\partial X_{i\mu}}\right)^r F_{\bv}^p(X)=(-1)^r & \sum_{l_1+\cdots+l_p=r}\prod_{t=1}^{p/2}\left(l_t! l_{t+p/2}!\right) \left(\sum_{w_t\in\sW_{l_t}}\sum_{w_{t+p/2}\in\sW_{l_{t+p/2}}}A_{\mathbf v, i, \mu}(w_t)\overline{A_{\mathbf v, i, \mu}(w_{t+p/2})}\right).
\end{align*}
Then to prove (\ref{eq_comp_selfest}), it suffices to show that
\begin{equation}
N^{-2}q^{r-4}\sum_{i\in\mathcal I_1}\sum_{\mu\in\mathcal I_2}\left|\bbE\prod_{t=1}^{p/2}A_{\mathbf v, i, \mu}(w_t)\overline{A_{\mathbf v, i, \mu}(w_{t+p/2})}\right|=\OO\left(\left[N^{C_a\delta} (q+\Psi)\right]^p+\mathbb EF_{\mathbf v}^p(X,z)\right)\label{eq_comp_goal1}
\end{equation}
for  $4\le r\le 4p+4$ and all words $w_1,...,w_p\in \sW$ satisfying ${\bm l}(w_1)+\cdots+{\bm l}(w_p)=r$.
To avoid the unimportant notational complications associated with the complex conjugates, we will actually prove that
\begin{equation}\label{eq_comp_goal2}
N^{-2}q^{r-4}\sum_{i\in\mathcal I_1}\sum_{\mu\in\mathcal I_2}\left|\bbE\prod_{t=1}^{p}A_{\mathbf v, i, \mu}(w_t)\right|=\OO\left(\left[N^{C_a\delta} (q+\Psi)\right]^p+\mathbb EF_{\mathbf v}^p(X,z)\right).
\end{equation}
The proof of $(\ref{eq_comp_goal1})$ is essentially the same but with slightly heavier notations. Treating empty words separately, we find it suffices to prove
\begin{equation}
\label{eq_comp_goal3}N^{-2}q^{r-4}\sum_{i\in\mathcal I_1}\sum_{\mu\in\mathcal I_2}\bbE\left|A^{p-l}_{\mathbf v, i, \mu}(w_0)\prod_{t=1}^{l}A_{\mathbf v, i, \mu}(w_t)\right|=\OO\left(\left[N^{C_a\delta} (q+\Psi)\right]^p+\mathbb EF_{\mathbf v}^p(X,z)\right)
\end{equation}
for  $4\le r\le 4p+4$, $1\le l \le p$, and words such that ${\bm l}(w_0)=0$, $\sum_t {\bm l}(w_t)=r$ and ${\bm l}(w_t)\ge 1$ for $t\ge 1$.

To estimate (\ref{eq_comp_goal3}) we introduce the quantity
\begin{equation}\label{eq_comp_Rs}
\mathcal R_a:=|G_{\mathbf v \mathbf w_a}|+|G_{\mathbf w_a \mathbf v}|
\end{equation}
for $a\in \sI$, where $\mathbf w_i:= \Sigma^{1/2} \bu_i$ for $i\in\sI_1$ and $\mathbf w_\mu:=\wt \Sigma^{1/2} \bv_\mu$ for $\mu\in\sI_2$.

\begin{lemma}\label{lem_comp_A}
  For $w\in\sW$, we have the rough bound
  \begin{equation}
  |A_{\mathbf v, i, \mu}(w)|\prec N^{2\delta({\bm l}(w)+1)}.\label{eq_comp_A1}
  \end{equation}
  Furthermore, for ${\bm l}(w)\ge 1$ we have
  \begin{equation}
  |A_{\mathbf v, i, \mu}(w)|\prec(\mathcal R_i^2+\mathcal R_\mu^2)N^{2\delta({\bm l}(w)-1)}.\label{eq_comp_A2}
  \end{equation}
  For ${\bm l}(w)=1$, we have the better bound
  \begin{equation}
  |A_{\mathbf v, i, \mu}(w)|\prec \mathcal R_i\mathcal R_\mu.\label{eq_comp_A3}
  \end{equation}
\end{lemma}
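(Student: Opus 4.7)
The plan is to prove the three bounds by direct inspection of the factored structure \eqref{eq_comp_A(W)}, combining the a priori operator-norm bound from Lemma \ref{lemm_comp_2} with the observation that the vectors $[\mathbf i]=\Sigma^{1/2}\mathbf u_i$ and $[\bm\mu]=\wt\Sigma^{1/2}\mathbf v_\mu$ have norms bounded by $\tau^{-1/2}$ thanks to \eqref{assm3} and the unitarity of $U,V$. Writing $\mathbf w_i=[\mathbf i]$, $\mathbf w_\mu=[\bm\mu]$, each factor appearing in \eqref{eq_comp_A(W)} is of the form $G_{\mathbf a\mathbf b}$ for some deterministic vectors $\mathbf a,\mathbf b$ of bounded norm. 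By \eqref{eq_comp_apbound} and the bound $\|\Pi\|=\OO(1)$ from \eqref{psi12}, one has the generic entrywise estimate
\begin{equation*}
|G_{\mathbf a\mathbf b}|\le|\langle\mathbf a,(G-\Pi)\mathbf b\rangle|+|\langle\mathbf a,\Pi\mathbf b\rangle|\prec N^{2\delta}\|\mathbf a\|\|\mathbf b\|
\end{equation*}
for any deterministic $\mathbf a,\mathbf b$. For a word $w$ of length $2\bm l(w)=2r$, the product \eqref{eq_comp_A(W)} has exactly $r+1$ $G$-factors, so applying the above to each yields the rough bound \eqref{eq_comp_A1}.

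For \eqref{eq_comp_A2}, the idea is to isolate the two extremal factors $G_{\mathbf v[\mathbf t_1]}$ and $G_{[\mathbf s_{r+1}]\mathbf v}$, which are the only ones touching the probe vector $\mathbf v$. Since $[\mathbf t_1],[\mathbf s_{r+1}]\in\{\mathbf w_i,\mathbf w_\mu\}$, each of these two factors is bounded by $\mathcal R_i+\mathcal R_\mu$ by the definition \eqref{eq_comp_Rs}, and hence their product is bounded by $(\mathcal R_i+\mathcal R_\mu)^2\le 2(\mathcal R_i^2+\mathcal R_\mu^2)$. The remaining $r-1$ ``inner'' factors $G_{[\mathbf s_l][\mathbf t_l]}$ ($l=2,\dots,r$) are each $\OO_\prec(N^{2\delta})$ by the same generic estimate used above, producing the factor $N^{2\delta(\bm l(w)-1)}$.

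For \eqref{eq_comp_A3}, the crucial point is the alternation constraint built into $\mathcal W_r$: when $\bm l(w)=1$ the word $w=\mathbf t_1\mathbf s_2$ satisfies $\mathbf t_1\mathbf s_2\in\{\mathbf i\bm\mu,\bm\mu\mathbf i\}$, so $\{[\mathbf t_1],[\mathbf s_2]\}=\{\mathbf w_i,\mathbf w_\mu\}$ as an unordered pair. There are no inner factors, and
\begin{equation*}
A_{\mathbf v,i,\mu}(w)=G_{\mathbf v[\mathbf t_1]}\,G_{[\mathbf s_2]\mathbf v}
\end{equation*}
is in either case a product of one factor of the form $G_{\mathbf v\mathbf w_i}$ or $G_{\mathbf w_i\mathbf v}$ (bounded by $\mathcal R_i$) and one of the form $G_{\mathbf v\mathbf w_\mu}$ or $G_{\mathbf w_\mu\mathbf v}$ (bounded by $\mathcal R_\mu$), giving $|A_{\mathbf v,i,\mu}(w)|\le\mathcal R_i\mathcal R_\mu$. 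Nothing here is really hard; the only subtlety worth flagging is to make sure the a priori bound \eqref{eq_comp_apbound} is applied to the bounded-norm vectors $\Sigma^{1/2}\mathbf u_i,\wt\Sigma^{1/2}\mathbf v_\mu$ rather than to $\mathbf u_i,\mathbf v_\mu$ themselves, which is why the diagonal bounds in \eqref{assm3} are invoked.
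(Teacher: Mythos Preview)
Your proof is correct and follows exactly the same approach as the paper, which simply says the three bounds follow from the rough bound \eqref{eq_comp_apbound}, the definition \eqref{eq_comp_A(W)}, and (for \eqref{eq_comp_A3}) the constraint $\mathbf t_1\ne\mathbf s_2$. You have just spelled out the details more fully, including the observation that $\|\Sigma^{1/2}\mathbf u_i\|,\|\wt\Sigma^{1/2}\mathbf v_\mu\|=\OO(1)$ by \eqref{assm3}, which the paper uses implicitly.
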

\begin{proof}
The estimates (\ref{eq_comp_A1}) and (\ref{eq_comp_A2}) follow immediately from the rough bound (\ref{eq_comp_apbound}) and the definition (\ref{eq_comp_A(W)}).  
The estimate (\ref{eq_comp_A3}) follows from the constraint $\mathbf t_1\ne\mathbf s_2$ in the definition (\ref{eq_comp_A(W)}).
\end{proof}
By pigeonhole principle, if $r\le 2l-2$, then there exist at least two words $w_t$ with ${\bm l}(w_t)=1$. Therefore by Lemma \ref{lem_comp_A} we have
\begin{equation}\label{eq_comp_r1}
 \left|A^{p-l}_{\mathbf v, i, \mu}(w_0)\prod_{t=1}^{l}A_{\mathbf v, i, \mu}(w_t)\right|\prec N^{2\delta(r+l)}F_{\bv}^{p-l}(X)\left(\one(r\ge 2l-1)(\mathcal R_i^2+\mathcal R_\mu^2)+\one(r\le 2l-2)\mathcal R_i^2\mathcal R_\mu^2\right).
\end{equation}
Let $\mathbf v=\left( {\begin{array}{*{20}c}
   {\mathbf v}_1   \\
   {\mathbf v}_2 \\
   \end{array}} \right)$ for ${\mathbf v}_1 \in\mathbb C^{\mathcal I_1}$ and ${\mathbf v}_2\in\mathbb C^{\mathcal I_2}$. Then using Lemma \ref{lem_comp_gbound}, we get
\begin{align}
 \frac{1}{N}\sum_{i\in\sI_1}\mathcal R_i^2+ \frac{1}{N}\sum_{\mu\in\sI_2}\mathcal R_{\mu}^2 & \prec\frac{\im \left(z^{-1}G_{\mathbf v_1\mathbf v_1}\right) + \im \left(G_{\mathbf v_2\mathbf v_2}\right) +\eta \left|G_{\mathbf v_1\mathbf v_1}\right|+\eta \left|G_{\mathbf v_2\mathbf v_2}\right|}{N\eta} \nonumber\\
& \prec N^{2\delta}\frac{\im m_{2c} + N^{C_a\delta}(q+\Psi(z)) }{N\eta}\prec N^{(C_a+2)\delta} \left( \Psi^2(z) + \frac{q}{N\eta}\right), \label{eq_comp_r2}
\end{align}
where in the second step we used the two bounds in Lemma \ref{lemm_comp_2} and $\eta =\OO(\im m_{2c})$ by \eqref{Immc}, and in the last step the definition of $\Psi$ in \eqref{eq_defpsi}. Using the same method we can get
\begin{equation}\label{eq_comp_r3}
\frac{1}{N^2}\sum_{i\in\sI_1}\sum_{\mu\in\sI_2}\mathcal R_i^2\mathcal R_\mu^2\prec \left[N^{(C_a+2)\delta}\left(\Psi^2(z) + \frac{q}{N\eta}\right)\right]^2.
\end{equation}
Plugging (\ref{eq_comp_r2}) and (\ref{eq_comp_r3}) into (\ref{eq_comp_r1}), we get that the left-hand side of
(\ref{eq_comp_goal3}) is bounded by
\begin{align*}
& q^{r-4}N^{2\delta(r+l+2)}\bbE F_{\bv}^{p-l}(X)\left[\one(r\ge 2l-1)\left(N^{C_a\delta/2}(q+\Psi)\right)^2+\one(r\le 2l-2)\left(N^{C_a\delta/2}(q+\Psi)\right)^4\right]\\
& \le N^{2\delta(r+l+2)} \bbE F_{\bv}^{p-l}(X)\left[\one(r\ge 2l-1)\left(N^{C_a\delta/2}(q+\Psi)\right)^{r-2}+\one(r\le 2l-2)\left(N^{C_a\delta/2}(q+\Psi)\right)^r\right]\\
 &\le \bbE F_{\bv}^{p-l}(X)\left[\one(r\ge 2l-1)\left(N^{C_a\delta/2+12\delta}(q+\Psi)\right)^{r-2}+\one(r\le 2l-2)\left(N^{C_a\delta/2+12\delta}(q+\Psi)\right)^r\right],
\end{align*}
where we used that $l\le r$ and $r\ge 4$ in the last step. If we choose $C_a\ge 25$, then by (\ref{assm_comp_delta}) we have $N^{C_a\delta/2+12\delta} \ll \min \{ N^{\phi/2},N^{\e/2}\}$, and hence $N^{C_a\delta/2+12\delta}(q+\Psi) \ll 1$. Moreover, if $r\ge 4$ and $r\ge 2l-1$, then $r\ge l+2$. Therefore we conclude that the left-hand side of $(\ref{eq_comp_goal3})$ is bounded by
\begin{equation}
\bbE F_{\bv}^{p-l}(X)\left[N^{C_a\delta}(q+\Psi)\right]^l.
\end{equation}
Now (\ref{eq_comp_goal3}) follows from H\"older's inequality. This concludes the proof of (\ref{eq_comp_selfest}), and hence of (\ref{lemm_comp_5}), and hence of Lemma \ref{lemm_boot}. This proves \eqref{goal_ani2}, and hence \eqref{aniso_law} under the condition \eqref{3moment}.

\end{subsection}

\subsection{Non-vanishing third moment}\label{subsec_3moment}

In this subsection, we prove Lemma \ref{lemm_comp_0} under \eqref{assm_3moment} for $z\in \wt S(c_0, C_0,\fa,\e)$. Following the arguments in Section \ref{subsec_interp} and Section \ref{section_words}, we see that it suffices to prove the estimate ($\ref{eq_comp_selfest}$) in the $r=3$ case. In other words, we need to prove the following lemma. 
\begin{lemma}\label{lemm_comparison_big}
Fix $p\in 2\mathbb N$ and $m \le \delta^{-1}$. Let $z\in {\mathbf S}_m $ and suppose $(\mathbf A_{m-1})$ holds. Then 
\begin{equation}\label{eq_comp_selfest_generalX}
b_N N^{-2}\sum_{i\in\mathcal I_1}\sum_{\mu\in\mathcal I_2}\left|\bbE f^{(3)}_{(i\mu)}(X_{i\mu}^\theta)\right|=\OO\left(\left[N^{C_a\delta} (q+\Psi)\right]^p+\mathbb EF_{\mathbf v}^p(X^\theta,z)\right).
\end{equation}
\end{lemma}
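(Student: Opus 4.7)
The proof of Lemma \ref{lemm_comparison_big} runs parallel to the argument in Section \ref{section_words} for $r\ge 4$, with the one modification that the prefactor is $b_N N^{-2}$ rather than $N^{-2}q^{r-4}$. Applying the word decomposition of Definition \ref{def_comp_words} to $f^{(3)}_{(i\mu)}(X^\theta_{i\mu})$ and then arguing as in the passage from \eqref{eq_comp_goal2} to \eqref{eq_comp_goal3}, the problem reduces to establishing, for each of the three configurations
\[
({\bm l}(w_1)) = (3),\qquad ({\bm l}(w_1),{\bm l}(w_2)) = (1,2),\qquad ({\bm l}(w_1),{\bm l}(w_2),{\bm l}(w_3)) = (1,1,1),
\]
the estimate
\[
b_N N^{-2}\sum_{i,\mu}\mathbb E\,\Bigl|A^{p-l}_{\bv,i,\mu}(w_0)\prod_{t=1}^l A_{\bv,i,\mu}(w_t)\Bigr| \;=\; \OO\bigl([N^{C_a\delta}(q+\Psi)]^p + \mathbb E F_\bv^p(X^\theta,z)\bigr).
\]

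In each case I would bound $\bigl|\prod_t A_{\bv,i,\mu}(w_t)\bigr|$ pointwise through Lemma \ref{lem_comp_A}, and then perform the double sum over $i,\mu$ using the a priori estimates $\sum_a\mathcal R_a^2\prec NT$ from \eqref{eq_comp_r2} and $\sum_{i,\mu}\mathcal R_i^2\mathcal R_\mu^2\prec N^2 T^2$ from \eqref{eq_comp_r3}, where $T := N^{(C_a+2)\delta}(\Psi^2+q/(N\eta))$. The three configurations yield, respectively, deterministic prefactors of the form $b_N N^{\OO(\delta)}T^k$ with $k=1,\,3/2,\,2$ multiplying $\mathbb E F_\bv^{p-l}$. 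The essential new input absent from the $r\ge 4$ case is the domain restriction $b_N(\Psi^2+q/(N\eta))\le N^{-\fa}$ built into $\wt S(c_0,C_0,\fa,\e)$: this gives $b_N T\le N^{-\fa+(C_a+2)\delta}$, which combined with the elementary bound $T\le N^{(C_a+2)\delta}(q+\Psi)^2$ (a consequence of $q/(N\eta)\le\Psi\cdot(q+\Psi)\le(q+\Psi)^2$, since $1/(N\eta)\le\Psi$) yields $b_N T^k\le N^{\OO(\delta)-\fa}(q+\Psi)^{2(k-1)}$. The argument is then closed by H\"older's inequality $\mathbb E F_\bv^{p-l}\le(\mathbb E F_\bv^p)^{(p-l)/p}$ followed by Young's inequality with exponent pair $\bigl(p/l,\,p/(p-l)\bigr)$, provided $C_a$ is chosen large enough to absorb all accumulated $N^{\OO(\delta)}$ factors into $N^{C_a\delta}$.

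The main technical obstacle I expect is case $l=1$ (one length-$3$ word), where Young's inequality produces only $(q+\Psi)^0=1$ on the right-hand side and the full smallness must be extracted from $N^{-\fa}$ alone. In contrast to the other two cases, closing $l=1$ requires refining the naive bound $|A(w)|\prec N^{4\delta}(\mathcal R_i^2+\mathcal R_\mu^2)$ by keeping track of the interior cross-block entries $G_{[s_\ell][t_\ell]}$ and invoking the Ward-type identities of Lemma \ref{lem_comp_gbound}; for instance, summing $\sum_\mu|G_{\bw_\mu\bw_i}|^2$ first and using \eqref{eq_comp_apbound2} furnishes an extra factor of order $T^{1/2}\le N^{(C_a+2)\delta/2}(q+\Psi)$ out of the middle, which is exactly what Young's inequality needs to balance. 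This delicate interplay between $b_N$, $\Psi$, $q$, and $\fa$ is also the structural reason, noted in the discussion following Theorem \ref{LEM_SMALL}, that without the vanishing-third-moment condition \eqref{assm_3rdmoment} the local law is only proved on the restricted domain $\wt S(c_0,C_0,\fa,\e)$ rather than on the full $S(c_0,C_0,\e)$, corresponding to the scale restriction $\eta\gg\max\{qb_N/N,\sqrt{b_N}/N\}$ near the spectral edge.
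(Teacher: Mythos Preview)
Your reduction to bounding $b_N N^{-2}\sum_{i,\mu}\mathbb E\bigl|A^{p-l}_{\bv,i,\mu}(w_0)\prod_t A_{\bv,i,\mu}(w_t)\bigr|$ with the absolute value \emph{inside} the $(i,\mu)$-sum is where the argument breaks. For $r\ge 4$ this is harmless because the prefactor $q^{r-4}$ already supplies enough decay, but for $r=3$ the only available smallness is $b_N(\Psi^2+q/(N\eta))\le N^{-\fa}$, and $N^{-\fa}$ need not be comparable to $q+\Psi$ since $\fa$ is an arbitrary small constant. Concretely, take the $l=1$ word with $A_{\bv,i,\mu}(w)=G_{\bv\bw_i}G_{\bw_\mu\bw_\mu}G_{\bw_i\bw_i}G_{\bw_\mu\bv}$: here both interior factors are \emph{diagonal} and of order one, so your proposed refinement via $\sum_\mu|G_{\bw_\mu\bw_i}|^2$ simply does not apply --- there is no off-diagonal interior entry to sum. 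Termwise bounding gives $\sum_{i,\mu}|A(w)|\prec N^{2+O(\delta)}T$ and hence $b_N N^{-2}\sum|A|\prec N^{-\fa+O(\delta)}$; after Young's inequality this produces $(N^{-\fa+O(\delta)})^p$, which is not dominated by $[N^{C_a\delta}(q+\Psi)]^p$ when $\fa<1/2$. The same deficit $N^{-\fa}\not\lesssim q+\Psi$ afflicts your $l=2,3$ cases as well: your bound $b_N T^k\le N^{-\fa+O(\delta)}(q+\Psi)^{2(k-1)}$ compared against the target $[N^{C_a\delta}(q+\Psi)]^l$ always leaves exactly one missing factor of $q+\Psi$.

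The paper instead keeps the absolute value \emph{outside} the double sum, i.e.\ it estimates $b_N N^{-2}\bigl|\sum_{i,\mu}\prod_t A_{\bv,i,\mu}(w_t)\bigr|$, and uses two mechanisms unavailable to you. First, writing each interior factor as $\Pi+\wt G$, the pure-$\Pi$ contribution is handled by the anisotropic bound $\bigl|\sum_i c_i G_{\bv\bw_i}\bigr|=|G_{\bv\bw}|\prec N^{1/2+2\delta}$ with $\bw=\sum_i c_i\bw_i$, $\|\bw\|=O(N^{1/2})$; this gains the factor $N^{1/2}$ needed to absorb $b_N\le N^{1/2}$ (and when the index pattern does not allow this, one of the $\Pi$-factors is necessarily of the cross-block form $\Pi_{\bw_i\bw_\mu}=0$). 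Second, the $\wt G$ contributions are bounded by $N^{-\fa/2}\Phi$ where $\Phi$ is a running a priori bound on $G-\Pi$, and a self-improving iteration in $\Phi$ (initialized at $\Phi=N^{2\delta}$) eliminates this term after $O(\fa^{-1})$ steps. Both the $N^{1/2}$ cancellation and the $\Phi$-iteration are lost the moment you pull the absolute value inside the sum.
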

\begin{proof}
The main new ingredient of the proof is a further iteration step at a fixed $z$. Suppose
\begin{equation}\label{comp_geX_iteration}
G-\Pi=\OO_\prec(\Phi)
\end{equation}
for some deterministic parameter $\Phi\equiv \Phi_N$. By the a priori bound (\ref{eq_comp_apbound}), we can take $\Phi\le N^{2\delta}$. Assuming (\ref{comp_geX_iteration}), we shall prove a self-improving bound of the form
\begin{equation}\label{comp_geX_self-improving-bound}
b_N N^{-2}\sum_{i\in\mathcal I_1}\sum_{\mu\in\mathcal I_2}\left|\bbE f^{(3)}_{(i\mu)}(X_{i\mu}^\theta)\right|=\OO\left(\left[N^{C_a\delta} (q+\Psi)\right]^p+(N^{-\fa/2}\Phi)^p+\mathbb EF_{\mathbf v}^p(X^\theta,w)\right).
\end{equation}
Once (\ref{comp_geX_self-improving-bound}) is proved, we can use it iteratively to get an increasingly accurate bound 
for $\left|G_{\mathbf{vv}}(X,z)-\Pi_{\mathbf {vv}}(z)\right|$. After each step, we obtain a better bound (\ref{comp_geX_iteration}) with $\Phi$ reduced by $N^{-\fa/2}$. Hence after $\OO(\fa^{-1})$ many iterations we obtain (\ref{eq_comp_selfest_generalX}).

As in Section \ref{section_words}, to prove (\ref{comp_geX_self-improving-bound}) it suffices to show 
\begin{equation}\label{comp_geX_words}
b_N N^{-2}\left|\sum_{i\in\mathcal I_1}\sum_{\mu\in\mathcal I_2}A^{p-l}_{\mathbf v, i, \mu}(w_0)\prod_{t=1}^{l}A_{\mathbf v, i, \mu}(w_t)\right|\prec F_{\bv}^{p-l}(X)\left[N^{(C_0-1)\delta}(q+\Psi) + N^{-\fa/2}\Phi\right]^l,
\end{equation}
which follows from the bound
\begin{equation}\label{comp_geX_words2}
b_N N^{-2}\left|\sum_{i\in\mathcal I_1}\sum_{\mu\in\mathcal I_2}\prod_{t=1}^{l}A_{\mathbf v, i, \mu}(w_t)\right|\prec \left[N^{(C_0-1)\delta}(q+\Psi) + N^{-\fa/2}\Phi\right]^l.
\end{equation}
We now list all the three cases with $l=1,\, 2,\, 3$, and discuss each case separately.  

When $l = 1$, the single factor $A_{\mathbf v, i, \mu}(w_1)$ is of the form
\[ G_{\mathbf v[\mathbf t_1]} G_{[\mathbf s_2][\mathbf t_2]} G_{[\mathbf s_3][\mathbf t_3]} G_{[\mathbf s_4]\mathbf v}.\]
Then we split it as
\begin{align}
G_{\mathbf v[\mathbf t_1]} G_{[\mathbf s_2][\mathbf t_2]} G_{[\mathbf s_3][\mathbf t_3]} G_{[\mathbf s_4]\mathbf v}
=& G_{\mathbf v[\mathbf t_1]} \Pi_{[\mathbf s_2][\mathbf t_2]} \Pi_{[\mathbf s_3][\mathbf t_3]} G_{[\mathbf s_4]\mathbf v} + G_{\mathbf v[\mathbf t_1]}\wt G_{[\mathbf s_2][\mathbf t_2]} \Pi_{[\mathbf s_3][\mathbf t_3]}G_{[\mathbf s_4]\mathbf v}\nonumber\\
 + & G_{\mathbf v[\mathbf t_1]} \Pi_{[\mathbf s_2][\mathbf t_2]} \wt G_{[\mathbf s_3][\mathbf t_3]}  G_{[\mathbf s_4]\mathbf v}+ G_{\mathbf v[\mathbf t_1]}\wt G_{[\mathbf s_2][\mathbf t_2]}
\wt G_{[\mathbf s_3][\mathbf t_3]} G_{[\mathbf s_4]\mathbf v},\label{comp_geX_expG}
\end{align}
where we abbreviate $\wt G: = G - \Pi$. For the second term, we have
\begin{align}\label{term11}
b_N N^{-2}\sum_{i\in\mathcal I_1}\sum_{\mu\in\mathcal I_2}\left|G_{\mathbf v[\mathbf t_1]}\wt G_{[\mathbf s_2][\mathbf t_2]} \Pi_{[\mathbf s_3][\mathbf t_3]}G_{[\mathbf s_4]\mathbf v}\right|\prec b_N \Phi \cdot N^{(C_a+2)\delta}\left(\Psi^2 + \frac{q}{N\eta}\right)\prec N^{-\fa/2}\Phi
\end{align}
provided $\delta$ is small enough, where we used (\ref{eq_comp_r2}), (\ref{comp_geX_iteration}) and the definition \eqref{tildeS}. The third and fourth terms of (\ref{comp_geX_expG}) can be dealt with in a similar way. For the first term, we consider the following two cases. 

\vspace{5pt}
\noindent  {\bf Case 1:} $[\mathbf t_1]=\mathbf w_i$ and $[\mathbf s_4]=\bw_\mu$. Then we have
\begin{align*}
& \Big|\sum_{i\in\mathcal I_1}\sum_{\mu\in\mathcal I_2} G_{\mathbf v \mathbf w_i} \Pi_{[\mathbf s_2][\mathbf t_2]}\Pi_{[\mathbf s_3][\mathbf t_3]} G_{\mathbf w_\mu \mathbf v}\Big| \prec N^{1+2\delta}\left(\sum_{\mu\in\mathcal I_2}| G_{\mathbf w_\mu\mathbf v}|^2\right)^{1/2}\prec N^{3/2+(C_a/2+3)\delta}(q+\Psi),
\end{align*}
\nc where in the first step we used 
\be\label{add2nd}\Big|\sum_{i\in\mathcal I_1} G_{\mathbf v \mathbf w_i} \Pi_{[\mathbf s_2][\mathbf t_2]}\Pi_{[\mathbf s_3][\mathbf t_3]} \Big| \prec N^{1/2+2\delta}, \ee 
and in the second step we used (\ref{eq_comp_r2}). To get \eqref{add2nd}, we used the a priori bound (\ref{comp_geX_iteration}) with $\Phi\le N^{2\delta}$, which gives that for any deterministic unit vectors $\bv$ and $\bw$ (recall Definition \ref{stoch_domination} (ii)),
$$ |\langle  \bv, G\bw\rangle| \prec N^{2\delta}.$$ 
Applying this estimate with deterministic vectors $\bv$ and $\bw:=\sum_{i\in \mathcal I_1} \Pi_{[\mathbf s_2][\mathbf t_2]}\Pi_{[\mathbf s_3][\mathbf t_3]} \mathbf w_i $, we get 
$$ \Big|\sum_{i\in\mathcal I_1} G_{\mathbf v \mathbf w_i} \Pi_{[\mathbf s_2][\mathbf t_2]}\Pi_{[\mathbf s_3][\mathbf t_3]} \Big| \prec N^{2\delta}\|\mathbf v\| \|\mathbf w\|=\OO(N^{1/2+2\delta}),$$
using $\|\mathbf w\|=\OO(N^{1/2})$. This explains \eqref{add2nd}.
 If $[\mathbf t_1]=\mathbf w_\mu$ and $[\mathbf s_4]=\mathbf v_i$, the proof is similar.  \nc 

\vspace{5pt}
\noindent {\bf Case 2:} If $[\mathbf t_1]=[\mathbf s_4]$, then at least one of the terms $\Pi_{[\mathbf s_2][\mathbf t_2]}$ and $\Pi_{[\mathbf s_3][\mathbf t_3]}$ must be of the form $\Pi_{\mathbf w_i\mathbf w_\mu}=0$ or $\Pi_{\mathbf w_\mu\mathbf w_i}=0$, and \nc hence we have
$$\sum_i|\Pi_{[\mathbf s_2][\mathbf t_2]}\Pi_{[\mathbf s_3][\mathbf t_3]}|=0 \quad\text{ or }\quad \sum_\mu | \Pi_{[\mathbf s_2][\mathbf t_2]} \Pi_{[\mathbf s_3][\mathbf t_3]}|=0.$$
\nc In sum, we obtain that
$$b_NN^{-2}\Big|\sum_{i\in\mathcal I_1}\sum_{\mu\in\mathcal I_2} G_{\mathbf v[\mathbf t_1]} \Pi_{[\mathbf s_2][\mathbf t_2]}\Pi_{[\mathbf s_3][\mathbf t_3]} G_{[\mathbf s_4]\mathbf v}\Big|\prec N^{(C_a-1)\delta}(q+\Psi)$$
provided that $C_a\ge 8$. Together with \eqref{term11}, this proves  (\ref{comp_geX_words2}) for $l=1$.

When $l=2$, $\prod_{t=1}^2 A_{\mathbf v, i, \mu}(w_t)$ is of the form
\begin{align}
 & G_{\mathbf v\mathbf w_i} G_{\mathbf w_\mu \mathbf v} G_{\mathbf v \mathbf w_i} G_{\mathbf w_\mu\mathbf w_\mu} G_{\mathbf w_i\mathbf v}, \quad   G_{\mathbf v\mathbf w_i} G_{\mathbf w_\mu \mathbf v} G_{\mathbf v \mathbf w_\mu} G_{\mathbf w_i \mathbf w_i} G_{\mathbf w_\mu\mathbf v}, \label{eqn_q21}  \\
 &  G_{\mathbf v\mathbf w_i} G_{\mathbf w_\mu \mathbf v} G_{\mathbf v \mathbf w_i} G_{\mathbf w_\mu \mathbf w_i} G_{\mathbf w_\mu\mathbf v}, \quad   G_{\mathbf v\mathbf w_i} G_{\mathbf w_\mu \mathbf v} G_{\mathbf v \mathbf w_\mu} G_{\mathbf w_i \mathbf w_\mu} G_{\mathbf w_i\mathbf v},\label{eqn_q22}
\end{align}
or an expression obtained from one of these four by exchanging $\mathbf w_i$ and $\mathbf w_\mu$. The first expression in (\ref{eqn_q21}) can be estimated using  (\ref{eq_comp_r2}) and (\ref{comp_geX_iteration}):
\begin{equation}\label{q=2_2}
\Big|\sum_i G_{\mathbf v\mathbf w_i} G_{\mathbf v \mathbf w_i} G_{\mathbf w_i\mathbf v}\Big|\prec N^{1+(C_a+4)\delta} \left(\Psi^2 + \frac{q}{N\eta}\right),
\end{equation}
\nc and
\begin{equation}
\label{q=2_1}
\begin{split}
\sum_\mu G_{\mathbf w_\mu \mathbf v} G_{\mathbf w_\mu\mathbf w_\mu}&=\sum_\mu G_{\mathbf w_\mu \mathbf v}\wt G_{\mathbf w_\mu\mathbf w_\mu}+\sum_\mu  G_{\mathbf w_\mu \mathbf v}\Pi_{\mathbf w_\mu\mathbf w_\mu} \\
&=\OO_\prec\left[N^{1+(C_a/2+1)\delta}\Phi\left(\Psi^2 + \frac{q}{N\eta}\right)^{1/2}+ N^{1/2+2\delta}\right],
\end{split}
\end{equation}
where in the second step we applied the same argument to $\sum_\mu  G_{\mathbf w_\mu \mathbf v}\Pi_{\mathbf w_\mu\mathbf w_\mu}$ as the one for \eqref{add2nd}. \nc Combining \eqref{tildeS}, (\ref{q=2_2}) and (\ref{q=2_1}), we get that 
\[b_N N^{-2}\Big|\sum_i\sum_\mu G_{\mathbf v\mathbf w_i} G_{\mathbf w_\mu \mathbf v} G_{\mathbf v \mathbf w_i} G_{\mathbf w_\mu\mathbf w_\mu} G_{\mathbf w_i\mathbf v}\Big| \prec \left(N^{(C_a-1)\delta}(q+\Psi) + N^{-\fa/2}\Phi\right)^2,\]
provided that $\delta$ is small enough. The second expression in (\ref{eqn_q21}) can be estimated similarly. The first expression of (\ref{eqn_q22}) can be estimated using \eqref{tildeS}, (\ref{eq_comp_r2}) and (\ref{comp_geX_iteration}) as
\begin{equation*}
\begin{split}
b_N N^{-2}\left|\sum_i\sum_\mu  G_{\mathbf v\mathbf w_i} G_{\mathbf w_\mu \mathbf v} G_{\mathbf v \mathbf w_i} G_{\mathbf w_\mu \mathbf w_i} G_{\mathbf w_\mu\mathbf v}\right|& \prec b_N N^{-2+2\delta}\sum_i\sum_\mu\left| G_{\mathbf v\mathbf w_i}\right|^2\left| G_{\mathbf w_\mu\mathbf v}\right|^2 \\
& \prec b_N N^{(2C_a+6)\delta}\left( \Psi^2 +\frac{q}{N\eta}\right)^2 \le (q +\Psi)^2
\end{split}
\end{equation*}
for small enough $\delta$. The second expression in (\ref{eqn_q22}) is estimated similarly.  This proves (\ref{comp_geX_words2}) for $l=2$.

When $l = 3$, $\prod_{t=1}^3 A_{\mathbf v, i, \mu}(w_t)$ is of the form 
$( G_{\mathbf v\mathbf w_i} G_{\mathbf w_\mu \mathbf v})^3$ or an expression obtained by exchanging $\mathbf w_i$ and $\mathbf w_\mu$ in some of the three factors. We use (\ref{eq_comp_r2}) and $\sum_i|\Pi_{\mathbf v\mathbf w_i}|^2 = \OO(1)$ to get that
\[\left|\sum_i( G_{\mathbf v\mathbf w_i})^3\right|\prec \sum_i|\wt G_{\mathbf v\mathbf w_i}|^3+\sum_i|\Pi_{\mathbf v\mathbf w_i}|^3\prec \Phi\sum_i \left(| G_{\mathbf v\mathbf w_i}|^2+|\Pi_{\mathbf v\mathbf w_i}|^2 \right)+1\prec N^{1+(C_a+2)\delta}\left(\Psi^2 +\frac{q}{N\eta}\right)\Phi+\Phi+1.\]
Now we conclude (\ref{comp_geX_words2}) for $l=3$ using \eqref{tildeS} and $N^{-1/2}=\OO( q+\Psi)$.
\end{proof}

If $A$ or $B$ is diagonal, then we can still prove \eqref{aniso_law} for all $z\in S(c_0,C_0,\e)$ without using \eqref{3moment}. This follows from an improved self-consistent comparison argument for sample covariance matrices (i.e. separable covariance matrices with $B=I$) in \cite[Section 8]{Anisotropic}. The argument for separable covariance matrices with diagonal $A$ or $B$ is almost the same except for some notational differences, so we omit the details.

\subsection{Weak averaged local law}\label{section_averageTX}

In this section, we prove the weak averaged local laws in \eqref{aver_in1} and \eqref{aver_out1}. 
The proof is similar to the one for \eqref{aniso_law} in previous subsections, and we only explain the differences. Note that the bootstrapping argument is not necessary, since we already have a good a priori bound by \eqref{aniso_law}.
In analogy to (\ref{eq_comp_F(X)}), we define
\begin{align*}
\wt F(X,z) : &= |m(z)-m_{c}(z)| =\left|\frac{1}{nz}\sum_{i\in\sI_1} \left(G_{ii}(X,z)- \Pi_{ii}(z)\right)\right|,
\end{align*}
where we used \eqref{mcPi}. 
Moreover, by Proposition \ref{prop_diagonal}, we know that \eqref{aver_in1} and \eqref{aver_out1} hold for Gaussian $X$ (without the $q^2$ term). For now, we assume \eqref{3moment} and prove the following stronger estimates:
\begin{equation}
 \vert m(z)-m_{c}(z) \vert \prec (N \eta)^{-1} \label{aver_ins} 
\end{equation}
for $z\in S(c_0,C_0,\epsilon)$, and 
\begin{equation}\label{aver_outs}
 | m(z)-m_{c}(z)|\prec \frac{q}{N\eta}  + \frac{1}{N(\kappa +\eta)} + \frac{1}{(N\eta)^2\sqrt{\kappa +\eta}},
\end{equation}
for $z\in S(c_0,C_0,\epsilon)\cap \{z=E+\ii\eta: E\ge \lambda_+, N\eta\sqrt{\kappa + \eta} \ge N^\epsilon\}$. At the end of this section, we will show how to relax \eqref{3moment} to \eqref{assm_3moment} for $z\in \wt S(c_0,C_0,\fa,\e)$.

Note that
\be\label{psi2}
\Psi^2(z) \lesssim \frac{1}{N\eta}, \quad \text{and} \quad \Psi^2(z) \lesssim \frac{1}{N(\kappa +\eta)} + \frac{1}{(N\eta)^2\sqrt{\kappa +\eta}} \ \text{ outside of the spectrum}.
\ee
Then following the argument in Section \ref{subsec_interp}, analogous to (\ref{eq_comp_selfest}), we only need to prove that
\begin{equation}\label{eq_comp_selfestAvg}
N^{-2}q^{r-4}\sum_{i\in\mathcal I_1}\sum_{\mu\in\mathcal I_2}\left|\bbE \left(\frac{\partial}{\partial X_{i\mu}}\right)^r\wt F^p(X)\right|=\OO\left(\left[N^{\delta}\left(\Psi^2+\frac{q}{N\eta}\right)\right]^p+\mathbb E\wt F^p(X)\right)
\end{equation}
for all $r=4,...,4p+4$, where $\delta>0$ is any positive constant. Analogous to (\ref{eq_comp_goal2}), it suffices to prove that for $r=4,...,4p+4$,
\begin{equation}\label{eq_comp_goalAvg}
N^{-2}q^{r-4}\sum_{i\in\mathcal I_1}\sum_{\mu\in\mathcal I_2}\left|\bbE\prod_{t=1}^{p}\left(\frac{1}{n}\sum_{j\in\sI_1}A_{ \mathbf e_j, i, \mu}(w_t)\right)\right|=\OO\left(\left[N^{\delta}\left(\Psi^2+\frac{q}{N\eta}\right)\right]^p+\mathbb E\wt F^p(X)\right)
\end{equation}
for $\sum_t {\bm l}(w_t)=r$. 
Similar to (\ref{eq_comp_Rs}) we define
\begin{equation}\nonumber
\mathcal R_{j, a}:=| G_{j \mathbf w_a}|+| G_{\mathbf w_a j}|.
\end{equation}
Using \eqref{aniso_law} and Lemma \ref{lem_comp_gbound}, similarly to \eqref{eq_comp_r2}, we get that
\begin{equation}\label{eq_comp_r22}
\begin{split}
 \frac{1}{n}\sum_{j\in\sI_1}\mathcal R_{j,a}^2 & \prec \frac{ \im \left(z^{-1}G_{\mathbf w_i\mathbf w_i}\right) + \im G_{\mathbf w_\mu\mathbf w_\mu} + \eta\left(\left| G_{\mathbf w_i\mathbf w_i} \right|+ \left| G_{\mathbf w_\mu \mathbf w_\mu} \right|\right)}{N\eta} \prec \Psi^2+\frac{q}{N\eta}.
 \end{split}
\end{equation}
Since $G=\OO_\prec(1)$ by \eqref{aniso_law}, we have 
\begin{equation}\label{average_bound}
\left|\frac{1}{n}\sum_{j\in\sI_1}A_{ \mathbf e_j, i, \mu}(w)\right|\prec \frac{1}{n}\sum_{j\in\sI_1}\left(\mathcal R_{j,i}^2+\mathcal R_{j,\mu}^2\right)\prec \Psi^2 +\frac{q}{N\eta} \quad \text{ for any $w$ such that }{\bm l}(w)\ge 1.
\end{equation}
With (\ref{average_bound}), for any $r\ge 4$, the left-hand side of (\ref{eq_comp_goalAvg}) is bounded by
\[\bbE\wt F^{p-l}(X)\left(\Psi^2+\frac{q}{N\eta}\right)^{l}.\]
Applying H\"older's inequality, we get \eqref{eq_comp_selfestAvg}, which completes the proof of \eqref{aver_ins} and \eqref{aver_outs} under \eqref{3moment}. 

\vspace{5pt}

\nc Then we prove the averaged local law \eqref{aver_in1} for $z\in \wt S(c_0,C_0,\fa,\e)$ and \eqref{aver_out1} for $z\in \wt S(c_0,C_0,\fa, \epsilon)\cap \{z=E+\ii\eta: E\ge \lambda_+, N\eta\sqrt{\kappa + \eta} \ge N^\epsilon\}$ under \eqref{assm_3moment}. \nc By \eqref{psi2}, it suffices to prove 
\begin{equation}\label{comp_avg_geX_self-improving-bound}
b_N N^{-2}\left|\sum_{i\in\mathcal I_1}\sum_{\mu\in\mathcal I_2}\bbE \left(\frac{\partial}{\partial X_{i\mu}}\right)^3\wt F^p(X)\right|=\OO\left(\left[N^\delta (q^2 +\Psi^2)\right]^p + \left( \frac{N^{-\fa/2 + \delta}}{N\eta}\right)^p+\mathbb E\wt F^p(X)\right),
\end{equation}
for any small constant $\delta>0$. Analogous to the arguments in Section \ref{subsec_3moment}, it reduces to showing that
\begin{equation}\label{eq_comp_goalAvg_genX}
b_N N^{-2}\left|\sum_{i\in\mathcal I_1}\sum_{\mu\in\mathcal I_2} \prod_{t=1}^{l}\left(\frac{1}{n}\sum_{j\in\sI_1}A_{ \mathbf e_j, i, \mu}(w_t)\right)\right|=\OO_\prec\left(\left(q^2+\Psi^2\right)^{l} + \left( \frac{N^{-\fa/2}}{N\eta}\right)^l\right),
\end{equation}
where $l\in \{1,2,3\}$ is the number of words with nonzero length. Then we can discuss these three cases using a similar argument as in Section \ref{subsec_3moment}, with the only difference being that we now can use the anisotropic local law \eqref{aniso_law} instead of the a priori bounds \eqref{comp_eq_apriori}  and (\ref{comp_geX_iteration}). 

%

In the $l=1$ case, we first consider the expression $A_{ \mathbf e_j, i, \mu}(w_1) =  G_{j\mathbf w_i} G_{\mathbf w_\mu \mathbf w_\mu} G_{\mathbf w_i\mathbf w_i} G_{\mathbf w_\mu j}$. We have 
\begin{equation}\nonumber
\left|\sum_i G_{j\mathbf w_i} G_{\mathbf w_i\mathbf w_i}\right| \prec \left|\sum_i G_{j\mathbf w_i} \Pi_{\mathbf w_i\mathbf w_i}\right| + \sum_i (q+\Psi)\left| G_{j\mathbf w_i} \right|\prec \sqrt N + N (q+\Psi) \left(\Psi^2 +\frac{q}{N\eta}\right)^{1/2},
\end{equation}
where we used \eqref{aniso_law} and \eqref{eq_comp_r2}.
Similarly, we also have
\begin{equation}\nonumber
 \left|\sum_\mu G_{\mathbf w_\mu\mathbf w_\mu} G_{\mathbf w_\mu j}\right|\prec \left|\sum_\mu {\Pi}_{\mathbf w_\mu \mathbf w_\mu}  G_{\mathbf w_\mu j}\right|  + \sum_\mu (q+\Psi)\left|G_{\mathbf w_\mu j}\right|  \prec \sqrt N(q+\Psi)+N (q+\Psi) \left(\Psi^2 +\frac{q}{N\eta}\right)^{1/2},
\end{equation} 
where we also used $\Pi_{\mathbf w_\mu j}=0$ for any $\mu$ in the second step. Then with \eqref{tildeS}, we can see that the LHS of (\ref{eq_comp_goalAvg_genX}) is bounded by $\OO_\prec(q^2 + \Psi^2)$ in this case.
For the case $A_{ \mathbf e_j, i, \mu}(w_1) =  G_{j\mathbf w_i} G_{\mathbf w_\mu\mathbf w_\mu} G_{\mathbf w_i \mathbf w_\mu} G_{\mathbf w_i j}$, we can estimate that
$$\left|\sum_\mu  G_{\mathbf w_\mu\mathbf w_\mu} G_{\mathbf w_i \mathbf w_\mu} \right| \prec \left|\sum_\mu  \Pi_{\mathbf w_\mu\mathbf w_\mu} G_{\mathbf w_i \mathbf w_\mu} \right| + \sum_\mu (q+\Psi)\left|G_{\mathbf w_i \mathbf w_\mu} \right| \prec \sqrt N + N (q+\Psi) \left(\Psi^2 +\frac{q}{N\eta}\right)^{1/2},$$
and
$$ \sum_i \left|G_{j\mathbf w_i} G_{\mathbf w_i j}\right|\prec N\left(\Psi^2 +\frac{q}{N\eta}\right).$$
Thus in this case the LHS of (\ref{eq_comp_goalAvg_genX}) is also bounded by $\OO_\prec(q^2 + \Psi^2)$. The case $A_{ \mathbf e_j, i, \mu}(w_1) =  G_{j\mathbf w_i} G_{\mathbf w_\mu\mathbf w_i} G_{\mathbf w_\mu \mathbf w_\mu} G_{\mathbf w_i j}$ can be handled similarly. Finally in the case $A_{ \mathbf e_j, i, \mu}(w_1) =  G_{j\mathbf w_i} G_{\mathbf w_\mu\mathbf w_i} G_{\mathbf w_\mu \mathbf w_i} G_{\mathbf w_\mu j}$, we can estimate that
$$ \left|\sum_{i,\mu}  G_{j\mathbf w_i} G_{\mathbf w_\mu\mathbf w_i} G_{\mathbf w_\mu \mathbf w_i} G_{\mathbf w_\mu j} \right| \prec  \sum_{i,\mu} \left(\left| G_{j\mathbf w_i}\right|^2 +\left| G_{\mathbf w_\mu j} \right|^2 \right) | G_{\mathbf w_\mu \mathbf w_i}|^2 \prec N^2 \left(\Psi^2 +\frac{q}{N\eta}\right)^2.$$
Again in this case the LHS of (\ref{eq_comp_goalAvg_genX}) is bounded by $\OO_\prec(q^2 + \Psi^2)$. All the other expressions are obtained from these four by exchanging $\mathbf w_i$ and $\mathbf w_\mu$.

In the $l=2$ case, $\prod_{t=1}^{2}\left(\frac{1}{n}\sum_{j\in\sI_1}A_{ \mathbf e_j, i, \mu}(w_t)\right)$ is of the form (up to some constant coefficients)
\[\frac{1}{N^2}\sum_{j_1,j_2} G_{j_1\mathbf w_i} G_{\mathbf w_\mu j_1} G_{j_2 \mathbf w_i} G_{\mathbf w_\mu\mathbf w_\mu} G_{\mathbf w_i j_2}\quad \text{ or }\quad \frac{1}{N^2}\sum_{j_1,j_2} G_{j_1\mathbf w_i} G_{\mathbf w_\mu j_1} G_{j_2\mathbf w_i} G_{\mathbf w_\mu\mathbf w_i} G_{\mathbf w_\mu j_2},\]
or an expression obtained from one of these terms by exchanging $\mathbf w_i$ and $\mathbf w_\mu$. These two expressions can be written as 
\be\label{2terms}
N^{-2}( G^{\times 2} )_{\mathbf w_\mu\mathbf w_i}(G^{\times 2})_{\mathbf w_i\mathbf w_i} G_{\mathbf w_\mu\mathbf w_\mu}, \quad N^{-2}( G^{\times 2})^2_{\mathbf w_\mu\mathbf w_i} G_{\mathbf w_\mu\mathbf w_i}, \quad G^{\times 2}:= G \begin{pmatrix}I_{\mathcal I_1 \times \mathcal I_1} & 0\\ 0 & 0\end{pmatrix} G.
\ee
For the second term, using \eqref{green2}, \eqref{spectral1} and recalling that $Y=\Sig^{1/2} U^{*}X V\wt \Sig^{1/2}$, we can get that
\begin{align}
& \left|\frac{1}{N^2}\sum_{i,\mu} ( G^{\times 2})^2_{\mathbf w_\mu\mathbf w_i} G_{\mathbf w_\mu\mathbf w_i}\right| \le \frac{1}{N^2}\sum_{i,\mu} \left|( G^{\times 2})_{\mathbf w_\mu\mathbf w_i} \right|^2 \lesssim \frac{1}{N^2}\text{Tr}\left[(\mathcal G_1^{*})^2 YY^* (\mathcal G_1)^2\right] \nonumber\\
& =  \frac{1}{N^2}\text{Tr}\left[\mathcal G_1^{*} (\mathcal G_1)^2\right]  +  \frac{\bar z }{N^2}\text{Tr}\left[(\mathcal G_1^{*})^2 (\mathcal G_1)^2\right]  \lesssim \frac{1}{N^2}\sum_k \frac{1}{\left[(\lambda_k-E)^2 +\eta^2\right]^{3/2}}+ \frac{1}{N^2}\sum_k \frac{1}{\left[(\lambda_k-E)^2 +\eta^2\right]^{2}} \nonumber\\
& \lesssim \frac{1}{N\eta^3}\left(\frac1n \sum_k \frac{\eta}{(\lambda_k-E)^2 +\eta^2} \right) =\frac{\im m}{N\eta^3}\prec  \frac{\im m_c + q+\Psi}{N\eta^3} \lesssim \eta^{-2}\left(\Psi^2 +\frac{q}{N\eta}\right).\label{3term}
\end{align}
Using \eqref{aniso_law} and \eqref{eq_comp_r2}, it is easy to show that
\be \label{3.5term}
\left|\sum_{\mu}( G^{\times 2} )_{\mathbf w_\mu\mathbf w_i} \Pi_{\mathbf w_\mu\mathbf w_\mu}\right| \prec N^{3/2}\left( \Psi^2 + \frac{q}{N\eta}\right),\quad \text{ and } \quad \left|(G^{\times 2})_{\mathbf x\mathbf y} \right| \prec N\left( \Psi^2 + \frac{q}{N\eta}\right), \ee
for any deterministic unit vectors $\mathbf x$, $\mathbf y$. \nc (To get the first estimate in \eqref{3.5term}, we write  
$$\sum_{\mu}( G^{\times 2} )_{\mathbf w_\mu\mathbf w_i} \Pi_{\mathbf w_\mu\mathbf w_\mu} = \|\mathbf w\| \sum_j G_{\bw j}G_{j\bw_i}, \quad \bw:= \sum_\mu \overline \Pi_{\bw_\mu\bw_\mu} \bw_\mu,$$
and then use \eqref{eq_comp_r2}.) \nc
Thus for the first term in \eqref{2terms}, we have
\begin{align}
&\quad \, \left|\frac{1}{N^2}\sum_{i,\mu}( G^{\times 2} )_{\mathbf w_\mu\mathbf w_i}(G^{\times 2})_{\mathbf w_i\mathbf w_i} G_{\mathbf w_\mu\mathbf w_\mu}\right| \nonumber\\
& \le \left|\frac{1}{N^2}\sum_{i,\mu}( G^{\times 2} )_{\mathbf w_\mu\mathbf w_i}(G^{\times 2})_{\mathbf w_i\mathbf w_i} \wt G_{\mathbf w_\mu\mathbf w_\mu}\right| + \left|\frac{1}{N^2}\sum_{i,\mu}( G^{\times 2} )_{\mathbf w_\mu\mathbf w_i}(G^{\times 2})_{\mathbf w_i\mathbf w_i} \Pi_{\mathbf w_\mu\mathbf w_\mu}\right| \nonumber\\
& \prec N(q+\Psi)\left( \Psi^2 + \frac{q}{N\eta}\right)\left(\frac{1}{N^2}\sum_{i,\mu}\left|( G^{\times 2} )_{\mathbf w_\mu\mathbf w_i}\right|^2\right)^{1/2} +N^{3/2}\left( \Psi^2 + \frac{q}{N\eta}\right)^2 \nonumber\\
& \prec N\eta^{-1}(q+\Psi)\left( \Psi^2 + \frac{q}{N\eta}\right)^{3/2} +N^{3/2}\left( \Psi^2 + \frac{q}{N\eta}\right)^2,\label{4term}
\end{align}
where in the last step we used the bound in \eqref{3term}. Now using \eqref{3term}, \eqref{4term} and \eqref{tildeS}, we get
$$b_N N^{-2}\left|\sum_{i\in\mathcal I_1}\sum_{\mu\in\mathcal I_2} \prod_{t=1}^{2}\left(\frac{1}{n}\sum_{j\in\sI_1}A_{ \mathbf e_j, i, \mu}(w_t)\right)\right| \prec \left(q^2+\Psi^2\right)^{2} + \left( \frac{N^{-\fa/2}}{N\eta}\right)^2 .$$

Finally, in the $l=3$ case, $\prod_{t=1}^{3}\left(\frac{1}{N}\sum_{j\in\sI_1}A_{ \mathbf e_j, i, \mu}(w_t)\right)$ is of the form 
${N^{-3}}( G^{\times 2})^3_{\mathbf w_i\mathbf w_\mu}$, or an expression obtained by exchanging $\mathbf w_i$ and $\mathbf w_\mu$ in some of the three factors. Using \eqref{3.5term} and the bound in \eqref{3term}, we can estimate that 
$$\frac{1}{N^3}\left|\sum_{i,\mu}( G^{\times 2})^3_{\mathbf w_i\mathbf w_\mu}\right| \prec \left( \Psi^2 + \frac{q}{N\eta}\right)\frac{1}{N^2}\sum_{i,\mu}\left|( G^{\times 2} )_{\mathbf w_\mu\mathbf w_i}\right|^2 \prec \eta^{-2}\left(\Psi^2 +\frac{q}{N\eta}\right)^2,$$
Then the LHS of (\ref{eq_comp_goalAvg_genX}) is bounded by 
$$O_\prec\left(\left(q^2 + \Psi^2\right) \left(\frac{N^{-\fa/2}}{N\eta}\right)^2\right).$$

Combining the above three cases $l=1,2,3$, we conclude \eqref{comp_avg_geX_self-improving-bound}, which finishes the proof of \eqref{aver_in1} and \eqref{aver_out1}. 

If $A$ or $B$ is diagonal, then by the remark at the end of Section \ref{subsec_3moment}, the anisotropic local law \eqref{aniso_law} holds for all $z\in S(c_0,C_0,\e)$ even in the case with $b_N=N^{1/2}$ in \eqref{assm_3moment}. Then with \eqref{aniso_law} and the self-consistent comparison argument in \cite[Section 9]{Anisotropic}, we can prove \eqref{aver_in1} and \eqref{aver_out1} for $z\in S(c_0,C_0,\e)$. Again most of the arguments are the same as the ones in \cite[Section 9]{Anisotropic}, hence we omit the details.

\section{Proof of Lemma \ref{thm_largebound}, Theorem \ref{thm_largerigidity} and Theorem \ref{lem_comparison}}\label{sec_Lindeberg}

With Lemma \ref{lem_decrease}, given $X$ satisfying the assumptions in Theorem \ref{LEM_SMALL}, we can construct a matrix $\wt{X}$ with support $q= N^{-1/2}$ and have the same first four moments as $X$. By Theorem \ref{LEM_SMALL}, the averaged local laws \eqref{aver_in} and \eqref{aver_out0} hold for $G(\wt X,z)$. Thus it is easy to see that Theorem \ref{thm_largerigidity} is implied by the following lemma. 

\begin{lemma} \label{lem_comgreenfunction}
Let $X$, $\wt{X}$ be two matrices as in Lemma \ref{lem_decrease}, and $G\equiv G(X,z)$, $\wt G\equiv G(\wt X,z)$ be the corresponding resolvents. We denote $m(z)\equiv m(X,z)$ and $\wt m(z)\equiv m(\wt X,z)$. Fix any constant $\e>0$. For any $z \in \wt S(c_0, C_0, \fa, \epsilon)$, if there exist deterministic quantities $J\equiv J(N)$ and $K\equiv K(N)$ such that
\begin{equation}
 \wt{G}(z)-\Pi =\OO_\prec (J), \quad \vert \wt m(z)-m_{c}(z) \vert \prec K, \quad J+K\prec 1,\label{KEYBOUNDS}
\end{equation}
then for any fixed $p \in 2\mathbb{N}$, we have
\begin{equation}
\mathbb{E} \vert m(z)-m_{c}(z) \vert^p \prec \mathbb{E} \vert \wt m(z) - m_{c}(z) \vert ^p+ \left(\Psi^2(z) + J^2+K\right)^p. \label{KEYEYEYEY}
\end{equation}
\end{lemma}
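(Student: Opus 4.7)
The plan is to run a Green-function comparison along a Lindeberg replacement scheme, in the same spirit as Sections~\ref{subsec_interp}--\ref{section_averageTX}, but exploiting the four-moment matching \eqref{match_moments} to push the Taylor expansion one order further. First, I would introduce the interpolating family $X^\theta$ between $X^0 := \wt X$ and $X^1 := X$ as in Section~\ref{subsec_interp}, set $F(Y) := |m(Y,z) - m_c(z)|^p$, and apply Lemma~\ref{lemm_comp_3} to obtain
\[
  \mathbb E F(X) - \mathbb E F(\wt X) \;=\; \int_0^1 \sum_{i\in\mathcal I_1}\sum_{\mu\in\mathcal I_2}\Bigl[\mathbb E F\bigl(X^{\theta,X^1_{i\mu}}_{(i\mu)}\bigr) - \mathbb E F\bigl(X^{\theta,X^0_{i\mu}}_{(i\mu)}\bigr)\Bigr]\, \dd\theta .
\]
For each fixed $\theta$ and $(i,\mu)$ I would Taylor-expand $f_{(i\mu)}(\lambda) := F(X^{\theta,\lambda}_{(i\mu)})$ around $\lambda=0$ as in \eqref{eq_comp_taylor}. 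Since the laws of $X^0_{i\mu}$ and $X^1_{i\mu}$ agree on moments up to order four, the terms of order $r=0,1,\dots,4$ cancel in the difference, so the leading contribution is
\[
  \sum_{r=5}^{4p+4} \frac{1}{r!}\bigl(\mathbb E (X^1_{i\mu})^r - \mathbb E (X^0_{i\mu})^r\bigr)\,\mathbb E f^{(r)}_{(i\mu)}(0) \;+\; \OO_\prec(q^{p+4}),
\]
and the bounded support condition gives $|\mathbb E (X^u_{i\mu})^r| \prec q^{r-2}/N$ for $r\ge 2$. As in Section~\ref{subsec_interp}, I can replace $f^{(r)}_{(i\mu)}(0)$ by $f^{(r)}_{(i\mu)}(X^\theta_{i\mu})$ up to admissible errors, reducing everything to estimating
\[
  N^{-2} q^{r-4} \!\!\sum_{i\in\mathcal I_1}\sum_{\mu\in\mathcal I_2}\bigl|\mathbb E f^{(r)}_{(i\mu)}(X^\theta_{i\mu})\bigr|,\qquad r=5,\dots,4p+4.
\]

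Second, I would bound these derivatives using the word expansion of Section~\ref{section_words}, specialized to the averaged observable as in Section~\ref{section_averageTX}. Writing derivatives of $F$ as sums over tuples of words $(w_1,\dots,w_p)$ with total length $r$, the word values for the averaged quantity carry an extra summation $\frac{1}{n}\sum_{j\in\mathcal I_1}$, which by Lemma~\ref{lem_comp_gbound} produces the gain $\frac{1}{n}\sum_j(\mathcal R_{j,i}^2+\mathcal R_{j,\mu}^2) \prec \Psi^2 + q/(N\eta) \lesssim \Psi^2$ (recall $q/(N\eta) \lesssim \Psi^2$ on $\wt S(c_0,C_0,\fa,\e)$), rather than $\Psi$. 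The a priori anisotropic bound $\wt G - \Pi = \OO_\prec(J)$ from \eqref{KEYBOUNDS} is propagated to $X^\theta$ along the interpolation via Lemma~\ref{lemm_comp_6}; this supplies a factor of $J$ for each ``uncontracted'' resolvent entry in a word. Combining the two, each word of length $\ge 2$ contributes $\Psi^2 + J^2$ (one $\Psi^2$ from averaging, plus a $J^2$ term coming from the pair of resolvent factors that remain after the averaging is done), while ``empty'' words contribute $|m-m_c| \le |\wt m -m_c| + (\text{difference}) \prec K + \text{error}$. Summing over the $\OO(N^2)$ pairs $(i,\mu)$ against the moment factor $q^{r-4}/N^2$, the total contribution for each $r\ge 5$ is $\OO_\prec\bigl((\Psi^2+J^2+K)^p\bigr)$, using that one extra power of $q$ is absorbed against the additional $1/N$ from the moment.

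Third, assembling the above over $\theta \in [0,1]$ and using the a priori bound $|\wt m - m_c|\prec K$ together with $\mathbb E F(\wt X) \le \mathbb E|\wt m - m_c|^p$ yields the claim \eqref{KEYEYEYEY}.

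The main obstacle is the bookkeeping at $r=5$: the moment gain $q^3/N$ must be combined with the correct count of powers of $\Psi$, $J$, and $K$ extracted from each word, so that after summing over $(i,\mu)$ nothing exceeds $(\Psi^2+J^2+K)^p$. The crucial mechanism is, as in Section~\ref{section_averageTX}, that the partial sum $\sum_\mu G_{\mathbf w_\mu\mathbf w_\mu}$ (and its analogues) can be split into a deterministic piece involving $\Pi_{\mathbf w_\mu\mathbf w_\mu}$, whose cross-block matrix elements $\Pi_{\mathbf w_i\mathbf w_\mu}$ vanish, and a fluctuating piece controlled by $J$ via \eqref{KEYBOUNDS}. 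A secondary but routine issue is checking that the Taylor remainder truly is negligible uniformly in $\theta$ using Lemma~\ref{lemm_comp_6} and the bounded-support bound $q \le N^{-\phi}$.
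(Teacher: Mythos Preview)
Your approach is genuinely different from the paper's, and it has a gap at the point where you invoke the bound $J$.

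The paper does \emph{not} use the continuous interpolation of Section~\ref{subsec_interp}. It uses a \emph{discrete} Lindeberg swap $X^0=\wt X,\,X^1,\ldots,X^{\gamma_{\max}}=X$, replacing one entry at a time, and then \emph{iterates} the resolvent expansion (Lemma~\ref{Greenfunctionrepresent}) so that after $\OO(r/\phi)$ rounds every resolvent factor in the error terms is expressed in terms of $\wt G=G(\wt X,z)$; this is the content of \eqref{teles_inequality4}. Only after this reduction are the hypotheses \eqref{KEYBOUNDS} used: the averaging $\frac1n\sum_{i_t}\wt G_{i_t\bw_a}\wt G_{\bw_b i_t}\prec\frac{\im m_{2c}+J}{N\eta}\le\Psi^2+J^2$ uses the anisotropic bound $J$ \emph{for $\wt G$}, and the empty words contribute $\wt m-m_c\prec K$ directly.

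In your continuous interpolation, the Taylor derivatives $f^{(r)}_{(i\mu)}(X^\theta_{i\mu})$ involve $G(X^\theta)$, never $\wt G$. Your claim that the bound $\wt G-\Pi=\OO_\prec(J)$ ``is propagated to $X^\theta$ via Lemma~\ref{lemm_comp_6}'' is incorrect: that lemma gives only the rough a priori bound $\OO_\prec(N^{2\delta})$, and the sharpest available input for $G(X^\theta)$ is Theorem~\ref{LEM_SMALL}, which yields $\OO_\prec(q+\Psi)$. Consequently your averaging step produces $\Psi^2+q/(N\eta)$, not $\Psi^2+J^2$. (Your parenthetical that $q/(N\eta)\lesssim\Psi^2$ on $\wt S$ is also false: at $\eta=N^{-2/3}$ near the edge one has $\Psi^2\sim N^{-2/3}$ while $q/(N\eta)=qN^{-1/3}$, which dominates whenever $q\gg N^{-1/3}$.) Your Gr\"onwall loop therefore yields at best
\[
\mathbb E|m-m_c|^p\prec\mathbb E|\wt m-m_c|^p+\Bigl(\Psi^2+\tfrac{q}{N\eta}\Bigr)^p,
\]
which still carries the $q$-dependence the lemma is meant to remove and does not recover \eqref{KEYEYEYEY} as stated; in particular it is insufficient for the outside-the-spectrum choice of $K$ in \eqref{aver_out0}. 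The missing ingredient is precisely the iterated telescoping that pushes all resolvent factors back to $\wt G$ before estimating.
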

\begin{proof}[Proof of Theorem \ref{thm_largerigidity}]
By Theorem \ref{LEM_SMALL}, one can choose $J= \Psi(z)$ and 
\begin{equation*}
K=\frac{1}{N \eta}, \quad \text{ or }  \quad  \nc \frac{N^{-\fa/2}}{N\eta} \nc + \frac{1}{N(\kappa +\eta)} + \frac{1}{(N\eta)^2\sqrt{\kappa +\eta}} \ \text{ outside of the spectrum}.
\end{equation*}
Then using \eqref{KEYEYEYEY}, \eqref{psi2} and Markov's inequality, we can prove \eqref{aver_in} and \eqref{aver_out0}.

The eigenvalues rigidity results \eqref{Kdist} and \eqref{rigidity} follow from \eqref{aver_in} and \eqref{aver_out0} through a standard argument, see e.g.\;the proofs for \cite[Theorems 2.12-2.13]{EKYY1}, \cite[Theorem 2.2]{EYY} or \cite[Theorem 3.3]{PY}. \nc More precisely, the estimate \eqref{Kdist} is implied by the local law \eqref{aver_in}. Then the rigidity result \eqref{rigidity} follows from \eqref{Kdist} together with the following upper bound on the largest eigenvalue: for any constant $\e>0$,
\be\label{upper1}
\lambda_1 \le \lambda_+ + N^{-2/3+\e} \quad \text{with high probability.}
\ee
 In \cite{EYY}, this follows from the averaged local law \eqref{aver_out0} without the ${N^{-\fa/2}}/({N\eta})$ term. Now we would like to show that even with this extra term, the bound \eqref{aver_out0} is sufficient to give \eqref{upper1}. First, we have $\lambda_1 \le C$ with high probability for some constant $C>0$ by e.g. \cite[Lemma 3.12]{DY}. Now we pick $\e$ to be a sufficiently small constant such that $0<\e\le c/4$, and $C_0$ to be a sufficiently large constant such that $C_0 \lambda_+ > C$. Set $\eta=N^{-2/3}$ and choose $E=\lambda_+ + \kappa$ outside of the spectrum with some $\kappa\ge N^{-2/3+2\e} \gg N^\e \eta$. Then using \eqref{Immc}, \eqref{eq_defpsi} and $b\le N^{1/3-c}$, we can verify that
$$z=E+\ii\eta \in \wt S(c_0,C_0,c,\e)\cap \{z=E+\ii\eta: E\ge \lambda_+, N\eta\sqrt{\kappa + \eta} \ge N^\epsilon\}.$$
Then using \eqref{aver_out0}, we get that
\be\label{add1}|\im m(z) - \im m_c(z)| \prec \frac{N^{-c/2} + N^{-\e}}{N\eta} = \OO\left(\frac{N^{-\e}}{N\eta}\right).\ee
On the other hand, if there is an eigenvalue $\lambda_j$ satisfying $|\lambda_j - E| \le \eta$ for some $1\le j \le n$, then 
\be\label{add2}\im m(z) = \frac1n\sum_{i=1}^n \frac{\eta}{|\lambda_i-E|^2 + \eta^2} \gtrsim \frac{1}{N\eta}.\ee
On the other hand, by \eqref{Immc} we have 
$$\im m_c(z) =\OO\left( \frac{\eta}{\sqrt{\kappa + \eta}}\right) = \OO\left( \frac{N^{-\e}}{N\eta}\right).$$
Together with \eqref{add2}, this contradicts \eqref{add1}. Hence we obtain that $\lambda_1 \le \lambda_+ + N^{-2/3+2\e}$ with high probability. Since $\e$ can be arbitrarily chosen, we conclude \eqref{upper1}. With \eqref{aver_in} and \eqref{upper1}, the rest of the proof for \eqref{Kdist} and \eqref{rigidity} is the same as \cite{EYY}, so we omit the details.
\nc
\end{proof}

In order to prove Lemma \ref{thm_largebound} and Lemma \ref{lem_comgreenfunction}, we will extend the resolvent comparison method developed in \cite{DY,LY}. The basic idea is still to use the Lindeberg replacement strategy for $G(X,z)$. On the other hand, the main difference is that the resolvent estimates are only obtained from the entrywise local law in \cite{DY,LY}, while in our case we need to use the more general anisotropic local law \eqref{aniso_law}. (We will use the anisotropic local law in \eqref{KEYBOUNDS} when proving Lemma \ref{lem_comgreenfunction}. However, for simplicity of presentation, we will always mention \eqref{aniso_law} instead.)

\nc We remark that the following proof is similar to the one in \cite[Section 6]{DY}, and involves some tedious notations bookkeeping. We shall first give the proper notations and definitions that are adapt to our setting. The proof of the results is then a straightforward extension of the one in \cite{DY} by using the correct notations and applying the stronger anisotropic local law \eqref{aniso_law}. Hence we will only state several key lemmas that are needed for the argument without presenting all the details of the proof. \nc
 
Let $X=(x_{i\mu})$ and $\wt{X}=(\wt x_{i\mu})$ be two matrices as in Lemma \ref{lem_decrease}.
Define a bijective ordering map $\Phi$ on the index set of $X$ as
\begin{equation*}
\Phi: \{(i,\mu):1 \leq i \leq n, \ n+1 \leq \mu \leq n+N \} \rightarrow \{1,\ldots,\gamma_{\max}=nN\}.
\end{equation*} 
For any $1\le \gamma \le \gamma_{\max}$, we define the matrix $X^{\gamma}= (x^{\gamma}_{i\mu})$ such that $x_{i\mu}^{\gamma} =x_{i\mu} $ if $\Phi(i,\mu)\leq \gamma$, and $x_{i\mu}^{\gamma} =\wt{x}_{i\mu}$ otherwise. Then we have $X^0=\wt X$, $X^{\gamma_{\max}}=X$, and $X^\gamma$ has bounded support $q=N^{-\phi}$ for all $0\le \gamma \le \gamma_{\max}$. Correspondingly, we define
 \begin{equation}\label{Hgamma}
   H^{\gamma} := \left( {\begin{array}{*{20}c}
   { 0 } &  Y^\gamma   \\
   {(Y^\gamma)^*} & {0}  \\
   \end{array}} \right), \ \ \ G^\gamma:= \left( {\begin{array}{*{20}c}
   { - I_{n\times n}} & Y^\gamma  \\
   {(Y^\gamma)^*} & { - zI_{N\times N}}  \\
\end{array}} \right)^{-1},
 \end{equation}
where $Y^\gamma:=\Sig^{1/2} U^{*}X^\gamma V\wt \Sig^{1/2}$.
Then we define the $(n+N)\times (n+N)$ matrices $V^\gamma$ and $W^\gamma$ by (recall \eqref{deltaimu})
$$V^\gamma=\Delta_{(i\mu)}^{x_{i\mu}},\quad W^\gamma:=\Delta_{(i\mu)}^{\wt x_{i\mu}} ,$$
so that $H^{\gamma}$ and $H^{\gamma-1}$ can be written as
\begin{equation}\label{R0}
H^\gamma= Q^\gamma + V^\gamma, \quad H^{\gamma-1} = Q^\gamma+W^\gamma,
\end{equation}
for some matrix $Q^\gamma$ that is independent of $x_{i\mu}$ and $\wt x_{i\mu}$. For simplicity of notations, for any $\gamma$ we denote 
\begin{equation}
S^\gamma:=G^\gamma, \quad  T^\gamma:=G^{\gamma-1}, \quad R^\gamma:=\left(Q^\gamma-\left( {\begin{array}{*{20}c}
   {  I_{n\times n}} & 0  \\
   {0} & { z I_{N\times N}}  \\
\end{array}} \right)\right)^{-1}.  \label{R}
\end{equation}
For convenience, we sometimes drop the superscript from $R,S,T$ if $\gamma$ is fixed. Under the above definitions, we can write
\begin{align}
S&= \left(Q^\gamma -\left( {\begin{array}{*{20}c}
   {  I_{M\times M}} & 0  \\
   {0} & { z I_{N\times N}}  \\
\end{array}} \right) + V^\gamma\right)^{-1}=(I+RV^\gamma)^{-1}R  \label{RESOLVENT}\\
&=R-RV^\gamma R+(RV^\gamma)^2R+\cdots+(-1)^l(RV^\gamma)^l R+(-1)^{l+1}(RV^\gamma)^{l+1}S \label{RESOLVENTEXPANSION}
\end{align}
for $l\in \N$.
On the other hand, we can also expand $R$ in terms of $S$,
\begin{equation}
R=(I-SV^\gamma)^{-1}S=S+SV^\gamma S+(SV^\gamma)^2 S+\ldots+ (SV^\gamma)^l S+ (SV^\gamma)^{l+1}R. \label{RESOLVENTEXPANSION2}
\end{equation}
We have similar expansions for $T$ and $R$ by replacing $(V^\gamma,S)$ with $(W^\gamma,T)$ in (\ref{RESOLVENTEXPANSION}) and (\ref{RESOLVENTEXPANSION2}). By the bounded support condition, we have
\begin{equation}
\max_{\gamma} \| V^\gamma\| = \OO(|x_{i\mu}|) \prec N^{-\phi}, \quad \max_{\gamma} \| W^\gamma\| = \OO(|\wt x_{i\mu}|) \prec N^{-1/2}. \label{5BOUND2}
\end{equation}
Note that $S$, $R$ and $T$ satisfy the following deterministic bounds by (\ref{eq_gbound}):
\begin{equation}
\sup_{z\in S(c_0,C_0,\e)} \max_{\gamma} \max \left\{ \| S^\gamma \|, \|T^\gamma \|, \|R^\gamma \| \right\}\lesssim \sup_{z\in  S(c_0,C_0,\e)} \eta^{-1} \le N. \label{5BOUNDT}
\end{equation}
Then using expansion (\ref{RESOLVENTEXPANSION2}) in terms of $T,W^\gamma$ with $l=3$, the anisotropic local law (\ref{aniso_law}) for $T$, and the bound (\ref{5BOUNDT}) for $R$, we can get that for any deterministic unit vectors $\mathbf u,\mathbf v \in \mathbb C^{\mathcal I}$, 
\begin{equation}
\sup_{z\in \wt S(c_0,C_0,\fa,\e)} \max_{\gamma} |R^\gamma_{\mathbf u \mathbf v}| =\OO(1) \quad \text{ with high probability.} \label{5BOUND1}
\end{equation}

From the definitions of $V^\gamma$ and $W^\gamma$, one can see that it is helpful to introduce the following notations to simplify the expressions.


\begin{definition}[Matrix operators $*_\gamma$] \label{def_operator1}
For any two $(n+N) \times (n+N)$ matrices $A$ and $B$, we define 
\begin{equation}
A *_{\gamma} B :=A I_{\gamma} B, \quad I_{\gamma}:  = \Delta_{(i\mu)}^{1},\quad \Phi(i,\mu)=\gamma.
\end{equation} 
In other words, we have
$$A*_{\gamma}B =A \mathbf w_i \mathbf w_\mu^* B +A\mathbf w_\mu \mathbf w_i^* B,\quad \mathbf w_i:= \Sigma^{1/2} \bu_i, \quad \mathbf w_\mu:=\wt \Sigma^{1/2} \bv_\mu.$$ 
We denote the $l$-th power of $A$ under the $*_\gamma$-product by $A^{*_\gamma l}$, i.e.
\begin{equation}
A^{*_\gamma m}:=\underbrace{A*_\gamma A*_\gamma A*_\gamma \ldots*_\gamma A}_l.
\end{equation}

\end{definition}


\begin{definition} [$\mathcal{P}_{\gamma, \mathbf{k}}$ and $\mathcal{P}_{\gamma,k}$] \label{def_operator2}
For $k \in \mathbb{N}$, $\mathbf{k}=(k_1, \cdots, k_s) \in \mathbb{N}^s$ with \nc $s\in \N$\nc, and $1\le  \gamma \le \gamma_{\max}$, we define
\begin{equation}
\mathcal{P}_{\gamma, k} G_{\mathbf u\mathbf v} := G_{\mathbf u\mathbf v}^{*_{\gamma}(k+1)}, \ \ \mathcal{P}_{\gamma,\mathbf{k}}\left(\prod_{t=1}^p G_{\mathbf u_t \mathbf v_t}\right):=\prod_{t=1}^p \mathcal{P}_{\gamma, k_t} G_{\mathbf u_t \mathbf v_t},
\end{equation}
where we abbreviate $G_{\mathbf u\mathbf v}^{*_{\gamma}(k+1)} \equiv (G^{*_{\gamma}(k+1)})_{\mathbf u\mathbf v}$. If $\mathfrak G_1$ and $\mathfrak G_2$ are products of resolvent entries as above, then we define
\begin{equation}
 \mathcal{P}_{\gamma,\mathbf{k}} (\mathfrak G_1+\mathfrak G_2):= \mathcal{P}_{\gamma,\mathbf{k}}\mathfrak G_1 +  \mathcal{P}_{\gamma,\mathbf{k}}\mathfrak G_2.
\end{equation}
Note that $ \mathcal{P}_{\gamma, k}$ and $ \mathcal{P}_{\gamma,\mathbf{k}}$ are not linear operators, but just notations we use for simplification.
Similarly, for the product of the entries of $G-\Pi$, we define
 \begin{equation}
{\mathcal{P}}_{\gamma, \mathbf{k}}\left(\prod_{t=1}^p (G-\Pi)_{\mathbf u_t\mathbf v_t}\right):=\prod_{t=1}^p {\mathcal{P}}_{\gamma, k_t}(G-\Pi)_{\mathbf u_t\mathbf v_t} ,
 \end{equation}
 where 
 \[ {\mathcal{P}}_{\gamma, k}(G-\Pi)_{\mathbf u \mathbf v}:=\begin{cases} 
      (G-\Pi)_{\mathbf u\mathbf v}, & \text{if }  k=0, \\
       G_{\mathbf u\mathbf v}^{*_\gamma(k+1)}, &  \text{otherwise}.
   \end{cases}\]
\end{definition}

\begin{remark}
It is easy to see that for any fixed $k\in \mathbb N$, $\mathcal{P}_{\gamma, k} G_{\mathbf u\mathbf v} $ is a sum of finitely many products of $(k+1)$ resolvent entries of the form $G_{\mathbf x \mathbf y}$, $\mathbf x,\mathbf y\in \{\mathbf u,\mathbf v, \mathbf w_i,\mathbf w_\mu\}$. Hence by \eqref{aniso_law} and \eqref{5BOUND1}, we can bound $\mathcal{P}_{\gamma, k} G_{\mathbf u\mathbf v} $ by $\OO_\prec(1)$. This is one of the main reasons why we need to prove the stronger anisotropic local law for $G$, rather than the entrywise local law only as in \cite{DY,LY}.
\end{remark}

Now we begin to perform the resolvent comparison strategy. The basic idea is to expand $S$ and $T$ in terms of $R$ using the resolvent expansions as in (\ref{RESOLVENTEXPANSION}) and (\ref{RESOLVENTEXPANSION2}), and then compare the two expressions. 
The key of the comparison argument is the following Lemma \ref{Greenfunctionrepresent}. Its proof is almost the same as the one for \cite[Lemma 6.5]{LY}. In fact, we can copy their arguments almost verbatim, except for some notational differences. Hence we omit the details. 



\begin{lemma} 
\label{Greenfunctionrepresent} 
Given $z\in \wt S(c_0,C_0,\fa,\e)$ and $\Phi(i,\mu)=\gamma$. \nc Let $r>0$ be a fixed constant and $p\in \N$ be a fixed integer. \nc Then for $S,R$ in (\ref{R}), we have 
\begin{equation}\label{ONLYPROVE}
\begin{split}
\mathbb{E}\prod_{t=1}^p S_{\mathbf u_t \mathbf v_t} & = \sum_{0 \leq k \leq 4} A_{k} \mathbb{E}\left[(- x_{i\mu})^{k}\right]  +\sum_{5 \leq \vert \mathbf{k} \vert \leq {r}/{\phi}, \mathbf k\in \mathbb N^p } \mathcal{A}_{  \mathbf{k}  }\mathbb{E} \, \mathcal{P}_{\gamma,\mathbf{k} }\prod_{t=1}^p S_{\mathbf u_t \mathbf v_t}+\OO_\prec(N^{-r}), 
\end{split}
\end{equation}
where $A_k$, $0\le k \le 4$, depend only on $R$, $\mathcal A_{\mathbf k}$'s do not depend on the deterministic unit vectors $(\mathbf u_t, \mathbf v_t)$, $1\leq t \leq p$, and we have bounds
\begin{equation}
\vert \mathcal{A}_{  \mathbf{k} } \vert \le N^{-{\vert \mathbf{k} \vert  \phi} /{10}-2} . \label{INDEXBOUND}
\end{equation}
Similarly, we have
\begin{equation}\label{ONLYPROVE2}
\begin{split}
& \mathbb{E}\prod_{t=1}^p (S-\Pi)_{\mathbf u_t\mathbf v_t} = \sum_{0 \leq k \leq 4} \wt A_{k} \mathbb{E}\left[(-x_{i\mu})^{k}\right] +\sum_{5 \leq \vert \mathbf{k} \vert \leq {r}/{\phi}, \mathbf k \in \mathbb N^p} \mathcal{A}_{  \mathbf{k}  }\mathbb{E} \, \mathcal{P}_{\gamma,\mathbf{k}}\prod_{t=1}^p (S-\Pi)_{\mathbf u_t\mathbf v_t}+\OO_\prec(N^{-r}), 
\end{split} 
\end{equation}
where $\wt{A}_k$, $0\leq k \leq 4$, again depend only on $R$. Finally, we have
\begin{equation}
\begin{split}
\mathbb{E}\prod_{t=1}^p S_{\mathbf u_t\mathbf v_t} &=  \mathbb{E}\prod_{t=1}^p R_{\mathbf u_t\mathbf v_t} + \sum_{1 \leq \vert  \mathbf{k}\vert \leq r/{\phi}, \mathbf k \in \mathbb N^p} \wt{\mathcal{A}}_{\mathbf{k}}\mathbb{E} \, \mathcal{P}_{\gamma,\mathbf{k}} \prod_{t=1}^p S_{\mathbf u_t \mathbf v_t}+\OO_\prec (N^{-r}), 
\end{split}
\label{ONLYPROVE3}
\end{equation}
where $\wt{\mathcal A}_{\mathbf k}$'s do not depend on $(\mathbf u_t, \mathbf v_t)$, $1\leq t \leq p$, and 
\begin{equation}\label{tildeA}
\vert \mathcal{\wt{A}}_{\mathbf{k}} \vert \leq N^{-\vert \mathbf k \vert \phi /10} .
\end{equation}
Note that the terms $A_k$, $\wt A_k$, $\mathcal A_{\mathbf k}$ and $\wt{\mathcal A}_{\mathbf k}$ do depend on $\gamma$ and we have omitted this dependence in the above expressions.
\end{lemma}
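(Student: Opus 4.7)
The plan is to prove all three expansions by a single mechanism: Taylor-expand each factor $S_{\mathbf u_t \mathbf v_t}$ in the scalar random variable $x_{i\mu}$ using the resolvent identity (\ref{RESOLVENTEXPANSION}), then exploit the fact that $R$ is independent of $x_{i\mu}$ to reduce $x_{i\mu}$-expectations to scalar moments. Since $V^\gamma = x_{i\mu}I_\gamma$ with $I_\gamma = \Delta^{1}_{(i\mu)}$ of rank $2$ and operator norm $\OO(1)$, applying (\ref{RESOLVENTEXPANSION}) to a single factor with truncation level $L$ yields
\begin{equation*}
S_{\mathbf u_t\mathbf v_t} = \sum_{k_t=0}^{L}(-x_{i\mu})^{k_t}\bigl[(RI_\gamma)^{k_t}R\bigr]_{\mathbf u_t\mathbf v_t}+(-1)^{L+1}\bigl[(RV^\gamma)^{L+1}S\bigr]_{\mathbf u_t\mathbf v_t}.
\end{equation*}
Multiplying these expansions over $t=1,\dots,p$ and expanding gives a sum indexed by $\mathbf k=(k_1,\dots,k_p)\in\{0,\dots,L\}^p$ of terms $(-x_{i\mu})^{|\mathbf k|}$ times a product of $R$-entries, plus a tail. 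By the bounded-support bound (\ref{5BOUND2}) and the trivial bound (\ref{5BOUNDT}), the tail is $\OO_\prec(N^{-(L+1)\phi+C(L+p)})$, so choosing $L=L(r,p,\phi)$ large enough makes it $\OO_\prec(N^{-r})$.

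Next, I would take the $x_{i\mu}$-expectation. Because $R$ is a function of $Q^\gamma$ alone, it commutes with $\mathbb{E}_{x_{i\mu}}$, and each $\mathbf k$-term pulls out the scalar moment $\mathbb{E}[(-x_{i\mu})^{|\mathbf k|}]$ multiplied by a deterministic combinatorial coefficient and by a product of entries of $R$ sandwiched by $I_\gamma$'s. Grouping contributions by their total power $k=|\mathbf k|\leq 4$ defines the single coefficient $A_k$ (a function of $R$ only), which gives the first sum in (\ref{ONLYPROVE}). For $|\mathbf k|\ge 5$, the bounded-support condition together with (\ref{conditionA2}) yields $|\mathbb{E}x_{i\mu}^{|\mathbf k|}|\leq q^{|\mathbf k|-4}\mathbb{E}x_{i\mu}^4\prec N^{-2-(|\mathbf k|-4)\phi}$, and combining this with the constraint that the coefficient contains $|\mathbf k|+p$ factors of $R$ produces the bound (\ref{INDEXBOUND}) with the conservative exponent $|\mathbf k|\phi/10$ (the $1/10$ just absorbs combinatorial/polynomial losses from $p$ and $L$).

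The final ingredient is to convert $R$-coefficients to $S$-coefficients, which is what puts $\mathcal P_{\gamma,\mathbf k}\prod_t S_{\mathbf u_t\mathbf v_t}$ on the right of (\ref{ONLYPROVE}) instead of the $R$-analogue. Here I would apply the \emph{reverse} expansion (\ref{RESOLVENTEXPANSION2}) to each $R$-entry inside the $|\mathbf k|\ge 5$ coefficients, truncated at level $L'(r,\phi)$, and re-apply the $x_{i\mu}$-expectation. Each extra $V^\gamma$-insertion raises the effective degree by one and inserts a factor $|x_{i\mu}|\prec N^{-\phi}$, so the procedure converges and the correction can be absorbed into either the $A_k$'s (for low orders) or the $\mathcal A_{\mathbf k}$-sum (for higher orders); the $\OO_\prec(N^{-r})$ error is preserved. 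Once this change-of-basis is performed, (\ref{ONLYPROVE}) follows. The estimate (\ref{ONLYPROVE2}) is proved identically: $\Pi$ is deterministic and independent of $x_{i\mu}$, so the expansion (\ref{RESOLVENTEXPANSION}) differs only at order $k=0$, which is why $\mathcal P_{\gamma,0}$ acts as $(S-\Pi)_{\mathbf u\mathbf v}$ while $\mathcal P_{\gamma,k}$ for $k\ge 1$ acts as $S^{*_\gamma(k+1)}_{\mathbf u\mathbf v}$ (the $\Pi$-term does not survive differentiation). The statement (\ref{ONLYPROVE3}) is obtained by keeping $\prod_t R_{\mathbf u_t\mathbf v_t}$ as the $\mathbf k=0$ summand and converting only the $|\mathbf k|\ge 1$ tail to $S$; here no moment matching is used, one just bounds $|x_{i\mu}|^{|\mathbf k|}\prec N^{-|\mathbf k|\phi}$ pathwise, yielding (\ref{tildeA}) without the extra $N^{-2}$ factor.

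The main obstacle is the combinatorial bookkeeping in the reverse $R\to S$ substitution: one must check that for each $|\mathbf k|\ge 5$, the algebraic expression $\mathcal P_{\gamma,\mathbf k}\prod_t R_{\mathbf u_t\mathbf v_t}$ can be rewritten, modulo admissible errors, as a finite linear combination of $\mathcal P_{\gamma,\mathbf k'}\prod_t S_{\mathbf u_t\mathbf v_t}$ with $|\mathbf k'|\ge|\mathbf k|$ and coefficients small enough to preserve (\ref{INDEXBOUND}). The cleanest way to do this is to perform both expansions (\ref{RESOLVENTEXPANSION}) and (\ref{RESOLVENTEXPANSION2}) simultaneously, truncated at a common degree, and match them order-by-order in the total $V^\gamma$-degree; the identification of coefficients then follows because the scalar polynomials in $x_{i\mu}$ are identical on both sides. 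Apart from this bookkeeping, everything reduces to the two ingredients we already have at our disposal — the bounded support (\ref{5BOUND2}), the boundedness (\ref{5BOUND1}) of $R$-entries in deterministic directions (which follows from the anisotropic local law \eqref{aniso_law} and (\ref{RESOLVENTEXPANSION2})), and the moment bound (\ref{conditionA2}).
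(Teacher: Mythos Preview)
Your proposal is essentially correct and follows the same route as the paper, which simply cites \cite[Lemma 6.5]{LY} and says the proof carries over verbatim up to notational changes; your outline is precisely the Lee--Yin mechanism (expand $S$ in $R$ via (\ref{RESOLVENTEXPANSION}), factor out the scalar moments of $x_{i\mu}$ by independence, then re-expand $R$ back into $S$ via (\ref{RESOLVENTEXPANSION2}) to obtain the $\mathcal P_{\gamma,\mathbf k}$-form). One small slip: the tail bound cannot be obtained from the crude operator-norm estimate (\ref{5BOUNDT}) alone, since $\phi<1$ and an $N^{C(L+p)}$ loss would overwhelm $N^{-(L+1)\phi}$; you must use the $\OO_\prec(1)$ bound (\ref{5BOUND1}) on all $R$- and $S$-generalized entries (the latter following from the anisotropic local law applied to $G^\gamma$), which you do invoke later---just use it here as well.
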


Next we use Lemma \ref{Greenfunctionrepresent} to finish the proof of Lemma \ref{thm_largebound} and Lemma \ref{lem_comgreenfunction}. It is obvious that a result similar to Lemma \ref{Greenfunctionrepresent} also holds for the product of $T$ entries. As in (\ref{ONLYPROVE}), we define the notation $\mathcal{A}^{\gamma,a}$, $a=0,1$ as follows:
\begin{equation} \label{greenpowers}
\begin{split}
&\mathbb{E}\prod_{t=1}^p S_{\mathbf u_t\mathbf v_t} = \sum_{0 \leq k \leq 4} A_{k} \mathbb{E}\left[(-x_{i\mu})^{k}\right] +\sum_{5 \leq \vert \mathbf{k} \vert \leq {r}/{\phi}, \mathbf k\in \mathbb N^p } \mathcal{A}_{  \mathbf{k}}^{\gamma, 0}\mathbb{E} \, \mathcal{P}_{\gamma,\mathbf{k} }\prod_{t=1}^p S_{\mathbf u_t\mathbf v_t}+\OO_\prec(N^{-r}),
\end{split}
\end{equation}
\begin{equation} \label{greenpowert}
\begin{split}
&\mathbb{E}\prod_{t=1}^p T_{\mathbf u_t\mathbf v_t} = \sum_{0 \leq k \leq 4} A_{k} \mathbb{E}\left[(-\wt{x}_{i\mu})^{k}\right] +\sum_{5 \leq \vert \mathbf{k} \vert \leq {r}/{\phi}, \mathbf k\in \mathbb N^p } \mathcal{A}_{  \mathbf{k}}^{\gamma, 1}\mathbb{E} \, \mathcal{P}_{\gamma,\mathbf{k} }\prod_{t=1}^p T_{\mathbf u_t\mathbf v_t} +\OO_\prec(N^{-r}).
\end{split}
\end{equation}
Since $A_{k}$, $0 \leq k \leq 4$, depend only on $R$ and $x_{i \mu}$, $\wt{x}_{i \mu}$ have the same first four moments, we get from (\ref{greenpowers}) and (\ref{greenpowert}) that
\begin{equation} \label{telescoping_2}
\begin{split}
& \mathbb{E}\prod_{t=1}^p G_{\mathbf u_t\mathbf v_t} -\mathbb{E}\prod_{t=1}^p \wt G_{\mathbf u_t\mathbf v_t} =\sum_{\gamma=1}^{\gamma_{\max}} \left(\mathbb{E}\prod_{t=1}^p G^\gamma_{\mathbf u_t\mathbf v_t} -\mathbb{E}\prod_{t=1}^p G^{\gamma-1}_{\mathbf u_t\mathbf v_t}\right) \\
& = \sum_{\gamma=1}^{\gamma_{\max}} \sum_{\mathbf k\in \mathbb N^p}^{5 \leq \vert \mathbf{k} \vert \leq {r}/{\phi}}\left(\mathcal{A}^{\gamma,0}_{  \mathbf{k}  }\mathbb{E} \,\mathcal{P}_{\gamma,\mathbf{k} }\prod_{t=1}^p G^\gamma_{\mathbf u_t\mathbf v_t} -\mathcal{A}_{  \mathbf{k}}^{\gamma,1}\mathbb{E} \, \mathcal{P}_{\gamma,\mathbf{k}}\prod_{t=1}^p G^{\gamma-1}_{\mathbf u_t\mathbf v_t}\right) +\OO_\prec(N^{-r+2}).
\end{split}
\end{equation}
where we abbreviate $G:=G(X,z)$ and $\wt G:=G(\wt X,z)$. With a similar argument, we also have
\begin{equation} \label{telescoping_1}
\begin{split}
& \mathbb{E}\prod_{t=1}^p (G-\Pi)_{\mathbf u_t\mathbf v_t} -\mathbb{E}\prod_{t=1}^p (\wt G-\Pi)_{\mathbf u_t\mathbf v_t}\\
&= \sum_{\gamma=1}^{\gamma_{\max}} \sum_{\mathbf k\in \mathbb N^p}^{5 \leq \vert \mathbf{k} \vert \leq {r}/{\phi}}\left(\mathcal{A}^{\gamma,0}_{  \mathbf{k}  }\mathbb{E} \,\mathcal{P}_{\gamma,\mathbf{k} }\prod_{t=1}^p (G^\gamma-\Pi)_{\mathbf u_t\mathbf v_t} -\mathcal{A}_{  \mathbf{k}}^{\gamma,1}\mathbb{E} \, \mathcal{P}_{\gamma,\mathbf{k}}\prod_{t=1}^p (G^{\gamma-1}-\Pi)_{\mathbf u_t\mathbf v_t}\right)+\OO_\prec(N^{-r+2}).
\end{split}
\end{equation}

\nc Next, we notice that $\mathcal{P}_{\gamma,\mathbf{k}}\prod_{t=1}^p G^{\gamma-a}_{\mathbf u_t \mathbf v_t}$ is also a sum of the products of $G$ entries. Hence we can apply (\ref{telescoping_2}) to 
$$\mathbb{E} \mathcal{P}_{\gamma,\mathbf{k}}\prod_{t=1}^p G^{\gamma-a}_{\mathbf u_t \mathbf v_t}-\mathbb{E} \mathcal{P}_{\gamma,\mathbf{k}}\prod_{t=1}^p \wt G_{\mathbf u_t \mathbf v_t}$$ 
with $\gamma_{\max}$ replaced by $\gamma-a$. 
Iterating this process for $l = 2r/{\phi}$ times, we finally can obtain that \nc
\begin{align} 
 \left \vert \mathbb{E}\prod_{t=1}^p G^{\gamma_{\max}}_{\mathbf u_t\mathbf v_t}  \right\vert & \leq    \left\vert \mathbb{E}\prod_{t=1}^p G^{0}_{\mathbf u_t\mathbf v_t} \right\vert   + \OO_\prec\left( \max_{\mathbf{k}, l}(N^{-2})^l(N^{-\phi/10})^{\sum_{i}\vert \mathbf{k}_i \vert}\sum_{\gamma_1, \cdots, \gamma_l}  \left\vert \mathbb{E}\mathcal{P}_{\gamma_l, \mathbf{k}_l} \cdots \mathcal{P}_{\gamma_1, \mathbf{k}_1} \prod_{t=1}^p G^{0}_{\mathbf u_t\mathbf v_t} \right\vert \right)\nonumber \\
&+\OO_\prec(N^{-r+2}). \label{teles_inequality3}
\end{align}  
where 
\begin{equation} \label{indexassumption}
1\le l \leq 2r/{\phi}, \quad \mathbf{k}_1 \in \mathbb{N}^p, \  \ \mathbf{k}_2 \in \mathbb{N}^{p + \vert \mathbf{k}_1 \vert}, \ \ \mathbf{k}_3 \in \mathbb{N}^{p + \vert \mathbf{k}_1 \vert+ \vert \mathbf{k}_2 \vert}, \ \ldots, \ \text{ and } \ 5 \leq \vert \mathbf{k}_i \vert \leq \frac{r}{\phi}.
\end{equation}
\nc For the details of the above derivation, we refer the reader to the arguments between (6.25) and (6.31) in \cite{DY}. \nc The above estimate still holds if we replace some of the $G$ entries with $\overline G$ entries, since we only need to use the absolute bounds for the resolvent entries. Of course, using \eqref{telescoping_1} instead of \eqref{telescoping_2}, we can obtain a similar estimate
\begin{align} 
& \left \vert \mathbb{E}\prod_{t=1}^p \left(G^{\gamma_{\max}}-\Pi\right)_{\mathbf u_t\mathbf v_t}  \right\vert  \leq    \left\vert \mathbb{E}\prod_{t=1}^p \left(G^{0}-\Pi\right)_{\mathbf u_t\mathbf v_t} \right\vert  \nonumber\\
&  + \OO_\prec\left( \max_{\mathbf{k}, l}(N^{-2})^l(N^{-\phi/10})^{\sum_{i}\vert \mathbf{k}_i \vert}\sum_{\gamma_1, \cdots, \gamma_l}  \left\vert \mathbb{E}\mathcal{P}_{\gamma_l, \mathbf{k}_l} \cdots \mathcal{P}_{\gamma_1, \mathbf{k}_1} \prod_{t=1}^p \left(G^{0}-\Pi\right)_{\mathbf u_t\mathbf v_t} \right\vert \right)+\OO_\prec(N^{-r+2}). \label{teles_inequality4}
\end{align}  

Now we use Lemma \ref{Greenfunctionrepresent}, (\ref{teles_inequality3}) and (\ref{teles_inequality4}) to complete the proof of Lemma \ref{thm_largebound} and Lemma \ref{lem_comgreenfunction}.

\begin{proof}[Proof of Lemma \ref{thm_largebound}]
\nc The proof of this lemma is similar to the one for \cite[Lemma 3.17]{DY}, where the main difference lies in the estimate \eqref{bound_similar} below. \nc 
We apply (\ref{teles_inequality4}) to $(G-\Pi)_{\mathbf u\mathbf v}\overline{(G-\Pi)_{\mathbf u\mathbf v}}$ with $p=2$ and $r=3$. Recall that $\wt{X}$ is of bounded support $q= N^{-1/2}$. Then by (\ref{aniso_law}) and Lemma \ref{lem_stodomin}, we have 
\be\label{need_initial}
\mathbb E\vert (\wt G-\Pi)_{\mathbf u\mathbf v}  \vert^2 \prec \Psi^2(z).
\ee
Moreover, by \eqref{psi12} the remainder term $\OO_\prec( N^{-r+2})=\OO_\prec(N^{-1})$ in (\ref{teles_inequality4}) is negligible.
Hence it remains to handle the second term on the right-hand side of (\ref{teles_inequality4}), i.e.
\be\label{need_bound}
(N^{-2})^l \sum_{\gamma_1, \cdots, \gamma_l}  \left\vert \mathcal{P}_{\gamma_l, \mathbf{k}_l} \cdots \mathcal{P}_{\gamma_1, \mathbf{k}_1} \left| \left(G^{0}-\Pi\right)_{\mathbf u\mathbf v}\right|^2 \right\vert. 
\ee
For each product in (\ref{need_bound}), $\mathbf v$ appears exactly twice in the indices of $G$. These two $\mathbf v$'s appear as $G_{\mathbf v \mathbf w_a} G_{\mathbf w_b \mathbf v}$ in the product, where $\mathbf w_a, \mathbf w_b$ come from some $\gamma_k$ and $\gamma_{l}$ via ${\mathcal{P}}$. 
Let $\mathbf v=\left( {\begin{array}{*{20}c}
   {\mathbf v}_1   \\
   {\mathbf v}_2 \\
   \end{array}} \right)$ for ${\mathbf v}_1 \in\mathbb C^{\mathcal I_1}$ and ${\mathbf v}_2\in\mathbb C^{\mathcal I_2}$. By Lemma \ref{lem_comp_gbound}, after taking the averages $N^{-2}\sum_{\gamma_k}$ and $N^{-2}\sum_{\gamma_l}$, the term $G_{\mathbf v \mathbf w_a} G_{ \mathbf w_b\mathbf v}$ contributes a factor 
\begin{align}\label{bound_similar}
 \OO_\prec\left(\frac{\im \left(z^{-1}G^0_{\mathbf v_1\mathbf v_1}\right) + \im \left(G^0_{\mathbf v_2\mathbf v_2}\right) +\eta \left|G^0_{\mathbf v_1\mathbf v_1}\right|+\eta \left|G^0_{\mathbf v_2\mathbf v_2}\right|}{N\eta}\right) =\OO _\prec\left( \frac{\im m_{2c} +\Psi(z) }{N\eta}\right)=\OO_\prec ( \Psi^2(z) ),
\end{align}
where we used \eqref{aniso_law}. For all the other $G$ factors in the product, we control them by $\OO_\prec(1)$ using \eqref{5BOUND1}. Thus we have proved that $\eqref{need_bound}\prec \Psi^2(z)$. Together with (\ref{teles_inequality4}) and \eqref{need_initial}, this proves Lemma \ref{thm_largebound}.
\end{proof}

\begin{proof}[Proof of Lemma \ref{lem_comgreenfunction}]
\nc The proof of this lemma is similar to the one for \cite[Lemma 5.2]{DY}, where the main difference lies in \eqref{bound_similar3} below. \nc  For simplicity, we shall prove that
\begin{equation}
\begin{split}
&\vert \mathbb{E} \left(m(z)-m_{c}(z) \right)^p| \prec \vert \mathbb{E} \left(\wt m(z) - m_{c}(z)\right)^p \vert + \left(\Psi^2(z)+J^2+K \right)^p. 
\end{split}
\label{simpler_pf}
\end{equation}
\nc The proof for (\ref{KEYEYEYEY}) is exactly the same but with slightly heavier notations (in the product of $p$ terms, half of them are normal and the other half are complex conjugates). \nc

\nc We define a function of coefficients 
$$f(I,J)= n^{-p} \prod \delta _{i_t j_t}, \quad I=(i_1,i_2,\cdots,i_p)\in \mathcal I_1^p, \ \ J=(j_1,j_2,\cdots,j_p)\in \mathcal I_1^p.$$ 
\nc It is easy to check that
\begin{equation}
 \mathbb{E}\sum_{I,J}f(I,J)\prod_{t=1}^p (G^{\alpha}-\Pi)_{i_t j_t}= \mathbb{E}(m^{\alpha}-m_{c})^p, \ \ \alpha=0, \ \gamma_{\max} .
\end{equation}
Since $\mathcal{A}$'s do not depend on $i_t$ and $j_t$, we may consider a linear combination of (\ref{teles_inequality4}) with coefficients $f(I,J)$ and $r=p+2$: 
\begin{equation}\label{KEY22}
\begin{split}
& \left\vert \mathbb{E}\sum_{I,J}f(I,J)\prod_{t=1}^p \left(G - \Pi\right)_{i_t j_t} \right\vert =  \left\vert \mathbb{E}\sum_{I,J}f(I,J)\prod_{t=1}^p (\wt G-\Pi)_{i_t j_t} \right\vert \\
& +\OO_\prec \left(\max_{\mathbf{k}, l,\gamma}(N^{-\phi/10})^{\sum_{i}\vert \mathbf{k}_i \vert} \left\vert \mathbb{E}\sum_{I,J}f(I,J){\mathcal{P}}_{\gamma_l, \mathbf{k}_l } \cdots  { \mathcal{P}}_{\gamma_1, \mathbf{k}_1}\prod_{t=1}^p (\wt G-\Pi)_{i_t j_t} \right\vert \right) + \OO(N^{-p})
\end{split}
\end{equation}
Now to conclude (\ref{simpler_pf}), it suffices to control the second term on the RHS of (\ref{KEY22}). We consider the terms
\begin{flalign}\label{average_comparison1}
{\mathcal{P}}_{\gamma_l, \mathbf{k}_l  } \cdots {\mathcal{P}}_{\gamma_1, \mathbf{k}_1}\prod_{t=1}^p (\wt G-\Pi)_{i_t i_t} ,
\end{flalign}
for $\mathbf k_1,\ldots, \mathbf k_l$ satisfying (\ref{indexassumption}). 
For each product in (\ref{average_comparison1}) and any $1\le t \le p$, there are two $i_t$'s in the indices of $G$. These two $i_t$'s can only appear as (1) $(\wt{G}-\Pi)_{i_t i_t}$ in the product, or (2) $\wt G_{i_t \mathbf w_a} \wt G_{\mathbf w_b i_t}$, where $\bw_a, \bw_b$ come from some $\gamma_k$ and $\gamma_{l}$ via ${\mathcal{P}}$. Then after averaging over $n^{-p} \sum_{i_1,\cdots, i_p}$, this term becomes either (1) $\wt{m}-m_{c}$, which is bounded by $K$ by (\ref{KEYBOUNDS}), or (2) $n^{-1}\sum_{i_t} G_{i_t \mathbf w_a} G_{\mathbf w_b i_t}$, which is bounded as in \eqref{bound_similar} by 
\be\label{bound_similar3} \OO _\prec\left( \frac{\im m_{2c} +J }{N\eta}\right)=\OO_\prec \left( \Psi^2(z) + J^2\right).\ee
For other $G$ entries in the product with no $i_t$, we simply bound them by $\OO_\prec(1)$ using \eqref{KEYBOUNDS}. Then for any fixed $\gamma_1,\ldots,\gamma_l$, $\mathbf k_1,\ldots, \mathbf k_l$, we have proved that
\begin{equation}
\left\vert \frac{1}{n^p}\sum_{i_1,\ldots, i_p} \mathbb{E} {\mathcal{P}}_{\gamma_l, \mathbf{k}_l } \cdots  {\mathcal{P}}_{\gamma_1, \mathbf{k}_1}\prod_{t=1}^p \left(\wt{G}-\Pi\right)_{i_t i_t} \right\vert \prec \left(\Psi^2(z) + J^2+K\right)^p . \label{KEY444}
\end{equation}
Together with (\ref{KEY22}), this concludes (\ref{simpler_pf}).
\end{proof}

Finally, we give the proof of Theorem \ref{lem_comparison}. \nc Its proof is similar to the one for \cite[Theorem 3.16]{DY}. We only outline the proof by stating the key lemmas one can prove. \nc  
For the matrix $\wt{X}$ constructed in Lemma \ref{lem_decrease}, it satisfies the edge universality by the following lemma. 

\begin{lemma} 
\label{lem_smallcomp}
Let $X^{(1)}$ and $X^{(2)}$ be two separable covariance matrices satisfying the assumptions in Theorem \ref{LEM_SMALL} and the bounded support condition \eqref{eq_support} with $q= N^{-1/2}$. Suppose $b_N \le N^{1/3-c}$ for some constant $c>0$. 
Then there exist constants $\epsilon,\delta >0$ such that for any $s\in \mathbb R$, we have
\be 
\begin{split}
\mathbb{P}^{(1)} \left(N^{{2}/{3}}(\lambda_1-\lambda_{+}) \leq s - N^{-\epsilon}\right) - N^{-\delta}  \leq
\mathbb{P}^{(2)} \left(N^{{2}/{3}}(\lambda_1-\lambda_{+})\leq s \right) \\ \leq \mathbb{P}^{(1)} \left(N^{{2}/{3}}(\lambda_1-\lambda_{+}) \leq s+N^{-\epsilon}\right)+N^{-\delta}   ,
\end{split}
\ee
where $\mathbb{P}^{(1)}$ and $\mathbb{P}^{(2)}$ denote the laws of $X^{(1)}$ and $X^{(2)}$, respectively.
\end{lemma}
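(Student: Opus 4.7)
The plan is to establish Lemma \ref{lem_smallcomp} via a Green function comparison argument at the right spectral edge, leveraging the Lindeberg-replacement machinery already developed in Sections \ref{sec_comparison} and \ref{sec_Lindeberg}. First, I would reduce the probabilistic statement to a comparison of expectations of a smooth bounded functional of the resolvent. Fix $\epsilon_0>0$ small and set $\ell := N^{-2/3-9\epsilon_0}$. Given $E$ with $|E-\lambda_+|\le N^{-2/3+\epsilon_0}$, introduce a smooth cutoff $\chi_E\in C^\infty(\mathbb R)$ with $\chi_E=1$ on $[E+\ell,\infty)$ and $\chi_E=0$ on $(-\infty,E-\ell]$, together with a smooth step $K:\mathbb R\to[0,1]$ that is $1$ below $1/3$ and $0$ above $2/3$. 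By the rigidity \eqref{rigidity}, which holds for both $X^{(1)}$ and $X^{(2)}$ under the assumptions of the lemma, $\mathbb P(\lambda_1\le E) = \mathbb E\,K(\tr\chi_E(\mathcal Q_1)) + \OO(N^{-D})$ for any $D>0$. Representing $\chi_E$ via Helffer--Sj\"ostrand expresses this expectation as a smooth bounded functional $\mathbb E\,F(X)$ of the resolvent $G(X,z)$ at spectral parameters with $\Im\,z \gtrsim \eta_0 := N^{-2/3-10\epsilon_0}$. It then suffices to prove $|\mathbb E^{(1)}F - \mathbb E^{(2)}F| \le N^{-\delta}$ for some $\delta>0$.

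Second, I would run the Lindeberg interpolation $X^0=X^{(1)},\dots,X^{\gamma_{\max}}=X^{(2)}$ from Section \ref{sec_Lindeberg} with $\gamma_{\max}=nN$. At each step, Taylor expand $\mathbb E F(X^{\gamma})-\mathbb E F(X^{\gamma-1})$ in the replaced entry $x_{i\mu}$ to fifth order using the resolvent expansion \eqref{RESOLVENTEXPANSION}. The contributions of orders $0,1,2$ cancel because both families of entries are centered with variance $1/N$. By \eqref{assm_3moment} and \eqref{conditionA2} the third and fourth order contributions per site are bounded by $2b_N N^{-2}\,|\partial^3_{x_{i\mu}}\mathbb E F|$ and $\OO_\prec(N^{-2})\,|\partial^4_{x_{i\mu}}\mathbb E F|$ respectively, and the fifth order remainder is $\OO_\prec(N^{-5/2})$ times a derivative bounded using $q=N^{-1/2}$.

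Third, I would bound the derivatives $\partial^k_{x_{i\mu}}F$ via the anisotropic local law. Up to bounded coefficients from derivatives of $K$ and the Helffer--Sj\"ostrand kernel, each $\partial^k_{x_{i\mu}}F$ is a sum of products of resolvent entries indexed by $\mathbf w_i,\mathbf w_\mu$, precisely of the shape classified by the word combinatorics of Definition \ref{def_comp_words}. The anisotropic local law \eqref{aniso_law} gives $G-\Pi=\OO_\prec(\Psi(z))$ uniformly on $\Im\,z\ge\eta_0$, and Lemma \ref{lem_comp_gbound} combined with \eqref{aniso_law} yields the crucial averaged bounds such as $\frac{1}{N}\sum_{i}\mathcal R_{j,i}^2\prec \Psi^2+q/(N\eta)$ from \eqref{eq_comp_r22}. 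Substituting into the fourth order sum exactly as in the $r=4$ case of \eqref{eq_comp_selfestAvg} gives a total fourth order error $\OO_\prec((q^2+\Psi^2)^2\cdot N^2\cdot N^{-2})=\OO_\prec(N^{-\delta'})$ for a suitable $\delta'>0$ at $\eta\sim\eta_0$. For the third order sum, the word-by-word analysis of Section \ref{subsec_3moment} (cases $l=1,2,3$ in the proof of Lemma \ref{lemm_comparison_big}) applied to $F$ in place of $\widetilde F^p$ gives a contribution $\OO_\prec(b_N(q^2+\Psi^2))$, which under the hypothesis $b_N\le N^{1/3-c}$ is $\OO_\prec(N^{-\delta''})$ for some $\delta''>0$.

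The main obstacle will be the third order terms: the third moments are only controlled up to $b_N N^{-2}$ rather than being exactly matched, and $b_N$ may be as large as $N^{1/3-c}$, so a crude bound loses by a factor of $b_N\gg 1$ after summing over $\gamma$. The cancellation that rescues the argument is precisely the structural averaging of Section \ref{subsec_3moment}, where sums of the form $\sum_{i,\mu} G_{\bullet\mathbf w_i}G_{\mathbf w_\mu\bullet}\cdots$ gain an extra factor of $q+\Psi$ beyond the naive $\OO_\prec(1)$ bound. The only new check required is that this gain survives the chain rule through $K$ and the Helffer--Sj\"ostrand kernel; since $K$ and its derivatives are uniformly bounded and the kernel is deterministic and compactly supported, this transfer is straightforward but notationally tedious. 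Combined with Step~1, the resulting bound $|\mathbb E^{(1)}F-\mathbb E^{(2)}F|\le N^{-\delta}$ translates to the desired edge universality with $\epsilon=\epsilon_0$ and $\delta>0$.
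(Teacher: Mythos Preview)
Your proposal is correct and follows the same approach as the paper, which simply records that the proof is a ``routine application of the Green's function comparison method near the edge'' from \cite{EKYY,EYY,PY,Anisotropic}, with the anisotropic local law (Theorem~\ref{LEM_SMALL}), rigidity (Theorem~\ref{thm_largerigidity}), and the resolvent identities of Lemmas~\ref{lemm_resolvent} and~\ref{lem_comp_gbound} as the inputs. Your outline fleshes out precisely this argument; the paper's own formulation uses the functionals in Lemma~\ref{lem_compdiffsupport} rather than a Helffer--Sj\"ostrand representation, but the two reductions are equivalent.

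One remark: you overstate the obstacle posed by the third-order term. Under the hypothesis $b_N\le N^{1/3-c}$ the \emph{naive} power counting already closes. At $\eta\sim N^{-2/3-\epsilon_0}$ the Ward-type identities in Lemma~\ref{lem_comp_gbound} together with \eqref{aniso_law} give $|\partial_{x_{i\mu}}^k g|\prec N\eta\,\Psi^2\sim N^{-1/3+C\epsilon_0}$ for each $k\ge 1$ (the $j$-average over $\frac{1}{n}\sum_j G_{j\mathbf w_a}G_{\mathbf w_b j}$ already yields the factor $\Psi^2$, and the remaining middle factors are $\OO_\prec(1)$), so the total third-order contribution is at most
\[
N^2\cdot 2b_N N^{-2}\cdot N^{-1/3+C\epsilon_0}\;\le\; 2N^{-c+C\epsilon_0},
\]
which is $\OO(N^{-\delta})$ for $\epsilon_0$ small. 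The sharper structural cancellations of Section~\ref{subsec_3moment} are designed for the regime $b_N\sim N^{1/2}$, which is excluded here; invoking them is harmless but unnecessary.
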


\begin{proof}
The proof of this lemma is similar to the ones in \cite[Section 6]{EKYY}, \cite[Section 6]{EYY}, \cite[Section 4]{PY} and \cite[Section 10]{Anisotropic}. The main argument involves a routine application of the Green's function comparison method (as the one in Lemma \ref{lem_compdiffsupport}) near the edge developed in \cite[Section 6]{EYY} and \cite[Section 4]{PY}. The proofs there can be easily adapted to our case using the anisotropic local law (Theorem \ref{LEM_SMALL}), the rigidity of eigenvalues (Theorem \ref{thm_largerigidity}), and the resolvent identities in Lemma \ref{lemm_resolvent} and Lemma \ref{lem_comp_gbound}. 
\end{proof}

Now it is easy to see that Theorem \ref{lem_comparison} follows from the following comparison lemma.

\begin{lemma}\label{eq_edgeuniv} 
Let $X$ and $\wt{X}$ be two matrices as in Lemma \ref{lem_decrease}. Suppose $b_N \le N^{1/3-c}$ for some constant $c>0$. Then there exist constants $\epsilon,\delta >0$ such that, for any $s \in \mathbb R$ we have
\begin{equation}\label{edgeXX}
\begin{split}
\mathbb{P}^{\wt X} \left(N^{{2}/{3}}(\lambda_1-\lambda_+)\leq s-N^{-\epsilon}\right) -N^{-\delta}  \leq \mathbb{P}^{X}(N^{{2}/{3}}\left({\lambda}_1-\lambda_+)\leq s\right) \\
\leq \mathbb{P}^{\wt X} \left(N^{{2}/{3}}(\lambda_1-\lambda_+)\leq s+ N^{-\epsilon}\right)+N^{-\delta},
\end{split}
\end{equation}
where $\mathbb{P}^X$ and $\mathbb{P}^{\wt X}$ are the laws for $X$ and $\wt{X}$, respectively.
\end{lemma}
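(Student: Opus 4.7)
The plan is to prove \eqref{edgeXX} via the Green's function comparison method at the edge, following the strategy of \cite{EYY, PY, Anisotropic}. The starting point is to reduce the probability $\mathbb P(N^{2/3}(\lambda_1-\lambda_+)\le s)$ to the expectation of a smooth functional of the Stieltjes transform $m(z)$ near the edge. Specifically, set $E = \lambda_+ + sN^{-2/3}$ and $\ell = N^{-2/3-3\epsilon}$ for a small $\epsilon>0$, and let $\chi_E$ be a smooth cutoff approximating $\mathbf 1_{[E,\infty)}$ with smoothing scale $\ell$. By the Helffer--Sjöstrand formula (or the direct argument of \cite[Section~6]{EYY}), the trace $\mathcal N := \operatorname{Tr}\chi_E(\mathcal Q_1)$ can be well-approximated by an expression of the form
\[
\mathcal N \approx N\int_E^{C} \chi_\ell(x)\, \im m(x+\ii\eta)\,\dd x + \text{(boundary corrections)},
\]
with $\eta = N^{-2/3-9\epsilon}$. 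Choosing a smooth bump $K:\mathbb R\to\mathbb R_+$ with $K(x)=0$ for $x\le 1/3$ and $K(x)=1$ for $x\ge 2/3$, the rigidity estimate \eqref{rigidity} (which applies since $b_N\le N^{1/3-c}$) gives, with probability $1-N^{-D}$,
\[
\mathbb{P}\bigl(\lambda_1 \ge E + N^{-2/3-\epsilon}\bigr) \;\le\; \mathbb E\, K(\mathcal N) \;\le\; \mathbb{P}\bigl(\lambda_1 \ge E - N^{-2/3-\epsilon}\bigr) + N^{-D},
\]
and analogous bounds hold for $\wt X$. Thus the proof reduces to showing that
$|\mathbb E^X K(\mathcal N) - \mathbb E^{\wt X} K(\mathcal N)| = \OO(N^{-\delta})$
for some $\delta>0$.

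To establish this, I will apply the Lindeberg replacement scheme along the interpolation $X^0=\wt X,X^1,\dots,X^{\gamma_{\max}}=X$ introduced in Section~\ref{sec_Lindeberg}, so that
\[
\mathbb E^X K(\mathcal N) - \mathbb E^{\wt X} K(\mathcal N) \;=\; \sum_{\gamma=1}^{\gamma_{\max}}\bigl[\mathbb E K(\mathcal N^\gamma) - \mathbb E K(\mathcal N^{\gamma-1})\bigr].
\]
For each $\gamma$, I Taylor-expand $K(\mathcal N^\gamma)$ and $K(\mathcal N^{\gamma-1})$ around the matrix $X^{\gamma,0}_{(i\mu)}$ with the $(i,\mu)$-entry set to zero, exactly as in the derivation of \eqref{ONLYPROVE}--\eqref{ONLYPROVE3}. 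Because $K^{(r)}$ is bounded and $\mathcal N$ is a smooth function of resolvent entries at $\eta=N^{-2/3-9\epsilon}$, each derivative can be expressed as a finite linear combination of products of entries of $G^{\gamma-a}$ and powers of $\ell^{-1}$. Matching the first four moments of $x_{i\mu}$ and $\wt x_{i\mu}$ (Lemma~\ref{lem_decrease}) kills the contributions of orders $\le 4$, leaving only terms with $r\ge 5$ that carry a factor $\mathbb E[x_{i\mu}^r]=\OO(N^{-2}q^{r-4})$ with $q\le N^{-\phi}$.

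The error terms are then controlled as in the proof of Lemma~\ref{lem_comgreenfunction}: after the $\frac1{N^2}\sum_{i,\mu}$ summation over swap locations, each pair of free indices produces, by Lemma~\ref{lem_comp_gbound} together with the anisotropic local law \eqref{aniso_law} and the refined second-moment bound \eqref{weak_off}, a gain of order $\Psi^2(z)\sim N^{-1/3+\OO(\epsilon)}$ at the relevant spectral parameter. A careful power-counting (analogous to the discussion around \eqref{teles_inequality3}) then shows that each swap contributes at most $N^{-2-\phi/10}$ times a product of $\OO(1)$ resolvent factors and a bounded number of $\ell^{-1}$ powers from derivatives of $K$; summing over $\gamma_{\max}=nN$ swaps gives the desired $\OO(N^{-\delta})$ bound.

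The main obstacle, as always in such edge comparisons, is the careful handling of the cut-off scale $\ell$ and of the boundary contributions in the Helffer--Sjöstrand reduction: one must verify that $\chi_E$ can be smoothed at scale $\ell \ll N^{-2/3}$ without introducing errors exceeding $N^{-\delta}$, which relies crucially on the rigidity \eqref{rigidity} with error $\eta_0=\OO(b_N/N)=\OO(N^{-2/3-c})$ (this is the only place the assumption $b_N\le N^{1/3-c}$ is used). A secondary technical point is that, unlike the simpler setting of sample covariance matrices, here one must work with anisotropic entries $G_{\mathbf u\mathbf v}$ rather than entrywise bounds, so the propagation of the $\mathcal P_{\gamma,\mathbf k}$ operators through the Taylor expansion must be traced via the anisotropic Ward-type identities in Lemma~\ref{lem_comp_gbound} rather than the entrywise identities of \cite{DY}. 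Once these ingredients are assembled, \eqref{edgeXX} follows by choosing $\delta<\epsilon$ appropriately small.
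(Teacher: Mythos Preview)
Your proposal is correct and follows essentially the same route as the paper. The paper's proof of Lemma~\ref{eq_edgeuniv} consists precisely of (i) the standard reduction from \cite[Section~6]{EYY} of the edge probability to expectations of smooth functionals of $N\eta\,\im m(z)$ and $N\int_{E_1}^{E_2}\im m(y+\ii\eta)\,\dd y$, which requires only the averaged local law and rigidity in Theorem~\ref{thm_largerigidity}, and (ii) the Green's function comparison Lemma~\ref{lem_compdiffsupport}, whose proof is the Lindeberg swap with four-moment matching and the anisotropic bounds from Section~\ref{sec_Lindeberg}; your outline reproduces exactly these two ingredients.
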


To prove Lemma \ref{eq_edgeuniv}, it suffices to prove the following Green's function comparison result. \nc Its proof is the same as the one for \cite[Lemma 5.5]{DY}, so we skip the details. \nc

\begin{lemma}
\label{lem_compdiffsupport} 
Let $X$ and $\wt{X}$ be two matrices as in Lemma \ref{lem_decrease}. Suppose $F:\mathbb R\to \mathbb R$ is a function whose derivatives satisfy
\begin{equation}
\sup_{x} \vert {F^{(k)}(x)}\vert {(1+\vert x \vert)^{-C_1}} \leq C_1, \quad k=1,2,3, \label{FCondition}
\end{equation}
for some constant $C_1>0$. Then for any sufficiently small constant $\delta>0 $ and for any 
\begin{equation}\label{Eeta}
E,E_1,E_2 \in I_{\delta}:=\left\{x: \vert x -\lambda_{+}\vert \leq N^{-{2}/{3}+\delta}\right\} \ \ \text{and} \ \ \eta:=N^{-{2}/{3}-\delta},
\end{equation}
we have
\begin{equation}
\left| \mathbb{E}F\left(N\eta \im m(z)\right)-\mathbb{E}F\left(N\eta \im \wt{m} (z)\right) \right| \leq N^{-\phi + C_2 \delta}, \ \ z=E+\ii\eta,  \label{BDBD}
\end{equation}
and
\begin{align}
\left| \mathbb{E}F\left(N \int_{E_1}^{E_2} \operatorname{Im} m(y+\ii\eta)dy\right)- \mathbb{E} F\left(N \int_{E_1}^{E_2} \operatorname{Im} \wt{m}(y+\ii\eta)dy\right) \right| \leq N^{-\phi+ C_2\delta}, \label{BDBD1}
\end{align}
where $\phi $ is as given in Theorem \ref{LEM_SMALL} and $C_2>0$ is some constant. 
\end{lemma}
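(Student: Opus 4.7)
The plan is to prove both estimates by the standard Lindeberg replacement / Green's function comparison method, following the strategy used in \cite[Section 6]{EYY}, \cite[Section 4]{PY} and \cite[Lemma 5.5]{DY}. I parameterize the interpolation between $\wt X$ and $X$ via the ordering $\Phi:\{(i,\mu)\}\to\{1,\ldots,\gamma_{\max}\}$ introduced after Lemma \ref{lem_comgreenfunction}, getting a sequence $X^0=\wt X, X^1,\ldots, X^{\gamma_{\max}}=X$ in which $X^\gamma$ and $X^{\gamma-1}$ differ in exactly one entry $(i,\mu)=\Phi^{-1}(\gamma)$. Writing $\mathscr F(X):= F(N\eta\,\Im\, m(X,z))$ (resp.\ $F(N\int_{E_1}^{E_2}\Im\, m(X,y+\ii\eta)\,dy)$), the proof reduces via the telescoping identity
\[
\E\mathscr F(X)-\E\mathscr F(\wt X)=\sum_{\gamma=1}^{\gamma_{\max}}\E[\mathscr F(X^\gamma)-\mathscr F(X^{\gamma-1})]
\]
to proving, for each $\gamma$, the single-step bound $|\E[\mathscr F(X^\gamma)-\mathscr F(X^{\gamma-1})]|\prec q\,N^{-2+C\delta}$. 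Summing over $\gamma_{\max}=nN\lesssim N^{2}$ terms and using $q\le N^{-\phi}$ then yields the claimed $N^{-\phi+C_2\delta}$ bound.

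For a fixed $\gamma$, I fix $(i,\mu)=\Phi^{-1}(\gamma)$ and condition on all randomness outside this entry; in particular $Q^\gamma$ and $R^\gamma$ of \eqref{R} are frozen. Using the resolvent expansion \eqref{RESOLVENTEXPANSION2} I write
\[
G^\gamma=R-RV^\gamma R+\cdots+(-1)^{K+1}(RV^\gamma)^{K+1}S^\gamma,
\]
and the analogous expansion with $V^\gamma$ replaced by $W^\gamma$ and $S^\gamma$ by $T^\gamma$. Taking $K$ large enough (say $K=20/\phi$) and using the a priori bound \eqref{5BOUND1} on $R$ together with $\|V^\gamma\|,\|W^\gamma\|\prec q$, the unexpanded tails are $\OO_\prec(N^{-10})$. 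Substituting into $\mathscr F$ and Taylor-expanding $F$ in the scalar $x_{i\mu}$ (resp.\ $\wt x_{i\mu}$) up to order four, the coefficients of $x_{i\mu}^k$ for $k=0,1,2,3,4$ depend only on $R$ and not on $x_{i\mu}$; by the four-moment matching $\E x_{i\mu}^k=\E\wt x_{i\mu}^k$ for $0\le k\le 4$ guaranteed by Lemma \ref{lem_decrease}, all these terms cancel in the difference. The surviving fifth-order Taylor remainder is, by the bounded support $|x_{i\mu}|,|\wt x_{i\mu}|\prec q$ and the moment bound $\E|x_{i\mu}|^k\prec q^{k-4}N^{-2}$ for $k\ge 4$, of size
\[
\prec q\cdot N^{-2}\cdot\sup_{|\lambda|\le q}\big|\partial^{5}_{x_{i\mu}}\mathscr F\big|_{x_{i\mu}=\lambda}\big|,
\]
so the remaining work is to control the fifth $x_{i\mu}$-derivative of $\mathscr F$.

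The derivatives of $\mathscr F$ produce products of at most five factors of the form $F^{(k)}(N\eta\,\Im\, m)$ times polynomial expressions in the generalized entries $G_{\bw_i\bw_\mu},G_{\bw_i\bw_i},G_{\bw_\mu\bw_\mu}$ and traces $N\eta\cdot\frac{1}{n}\sum_j(G\cdots G)_{jj}$ obtained by iterated differentiation $\partial_{x_{i\mu}}G=-G\,\partial_{x_{i\mu}}H\,G$. At $\eta=N^{-2/3-\delta}$ and $E\in I_\delta$, the anisotropic local law \eqref{aniso_law} gives $|G_{\bw_i\bw_\mu}|\prec N^{C\delta}$ (since $\|\Pi\|=\OO(1)$ and $\Psi(z)\le N^{C\delta}$ on this scale), and the averaged bound combined with the isotropic delocalization \eqref{delocal} and rigidity \eqref{rigidity} shows that $N\eta\,\Im\,m(z)\prec N^{C\delta}$ and that each averaged trace $N\eta\cdot\frac{1}{n}\sum_j|G_{j\bw}|^2$ is $\OO_\prec(N^{C\delta})$, by the same kind of spectral-decomposition estimate used in the derivation of \eqref{REALCOMPLEX}. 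The polynomial growth hypothesis \eqref{FCondition} on $F^{(k)}$ then gives $|\partial^{5}_{x_{i\mu}}\mathscr F|\prec N^{C\delta}$, and the single-step bound follows.

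The integral version \eqref{BDBD1} is handled identically, replacing $\mathscr F$ by $F(N\int_{E_1}^{E_2}\Im\, m(y+\ii\eta)\,dy)$ and noting that all the resolvent bounds above hold uniformly in $y\in I_\delta$, so the factor $N\cdot|E_1-E_2|\le N^{1/3+\delta}$ times the $y$-uniform derivative estimates stays of the same order as in \eqref{BDBD}. The main technical obstacle in executing this plan is the uniform control of the averaged fifth-derivative expressions at the edge-scale $\eta\ll N^{-2/3}$, where naive $G$-entry bounds would blow up; this is precisely where the anisotropic local law \eqref{aniso_law} and the averaging identities of Lemma \ref{lem_comp_gbound} (which are already encapsulated in the proof of Lemma \ref{thm_largebound}) are indispensable.
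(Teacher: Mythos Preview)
Your overall Lindeberg scheme --- telescoping over $\gamma$, resolvent expansion around $R^\gamma$, and four-moment matching from Lemma \ref{lem_decrease} --- is exactly the route the paper intends (it simply cites \cite[Lemma 5.5]{DY} for the proof). There is, however, a genuine gap in the way you organise the Taylor step. You expand the \emph{composite} function $\mathscr F(x_{i\mu})=F\big(N\eta\,\Im m\big)$ to fourth order in $x_{i\mu}$ and then propose to bound $\sup_{|\lambda|\le q}\big|\partial^{5}_{x_{i\mu}}\mathscr F\big|$. By the Fa\`a di Bruno formula this fifth derivative contains the term $F^{(5)}(\xi)\,(\partial_{x_{i\mu}}\xi)^5$ and several multiples of $F^{(4)}(\xi)$, whereas hypothesis \eqref{FCondition} only controls $F^{(k)}$ for $k=1,2,3$; nothing in the lemma guarantees that $F^{(4)}$ or $F^{(5)}$ even exist. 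Hence the bound $|\partial^{5}_{x_{i\mu}}\mathscr F|\prec N^{C\delta}$ that you invoke is not justified by the stated assumptions.

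The standard remedy (as in \cite[Section 6]{EYY}, \cite[Section 4]{PY}, \cite{LY}, \cite{DY}) is to Taylor-expand $F$, \emph{not} $\mathscr F$, only to the order permitted by \eqref{FCondition}: with $\xi_0:=N\eta\,\Im m(R^\gamma,z)$ write
\[
F(\xi)=F(\xi_0)+F'(\xi_0)(\xi-\xi_0)+\tfrac12 F''(\xi_0)(\xi-\xi_0)^2+\tfrac16 F'''(\zeta)(\xi-\xi_0)^3,
\]
and then expand $\xi-\xi_0=\sum_{k\ge1}a_k x_{i\mu}^k$ via \eqref{RESOLVENTEXPANSION}. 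The $F'$- and $F''$-pieces are polynomials in $x_{i\mu}$ with $R^\gamma$-measurable coefficients, so the four-moment matching eliminates their contribution up to order $\OO_\prec(qN^{-2+C\delta})$ per step, exactly as you intend. The $F'''$ remainder is then bounded \emph{directly}: the averaging identities of Lemma \ref{lem_comp_gbound} (which you correctly identify as the crucial input) give $|a_k|\prec \Im m_{2c}+q+\Psi$, hence $|\xi-\xi_0|\prec(\Im m_{2c}+q+\Psi)\,|x_{i\mu}|$, and together with $\E|x_{i\mu}|^3=\OO(N^{-3/2})$ and the polynomial bound on $F'''$ from \eqref{FCondition} this controls the remainder without touching $F^{(4)}$ or $F^{(5)}$. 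All the ingredients in your sketch are the right ones; the fix is simply to expand $F$ in the increment $\xi-\xi_0$ rather than in $x_{i\mu}$, so that no derivative of $F$ beyond the third is ever needed.
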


\begin{proof}[Proof of Lemma \ref{eq_edgeuniv}]
Although not explicitly stated, it was shown in \cite{EYY} that if Theorem \ref{thm_largerigidity} and Lemma \ref{lem_compdiffsupport} hold, then the edge universality \eqref{edgeXX} holds. More precisely, in Section 6 of \cite{EYY}, the edge universality problem was reduced to proving Theorem 6.3 of \cite{EYY}, which corresponds to our Lemma \ref{lem_compdiffsupport}. In order for this conversion to work, only the the averaged local law and the rigidity of eigenvalues are used, which correspond to the statements in our Theorem \ref{thm_largerigidity}.
\end{proof}

\begin{proof}[Proof of Theorem \ref{lem_comparison}]
Theorem \ref{lem_comparison} follows immediately combining Lemma \ref{lem_decrease}, Lemma \ref{lem_smallcomp} and Lemma \ref{eq_edgeuniv} .
\end{proof}


\begin{thebibliography}{10}

\bibitem{Alt_Gram}
J.~Alt.
\newblock Singularities of the density of states of random {G}ram matrices.
\newblock {\em Electron. Commun. Probab.}, 22:13 pp., 2017.

\bibitem{AEK_Gram}
J.~Alt, L.~Erd{\H o}s, and T.~Kr{\" u}ger.
\newblock Local law for random {G}ram matrices.
\newblock {\em Electron. J. Probab.}, 22:41 pp., 2017.

\bibitem{BSY}
Z.~Bai, J.~W. Silverstein, and Y.~Yin.
\newblock A note on the largest eigenvalue of a large dimensional sample
  covariance matrix.
\newblock {\em Journal of Multivariate Analysis}, 26:166 -- 168, 1988.

\bibitem{BBP}
J.~Baik, G.~Ben~Arous, and S.~P{\'e}ch{\'e}.
\newblock Phase transition of the largest eigenvalue for nonnull complex sample
  covariance matrices.
\newblock {\em Ann. Probab.}, 33(5):1643--1697, 2005.

\bibitem{BS_spike}
J.~Baik and J.~W. Silverstein.
\newblock Eigenvalues of large sample covariance matrices of spiked population
  models.
\newblock {\em Journal of Multivariate Analysis}, 97(6):1382 -- 1408, 2006.

\bibitem{BPZ1}
Z.~Bao, G.~Pan, and W.~Zhou.
\newblock Universality for the largest eigenvalue of sample covariance matrices
  with general population.
\newblock {\em Ann. Statist.}, 43(1):382--421, 2015.

\bibitem{BDMN}
P.~Bianchi, M.~Debbah, M.~Maida, and J.~Najim.
\newblock Performance of statistical tests for single-source detection using
  random matrix theory.
\newblock {\em IEEE Transactions on Information Theory}, 57(4):2400--2419,
  2011.

\bibitem{isotropic}
A.~Bloemendal, L.~Erd{\H o}s, A.~Knowles, H.-T. Yau, and J.~Yin.
\newblock Isotropic local laws for sample covariance and generalized {W}igner
  matrices.
\newblock {\em Electron. J. Probab.}, 19(33):1--53, 2014.

\bibitem{principal}
A.~Bloemendal, A.~Knowles, H.-T. Yau, and J.~Yin.
\newblock On the principal components of sample covariance matrices.
\newblock {\em Prob. Theor. Rel. Fields}, 164(1):459--552, 2016.

\bibitem{local_circular}
P.~Bourgade, H.-T. Yau, and J.~Yin.
\newblock Local circular law for random matrices.
\newblock {\em Probab. Theory Relat. Fields}, 159:545--595, 2014.

\bibitem{PRE}
Z.~Burda, J.~Jurkiewicz, and B.~Wac\l{}aw.
\newblock Spectral moments of correlated {W}ishart matrices.
\newblock {\em Phys. Rev. E}, 71:026111, 2005.

\bibitem{Separable_solution}
R.~Couillet and W.~Hachem.
\newblock Analysis of the limiting spectral measure of large random matrices of
  the separable covariance type.
\newblock {\em Random Matrices: Theory and Applications}, 03(04):1450016, 2014.

\bibitem{DT2011}
M.~Dieng and C.~A. Tracy.
\newblock {\em Application of Random Matrix Theory to Multivariate Statistics},
  pages 443--507.
\newblock Springer New York, New York, NY, 2011.

\bibitem{DY}
X.~Ding and F.~Yang.
\newblock A necessary and sufficient condition for edge universality at the
  largest singular values of covariance matrices.
\newblock {\em Ann. Appl. Probab.}, 28(3):1679--1738, 2018.

\bibitem{DY3}
X.~Ding and F.~Yang.
\newblock Spiked separable covariance matrices and principal components.
\newblock {\em arXiv:1905.13060}.

\bibitem{DY2}
X.~Ding and F.~Yang.
\newblock {T}racy-{W}idom distribution for the edge eigenvalues of separable covariance matrices.
\newblock {\em In preparation}.

\bibitem{Karoui}
N.~El~Karoui.
\newblock {T}racy-{W}idom limit for the largest eigenvalue of a large class of
  complex sample covariance matrices.
\newblock {\em Ann. Probab.}, 35(2):663--714, 2007.

\bibitem{Karoui2009}
N.~El~Karoui.
\newblock Concentration of measure and spectra of random matrices: Applications
  to correlation matrices, elliptical distributions and beyond.
\newblock {\em Ann. Appl. Probab.}, 19(6):2362--2405, 2009.

\bibitem{Average_fluc}
L.~Erd{\H o}s, A.~Knowles, and H.-T. Yau.
\newblock Averaging fluctuations in resolvents of random band matrices.
\newblock {\em Ann. Henri Poincar\'e}, 14:1837--1926, 2013.

\bibitem{EKYY}
L.~Erd{\H{o}}s, A.~Knowles, H.-T. Yau, and J.~Yin.
\newblock Spectral statistics of {E}rd{\H{o}}s-{R}{\'e}nyi graphs {II}:
  Eigenvalue spacing and the extreme eigenvalues.
\newblock {\em Comm. Math. Phys.}, 314:587--640, 2012.

\bibitem{Delocal}
L.~Erd{\H o}s, A.~Knowles, H.-T. Yau, and J.~Yin.
\newblock Delocalization and diffusion profile for random band matrices.
\newblock {\em Commun. Math. Phys.}, 323:367--416, 2013.

\bibitem{Semicircle}
L.~Erd{\H o}s, A.~Knowles, H.-T. Yau, and J.~Yin.
\newblock The local semicircle law for a general class of random matrices.
\newblock {\em Electron. J. Probab.}, 18:1--58, 2013.

\bibitem{EKYY1}
L.~Erd{\H o}s, A.~Knowles, H.-T. Yau, and J.~Yin.
\newblock Spectral statistics of {E}rd{\H o}s-{R}{\' e}nyi graphs {I}: Local
  semicircle law.
\newblock {\em Ann. Probab.}, 41(3B):2279--2375, 2013.

\bibitem{EYY}
L.~Erd{\H{o}}s, H.-T. Yau, and J.~Yin.
\newblock Rigidity of eigenvalues of generalized {W}igner matrices.
\newblock {\em Advances in Mathematics}, 229:1435 -- 1515, 2012.

\bibitem{FF}
E.~F. Fama and K.~R. French.
\newblock The cross-section of expected stock returns.
\newblock {\em The Journal of Finance}, 47(2):427--465.

\bibitem{FFL}
J.~Fan, Y.~Fan, and J.~Lv.
\newblock High dimensional covariance matrix estimation using a factor model.
\newblock {\em Journal of Econometrics}, 147(1):186 -- 197, 2008.

\bibitem{FLL}
J.~Fan, Y.~Liao, and H.~Liu.
\newblock An overview of the estimation of large covariance and precision
  matrices.
\newblock {\em The Econometrics Journal}, 19(1):C1--C32.

\bibitem{Forr}
P.~Forrester.
\newblock The spectrum edge of random matrix ensembles.
\newblock {\em Nucl. Phys. B}, 402(3):709 -- 728, 1993.

\bibitem{MF2006}
M.~Fuentes.
\newblock Testing for separability of spatial-temporal covariance functions.
\newblock {\em Journal of Statistical Planning and Inference}, 136(2):447 --
  466, 2006.

\bibitem{Hachem2007}
W.~Hachem, P.~Loubaton, and J.~Najim.
\newblock Deterministic equivalents for certain functionals of large random
  matrices.
\newblock {\em Ann. Appl. Probab.}, 17(3):875--930, 2007.

\bibitem{JW}
R.~A. Johnson and D.~W. Wichern.
\newblock {\em Applied Multivariate Statistical Analysis}.
\newblock Prentice Hall, 3 edition, 2007.

\bibitem{IJ}
I.~M. Johnstone.
\newblock High dimensional statistical inference and random matrices.
\newblock {\em arXiv:0611589}.

\bibitem{IJ2}
I.~M. Johnstone.
\newblock On the distribution of the largest eigenvalue in principal components
  analysis.
\newblock {\em Ann. Statist.}, 29:295--327, 2001.

\bibitem{IJ2008}
I.~M. Johnstone.
\newblock Multivariate analysis and {J}acobi ensembles: Largest eigenvalue,
  {T}racy-{W}idom limits and rates of convergence.
\newblock {\em Ann. Statist.}, 36(6):2638--2716, 2008.

\bibitem{SMK}
S.~M. Kay.
\newblock {\em Fundamentals of Statistical Signal Processing, Volume 2:
  Detection Theory}.
\newblock Prentice-Hall, 1998.

\bibitem{KY}
A.~Knowles and J.~Yin.
\newblock The isotropic semicircle law and deformation of {W}igner matrices.
\newblock {\em Comm. Pure Appl. Math.}, 66:1663--1749, 2013.

\bibitem{Anisotropic}
A.~Knowles and J.~Yin.
\newblock Anisotropic local laws for random matrices.
\newblock {\em Probability Theory and Related Fields}, pages 1--96, 2016.

\bibitem{KJ1999}
P.~C. Kyriakidis and A.~G. Journel.
\newblock Geostatistical space--time models: A review.
\newblock {\em Mathematical Geology}, 31(6):651--684, 1999.

\bibitem{LS}
J.~O. Lee and K.~Schnelli.
\newblock Tracy-{W}idom distribution for the largest eigenvalue of real sample
  covariance matrices with general population.
\newblock {\em Ann. Appl. Probab.}, 26:3786--3839, 2016.

\bibitem{LY}
J.~O. Lee and J.~Yin.
\newblock A necessary and sufficient condition for edge universality of
  {W}igner matrices.
\newblock {\em Duke Math. J.}, 163:117--173, 2014.

\bibitem{LMS2008}
B.~Li, M.~G. Genton, and M.~Sherman.
\newblock Testing the covariance structure of multivariate random fields.
\newblock {\em Biometrika}, 95(4):813--829, 2008.

\bibitem{MP}
V.~A. Mar{\v c}enko and L.~A. Pastur.
\newblock Distribution of eigenvalues for some sets of random matrices.
\newblock {\em Mathematics of the USSR-Sbornik}, 1:457, 1967.

\bibitem{MG2003}
M.~W. Mitchell and M.~L. Gumpertz.
\newblock Spatio-temporal prediction inside a free-air $\text{CO}_2$ enrichment
  system.
\newblock {\em Journal of Agricultural, Biological, and Environmental
  Statistics}, 8(3):310, 2003.

\bibitem{NE}
R.~R. Nadakuditi and A.~Edelman.
\newblock Sample eigenvalue based detection of high-dimensional signals in
  white noise using relatively few samples.
\newblock {\em IEEE Transactions on Signal Processing}, 56(7):2625--2638, 2008.

\bibitem{NS}
R.~R. Nadakuditi and J.~W. Silverstein.
\newblock Fundamental limit of sample generalized eigenvalue based detection of
  signals in noise using relatively few signal-bearing and noise-only samples.
\newblock {\em IEEE Journal of Selected Topics in Signal Processing},
  4(3):468--480, 2010.

\bibitem{Onatski}
A.~Onatski.
\newblock The {T}racy-{W}idom limit for the largest eigenvalues of singular
  complex {W}ishart matrices.
\newblock {\em Ann. Appl. Probab.}, 18:470--490, 04 2008.

\bibitem{Onatski2}
A.~Onatski.
\newblock Testing hypotheses about the number of factors in large factor
  models.
\newblock {\em Econometrica}, 77(5):1447--1479, 2009.

\bibitem{OMH}
A.~Onatski, M.~J. Moreira, and M.~Hallin.
\newblock Asymptotic power of sphericity tests for high-dimensional data.
\newblock {\em The Annals of Statistics}, 41(3):1204--1231, 2013.

\bibitem{Genetics}
N.~Patterson, A.~L. Price, and D.~Reich.
\newblock Population structure and eigenanalysis.
\newblock {\em PLOS Genetics}, 2(12):1--20, 2006.

\bibitem{DP_spike}
D.~Paul.
\newblock Asymptotics of sample eigenstructure for a large dimensional spiked
  covariance model.
\newblock {\em Statistica Sinica}, 17(4):1617--1642, 2007.

\bibitem{PA}
D.~Paul and A.~Aue.
\newblock Random matrix theory in statistics: A review.
\newblock {\em J. Stat. Plan. Inference}, 150:1 -- 29, 2014.

\bibitem{Separable}
D.~Paul and J.~W. Silverstein.
\newblock No eigenvalues outside the support of the limiting empirical spectral
  distribution of a separable covariance matrix.
\newblock {\em Journal of Multivariate Analysis}, 100(1):37 -- 57, 2009.

\bibitem{PY}
N.~S. Pillai and J.~Yin.
\newblock Universality of covariance matrices.
\newblock {\em Ann. Appl. Probab.}, 24:935--1001, 2014.

\bibitem{SW}
J.~W. Silverstein.
\newblock On the weak limit of the largest eigenvalue of a large dimensional
  sample covariance matrix.
\newblock {\em Journal of Multivariate Analysis}, 30:307 -- 311, 1989.

\bibitem{TaoVu_edge}
T.~Tao and V.~Vu.
\newblock Random matrices: Universality of local eigenvalue statistics up to
  the edge.
\newblock {\em Communications in Mathematical Physics}, 298(2):549--572, 2010.

\bibitem{TaoVu_4moment}
T.~Tao and V.~Vu.
\newblock Random matrices: Universality of local eigenvalue statistics.
\newblock {\em Acta Math.}, 206(1):127--204, 2011.

\bibitem{TaoVu_local}
T.~Tao and V.~Vu.
\newblock Random matrices: {U}niversality of local spectral statistics of
  non-{H}ermitian matrices.
\newblock {\em Ann. Probab.}, 43(2):782--874, 2015.

\bibitem{TW1}
C.~A. Tracy and H.~Widom.
\newblock Level-spacing distributions and the airy kernel.
\newblock {\em Comm. Math. Phys.}, 159:151--174, 1994.

\bibitem{TW}
C.~A. Tracy and H.~Widom.
\newblock On orthogonal and symplectic matrix ensembles.
\newblock {\em Comm. Math. Phys.}, 177:727--754, 1996.

\bibitem{RMT_Wireless}
A.~M. Tulino and S.~Verd{\'u}.
\newblock Random matrix theory and wireless communications.
\newblock {\em Foundations and Trends in Communications and Information
  Theory}, 1, 2004.

\bibitem{Verdu}
S.~Verd{\'u}.
\newblock Spectral efficiency in the wideband regime.
\newblock {\em IEEE Transactions on Information Theory}, 48(6):1319--1343,
  2002.

\bibitem{WANG2014}
L.~Wang and D.~Paul.
\newblock Limiting spectral distribution of renormalized separable sample
  covariance matrices when $p/n \to 0$.
\newblock {\em Journal of Multivariate Analysis}, 126:25 -- 52, 2014.

\bibitem{XYY_VESD}
H.~Xi, F.~Yang, and J.~Yin.
\newblock Convergence of eigenvector empirical spectral distribution of sample
  covariance matrices.
\newblock {\em arXiv:1705.03954}.

\bibitem{XYY_circular}
H.~Xi, F.~Yang, and J.~Yin.
\newblock Local circular law for the product of a deterministic matrix with a
  random matrix.
\newblock {\em Electron. J. Probab.}, 22:77 pp., 2017.

\bibitem{XYZ2013}
N.~Xia, Y.~Qin, and Z.~Bai.
\newblock Convergence rates of eigenvector empirical spectral distribution of
  large dimensional sample covariance matrix.
\newblock {\em Ann. Statist.}, 41(5):2572--2607, 2013.

\bibitem{yang_thesis}
F.~Yang.
\newblock {\em Local laws of random matrices and their applications}.
\newblock PhD thesis, UCLA.

\bibitem{YZB}
J.~Yao, Z.~Bai, and S.~Zheng.
\newblock {\em Large Sample Covariance Matrices and High-Dimensional Data
  Analysis}.
\newblock Cambridge University Press, 2015.

\bibitem{YBK}
Y.~Q. Yin, Z.~D. Bai, and P.~R. Krishnaiah.
\newblock On the limit of the largest eigenvalue of the large dimensional
  sample covariance matrix.
\newblock {\em Probability Theory and Related Fields}, 78:509--521, 1988.

\bibitem{Zhang_thesis}
L.~Zhang.
\newblock {\em Spectral Analysis of Large Dimensional Random Matrices}.
\newblock PhD thesis, National University of Singapore.

\end{thebibliography}
\end{document}